\newtheorem{theorem}{Theorem}[section]
\newaliascnt{conj}{theorem}
\newaliascnt{cor}{theorem}
\newaliascnt{lemma}{theorem}
\newaliascnt{fact}{theorem}
\newaliascnt{claim}{theorem}
\newaliascnt{prop}{theorem}
\newaliascnt{definition}{theorem}
\newtheorem{conj}[conj]{Conjecture}
\newtheorem{cor}[cor]{Corollary}
\newtheorem{lemma}[lemma]{Lemma}
\newtheorem{claim}[claim]{Claim}
\newtheorem{prop}[prop]{Proposition}
\newtheorem{definition}[definition]{Definition}
\newtheorem{question}[theorem]{Question}
\newtheorem{notation}[theorem]{Notation}
\newtheorem{convention}[theorem]{Convention}
\numberwithin{equation}{subsection}
\numberwithin{theorem}{subsection}
\numberwithin{figure}{subsection}
\newcommand{\be}{\begin{enumerate}}
\newcommand{\ee}{\end{enumerate}}
\newcommand{\op}{\operatorname}
\theoremstyle{remark}
\newaliascnt{rmk}{theorem}
\newtheorem{example}[rmk]{Example}
\newtheorem{remark}[rmk]{Remark}
\theoremstyle{remark}
\newaliascnt{exam}{theorem}
\def\sek~{\S{}}
\DeclareMathOperator{\Sk}{Sk}
\DeclareMathOperator{\std}{std}
\DeclareMathOperator{\aut}{Aut}
\DeclareMathOperator{\symp}{Symp}
\newcommand{\p}{\partial}
\newcommand{\D}{\mathbb{D}}
\newcommand{\R}{\mathbb{R}}
\newcommand{\FF}{\mathcal{F}}
\newcommand{\OO}{\mathcal{O}}
\newcommand{\s}{\vskip.1in}
\newcommand{\n}{\noindent}
\newcommand{\bdry}{\partial}
\newcommand{\ot}{\text{ot}}
\newcommand{\cb}{\color{black}}
\begin{document}

\title{Bypass attachments in higher-dimensional contact topology}

\author{Ko Honda}
\address{University of California, Los Angeles, Los Angeles, CA 90095}
\email{honda@math.ucla.edu} \urladdr{http://www.math.ucla.edu/\char126 honda}

\author{Yang Huang}
\address{Uppsala University, Box 480, 751 06 Uppsala, Sweden \newline
		\indent Aarhus University, 8000 Aarhus C, Denmark}
\email{yang.huang@math.uu.se} \urladdr{https://sites.google.com/site/yhuangmath}

\date{This version: \today}

\thanks{KH is supported by NSF Grants DMS-1406564 and DMS-1549147. YH is partially supported by the grant KAW 2016.0198 from the Knut and Alice Wallenberg Foundation and by the Center of excellence grant 'Center for Quantum Geometry of Moduli Spaces' from the Danish National Research Foundation (DNRF95).}

\begin{abstract}
We initiate a systematic study of convex hypersurface theory and generalize the bypass attachment to arbitrary dimensions.  We also introduce a new type of overtwisted object called the {\em overtwisted orange} which is middle-dimensional and contractible.
\end{abstract}

\maketitle

\tableofcontents

\section{Introduction} \label{sec:intro}

Convex surface theory, as developed by Giroux in \cite{Gir91}, is an extremely powerful tool in studying $3$-dimensional contact topology. (The notion of a {\em convex contact manifold} was introduced earlier by Eliashberg and Gromov \cite{EG91}, and a convex surface is a level set of a convex contact manifold.) The merit of convex surface theory is that it characterizes the contact germ on a surface by discrete data, i.e., an isotopy class of oriented embedded $1$-dimensional submanifolds. A well-known result of Giroux \cite{Gir91} states that a $C^\infty$-generic surface is convex, and, moreover, a generic $1$-parameter family of surfaces, after a slight modification, fails to be convex at only a discrete set of times \cite{Gir00}. The bypass attachment precisely characterizes the codimension $1$ failure of convexity; its theory was developed by the first author in \cite{Hon00} and applied to many classification problems in $3$-dimensional contact topology.

\subsection{Convex hypersurfaces and bypass attachments}

The goal of the current paper is to initiate a systematic study of convex hypersurface theory and generalize the bypass attachment to arbitrary dimensions.

One crucial difference between the theory of convex hypersurfaces in dimension $3$ and higher is that a $C^\infty$-generic hypersurface is convex in dimension $3$ but not in higher dimensions. This fact was known to the experts in the field for a long time and was written down by Mori~\cite{Mor11}.  To the best of our knowledge, the following is still open:

\begin{question}
Given a hypersurface, is there a $C^0$-small perturbation which can be made convex?
\end{question}

What justifies our study, however, is that there is an abundance of convex hypersurfaces in any given contact manifold. For example, the boundary of a standard neighborhood of a closed Legendrian submanifold is always convex; more generally, the boundary of a standard neighborhood of a Legendrian skeleton with mild (e.g., arboreal) singularities is convex. Another natural source of examples of convex hypersurfaces is the Giroux correspondence \cite{Gir02} between contact structures and compatible open book decompositions: all the pages of a compatible open book decomposition are convex hypersurfaces.

It also makes no sense to define bypass attachments as characterizing the co\-dimen\-sion-1 failure of convexity in contact manifolds of dimensions greater than $3$. Instead in this paper we define a bypass attachment as a special type of a smoothly canceling pair of contact handle attachments in the middle dimensions. (If the contact manifold has dimension $2n+1$, then the middle dimensions are $n$ and $n+1$.) This definition is consistent with the description of a $3$-dimensional bypass attachment as a canceling pair of contact handles as explained in \cite{Oz11}.   In particular, the bypass attachment produces a new convex hypersurface from a given one in a way that is described in \autoref{thm:bypass_attachment}. Moreover, we will briefly discuss a slightly more general theory of smoothly canceling pairs of contact handle attachments in \autoref{subsec:smooth_canceling_pair}, leaving details to a future work.\footnote{Kevin Sackel has informed us that he is writing up overlapping results in his thesis.}

Compared to our knowledge of convex surface theory in dimension $3$, little is known in higher dimensions.  One of the first questions that comes to mind is:

\begin{question}
When is the contact germ on a convex hypersurface overtwisted in the sense of \cite{BEM15}?
\end{question}

In this paper a contact manifold which is overtwisted in the sense of \cite{BEM15} will be called {\em overtwisted} or {\em BEM-overtwisted}.
We will say that a convex hypersurface $\Sigma$ is {\em overtwisted} if any small neighborhood of $\Sigma$ is overtwisted.

In dimension $3$, Giroux's criterion asserts that a convex surface $\Sigma$ is overtwisted if and only if the dividing set $\Gamma_{\Sigma} \subset \Sigma$ contains a homotopically trivial loop if $\Sigma \neq S^2$ and $\Gamma_{\Sigma}$ is disconnected if $\Sigma = S^2$. The analogous criterion for convex hypersurfaces in contact manifolds of dimension at least $5$ seems out of reach at this moment. In fact, to the best of the authors' knowledge, not a single example of an overtwisted convex hypersurface in dimension $\geq 5$ was known. Using technical tools such as the overtwisted orange and bypass, which will be explained momentarily, we will construct the first examples of (closed) overtwisted convex hypersurfaces in any dimension in \autoref{sec:application}.

\subsection{Overtwisted oranges and bypasses}

The $h$-principle for overtwisted contact structures was established by Eliashberg \cite{Eli89} in dimension $3$ and by Borman, Eliashberg and Murphy in the groundbreaking work \cite{BEM15} in all dimensions. Soon afterwards, a list of useful criteria for overtwistedness was obtained by Casals, Murphy and Presas in \cite{CMP15}, followed by the work of the second author in \cite{Hua17}. Roughly speaking, the overtwistedness of a contact manifold $(M,\xi)$ is caused by the existence of certain embedded ``overtwisted object''.

Historically there have been many attempts at determining what this overtwisted object should look like, especially in high dimensions.
In dimension $3$, by the work of Eliashberg \cite{Eli89} an overtwisted disk is an embedded $2$-disk $D^2$ in $(M,\xi)$ such that $\xi=TD^2$ along $\bdry D^2$. The overtwisted disk is simultaneously {\em contractible and middle-dimensional.}  The search for middle-dimensional overtwisted objects in higher dimensions started with the work of Niederkr\"uger \cite{Nie06} (also see Gromov~\cite{Gro85}), where the notion of a \emph{plastikstufe} was introduced and shown to obstruct symplectic fillability. A plastikstufe is roughly the product of an overtwisted $2$-disk $D^2$ with an arbitrary closed manifold $S$ of dimension $n-1$ if $\dim M = 2n+1$. An easy dimension count shows that a plastikstufe is indeed middle-dimensional but it is never contractible if $\dim S>0$. The notion of a plastikstufe was later generalized by Massot, Niederkr\"uger and Wendl \cite{MNW13} to more general middle-dimensional objects, called \emph{bordered Legendrian open books} (bLobs), which were also shown to obstruct symplectic fillability; none of the bLobs however are contractible. On the other hand, an overtwisted disk in the sense of \cite{BEM15} is a piecewise smooth codimension 1 disk with a special contact germ, which is middle-dimensional only when $\dim M=3$.

In this paper we introduce yet another overtwisted object which we call an \emph{overtwisted orange}. It is a (singular) middle-dimensional contractible object (or more precisely a stratified manifold) which coincides with the usual overtwisted disk in dimension $3$.  We show in \autoref{thm:orange} that the existence of an embedded overtwisted orange implies BEM-overtwistedness.  The merit of the overtwisted orange, compared to other existing overtwisted objects, is that it is often easier to find in a given contact manifold $(M,\xi)$; see \autoref{sec:application}.

We define the \emph{bypass} to be one half of an overtwisted orange by analogy with the $3$-dimensional case studied in \cite{Hon00}.  In \autoref{sec:orange_bypass} we explain how to attach a bypass to a convex hypersurface and show that such an attachment is consistent with the previously described ``bypass attachments'' defined using pairs of contact handle attachments, and therefore justifies the terminology. Comparing our theory of higher-dimensional bypasses developed in this paper with the 3-dimensional case, there are several notable omissions, e.g., the relationship between bypass attachments and stabilizations of Legendrian submanifolds, as well as the existence of bypasses in ``right-veering'' supporting open book decompositions \cite{HKM07}. These aspects of convex hypersurface and bypass theory will be carried out in future work.

\s\n
{\em Organization of the paper.}
In \autoref{sec:convex_surface} we review some background on Liouville manifolds, Lagrangian cobordisms, and convex hypersurfaces. The material in this section is mostly well-known. In \autoref{sec:contact_handles} we study contact handle attachments, which are contact analogs of Weinstein handle attachments in symplectic topology. \autoref{sec:Legendrian_sum} is devoted to the Legendrian sum and Legendrian boundary sum operations, which are analogous to Polterovich's Lagrangian sum operation \cite{Pol91}, with applications to the Kirby calculus of Legendrian handleslides. Most of the material in this section is also well-known to the experts; see in particular \cite{DG09,CM16,CM16_preprint}. In \autoref{sec:bypass_attachment} we construct the bypass attachment using the contact handle attachments from \autoref{sec:contact_handles}. Some properties and examples of bypass attachments are studied \autoref{sec:example_bypass}. In particular we introduce the notion of a trivial bypass attachment and an overtwisted bypass attachment. The terminology of overtwisted bypass attachment is justified immediately but the triviality of the trivial bypass attachment is postponed to \autoref{sec:partial_OB} after we establish a dictionary between bypass attachments and partial open book decompositions, generalizing the dictionary in dimension $3$ developed in \cite{HKM09}. The overtwisted orange and the bypass are constructed in \autoref{sec:orange_bypass}, where it is shown that an embedded overtwisted orange indeed implies BEM-overtwistedness. Combining essentially all the techniques developed in the previous sections, we finally give examples of overtwisted convex hypersurfaces in any dimension in \autoref{sec:application}.

\s\n
{\em Acknowledgements.} KH is grateful to Yi Ni and the Caltech Mathematics Department for their hospitality during his sabbatical. YH thanks UCLA and Caltech Mathematics Department for their hospitality during his numerous visits. He also thanks Fr\'ed\'eric Bourgeois and Tobias Ekholm for their interest in this work.

\section{Preliminaries on convex hypersurface theory} \label{sec:convex_surface}

\subsection{Liouville manifolds} \label{subsec:Liouville_mfds}

We review the notion of a Liouville manifold and its ideal compactification.

An {\em exact symplectic manifold} $(V,\omega)$ without boundary is a (necessarily open) manifold $V$ with symplectic form $\omega=d\lambda$. Such a 1-form $\lambda$ is called a {\em Liouville form}. The nondegeneracy of $\omega$ implies that there exists a unique vector field $X$, called the {\em Liouville vector field}, which satisfies $i_X \omega = \lambda$, or equivalently $\mathcal{L}_X \omega = \omega$.

A {\em Liouville manifold} is an exact symplectic manifold $(V,\omega,X)$ such that
	\begin{itemize}
		\item the Liouville vector field $X$ is complete, and
		\item the manifold $V$ is {\em convex} in the sense that there exists an exhaustion $V = \cup_{k=1}^{\infty} V^k$ by compact domains $V^k \subset V$ with smooth boundary along which $X$ is outward-pointing.
	\end{itemize}
A Liouville manifold $(V,\omega,X)$ is of {\em finite type} if there exists a compact domain $V^c \subset V$ with smooth boundary along which $X$ is outward pointing and such that $(V\setminus V^c,\lambda|_{V\setminus V^c})$ is diffeomorphic to a positive half-symplectization $([c,\infty)\times Y,e^s\alpha)$, where $Y=\p V^c$, $\alpha=\lambda|_Y$, and $s$ is the $[c,\infty)$-coordinate.  Each component of $[c,\infty)\times Y$ will be called a {\em cylindrical end}.

\s
{\em All Liouville manifolds are assumed to have finite type throughout the paper.}

A {\em Liouville domain} $(W,\omega,X)$ is a compact exact symplectic manifold such that $X$ is outward pointing along $\p W$. Clearly any $V^c \subset V$ is a Liouville domain, and, conversely, any Liouville domain $W$ can be obtained in this way, i.e., there exists a Liouville manifold $V$ which is obtained by attaching a cylindrical end along each connected component of $\p W$, such that $W$ is symplectomorphic to $V^c$. Such $V$ is called the {\em completion} of $W$ and is uniquely defined up to symplectomorphism.

Given a Liouville manifold $(V,\omega,X)$, we define the {\em ideal compactification} $\overline{V}$ as follows: By the previous discussion, we can write $V = V^c \cup ([0,\infty)\times Y)$, where $[0,\infty)\times Y$ is a positive half-symplectization. Then we define
	\begin{equation*}
		\overline{V} := V^c \cup ([0,\infty]\times Y).
	\end{equation*}
Note that $\overline{V}$ is not a symplectic manifold since $\omega$ does not extend to $\p \overline{V}$, but $\p \overline{V} = \{\infty\}\times Y$ is naturally a contact manifold. See \autoref{subsec:convex_surface} below for more discussions on the ideal compactification.

\begin{remark}
	The ideal compactification of a Liouville manifold is also known as the {\em Giroux domain} in the literature; see for example \cite{MNW13} and more recently \cite{Gir17}.
\end{remark}

\subsection{Legendrian isotopy and Lagrangian cylinders} \label{subsec:isotopy_to_cylinder}

In this subsection we discuss the theory of Lagrangian cobordisms induced by a Legendrian isotopy following \cite[Appendix A]{Ek08} and \cite[Section 6.1]{EHK16}. Also see \cite[Section 4.2.3]{EG98} for a different approach to relating Legendrian isotopy and Lagrangian cobordism.

Let $(M,\xi)$ be a contact manifold and choose a contact form $\alpha$ such that $\xi = \ker \alpha$. Consider the symplectization $(\R \times M, d\lambda)$, where $\lambda = e^s \alpha$ and $s$ is the $\R$-coordinate. A Lagrangian submanifold $L \subset \R \times M$ is said to have \emph{cylindrical ends} if there exist (possibly empty) Legendrian submanifolds $\Lambda_{\pm} \subset M$ and large $C \gg 0$ such that
\begin{gather} \label{eqn: ends}
L \cap ([C, \infty) \times M) = [C, \infty) \times \Lambda_+,\\
\nonumber L \cap ((-\infty, -C] \times M) = (-\infty, -C] \times \Lambda_-.
\end{gather}
We say that $L$ is a {\em Lagrangian cobordism from $\Lambda_+$ to $\Lambda_-$} if it has cylindrical ends of the form Equation~\eqref{eqn: ends} for $C\gg 0$.

Consider a smooth Legendrian isotopy $\gamma_{\tau}: \Lambda \to M, \tau \in [0,1]$, i.e., $\Lambda_{\tau} = \gamma_{\tau} (\Lambda)$ is a Legendrian submanifold for all $\tau \in [0,1]$. Assume that $\Lambda$ is compact. We ``reparametrize'' $[0,1]$ by a map $f: \R \to [0,1]$ with small nonnegative derivative such that $f(s) \equiv 0$ for $s \leq -C$ and $f(s) \equiv 1$ for $s \geq C$. Consider the trace
	\begin{equation*}
		\Gamma: \R \times \Lambda \to \R \times M, \qquad (s,x) \mapsto (s, \gamma_{f(s)} (x) )
	\end{equation*}
of the isotopy $\gamma_{f(s)}$. Clearly
	\begin{equation*}
		\Gamma( [C, \infty) \times \Lambda) = [C, \infty) \times \Lambda_1, \qquad
		\Gamma( (-\infty, -C] \times \Lambda ) = (\infty, -C] \times \Lambda_0.
	\end{equation*}

The following result is standard (cf.\ \cite[Lemma 6.1]{EHK16}, for example).

\begin{prop} \label{prop:isotopy_to_cylinder}
	For any $\epsilon>0$, there exist $\delta, K>0$ such that if $\sup_{s \in \R} f'(s) < \delta$ and $C>K$, then there is an exact Lagrangian cobordism from $\Lambda_1$ to $\Lambda_0$ which is $\epsilon$-close (in the $C^0$-metric) to the image of $\Gamma$.
\end{prop}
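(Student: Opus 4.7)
The plan is to construct the Lagrangian cobordism as a graphical perturbation of the trace $\Gamma$ in the symplectization direction, driven by an ambient contact isotopy extending $\gamma_\tau$.

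First, I would compute the obstruction to $\Gamma$ being Lagrangian. The Legendrian condition on each slice $\gamma_\tau$ forces the pullback of $\alpha$ under $(s,x)\mapsto \gamma_{f(s)}(x)$ to reduce to $f'(s)\,h(f(s),x)\,ds$, where $h(\tau,x) := \alpha_{\gamma_\tau(x)}(\partial_\tau \gamma_\tau(x))$ is the Legendrian Hamiltonian of the isotopy. Consequently
\[
\Gamma^{*}\lambda \;=\; e^{s}\, f'(s)\, h(f(s),x)\, ds,
\]
which vanishes identically on the cylindrical ends (where $f'\equiv 0$) and has $C^{0}$-size controlled by $\delta$ in between. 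In particular $\Gamma^{*}d\lambda$ is of size $O(\delta)$, so the only failure of the Lagrangian condition is proportional to the reparametrization speed.

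Second, I would extend $\gamma_\tau$ to a compactly supported ambient contact isotopy $\phi_\tau\colon M\to M$ (possible by the standard contact isotopy extension theorem together with compactness of $\Lambda$), and lift it to the Liouville-preserving symplectic isotopy $\tilde\phi_\tau(s,x) = (s-g_\tau(x),\phi_\tau(x))$, where $e^{g_\tau}\alpha = \phi_\tau^{*}\alpha$. The candidate cobordism is $L_0 := \tilde\Gamma(\R\times\Lambda_0)$ with $\tilde\Gamma(s,x) := \tilde\phi_{f(s)}(s,x)$; by design $L_0$ has the correct cylindrical ends and is pointwise within $\|g_{\tau}\|_{C^{0}}$ of $\Gamma(\R\times\Lambda)$ in the $s$-direction. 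Because $\tilde\phi_\tau$ is Liouville-preserving for each fixed $\tau$, the pullback $\tilde\Gamma^{*}\lambda$ again has only a $ds$-component, now proportional to $f'(s)$ and to the contact Hamiltonian $H_{f(s)}=\alpha(X_{f(s)})$ of $\phi_\tau$.

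Third, I would upgrade $L_0$ to an honest exact Lagrangian by a further graphical shift of the $s$-coordinate by a small function $v(s,x)$. The Lagrangian condition reduces to a first-order transport equation for $v$ whose forcing is proportional to $e^{s} f'(s) H_{f(s)}(\phi_{f(s)}(x))$; integrating from $s=-\infty$ yields the unique solution vanishing on the lower end, and a Gronwall-type estimate bounds $\|v\|_{C^{0}}$ by $K\delta$ for a constant $K$ depending only on the ambient extension. Choosing $\delta < \epsilon/K$ and $C > K$ makes $v$ compactly supported in $[-C,C]\times\Lambda$ and small enough that the resulting exact Lagrangian cobordism is $\epsilon$-close to $\Gamma(\R\times\Lambda)$ with prescribed ends $\Lambda_0$ and $\Lambda_1$.

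The main obstacle I expect is the compatibility between $C^{0}$-smallness of the correction and honest exactness of the final primitive. A naive pointwise ansatz that shifts $\Gamma$ itself in the $s$-direction breaks down precisely at zeros of the Legendrian Hamiltonian $h$, where it would force the correction to be $O(1)$ even for $\delta\ll 1$; extending to an ambient contact Hamiltonian restores the geometric slack needed to solve the transport equation globally with a $C^{0}$-small solution. One must then verify that the primitive $v$ is not only locally defined but extends smoothly over $\R\times\Lambda$ and is compactly supported, so that the output is genuinely an exact Lagrangian cobordism rather than merely a closed one. The hypotheses $\sup f'<\delta$ and $C>K$ are precisely what drive these estimates through.
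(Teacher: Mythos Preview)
Your Steps~1 and~2 are set up correctly: the pullback $\Gamma^*\lambda$ (and equally $\tilde\Gamma^*\lambda$ after passing to an ambient contact isotopy) is of the form $F(s,x)\,ds$ with $F = e^{\,s-g_{f(s)}(x)}f'(s)\,H_{f(s)}(\phi_{f(s)}(x))$, and is $O(\delta)$. The problem is Step~3. A further shift of the $s$-coordinate by $v(s,x)$ sends the map to $(s+v,\phi_{f(s)}(x))$; since $\alpha$ vanishes on the Legendrian slices, the pullback of $\lambda$ is still purely a $ds$-form, namely $e^{\,v}F\,ds$. Closedness (the Lagrangian condition) therefore demands that $e^{v}F$ be independent of $x$, i.e.\ $v(s,x) = -\log|H_{f(s)}(\phi_{f(s)}(x))| + c(s)$, which blows up exactly at the zeros of $H$. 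Passing to an ambient contact Hamiltonian does not help: its restriction to $\Lambda_\tau = \phi_\tau(\Lambda_0)$ is $\alpha(X_\tau)|_{\Lambda_\tau}$, which is the same Legendrian Hamiltonian $h$ you started with, with the same zeros. No ``transport equation'' arises from an $s$-shift alone, and no $C^0$-small $v$ can make the image Lagrangian in general.

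The paper's fix is different and essentially one line: after subdividing the isotopy into short pieces one may take $M=\R_z\times T^*\Lambda$ with $\alpha_0=dz-\beta$, and then perturb the trace \emph{in the Reeb direction inside $M$} (not in $s$), setting
\[
\widetilde\Gamma(s,x)=\bigl(s,\;z(\gamma_{f(s)}(x))+H_s(x),\;\pi(\gamma_{f(s)}(x))\bigr),\qquad H_s(x)=\alpha_0(\partial_s\gamma_{f(s)}(x)).
\]
A direct computation gives $\widetilde\Gamma^*\lambda = d(e^{s}H_s)$, so the image is exact Lagrangian with primitive $e^sH_s$, which is $O(\delta)$ and vanishes on the ends. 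The conceptual point is that the obstruction lives in the ``contact'' (Reeb) direction of $M$, not in the symplectization direction; moving in $s$ only rescales the Liouville form and cannot cancel the $x$-dependence of $F$, whereas a Reeb shift by $H_s$ produces exactly the missing $d_xH_s$ term needed to make the pullback exact.
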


\begin{proof}[Sketch of proof]
	By subdividing the Legendrian isotopy $\gamma_{\tau}$ into sufficiently many short pieces, we may assume without loss of generality that $M = \R \times T^\ast \Lambda$ equipped with the standard contact form $\alpha_0 = dz - \beta$, where $z$ is the coordinate on $\R$ and $\beta$ is the tautological $1$-form on $T^\ast \Lambda$. Define $H_s(x) = \alpha_0 (\p_s \gamma_{f(s)}(x))$. Let $\pi: \R \times T^\ast \Lambda \to T^\ast \Lambda$ be the projection onto the second factor. Then one can check that the perturbed trace
		\begin{equation*}
			\widetilde{\Gamma}: \R \times \Lambda \to \R \times \R \times T^\ast \Lambda, \quad (s,x) \to \left(s, z(\gamma_{f(s)}(x))+H_s(x), \pi(\gamma_{f(s)}(x)) \right)
		\end{equation*}
	of the isotopy $\gamma_{f(s)}$ has image which is exact Lagrangian in $(\R \times M, d(e^s\alpha_0))$. Moreover, if $\delta>0$ is sufficiently small, then $\widetilde{\Gamma}$ is $C^0$-close to $\Gamma$ and is an embedding since $\Gamma$ is an embedding. Finally it is clear that $\widetilde{\Gamma}$ coincides with $\Gamma$ when $|s| \geq C$ since $H_s$ vanishes there.
\end{proof}

\begin{remark}
It is clear from the proof of \autoref{prop:isotopy_to_cylinder} that, for sufficiently small $\epsilon>0$, the resulting Lagrangian cylinder depends, up to Hamiltonian isotopy, only on the Legendrian isotopy from $\Lambda_0$ to $\Lambda_1$.
\end{remark}

The following corollary will be useful in our construction of the bypass attachment in \autoref{subsec:construct_bypass_attachment}.

\begin{cor} \label{cor:isotop_Lagrangian_disk}
Let $V$ be a Liouville manifold and $D \subset V$ be a Lagrangian plane with a cylindrical end. Let $\overline{V}$ be the ideal compactification of $V$ defined in \autoref{subsec:Liouville_mfds}, and $\Lambda \subset \p \overline{V}$ be the Legendrian sphere such that $\Lambda = \p \overline{D}$. Then for any Legendrian isotopy $\Lambda_\tau$, $\tau \in [0,1]$, in $\p \overline{V}$ with $\Lambda = \Lambda_0$, there is a Lagrangian plane $D_1 \subset V$ with a cylindrical end and $\Lambda_1 = \p \overline{D}_1$, which is unique up to compactly supported Hamiltonian isotopy.
\end{cor}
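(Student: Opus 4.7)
The plan is to apply \autoref{prop:isotopy_to_cylinder} directly inside the cylindrical end of $V$ and splice the resulting Lagrangian trace into $D$, leaving $D$ unchanged on a compact core. First I would identify the cylindrical end of $V$ with $([c,\infty)\times Y, d(e^s\alpha))$ where $Y = \p V^c$ and $\alpha = \lambda|_Y$, so that under this identification $D$ takes the standard form $[c,\infty) \times \Lambda$ on the end.

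Next I would pick $c < C < C'$ with $C$ large, together with a reparametrization $f : \R \to [0,1]$ with $f \equiv 0$ for $s \leq C$, $f \equiv 1$ for $s \geq C'$, and $\sup_s f'(s)$ small enough for \autoref{prop:isotopy_to_cylinder} to apply. The proposition, applied to the isotopy $\Lambda_\tau$ inside $[C, C'] \times Y$ viewed as a slab of the symplectization $\R \times Y$, yields an exact Lagrangian cobordism $L \subset [C, C'] \times Y$ from $\Lambda_1$ at $s = C'$ to $\Lambda = \Lambda_0$ at $s = C$ which is cylindrical near both ends. Then I would define $D_1$ as the union
\[
\bigl( D \cap (V^c \cup [c,C] \times \Lambda) \bigr) \cup L \cup \bigl( [C', \infty) \times \Lambda_1 \bigr).
\]
Cylindricality of $L$ near $s = C$ and $s = C'$ makes the gluing smooth, and exactness of each piece together with the matching Legendrian boundary conditions produces an exact Lagrangian plane with cylindrical end asymptotic to $\Lambda_1$.

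For uniqueness, suppose $D_1$ and $D_1'$ are two such planes obtained from possibly different splicing data $(C, C', f)$ and $(\widetilde{C}, \widetilde{C}', \widetilde{f})$. After enlarging $C$ and $\widetilde{C}$, both planes coincide with $D$ on $V^c$ and with $[s_0, \infty) \times \Lambda_1$ for some large $s_0$, so any discrepancy is confined to a compact annular region of the symplectization. The remark following \autoref{prop:isotopy_to_cylinder} supplies a Hamiltonian isotopy between the two cobordism pieces inside the symplectization which depends only on the Legendrian isotopy $\Lambda_\tau$; since both pieces already agree with honest cylinders outside a compact set, the generating Hamiltonian can be cut off to be compactly supported in $V$ and extended by zero elsewhere, giving the desired compactly supported Hamiltonian isotopy from $D_1$ to $D_1'$.

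The place where I expect the argument to be most delicate is precisely the uniqueness statement with \emph{compactly supported} Hamiltonian isotopy. One must check that when interpolating between two different choices of $(C,C',f)$ the family of Lagrangians stays embedded and that the primitive of $\lambda$ on the parametric Lagrangian can be chosen to differ by a compactly supported function; this is essentially a Lagrangian neighborhood theorem argument together with the fact that both $D_1$ and $D_1'$ agree with $[s_0,\infty)\times\Lambda_1$ at infinity, which forces the relative primitive to be constant near the cylindrical end and hence modifiable to have compact support.
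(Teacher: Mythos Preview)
Your proposal is correct and follows essentially the same approach as the paper: identify the cylindrical end of $V$ with a half-symplectization, apply \autoref{prop:isotopy_to_cylinder} to the Legendrian isotopy to produce an exact Lagrangian cobordism in that end, and glue it to the compact core $D \cap V^c$. The paper's proof is in fact more terse than yours and does not spell out the uniqueness argument, so your added discussion of compactly supported Hamiltonian isotopy (invoking the remark after \autoref{prop:isotopy_to_cylinder}) is a welcome elaboration rather than a deviation.
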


\begin{proof}
Recall from \autoref{subsec:Liouville_mfds} the decomposition $V = V^c \cup ([0,\infty) \times Y)$, where $Y$ is canonically contactomorphic to $\p \overline{V}$. Assume without loss of generality that $D \cap ([0,\infty) \times Y) = [0,\infty) \times \Lambda$ since $D$ has cylindrical end. Now we apply \autoref{prop:isotopy_to_cylinder} to the Legendrian isotopy $\Lambda_\tau$ to obtain an exact Lagrangian cylinder $L \subset [0,\infty) \times Y$ such that $L \cap ([C,\infty) \times Y) = [C,\infty) \times \Lambda_1$ for $C \gg 0$ and $L \cap ([0,1] \times Y) = [0,1] \times \Lambda$. Then we obtain the Lagrangian plane $D_1 = D^c \cup L$ with cylindrical end as desired, where $D^c = D \cap V^c$.
\end{proof}

\subsection{Convex hypersurfaces} \label{subsec:convex_surface}

In this subsection we review the theory of convex hypersurfaces in contact manifolds following Giroux \cite{Gir91}.  We use the notation $(-\epsilon,\epsilon)_t$, $\R_t$, $\R^2_{x,y}$ to indicate $(-\epsilon,\epsilon)$, $\R$, $\R^2$ with coordinates $t$, $t$, and $(x,y)$.

Let $(M,\xi)$ be a contact manifold with contact form $\alpha$. A hypersurface $\Sigma \subset M$ is {\em convex}\footnote{Unfortunately, we will use the notion of convexity both in the contact and the symplectic settings (cf.\ \autoref{subsec:Liouville_mfds}), and they are not directly related. In what follows it will be clear from the context which notion of convexity we are talking about.} if there exists a contact vector field $v$ defined in a neighborhood of $\Sigma$ which is everywhere transverse to $\Sigma$. Define the $v$-{\em dividing set}
$$\Gamma_{\Sigma,v}=\{\alpha(v)=0\} \subset \Sigma,$$
which is naturally a codimension 2 contact submanifold in $M$ with contact form $\alpha|_{\Gamma_{\Sigma,v}}$. Note that the set of transverse contact vector fields is contractible and hence the isotopy class of $\Gamma_{\Sigma,v} \subset \Sigma$ is independent of the choice of $v$.
We also have a decomposition
$$\Sigma \setminus \Gamma_{\Sigma,v} = R_+(\Sigma,v) \cup R_-(\Sigma,v)$$
into positive and negative regions, where
$$R_+(\Sigma,v)=\{\alpha(v) > 0\} \quad \mbox{and} \quad R_-(\Sigma,v)=\{\alpha(v) < 0\}.$$
We suppress $\Sigma$ (resp.\ $v$) from the notation $\Gamma_{\Sigma,v}$ and $R_\pm(\Sigma,v)$ if $\Sigma$ is understood (resp.\ the particular choice of $v$ is understood or irrelevant).

The following lemma characterizes the symplectic geometry of $R_\pm \subset \Sigma$:

\begin{lemma} \label{lem:ideal_compactification}
Let $\Sigma$ be a closed convex hypersurface and $v$ be a transverse contact vector field. Then $\overline{R}_+ = \{\alpha(v) \geq 0 \}$ and $\overline{R}_- = \{\alpha(v) \leq 0 \}$ are ideal compactifications of Liouville manifolds.
\end{lemma}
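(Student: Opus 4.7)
The plan is to put the contact form in a convenient normal form near $\Sigma$, exhibit an explicit Liouville primitive on $R_\pm$ in terms of that normal form, and then verify cylindrical end behavior near $\Gamma_{\Sigma,v}$.

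First I would use the flow of the transverse contact vector field $v$ to identify a neighborhood of $\Sigma$ with $(-\epsilon,\epsilon)_t \times \Sigma$ in such a way that $v = \partial_t$ and the contact form $\alpha$ is $t$-invariant. Writing $u := \alpha(v) \in C^\infty(\Sigma)$ and $\beta := \alpha|_\Sigma \in \Omega^1(\Sigma)$, we get $\alpha = u\, dt + \beta$. Expanding $\alpha \wedge (d\alpha)^n = dt \wedge \big[u(d\beta)^n + n\,\beta \wedge du \wedge (d\beta)^{n-1}\big]$, the contact condition becomes the statement that
\[
\Omega_\Sigma := u(d\beta)^n + n\,\beta \wedge du \wedge (d\beta)^{n-1}
\]
is a volume form on $\Sigma$. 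In particular, on $\Gamma_{\Sigma,v} = \{u=0\}$ we get $\beta \wedge du \wedge (d\beta)^{n-1} \neq 0$, so $du|_{\Gamma} \neq 0$ and $\Gamma$ is a smooth codimension $1$ submanifold; the same expression shows $\beta|_{\Gamma}$ is a contact form on $\Gamma$, which I will call $\alpha_{\Gamma}$.

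Next I would define $\lambda_+ := \beta/u$ on $R_+$ (and $\lambda_- := -\beta/u$ on $R_-$). A direct computation using $(du \wedge \beta)^2 = 0$ gives
\[
(d\lambda_+)^n = \frac{1}{u^{n+1}}\,\Omega_\Sigma,
\]
so $d\lambda_+$ is symplectic on $R_+$, i.e., $(R_+,\lambda_+)$ is an exact symplectic manifold. The analogous statement holds for $R_-$.

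To identify the cylindrical end, I would choose a tubular neighborhood $\Gamma \times (-\delta,\delta)_s \subset \Sigma$ in which $u = s$ and write $\beta = \tilde{\alpha}_s + f\, ds$, where $\tilde{\alpha}_s$ is a smooth family of $1$-forms on $\Gamma$ with $\tilde{\alpha}_0 = \alpha_\Gamma$. Substituting $r = -\log s$ transforms the end $\{0 < s \leq s_0\}$ of $R_+$ into $\{r \geq -\log s_0\} \times \Gamma$, and
\[
\lambda_+ \;=\; e^r\,\tilde{\alpha}_{e^{-r}} \,-\, f(e^{-r},y)\, dr \;=\; e^r \alpha_\Gamma + O(1)
\]
as $r \to \infty$. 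A Moser-type argument applied to the linear interpolation between $\lambda_+$ and the standard cylindrical form $e^r \alpha_\Gamma$ on the end (the interpolation is a family of Liouville forms because the perturbation is exponentially dominated by the leading term) produces a symplectomorphism identifying a neighborhood of infinity in $R_+$ with $([C,\infty)_r \times \Gamma,\, e^r \alpha_\Gamma)$. This is the main technical step; the hard part is checking that the error terms are small enough that Moser's trick converges, which follows from the exponential growth of $e^r$ in the dominant term.

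With the cylindrical end in place, the Liouville vector field $X$ of $\lambda_+$ equals $\partial_r$ on the end, so it is complete and points outward along the hypersurfaces $\{u = \epsilon\}$ (equivalently $\{r = -\log\epsilon\}$); the compact exhaustion $V^k = \{u \geq 1/k\}$ then verifies convexity. Hence $(R_+, d\lambda_+, X)$ is a Liouville manifold of finite type, and its ideal compactification $V^c \cup ([0,\infty] \times \Gamma)$ is identified with $\overline{R}_+ = \{\alpha(v) \geq 0\}$ under the change of variables $r = -\log u$. The argument for $\overline{R}_-$ is identical with $\lambda_- = -\beta/u$.
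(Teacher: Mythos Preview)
Your argument is correct and uses the same Liouville primitive $\lambda_\pm=\beta/u$ as the paper, but the paper's proof is much shorter because it first normalizes the contact form near $\Gamma$: after choosing coordinates so that $u=\tau$ and $\beta=\beta_0$ is $\tau$-independent on a collar $\Gamma\times(-\epsilon,\epsilon)_\tau$, one has $\lambda_+=\beta_0/\tau$ exactly, the Liouville vector field is $-\tau\partial_\tau$ on the nose, and the substitution $r=-\log\tau$ gives the symplectization form $e^r\beta_0$ with no error terms and no Moser step. In effect the paper moves the Moser/Gray argument into the normalization of the contact germ near $\Gamma$, where it is a standard fact, rather than running it on the Liouville side.

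Two small points about your version. First, the Moser step is actually unnecessary even without normalizing: once you know $d\lambda_+$ is symplectic and the Liouville vector field $X$ is outward along $\{u=\epsilon\}$ and complete, the identity $\mathcal{L}_X\lambda_+=\lambda_+$ (which follows from $i_X\lambda_+=i_Xi_X\omega=0$) shows that the flow of $X$ identifies $\{0<u\le\epsilon\}$ with $[0,\infty)\times\{u=\epsilon\}$ carrying $\lambda_+$ to $e^s(\lambda_+|_{u=\epsilon})$ exactly. Second, your sentence ``the Liouville vector field $X$ of $\lambda_+$ equals $\partial_r$ on the end'' is not literally true after a symplectic Moser: that argument matches $d\lambda_+$ with $d(e^r\alpha_\Gamma)$ but not the primitives, so the pushed-forward $X$ is only $\partial_r+O(e^{-r})$. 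This still gives completeness, but if you want the finite-type condition for $\lambda_+$ itself you should either use the flow argument above or run Moser at the level of Liouville forms.
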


\begin{proof}
Identify a collar neighborhood of $\Sigma$ with $\Sigma \times (-\epsilon,\epsilon)_t$ such that the contact form can be written as $\alpha=fdt+\beta$, where $f \in C^\infty(\Sigma)$ and $\beta \in \Omega^1(\Sigma)$ are independent of $t$. Then $\p_t$ is a contact vector field which is everywhere transverse to $\Sigma = \Sigma \times \{0\}$. By definition the $\p_t$-dividing set $\Gamma$ is given by $\{f=0\}$ and $R_+=\{f > 0\}$ and $R_-=\{f < 0\}$. Now identify a collar neighborhood $N(\Gamma)$ of $\Gamma \subset \Sigma$ with $\Gamma \times (-\epsilon,\epsilon)_\tau$ such that $\Gamma$ is identified with $\Gamma \times \{0\}$. Without loss of generality we can arrange that:
\begin{enumerate}
\item[(N)] $f=\tau$ and $\beta=\beta_0$ on $N(\Gamma)$, where $\beta_0\in \Omega^1(\Gamma)$ is independent of $\tau$ and $t$.
\end{enumerate}

Define the Liouville forms on $R_\pm$ by $\lambda_\pm=\beta/f$, respectively. Then, on a neighborhood $\Gamma \times [0,\epsilon)$ of $\p \overline{R}_+ \subset \overline{R}_+$, the Liouville form is $\beta_0/\tau$ and the Liouville vector field is given by $-\tau\p_\tau$. This shows that $\p \overline{R}_+$ is indeed the ideal boundary of $R_+$. The same argument applies to $R_-$.
\end{proof}

\begin{cor}
The induced contact structure on each component of the dividing set $\Gamma$ is semi-fillable and in particular is tight, i.e., not overtwisted in the sense of \cite{BEM15}.
\end{cor}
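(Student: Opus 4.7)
The plan is to deduce the corollary essentially directly from \autoref{lem:ideal_compactification} together with the general principle that semi-filled contact manifolds cannot be BEM-overtwisted. I do not expect any genuinely hard step; the main point is simply to package the previous lemma into a fillability statement and then invoke the known obstruction.

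First I would unpack what \autoref{lem:ideal_compactification} says about a fixed connected component $\Gamma_i\subset\Gamma$. By the lemma, $\overline{R}_+$ is the ideal compactification of the Liouville manifold $(R_+,d\lambda_+)$, with $\lambda_+=\beta/f$, and $\p\overline{R}_+$ is contact-diffeomorphic to $\Gamma$ with contact form $\alpha|_\Gamma$. Under the cylindrical-end coordinate $s=-\log\tau$ coming from the normal form in (N), the cylindrical end of $R_+$ near $\Gamma$ is identified with $([0,\infty)\times\Gamma, e^s(\alpha|_\Gamma))$. Truncating this end at a finite height $s=c$ then produces a Liouville domain $W_+\subset R_+$ of finite type with $\p W_+$ contactomorphic to $(\Gamma,\ker\alpha|_\Gamma)$. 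In particular $(\Gamma_i,\ker\alpha|_{\Gamma_i})$ is a connected component of the contact boundary of the Liouville domain $W_+$, which is exactly the definition of being (Liouville) semi-fillable. The same argument applied to $\overline{R}_-$ gives a second filling on the other side, but either one suffices.

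Second I invoke the standard obstruction: a Liouville-semi-fillable contact manifold in any dimension is not overtwisted in the sense of \cite{BEM15}. In dimension $3$ this is the Eliashberg--Gromov theorem, and in higher dimensions it is a direct consequence of the main result of \cite{BEM15}, since a BEM-overtwisted disk in any boundary component of a Liouville domain already obstructs fillability (see also \cite{MNW13} for the related bLob obstruction). Applied to the component $\Gamma_i$ of $\p W_+$, this yields tightness of $(\Gamma_i,\ker\alpha|_{\Gamma_i})$.

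The only mild subtlety is to make sure that the notion of ``semi-fillable'' used here is the one to which the BEM obstruction applies, namely that $\Gamma_i$ is allowed to be just one component of $\p W_+$, with the remaining components being the other $\Gamma_j$. Since the BEM obstruction does not care about the other boundary components, this causes no issue and the corollary follows.
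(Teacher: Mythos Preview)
Your argument has the same overall shape as the paper's: semi-fillability is read off from \autoref{lem:ideal_compactification}, and then one invokes that semi-fillable contact manifolds cannot be overtwisted. The semi-fillability step is fine and matches the paper exactly.

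The one point that deserves tightening is your justification for ``semi-fillable $\Rightarrow$ not BEM-overtwisted.'' You attribute this to the main result of \cite{BEM15}, but that paper proves the $h$-principle for overtwisted structures, which does not by itself obstruct fillability. The paper instead routes the implication through the plastikstufe: a PS-overtwisted contact manifold is not semi-fillable by Niederkr\"uger~\cite{Nie06} (semi-positive case) and Albers--Hofer~\cite{AH09} (general case), and PS-overtwistedness is equivalent to BEM-overtwistedness by \cite{CMP15} and \cite{Hua17}. Your citation of \cite{MNW13} is in the right spirit but concerns bLobs rather than the BEM disk directly, so one still needs the equivalence step. The mathematical content of your proof is correct; only the attribution for the obstruction needs to be replaced by this two-step chain.
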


\begin{proof}
The semi-fillability is immediate. A PS-overtwisted contact structure is not semi-fillable by Niederkr\"uger~\cite[Theorem 1]{Nie06} (proved in the semi-positive case) and Albers-Hofer~\cite[Remark 1.2]{AH09} (proof indicated in the general case).  The equivalence of PS-overtwisted and BEM-overtwisted was proven in \cite{CM16} and \cite{Hua17}.
\end{proof}

\begin{remark}
Note that in $3$-dimensional contact geometry, the Liouville vector fields on $R_\pm$ defined above direct the characteristic foliation: Given a convex surface $\Sigma$ in a contact $3$-manifold, we pick a positive area form $\omega$ on $\Sigma$. Then the characteristic foliation is directed by a vector field $X$ given by $i_X \omega=\alpha|_\Sigma$ and $X$ always flows from $R_+$ to $R_-$.
\end{remark}

Let $\Sigma$ be a closed convex hypersurface with a neighborhood $\Sigma\times \R_t$ such that $\Sigma=\Sigma\times\{0\}$, $v=\bdry_t$ is a transverse contact vector field, and $\Gamma$ is the dividing set of $\Sigma$. Let $fdt+\beta$ be the contact form on $\Sigma\times \R_t$, such that $f\in C^\infty(\Sigma)$ and $\beta\in \Omega^1(\Sigma)$ are independent of $t$ and (N) holds, and let $\lambda_\pm=\beta/f$ be the Liouville form on $R_\pm$.

\begin{lemma}[Flexibility] \label{lemma: flexibility}
Given another contact form $fdt+\beta'$ on $\Sigma\times \R$ such that
\be
\item[(i)] $\beta'\in \Omega^1(\Sigma)$ is independent of $t$ and agrees with $\beta$ near $\Gamma$, and
\item[(ii)] $\beta'$ is exact deformation equivalent to $\beta$,
\ee
there exists a $1$-parameter family of diffeomorphisms $\phi_s: \Sigma\times\R\stackrel\sim\to \Sigma\times \R$, $s\in[0,1]$, such that
\be
\item $\phi_0=id$;
\item $\phi_1$ takes $\ker (fdt+\beta')$ to $\ker (fdt +\beta)$;
\item $\phi_s$ commutes with $\bdry_t$ (and hence $\phi_s(\Sigma\times\{0\})$ is transverse to $\bdry_t$); and
\item $\phi_s=id$ near $\Gamma\times \R$.
\ee
\end{lemma}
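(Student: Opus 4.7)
The plan is a $\partial_t$-equivariant Gray stability argument. Combining hypotheses (i) and (ii) gives a smooth path of $t$-independent 1-forms $\beta_s$ on $\Sigma$, $s\in[0,1]$, with $\beta_0=\beta'$, $\beta_1=\beta$ (I reverse the path relative to the stated hypothesis so the resulting diffeomorphism goes in the required direction), $\beta_s\equiv\beta$ on a fixed neighborhood of $\Gamma$ for every $s$, and such that $\alpha_s:=f\,dt+\beta_s$ is a contact form on $\Sigma\times\R$ for each $s$. Producing this path is the real content of hypothesis (ii): the naive linear interpolation $(1-s)\beta+s\beta'$ need not leave $\beta_s/f$ Liouville on $R_\pm$, so $\alpha_s$ may fail to be contact along the way. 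I treat the existence of $\alpha_s$ as the given input; what follows is the standard Moser argument.

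Next, run the Moser trick on $\alpha_s$. Let $R_s$ denote the Reeb vector field of $\alpha_s$ and set $\mu_s:=\dot{\alpha}_s(R_s)$. Because $d\alpha_s$ is non-degenerate on $\ker\alpha_s$, there is a unique vector field
$$X_s\in \ker\alpha_s\quad\text{with}\quad i_{X_s}d\alpha_s=\mu_s\alpha_s-\dot{\alpha}_s.$$
A direct computation gives
$$\tfrac{d}{ds}(\phi_s^*\alpha_s)=\phi_s^*\bigl(i_{X_s}d\alpha_s+\dot{\alpha}_s\bigr)=\phi_s^*(\mu_s\alpha_s)=(\mu_s\circ\phi_s)\,\phi_s^*\alpha_s,$$
so the flow $\phi_s$ of $X_s$ satisfies $\phi_s^*\alpha_s=g_s\alpha_0$ for a positive function $g_s$ with $g_0\equiv 1$. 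In particular $\phi_1$ carries $\ker(f\,dt+\beta')$ onto $\ker(f\,dt+\beta)$, giving (2); (1) is just the initial condition of the flow. Completeness of the flow on the non-compact $\Sigma\times\R$ is not an issue, since $\Sigma$ is closed and, as explained next, the $\partial_t$-component of $X_s$ is $t$-independent and bounded.

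The remaining properties follow from $t$-independence of all the inputs. Since $f$, $\beta_s$, and $\dot{\beta}_s$ are independent of $t$, so are $\alpha_s$, $d\alpha_s$, $\dot{\alpha}_s$, and hence $R_s$, $\mu_s$, and the uniquely determined $X_s$. Thus $[\partial_t,X_s]=0$, so $\phi_s$ commutes with $\partial_t$, which is (3); transversality of $\phi_s(\Sigma\times\{0\})$ to $\partial_t$ is then automatic, because any $\partial_t$-equivariant diffeomorphism has the form $(x,t)\mapsto(\psi_s(x),t+g_s(x))$ and so sends $\Sigma\times\{0\}$ to the graph of $g_s\circ\psi_s^{-1}$. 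On the neighborhood of $\Gamma$ where $\beta_s\equiv\beta$ we have $\dot{\alpha}_s\equiv 0$, forcing $\mu_s\equiv 0$ and then $X_s\equiv 0$ by non-degeneracy of $d\alpha_s|_{\ker\alpha_s}$; hence $\phi_s=\operatorname{id}$ near $\Gamma\times\R$, which is (4). As already flagged, the one genuinely subtle step in this scheme is the production of the contact path $\alpha_s$ from hypothesis (ii); the Moser flow built on top of it is entirely routine.
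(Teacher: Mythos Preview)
Your proof is correct and is precisely the Moser/Gray stability argument the paper has in mind; the paper's own proof consists of the single line ``Immediate consequence of Moser technique.'' Your concern about producing the contact path $\alpha_s$ is unfounded: ``exact deformation equivalent'' in (ii) by definition means such a path $\beta_s$ exists, so there is nothing further to arrange.
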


\begin{proof}
Immediate consequence of Moser technique.
\end{proof}

\cb

\begin{lemma}[Legendrian realization]\label{lemma: Legendrian realization}
Given an open Lagrangian disk $D_+\subset (R_+,d\lambda_+)$ that has a cylindrical end over a Legendrian sphere $\Lambda_+\subset \Gamma=\bdry R_+$, there exists a convex surface $\Sigma'\subset \Sigma\times \R$ such that:
\begin{enumerate}
\item $\Sigma'$ is graphical over $\Sigma\times\{0\}$ and hence is transverse to $v$;
\item $\Sigma'$ agrees with $\Sigma\times\{0\}$ on a neighborhood of $\Gamma$; and
\item the lift $\widetilde D_+$ of $D_+$ to $\Sigma'$ is Legendrian.
\end{enumerate}
\end{lemma}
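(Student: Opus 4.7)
The plan is to construct $\Sigma'$ as a graph $\{t=g(x)\}$ over $\Sigma\times\{0\}$ in $\Sigma\times\R_t$ for an appropriately chosen $g\in C^\infty(\Sigma)$. Any such graph is automatically transverse to $v=\p_t$, giving (1), and agrees with $\Sigma\times\{0\}$ wherever $g$ vanishes, giving (2) as soon as $g$ is supported away from $\Gamma$. A tangent vector to $\widetilde{D}_+$ at $(x,g(x))$ has the form $(X,dg_x(X)\p_t)$ for $X\in T_xD_+$, and
\[
(fdt+\beta)(X,dg_x(X)\p_t) = f(x)\, dg_x(X) + \beta_x(X),
\]
so (3) reduces to the condition $dg|_{D_+} = -\beta|_{D_+}/f = -\lambda_+|_{D_+}$; equivalently, $-g|_{D_+}$ must be a primitive of $\lambda_+|_{D_+}$.

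Since $D_+$ is Lagrangian, $\lambda_+|_{D_+}$ is closed, and since $D_+$ is contractible it is exact: write $\lambda_+|_{D_+}=dh$ for some $h\in C^\infty(D_+)$. To make $g=-h$ compatible with the requirement $g\equiv 0$ near $\Gamma$, I need $h$ to vanish near the cylindrical end of $D_+$. Using the normal form (N) from the proof of \autoref{lem:ideal_compactification}, the end is identified with $\Lambda_+\times (0,\epsilon')_\tau\subset \Gamma\times(0,\epsilon)_\tau\subset R_+$, and on this collar $\lambda_+=\beta_0/\tau$. Because $\beta_0$ is pulled back from $\Gamma$ it annihilates $\p_\tau$; because $\Lambda_+$ is Legendrian in $(\Gamma,\ker\beta_0|_\Gamma)$ it annihilates $T\Lambda_+$. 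Hence $\lambda_+|_{D_+}\equiv 0$ on the cylindrical end, $h$ is locally constant there, and I normalize so that $h\equiv 0$ on $\Lambda_+\times (0,\epsilon'')$ for some $0<\epsilon''\leq \epsilon'$.

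Next I extend $-h$ from $D_+$ to a smooth function $g$ on $\Sigma$ supported in $R_+$ and vanishing on a neighborhood of $\Gamma$. Concretely: extend $-h$ smoothly to a tubular neighborhood $U\supset D_+$ in $R_+$ arranging the extension to be $0$ on $U\cap\{\tau<\epsilon''\}$, multiply by a bump function equal to $1$ on $D_+$ and supported in $U$, and extend by zero across $\Gamma$. Then $\Sigma'=\{t=g(x)\}$ satisfies (1), (2), (3) by the computations above.

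The only substantive point is the vanishing of $\lambda_+|_{D_+}$ on the cylindrical end, which is precisely what lets the primitive $h$ be chosen to match the prescribed boundary behavior; the remainder is a routine tubular neighborhood extension and cutoff, with no new ideas beyond what is already available in \autoref{lem:ideal_compactification}.
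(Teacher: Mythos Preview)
Your proof is correct and follows essentially the same approach as the paper's: both construct $\Sigma'$ as the graph of a function $g$ obtained by taking minus a primitive of $\lambda_+|_{D_+}$ (the paper writes this as $t=-\int_\gamma\lambda_+$), normalize it to vanish on the cylindrical end using that $\lambda_+|_{D_+}=0$ there, and then extend graphically to all of $\Sigma$. Your write-up is simply more explicit about the computation forcing $dg|_{D_+}=-\lambda_+|_{D_+}$ and about why $\lambda_+$ vanishes on the end.
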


By a lift $\widetilde D_+$ we mean that $\widetilde D_+$ is taken to $D_+$ diffeomorphically under the projection $\pi:\Sigma\times \R\to \Sigma$.

\begin{proof}
Note that $D_+$, viewed as a submanifold of $\Sigma\times \R$, is already Legendrian on the cylindrical end.  Let $N(\Gamma)$ be collar neighborhood of $\Gamma\subset \Sigma\times\{0\}$ on which $D_+\cap N(\Gamma)$ is cylindrical.  Writing $\widetilde D_+|_{U}$ for the lift of $D_+$ over the subset $U$, we can set $\widetilde D_+|_{D_+\cap N(\Gamma)}=D_+\cap N(\Gamma)$.

Next we lift the exact Lagrangian $D_+':=D_+\cap (R_+\setminus N(\Gamma))$ with respect to $d\lambda_+$ to a Legendrian $\widetilde D_+'$ with respect to
$$dt+\beta/f=dt+ \lambda_+,$$
so that $\widetilde D_+'$ agrees with $\widetilde D_+|_{D_+\cap N(\Gamma)}$. Fix $p\in \bdry D_+'$. For any $q\in D_+'$, let $\gamma:[0,1]\to D_+'$ be a path from $p$ to $q$.  Then we define
$$\widetilde D'_+|_{\{q\}}= \{(-\textstyle\int_\gamma\lambda_+,q)\}.$$
The definition is independent of the choice of $\gamma$ since $D_+$ is Lagrangian with respect to $d\lambda_+$.

The lemma follows from observing that $\widetilde D_+$ can be extended to $\Sigma'$ satisfying (1) and (2).
\end{proof}

The following lemma, whose proof is left to the reader, is useful for adjusting boundaries of Legendrian submanifolds:

\begin{lemma}[Contact parallel transport]\label{lemma: contact parallel transport}
Let $\R^2\times M^{2n-1}$ be a contact manifold with contact form $\lambda+\beta$, where $M^{2n-1}$ is a closed contact manifold with contact form $\beta$ and $\lambda$ is a Liouville form on $\R^2$, and let $\phi_t$, $t\in \R$, be the flow on $M$ corresponding to the Reeb vector field for $\beta$.  If $\Lambda\subset M$ is a Legendrian submanifold and $\gamma:[0,1]\to \R^2$ is a path, then there exists a Legendrian embedding
$$\phi: [0,1]\times \Lambda \to \R^2\times M, \quad (s,x)\mapsto (\gamma(s), \phi_{-\int_0^s \gamma^*\lambda} (x)).$$
\end{lemma}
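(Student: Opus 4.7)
The plan is a direct computation of $\phi^*(\lambda+\beta)$ along the two obvious classes of tangent vectors on $[0,1]\times\Lambda$. Since $\Lambda$ is Legendrian in $M$ with $\dim\Lambda=n-1$, the domain $[0,1]\times\Lambda$ has dimension $n$, which is the correct dimension for a Legendrian submanifold of the $(2n+1)$-dimensional contact manifold $\R^2\times M$. So it suffices to check that the pullback $1$-form vanishes and that $\phi$ is an immersion (and an embedding once one assumes $\gamma$ is embedded, which is the intended setting in the applications).

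For the $\Lambda$-direction: at $(s,x)$, any $X\in T_x\Lambda$ is pushed forward to $(\phi_{t(s)})_*X\in TM$, where $t(s):=-\int_0^s\gamma^*\lambda$. Then
\[
(\lambda+\beta)\bigl(\phi_*X\bigr)=\beta\bigl((\phi_{t(s)})_*X\bigr)=\bigl(\phi_{t(s)}^*\beta\bigr)(X)=\beta(X)=0,
\]
where the middle equality uses $\mathcal{L}_{R_\beta}\beta=d(\beta(R_\beta))+i_{R_\beta}d\beta=0$ so that the Reeb flow preserves $\beta$, and the last uses that $\Lambda$ is Legendrian.

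For the $\partial_s$-direction: by the chain rule,
\[
\phi_*\partial_s=\gamma'(s)+t'(s)\,R_\beta\big|_{\phi_{t(s)}(x)}=\gamma'(s)-\lambda(\gamma'(s))\,R_\beta.
\]
Pairing with $\lambda+\beta$ and using $\beta(R_\beta)=1$ while $\lambda$ vanishes on $TM$ and $\beta$ vanishes on $T\R^2$ gives
\[
(\lambda+\beta)(\phi_*\partial_s)=\lambda(\gamma'(s))-\lambda(\gamma'(s))=0.
\]
Hence $\phi^*(\lambda+\beta)\equiv 0$.

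It remains to verify that $\phi$ is an embedding. The formula shows that the $M$-component at fixed $s$ is the diffeomorphism $\phi_{t(s)}$ applied to $\Lambda$, hence injective on each slice, and the $\R^2$-component is $\gamma(s)$, so injectivity of $\phi$ follows from injectivity of $\gamma$ (the case intended by the applications, e.g.\ in the bypass and parallel-transport arguments); for general immersed paths one obtains a Legendrian immersion by the same computation. The immersion property is immediate since $\phi_*\partial_s$ has nonzero $\R^2$-component $\gamma'(s)$ whenever $\gamma$ is regular, and the $T\Lambda$-directions map isomorphically into $TM$ at each slice. The only potential obstacle is the embeddedness in the presence of self-intersections of $\gamma$, but this is not a genuine difficulty: one either assumes $\gamma$ embedded, or, in the Reeb-nonperiodic regime, uses that $t(s_1)\neq t(s_2)$ whenever $\gamma(s_1)=\gamma(s_2)$ to conclude injectivity on $[0,1]\times\Lambda$.
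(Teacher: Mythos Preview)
Your proof is correct and is precisely the direct verification the paper has in mind; indeed, the paper gives no argument and simply states that the proof is left to the reader. Your observation that the embedding claim requires $\gamma$ to be an embedded regular path is accurate and worth noting, since the lemma as stated omits this hypothesis.
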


\cb
In higher-dimensional contact topology, it is often important to understand the sizes of neighborhoods of submanifolds in a given contact manifold; see for example \cite{NP2010,CMP15}. In the following we discuss the neighborhood size of convex hypersurfaces, as well as the dividing sets. Both results will be crucial in our later study of bypass attachments.

\begin{lemma} \label{lemma: hypersurface_nbhd_size}
Let $\Sigma = \Sigma \times \{0\}$ be a convex hypersurface in  $\Sigma \times \R$ equipped with an $\R$-invariant contact structure. Then, for all $a > b > 0$, there exists a contact embedding $i: \Sigma \times (-a, a) \to \Sigma \times (-b, b)$ such that $i|_{\Sigma \times \{0\}} = \text{\em id}_{\Sigma \times \{0\}}$. In other words, any convex hypersurface has an arbitrarily large invariant neighborhood.
\end{lemma}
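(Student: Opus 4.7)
The plan is to factor the required embedding as $i = \Psi\circ\Phi$, where $\Psi(x,t) = (x,(b/a)t)$ is the linear dilation (a diffeomorphism $\Sigma\times(-a,a)\to\Sigma\times(-b,b)$ fixing $\Sigma\times\{0\}$ pointwise) and $\Phi$ is a self-contactomorphism of $\Sigma\times\R$ also fixing $\Sigma\times\{0\}$ pointwise. Since $\Psi^*(fdt+\beta) = (b/a)fdt+\beta$, the map $\Psi$ is automatically a contactomorphism from $(\Sigma\times(-a,a),\ker((b/a)fdt+\beta))$ onto $(\Sigma\times(-b,b),\xi)$, so it suffices to produce a $\Phi$ taking $\xi = \ker(fdt+\beta)$ to $\ker((b/a)fdt+\beta)$ and satisfying $\Phi(\Sigma\times(-a,a))\subset\Sigma\times(-a,a)$.

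For $\Phi$ I would apply Gray stability to the 1-parameter family of contact forms $\alpha_\tau := (1-\tau(1-b/a))fdt+\beta$, $\tau\in[0,1]$, which interpolates between $\alpha$ and $(b/a)fdt+\beta$; each $\alpha_\tau$ is contact because a direct expansion yields $\alpha_\tau\wedge(d\alpha_\tau)^n = (1-\tau(1-b/a))\,\alpha\wedge(d\alpha)^n$. The standard horizontal Gray vector field $X_\tau\in\ker\alpha_\tau$ does not vanish on $\Sigma\times\{0\}$, but I can correct it by adding a contact vector field $Z_\tau$ of $\alpha_\tau$ whose contact Hamiltonian $H_{Z_\tau}$ satisfies $H_{Z_\tau}|_{\Sigma\times\{0\}}\equiv 0$ and $\partial_t H_{Z_\tau}|_{\Sigma\times\{0\}} = (1-b/a)f$. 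A pointwise linear-algebra computation in the contact plane at $(x,0)$ with $f(x)\neq 0$ shows that both $\iota_{Y_{Z_\tau}}(d\alpha_\tau|_{\ker\alpha_\tau})$ and $-\iota_{X_\tau}(d\alpha_\tau|_{\ker\alpha_\tau})$ equal the same multiple of $\beta$, so $Z_\tau|_{\Sigma\times\{0\}} = -X_\tau|_{\Sigma\times\{0\}}$ and the modified Gray field $\widetilde X_\tau := X_\tau+Z_\tau$ vanishes on $\Sigma\times\{0\}$ while still solving the Gray equation. A concrete choice is $H_{Z_\tau}(x,t) := (1-b/a)f(x)\,t\,\chi(t)$ for a bump function $\chi$ with $\chi(0)=1$; the resulting flow $\Phi := \Phi_1$ is then a self-contactomorphism of $\Sigma\times\R$ fixing $\Sigma\times\{0\}$ pointwise and taking $\xi$ to $\ker((b/a)fdt+\beta)$.

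The hard part will be to guarantee $\Phi(\Sigma\times(-a,a))\subset \Sigma\times(-a,a)$. I would handle this by further replacing the family $\alpha_\tau$ with its $t$-truncation $\alpha_\tau^\mathrm{cut} := (1-\tau(1-b/a)\chi(t))fdt+\beta$, where $\chi$ is supported in $(-a,a)$ and equal to $1$ on a subinterval containing $(-b,b)$; the same expansion gives $\alpha_\tau^\mathrm{cut}\wedge(d\alpha_\tau^\mathrm{cut})^n = (1-\tau(1-b/a)\chi(t))\,\alpha\wedge(d\alpha)^n$, which remains positive, so $\alpha_\tau^\mathrm{cut}$ is again contact. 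Now $\dot\alpha_\tau^\mathrm{cut}$ is supported in $\Sigma\times(-a,a)$, forcing the standard Gray field $X_\tau$ to vanish outside $\Sigma\times(-a,a)$; combining with the same $t$-support for $H_{Z_\tau}$, the corrected field $\widetilde X_\tau$ is supported in $\Sigma\times(-a,a)$, so its flow $\Phi$ fixes $\Sigma\times\{|t|\geq a\}$ pointwise and therefore preserves $\Sigma\times(-a,a)$. The composition $i := \Psi\circ\Phi$ is the required contact embedding.
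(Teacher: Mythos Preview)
Your approach via a linear $t$-rescaling followed by a Gray-stability correction is quite different from the paper's, and your first two paragraphs are essentially sound: the family $\alpha_\tau = c_\tau f\,dt + \beta$ is contact because $\beta\wedge(d\beta)^n = 0$ on the $2n$-dimensional slice $\Sigma$, and your trick of adding a contact vector field $Z_\tau$ with Hamiltonian vanishing to first order along $\Sigma\times\{0\}$ does cancel the Gray field there (both $X_\tau$ and $Z_\tau|_{t=0}$ lie in $\ker\alpha_\tau$ and satisfy $\iota_{X_\tau}d\alpha_\tau|_{\ker\alpha_\tau} = -\iota_{Z_\tau}d\alpha_\tau|_{\ker\alpha_\tau}$ at $t=0$, so their sum vanishes by nondegeneracy of $d\alpha_\tau|_{\ker\alpha_\tau}$).

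The gap is in your third paragraph. Once you replace $\alpha_\tau$ by the truncated family $\alpha_\tau^{\mathrm{cut}}$, the resulting Gray diffeomorphism $\Phi$ carries $\xi = \ker\alpha_0$ to $\ker\alpha_1^{\mathrm{cut}} = \ker\bigl((1-(1-b/a)\chi(t))f\,dt+\beta\bigr)$, and this coincides with $\ker((b/a)f\,dt+\beta)$ \emph{only where $\chi \equiv 1$}. Your linear map $\Psi$, however, pulls $\xi$ back to $\ker((b/a)f\,dt+\beta)$ on \emph{all} of $\Sigma\times(-a,a)$. Hence $\Psi\circ\Phi$ fails to be a contact map on the nonempty region where $\chi<1$; since $\Phi$ is the identity near $|t|=a$, this region certainly meets the image. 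You cannot take $\chi\equiv 1$ on all of $(-a,a)$ while keeping it compactly supported there. One can repair the argument by replacing $\Psi$ with the nonlinear reparametrization $t\mapsto\int_0^t(1-(1-b/a)\chi(u))\,du$, but then the image only lands in $\Sigma\times(-a',a')$ with $a'>b$ strictly, so a further iteration or a stronger initial contraction ratio is needed.

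The paper's argument is much shorter and avoids Gray stability altogether. Since $\partial_t$ is a contact vector field with $t$-independent contact Hamiltonian $H = f$, one takes a cutoff $\rho$ supported in $(-b,b)$ with $\rho\equiv 1$ near $0$ and lets $v_{H'}$ be the contact vector field with Hamiltonian $H' = \rho H$. Then $v_{H'}$ is supported in $\Sigma\times(-b,b)$ and equals $\partial_t$ near $\Sigma\times\{0\}$, so its complete flow $(x,s)\mapsto\phi_s^{v_{H'}}(x,0)$ directly exhibits $\Sigma\times\{0\}$ as convex with transverse contact vector field $v_{H'}$ and gives the embedding $\Sigma\times(-a,a)\hookrightarrow\Sigma\times(-b,b)$ for every $a$.
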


\begin{proof}
By assumption, $\p_t$ is an $\R$-invariant contact vector field on $\Sigma \times \R_t$. Let $H: \Sigma \times \R \to \R$ be the contact Hamiltonian generating $\p_t$. Then $H$ is independent of $t$. Given $b > 0$, let $\rho = \rho(t)$ be a cut-off function supported in $(-b, b)$ and $\rho(t) \equiv 1$ for $t \in (-b/2, b/2)$. Then the new contact Hamiltonian $H' = \rho H$ generates a contact vector field $v_{H'}$ which is supported in $\Sigma \times (-b, b)$ and agrees with $\p_t$ near $\Sigma \times \{0\}$. Now the flow of $v_{H'}$ gives the desired contact embedding of $\Sigma \times (-a, a)$ into $\Sigma \times (-b, b)$ for arbitrarily large $a$.
\end{proof}

As an immediate corollary of \autoref{lemma: hypersurface_nbhd_size}, we have the following estimate on the neighborhood size of the dividing set.

\begin{cor} \label{cor:dividing_set_nbhd_size}
Let $\Sigma \subset (M,\xi)$ be a convex hypersurface with dividing set $\Gamma$. Fix a contact form $\beta$ on $\Gamma$ such that $\xi|_{\Gamma} = \ker \beta$ and consider $\Gamma \times \R^2_{x,y}$ equipped with the contact form $\alpha = \beta -ydx$. Let $D(R) \subset \R^2$ be the open disk of radius $R$. Then for any $R>0$ there exists a contact embedding $$j: (\Gamma \times D(R), \ker \alpha) \to (M, \xi)$$ such that $j|_{\Gamma \times \{0\}} = \text{\em id}_{\Gamma}$.
\end{cor}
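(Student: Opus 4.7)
My plan is to combine \autoref{lemma: hypersurface_nbhd_size}, which enlarges the invariant neighborhood of $\Sigma$, with a manifest scaling symmetry of the local model $(\Gamma \times \R^2, \beta - y\, dx)$ that trades neighborhood size in the direction normal to $\Sigma$ for size in the direction transverse to $\Gamma$ inside $\Sigma$.

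First I would write down the local normal form near $\Gamma$ in $M$. By convexity of $\Sigma$ together with the normal form (N) from the proof of \autoref{lem:ideal_compactification}, there is a neighborhood of $\Sigma$ in $M$ of the form $\Sigma \times (-\delta, \delta)_t$ carrying the $\R$-invariant contact form $fdt + \beta_\Sigma$, and a collar $\Gamma \times (-\epsilon, \epsilon)_\tau \subset \Sigma$ on which $f = \tau$ and $\beta_\Sigma = \beta$. Under the change of coordinates $x := -t$ and $y := \tau$, this collar neighborhood of $\Gamma$ in $M$ is identified with $\Gamma \times (-\delta, \delta)_x \times (-\epsilon, \epsilon)_y$ carrying exactly the model contact form $\alpha = \beta - y\, dx$.

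Next I would apply \autoref{lemma: hypersurface_nbhd_size} to $\Sigma$ to enlarge arbitrarily in the $t$-direction (equivalently the $x$-direction): for any $A > 0$ we obtain a contact embedding
\[
j_1 : \Gamma \times (-A, A)_x \times (-\epsilon, \epsilon)_y \hookrightarrow (M, \xi)
\]
that is the identity on $\Gamma \times \{0\}$. The key observation is then that, for every $\lambda > 0$, the diffeomorphism $\phi_\lambda : \Gamma \times \R^2 \to \Gamma \times \R^2$ given by $(p, x, y) \mapsto (p, \lambda x, y/\lambda)$ is a strict contactomorphism of $(\Gamma \times \R^2, \alpha)$, since $\phi_\lambda^*(y\, dx) = (y/\lambda)(\lambda\, dx) = y\, dx$ and $\beta$ is unchanged. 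Precomposing $j_1$ with $\phi_\lambda$ yields a contact embedding of $\Gamma \times (-A/\lambda, A/\lambda)_x \times (-\lambda \epsilon, \lambda \epsilon)_y$ into $M$, still the identity on $\Gamma \times \{0\}$. Given $R > 0$, choosing $\lambda := R/\epsilon$ and $A > R^2/\epsilon$ makes this domain contain $\Gamma \times D(R)$, and restriction produces the required embedding $j$.

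I do not anticipate a genuine obstacle: the normal form at $\Gamma$ is already in hand from the proof of \autoref{lem:ideal_compactification}, and the scaling $\phi_\lambda$ is a strict symmetry of the model, so the argument is essentially a direct assembly of these two ingredients.
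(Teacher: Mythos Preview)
Your proof is correct and is essentially a fleshing-out of what the paper intends: the paper states this result as an immediate corollary of \autoref{lemma: hypersurface_nbhd_size} without supplying any argument. Your combination of the normal form (N) near $\Gamma$, the enlargement of the $t$-direction via \autoref{lemma: hypersurface_nbhd_size}, and the strict contact scaling $(x,y)\mapsto(\lambda x,y/\lambda)$ to exchange $t$-size for $\tau$-size is exactly the natural way to make ``immediate'' precise.
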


\begin{remark}
\autoref{cor:dividing_set_nbhd_size} should be compared with the results on the neighborhood size of overtwisted contact submanifolds. For example, it is shown in \cite{CMP15} that, roughly speaking, if $(M, \xi)$ contains an overtwisted contact submanifold with a sufficiently large neighborhood, then $(M, \xi)$ is itself overtwisted.
\end{remark}

\section{Contact handle attachments} \label{sec:contact_handles}

Our construction of a bypass attachment consists of two smoothly canceling contact handle attachments in the middle dimensions. We first examine the contact handles in detail.

\subsection{Contact $n$-handle attachment} \label{subsec:n_handle}

A {\em smooth $n$-handle} $H_n$ of dimension $2n+1$ is a bi-disk $H_n=\left\{ |x|^2 \leq 1 \right\} \times \left\{ z^2 + |y|^2 \leq 1 \right\} \subset \R^{2n+1}$, where $x=(x_1,\dots,x_n), y=(y_1,\dots,y_n)$, and $z$ are coordinates on $\R^{2n+1}$ and $|{\cdot}|$ denotes the Euclidean norm. The $n$-disk $D=\left\{ z=y=0 \right\} \subset H_n$ is called the {\em core disk}, and the $(n+1)$-disk $D'=\left\{ x=0 \right\} \subset H_n$ is called the {\em cocore disk}.

A {\em contact $n$-handle} is a triple $(H_n, \alpha_n, v_n)$, where $H_n$ is a smooth $n$-handle, $\alpha_n = dz - 2y \cdot dx - x \cdot dy$ is a contact form, and $v_n = -x \cdot \p_x + 2y \cdot \p_y + z \p_z$ is a contact vector field. (In fact we have $\mathcal{L}_{v_n} \alpha_n = \alpha_n$.) Note that $v_n$ is gradient-like for the Morse function $z^2+|y|^2-|x|^2$ with a unique critical point of index $n$ at the origin.  The core disk $D$ of $H_n$ is Legendrian and there is a contact embedding $H_n\hookrightarrow J^1(D^n)$, where $J^1(D^n)$ is the standard $1$-jet space of $D^n$ and $D$ is taken to the zero section.

We decompose the boundary $\p H_n = \p_1 H_n \cup \p_2 H_n$, where
$$\p_1 H_n = \left\{ |x| = 1 \right\} \times \left\{ z^2 + |y|^2 \leq 1 \right\},$$
$$\p_2 H_n = \left\{ |x| \leq 1 \right\} \times \left\{ z^2 + |y|^2 = 1 \right\}.$$
Then both $\p_1 H_n$ and $\p_2 H_n$ are convex with respect to $v_n$, with $v_n$ pointing into $H_n$ along $\p_1 H_n$ and out of $H_n$ along $\p_2 H_n$.

We first describe $\p_1 H_n$ in some detail. Noting that $\alpha_n(v_n)=0$ if and only if $z=0$, the $v_n$-dividing set is
\begin{equation*}
	\Gamma_{\p_1 H_n} = \left\{ z=0 \right\} \times \left\{ |x| =1 \right\} \times \left\{ |y| \leq 1 \right\},
\end{equation*}
equipped with the induced contact form $\alpha_n|_{\Gamma_{\p_1 H_n}} = -2y \cdot dx - x \cdot dy$. Consider $J^1(S^{n-1})$ with the standard contact form $du - p \cdot dq$, where $u \in \R, q,p \in \R^n$ are coordinates such that $|q|=1$ and $p \cdot q = 0$. The following is immediate:

\begin{claim} \label{claim: contact embedding}
There exists a contact embedding
\begin{gather}\label{eqn:attaching_region_n}
	\phi: (\Gamma_{\p_1 H_n}, \alpha_n|_{\Gamma_{\p_1 H_n}}) \hookrightarrow J^1(S^{n-1}),\\
\nonumber (x,y)\mapsto (u,q,p)=(-x\cdot y,x, y-(x\cdot y)x),
\end{gather}
which sends the boundary $\bdry D$ of the core disk to the $0$-section of $J^1(S^{n-1})$.
\end{claim}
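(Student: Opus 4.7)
The plan is to verify the claim by direct computation in four steps: check that $\phi$ lands in $J^1(S^{n-1})$, check the pullback of contact forms, check that $\phi$ is a bijection onto its image (with smooth inverse), and finally check where $\bdry D$ goes.

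First, on $\Gamma_{\p_1 H_n}$ we have $|x|=1$, so $|q|^2=|x|^2=1$ and
$$p\cdot q = \bigl(y-(x\cdot y)x\bigr)\cdot x = x\cdot y - (x\cdot y)|x|^2 = 0,$$
so $\phi$ indeed maps into the submanifold $\{|q|=1,\ p\cdot q=0\} \subset \R \times \R^n \times \R^n$ that models $J^1(S^{n-1})$.

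Next, using $|x|^2 \equiv 1$ on $\Gamma_{\p_1 H_n}$ (hence $x\cdot dx = 0$), I compute
$$\phi^* du = -d(x\cdot y) = -y\cdot dx - x\cdot dy,$$
$$\phi^*(p\cdot dq) = \bigl(y-(x\cdot y)x\bigr)\cdot dx = y\cdot dx - (x\cdot y)(x\cdot dx) = y\cdot dx,$$
so $\phi^*(du - p\cdot dq) = -2y\cdot dx - x\cdot dy$, which is exactly $\alpha_n|_{\Gamma_{\p_1 H_n}}$. This simultaneously shows $\phi$ is a contact map and (since $\alpha_n|_{\Gamma_{\p_1 H_n}}$ is contact) an immersion.

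For injectivity, given a point $(u,q,p)$ in the image I want to solve $\phi(x,y)=(u,q,p)$. From $q=x$ we recover $x=q$; then $p = y - (x\cdot y)x = y + uq$, so $y = p-uq$. This defines a smooth inverse $\phi^{-1}(u,q,p) = (q,\, p-uq)$, so $\phi$ is a smooth embedding onto the region $\{|p|^2+u^2\leq 1\} \subset J^1(S^{n-1})$ (a compact subdomain, so properness onto the image is automatic). Finally, $\bdry D = \{z=0,\,y=0,\,|x|=1\}$ maps to $(u,q,p)=(0,x,0)$, which is precisely the zero section of $J^1(S^{n-1})$.

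There is no real obstacle here: the content of the claim is the clever choice of the normalization $p = y-(x\cdot y)x$, which is exactly the orthogonal projection of $y$ onto $T_xS^{n-1}$, together with the correction $u=-x\cdot y$ that absorbs the mismatch between $-2y\cdot dx - x\cdot dy$ and $-y\cdot dx$. Once one writes down these formulas the verification is purely algebraic, which is why the authors call it immediate.
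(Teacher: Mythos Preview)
Your verification is correct and is precisely the direct computation the paper has in mind when it says the claim is ``immediate'': the paper does not write out a proof at all, so your four-step check (target, pullback of forms, explicit inverse, image of $\bdry D$) is exactly the intended argument spelled out in full.
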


\begin{claim} \label{claim: contact embedding 2}
If we identify $J^1(S^{n-1})$ with the ideal boundary of a positive half-symplectization $[c,\infty)\times J^1(S^{n-1})$, then $\overline{R_+ (\p_1 H_n)}$ is identified with a (half) tubular neighborhood of $S^{n-1} \subset  \left\{ \infty \right\}\times J^1(S^{n-1}) \subset [c,\infty]\times J^1(S^{n-1})$, where the first inclusion is given by the $0$-section.  A similar identification holds for $\overline{R_- (\p_1 H_n)}$.
\end{claim}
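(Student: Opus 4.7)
The plan is to compute the Liouville form on $R_+ := R_+(\partial_1 H_n)$ explicitly using the proof of \autoref{lem:ideal_compactification}, to recognize it---via the coordinate change from \autoref{claim: contact embedding}---as a piece of the standard symplectization of $(J^1(S^{n-1}), du - p\cdot dq)$, and to identify the resulting region with a half tubular neighborhood of the zero section.

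First, a direct computation gives $\alpha_n(v_n) = z + 2x\cdot y - 2x\cdot y = z$. After flowing by $v_n$ to produce an $\R$-invariant neighborhood of $\partial_1 H_n$, this puts us in the setup of the proof of \autoref{lem:ideal_compactification} with $f = z$ and $\beta = \tilde\alpha_n := \alpha_n|_{\partial_1 H_n}$. The identity
\[
du - p\cdot dq = -2y\cdot dx - x\cdot dy
\]
on $\{|x|=1\}$ (already derived in the proof of \autoref{claim: contact embedding}) then yields $\tilde\alpha_n = dz + (du - p\cdot dq)$ in the $(u,q,p,z)$-coordinates of $\phi$. Hence the Liouville form on $R_+$ is
\[
\lambda_+ = \frac{\tilde\alpha_n}{z} = \frac{dz}{z} + \frac{du - p\cdot dq}{z}.
\]

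Next, I introduce the symplectization coordinate $s := -\log z$ (so that $dz/z = -ds$ and $1/z = e^s$) and rewrite
\[
\lambda_+ = e^s(du - p\cdot dq) - ds,
\]
which differs from the standard Liouville form $e^s(du - p\cdot dq)$ on the half-symplectization $[0,\infty)\times J^1(S^{n-1})$ only by the exact $1$-form $-ds$. This realizes $(R_+, d\lambda_+)$ as the open Liouville subdomain
\[
\Omega = \{(s,u,q,p) \,:\, s \geq 0,\ |q|=1,\ p\cdot q = 0,\ e^{-2s} + u^2 + |p|^2 \leq 1\}
\]
of the half-symplectization, and passing to ideal compactifications identifies $\overline{R_+}$ with $\overline{\Omega}$ by matching $\{z=0\}$ with $\{s=\infty\}$. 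In the auxiliary coordinate $r := e^{-s}\in [0,1]$, the closed region $\overline{\Omega}$ becomes the closed half $(n+1)$-disk bundle $\{r\geq 0,\ r^2+u^2+|p|^2\leq 1\}$ over $S^{n-1}$, i.e., $S^{n-1}\times D^{n+1}_+$---a half tubular neighborhood of the zero section $S^{n-1}\subset\{\infty\}\times J^1(S^{n-1})$ in $[c,\infty]\times J^1(S^{n-1})$. The analogous statement for $\overline{R_-}$ follows by replacing $z$ with $-z$.

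The main step requiring care is the smoothing of the corner of $\overline{\Omega}$ at $\{r^2+u^2+|p|^2=1\}$, but this is a standard operation that preserves both the Liouville structure on the interior and the contact structure on the ideal boundary.
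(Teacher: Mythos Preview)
Your proof is correct. The paper states this claim without proof, so there is no argument to compare against; your explicit computation is the natural verification and is presumably what the authors had in mind as ``immediate'' from \autoref{claim: contact embedding}.

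Two minor remarks. First, when you write ``after flowing by $v_n$ to produce an $\R$-invariant neighborhood,'' note that $\mathcal{L}_{v_n}\alpha_n=\alpha_n$, so $\alpha_n$ itself is not $t$-invariant in flow coordinates; rather, the contact \emph{structure} is, and the $t$-invariant contact form you are implicitly using is $z\,dt+\tilde\alpha_n$. This does not affect your computation of $\lambda_+=\tilde\alpha_n/z$, which is correct. Second, your closing comment about smoothing the corner of $\overline\Omega$ is unnecessary: $\overline{R_+(\partial_1 H_n)}$ already has a corner where $\Gamma_{\partial_1 H_n}$ meets $\partial(\partial_1 H_n)$, and your identification matches it with the corner $\{r=0\}\cap\{r^2+u^2+|p|^2=1\}$ of $\overline\Omega$, so nothing needs to be smoothed for the claim as stated.
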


Next let us turn to $\p_2 H_n$. The $v_n$-dividing set is
\begin{equation*}
	\Gamma_{\p_2 H_n} = \left\{ z=0 \right\} \times \left\{ |x| \leq 1 \right\} \times \left\{ |y| = 1 \right\},
\end{equation*}
and $R_+ (\p_2 H_n) = \left\{ |x|<1 \right\} \times \left\{ z^2 + |y|^2 = 1, z > 0 \right\}$.

\begin{claim} \label{claim: def equiv}
$(R_+ (\p_2 H_n), \alpha_n|_{R_+ (\p_2 H_n)})$ and $(R_- (\p_2 H_n), \alpha_n|_{R_- (\p_2 H_n)})$ are exact deformation equivalent (relative to the boundary) to the Weinstein $n$-handle $(T^\ast D^n, -2p \cdot dq - q \cdot dp)$.
\end{claim}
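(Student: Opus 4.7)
The plan is a direct computation on the model. I will parametrize
$R_+(\p_2 H_n) = \{|x| < 1\} \times \{z^2 + |y|^2 = 1,\ z>0\}$ by $(x,y)$ via $z = +\sqrt{1-|y|^2}$, and identify the result with the Weinstein $n$-handle $T^\ast D^n$ via $(q,p) = (x,y)$. Pulling back $\alpha_n = dz - 2y\cdot dx - x\cdot dy$ and using $dz = d\sqrt{1-|y|^2}$, one obtains
\[
\alpha_n|_{R_+} \;=\; -2y\cdot dx - x\cdot dy + d\phi \;=\; \lambda_W + d\phi,
\]
where $\lambda_W := -2p\cdot dq - q\cdot dp$ is the Weinstein Liouville form and $\phi := \sqrt{1-|y|^2}$. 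The key observation is that $\alpha_n|_{R_+}$ and $\lambda_W$ differ by the exact $1$-form $d\phi$.

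Given this, the exact deformation is furnished by the straight-line homotopy $\lambda_t := \lambda_W + t\, d\phi$ for $t \in [0,1]$. Because $d$ kills the exact perturbation, $d\lambda_t = d\lambda_W = dq\wedge dp$ is symplectic and independent of $t$, so every $\lambda_t$ is a Liouville form on the common underlying symplectic manifold, with $\lambda_0 = \lambda_W$ and $\lambda_1 = \alpha_n|_{R_+}$. For $R_-(\p_2 H_n)$ the identical computation with $z = -\sqrt{1-|y|^2}$ yields $\alpha_n|_{R_-} = \lambda_W - d\phi$, handled by the analogous straight-line homotopy.

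For the \emph{relative to the boundary} clause, the crucial observation is that $\phi = \sqrt{1-|y|^2}$ vanishes identically on the dividing set $\{|y|=1\}$, and there both $\lambda_W$ and $\alpha_n|_{R_\pm}$ restrict to the same contact form $-2y\cdot dx - x\cdot dy$ (because $dz=0$ on $\{z=0\}$). To make the homotopy stationary in a collar of the full boundary of the handle, one multiplies $\phi$ by a suitable cutoff function; since the perturbation remains exact, $d\lambda_t = dq\wedge dp$ throughout, so no nondegeneracy is lost. The only genuine technical subtlety, which is essentially bookkeeping, is to arrange the cutoff so that the endpoints of the modified homotopy agree with $\lambda_W$ and $\alpha_n|_{R_\pm}$ in the appropriate sense, and this reduces to a standard Moser-type argument using the common symplectic form $dq\wedge dp$. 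Beyond this and the matching of sign conventions for $\lambda_W$, I expect no real obstacle.
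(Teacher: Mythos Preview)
Your proposal is correct and is essentially the same argument as the paper's: the paper parametrizes $R_+(\p_2 H_n)$ by $(x,y)$, views $z$ as a function $z(x,y)$, and interpolates via $d(tz) - 2y\cdot dx - x\cdot dy$, $t\in[0,1]$, which is exactly your straight-line homotopy $\lambda_W + t\,d\phi$ with $\phi = z$. Your treatment of the ``relative to the boundary'' clause is more explicit than the paper's (which leaves it implicit), but the underlying idea is identical.
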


\begin{proof}
Let us use coordinates $(x,y)$ on $R_+(\p_2 H_n)$.  View $z$ as a function $z(x,y)$. Then
$\alpha_n|_{R_+ (\p_2 H_n)}=dz-2y\cdot dx - x\cdot dy$.  We can then interpolate from $dz-2y\cdot dx - x\cdot dy$ to $-2y\cdot dx - x\cdot dy$ by taking $d(tz) -2y\cdot dx-x\cdot dy$, $t\in[0,1]$.
\end{proof}

We now describe the attachment of a contact $n$-handle to a convex hypersurface. \cb

\begin{prop} \label{prop:n_handle}
Let $(M,\xi)$ be a $(2n+1)$-dimensional contact manifold with convex boundary $\Sigma = \p M$. Fix an outward-pointing contact vector field $v$ along $\Sigma$, let $\Gamma$ be the $v$-dividing set, viewed as a codimension 2 contact submanifold, and let $\Lambda$ be an $(n-1)$-dimensional Legendrian sphere in $\Gamma$. Let $\Sigma \setminus \Gamma = R_+ \cup R_-$ be the decomposition into positive and negative regions as in \autoref{subsec:convex_surface}. Then a contact $n$-handle can be attached to $(M, \xi)$ along $\Lambda$ to yield a new contact manifold $(M', \xi')$ with convex boundary $\Sigma' = \p M'$ such that if we write $\Sigma' \setminus \Gamma' = R'_+ \cup R'_-$, then:
		\begin{enumerate}
			\item as a Liouville manifold, $R'_+$ is the completion of a Liouville domain obtained by attaching a Weinstein $n$-handle to $R^c_+$ along (a parallel copy of) the Legendrian sphere $\Lambda \subset \p R^c_+$;
		
			\item as a Liouville manifold, $R'_-$ is the completion of a Liouville domain obtained by attaching a Weinstein $n$-handle to $R^c_-$ along (a parallel copy of) the Legendrian sphere $\Lambda \subset \p R^c_-$;
		
			\item as a contact manifold, $\Gamma'$ is obtained from $\Gamma$ by performing a contact $(-1)$-surgery along $\Lambda \subset \Gamma$; and

			\item as a smooth manifold, $\Sigma'$ is diffeomorphic to the union of $\overline{R'_+}$ and $\overline{R'_-}$, glued along their boundaries by the identity map.
		\end{enumerate}
\end{prop}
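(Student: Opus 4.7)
\bigskip

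\noindent\textbf{Proof proposal.}

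The plan is to construct the attaching map, perform the gluing, and then verify each of the four claims by tracking how the convex structure on $\Sigma$ is modified near $\Lambda$.

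First I would set up the attaching map. By \autoref{cor:dividing_set_nbhd_size} a neighborhood of $\Gamma$ in $M$ is contactomorphic to $\Gamma\times D^2$ with contact form $\beta-y\,dx$, so a neighborhood of $\Lambda\subset\Gamma$ in $M$ is contactomorphic to $J^1(S^{n-1})_{\rm nbhd}\times D^2$ after applying the standard Weinstein tubular neighborhood theorem for Legendrians inside $\Gamma$. On the handle side, \autoref{claim: contact embedding} and \autoref{claim: contact embedding 2} tell us that a collar of $\p_1 H_n$ in $H_n$ (thickened along the flow of $v_n$) looks, near $\Gamma_{\p_1 H_n}$, exactly like $J^1(S^{n-1})_{\rm nbhd}\times D^2$ with the standard contact form, and in such a way that $\overline{R_\pm(\p_1 H_n)}$ is a half-tubular neighborhood of the $0$-section. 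Thus a collar of $\p_1 H_n$ matches a collar of $\Sigma$ near $\Lambda$ as contact manifolds, and we glue $H_n$ to $M$ along this identification to produce $M'$.

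Next I would verify convexity of $\Sigma'=(\Sigma\setminus \text{attaching region})\cup \p_2 H_n$. Since $v_n$ is outward-pointing along $\p_2 H_n$ and matches $v$ along the overlap (both integrate to contactomorphisms rescaling the contact form), a standard cutoff argument produces an outward-pointing contact vector field $v'$ on a neighborhood of $\Sigma'$ in $M'$, so $\Sigma'$ is convex. Claim (3) is now immediate: the $v'$-dividing set $\Gamma'$ is obtained from $\Gamma$ by removing the neighborhood of $\Lambda$ corresponding to $\Gamma_{\p_1 H_n}$ and replacing it with $\Gamma_{\p_2 H_n}$, which is precisely the model for contact $(-1)$-surgery on $\Lambda\subset\Gamma$. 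Claim (4) is tautological from the decomposition $\Sigma'\setminus\Gamma'=R'_+\cup R'_-$ and the fact that $\overline{R'_\pm}$ meet exactly along $\Gamma'$.

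The main work lies in claims (1) and (2), where I need to match two a priori different Liouville structures on $R'_\pm$: the one inherited from convex hypersurface theory on $\Sigma'$ (given by $\lambda_\pm=\beta/f$ as in \autoref{lem:ideal_compactification}), and the completion of a Weinstein $n$-handle attachment on $R^c_\pm$ along $\Lambda$. Away from the handle, both Liouville structures agree by construction. Inside the handle, \autoref{claim: def equiv} shows that $R_+(\p_2 H_n)$ (resp.\ $R_-(\p_2 H_n)$) with the Liouville form induced from $\alpha_n$ is exact deformation equivalent, relative to its boundary, to the standard Weinstein $n$-handle $(T^\ast D^n,-2p\cdot dq-q\cdot dp)$ glued along a Legendrian parallel to $\Lambda$. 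I would then apply the flexibility lemma (\autoref{lemma: flexibility}) in an invariant neighborhood of $\Sigma'$ (whose existence is guaranteed by \autoref{lemma: hypersurface_nbhd_size}) to promote this exact deformation equivalence into an actual contactomorphism between the attached handle picture and the Weinstein completion picture, fixed near $\Gamma'$. This identifies $R'_\pm$ with the claimed completions, giving (1) and (2).

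The step I expect to be the main obstacle is the matching in the previous paragraph: one must carefully check that the cutoff performed when interpolating the handle contact form is compatible with the fixed behavior near $\Gamma'$ required by \autoref{lemma: flexibility}, and that the resulting Hamiltonian isotopy on $R'_\pm$ stays compactly supported so that it identifies the two Liouville completions (and not merely the underlying symplectic manifolds).
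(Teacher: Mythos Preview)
Your proposal is correct and follows essentially the same approach as the paper: identify the contact germ on $\p_1 H_n$ with a neighborhood of $\Lambda$ in $\Sigma$ via \autoref{claim: contact embedding} and \autoref{claim: contact embedding 2}, glue (with a small adjustment of the contact vector field on the overlap), and read off (1)--(4) from \autoref{claim: def equiv}. The paper's argument is somewhat terser than yours---it does not explicitly invoke \autoref{lemma: flexibility} for (1) and (2), since \autoref{claim: def equiv} already gives the exact deformation equivalence \emph{relative to the boundary}, which is exactly what is needed to identify $R'_\pm$ as Liouville manifolds with the claimed Weinstein handle attachments; your ``main obstacle'' therefore dissolves, as the boundary behavior is already fixed by that claim and no further cutoff/compact-support argument is required.
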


\begin{proof}
Fix an identification $i:\bdry D\stackrel\sim\to\Lambda$. By \autoref{claim: contact embedding}, $i$ can be extended to a contactomorphism $i:\Gamma_{\p_1 H_n}\stackrel\sim\to N_\Gamma(\Lambda)$, where $N_\Gamma(\Lambda)$ is a standard tubular neighborhood $N_\Gamma(\Lambda)$ of $\Lambda$ in $\Gamma$. Next, by \autoref{lem:ideal_compactification} and \autoref{claim: contact embedding 2}, $i$ extends to an identification of contact germs on $\p_1 H_n$ and a tubular neighborhood of $N_\Gamma(\Lambda)$ in $\Sigma$. Finally, using $i$ we attach $H_n$ to $(M,\xi)$ along $\Lambda \subset \Sigma$ and round the corners to obtain $(M',\xi')$ with convex boundary.  We may need to slightly adjust the contact vector field $v_n$ on $H_n$ so that it agrees with $v$ on the overlap.

(1), (2) follow from \autoref{claim: def equiv} which identifies $R_+ (\p_2 H_n)$ with (the completion of) a Weinstein handle. Since $H_n$ is attached along $\Gamma$, $R'_+ = R_+ (\Sigma')$ is the completion of a Liouville domain obtained by attaching a Weinstein handle to $R^c_+$ along $\Lambda$, viewed as a subset of $\p R^c_+$. (3) and (4) are immediate.
\end{proof}

\begin{remark} \label{rmk:corner_rounding}
The rounding of contact handles is completely analogous to rounding Weinstein handles in the symplectic case; see \autoref{subsec:(n+1)_handle} for more details.
\end{remark}

\subsection{Contact $(n+1)$-handle attachment} \label{subsec:(n+1)_handle}

It is well-known that a gradient-like Liouville vector field for some Morse function cannot have critical points of index greater than half of the dimension of the symplectic manifold. In the contact case, however, there is no such restriction. Namely, a gradient-like contact vector field for some Morse function can have critical points of arbitrary index.

A quick way to define a contact $(n+1)$-handle is by viewing it as an upside down contact $n$-handle. More precisely, take a contact $n$-handle $(H_n, \alpha_n, v_n)$ and consider the triple $(H_n, \alpha_n, -v_n)$. Now observe that $-v_n$ is a gradient-like contact vector field for the Morse function $|x|^2-|y|^2-z^2$, which has a unique critical point of index $n+1$ at the origin.

This point of view, however, has the drawback that the contact germ on the attaching region of the $(n+1)$-handle is harder to characterize. Instead we consider the standard contact neighborhood of an $(n+1)$-dimensional disk foliated by $n$-dimensional Legendrian disks. This approach, as we will see below, is more complicated but will be useful when we construct the bypass attachment.

\subsubsection{$\Theta$-disks}

Consider the unit disk $\Theta = \left\{ z^2 + |x|^2 \leq 1 \right\} \subset \R^{n+1}_{z,x}$ with $1$-form
\begin{equation} \label{eqn:unknot_foliation}
	\beta = (z^2 - |x|^2 +1) dz + 2zx \cdot dx= (z^2-|x|^2+1) dz + 2z d(|x|^2).
\end{equation}
Then $\FF = \ker \beta$ defines a singular, radially (in the $|x|$ variable) symmetric foliation on $\Theta$ such that $\p \Theta$ is a closed leaf, all other leaves are disks, and the singular locus of $\FF$ is precisely $\left\{ z=0, |x|=1 \right\} \subset \p \Theta$; see \autoref{fig:std_foliation}.

\begin{figure}[ht]
	\begin{overpic}[scale=.23]{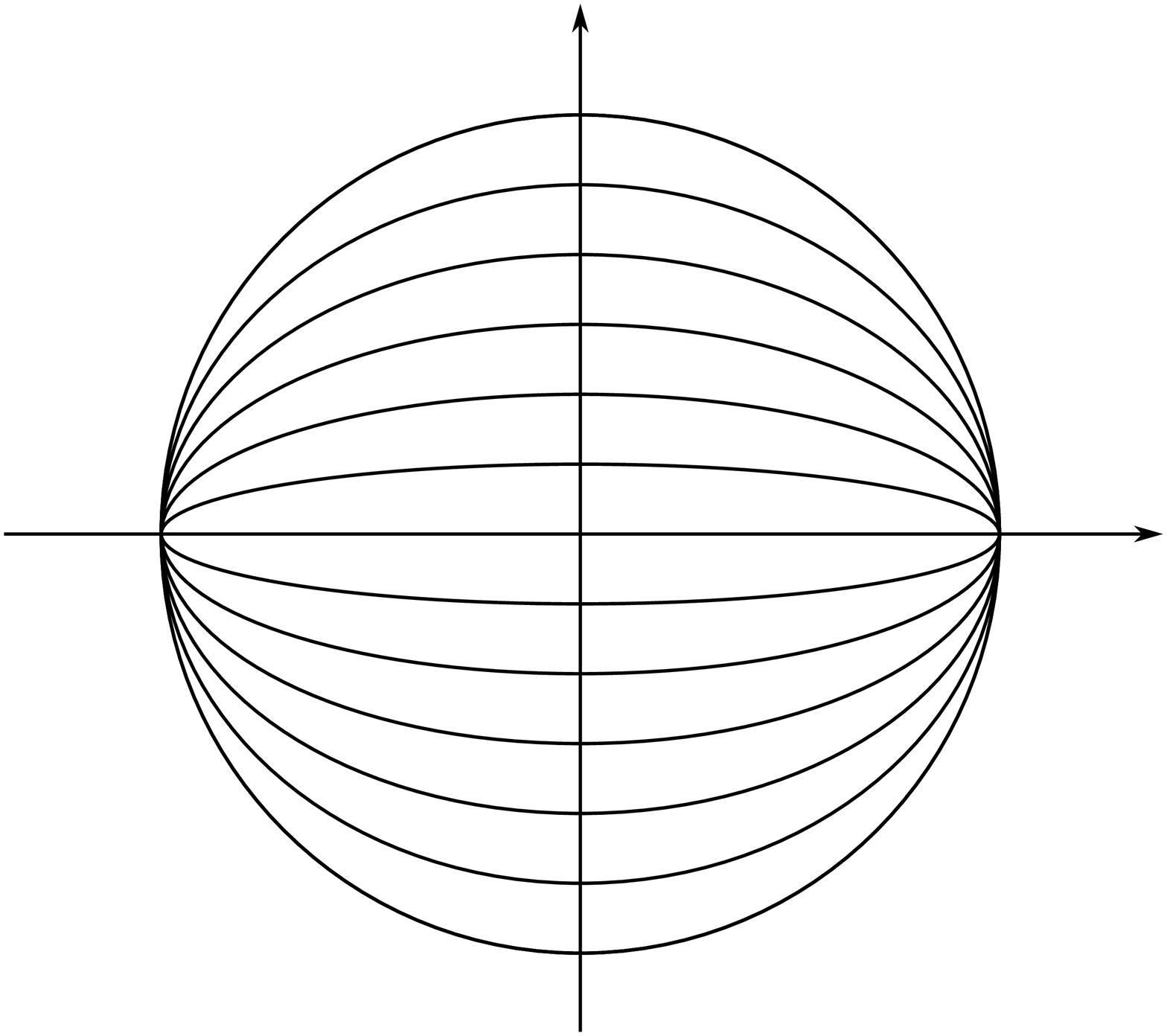}
		\put(44.5,88){$z$}
		\put(98,38){$x$}
	\end{overpic}
	\caption{A $\Theta$-disk.}
	\label{fig:std_foliation}
\end{figure}

According to \cite[Lemma~3.8]{Hua15}, there exists a unique contact germ on $\Theta \subset \Theta \times \R^{n}_{y}$, embedded as the $0$-section, such that all leaves of $\FF$ are Legendrian. \cb

\begin{definition}
An {\em $(n+1)$-dimensional $\Theta$-disk} is an embedded $(n+1)$-dimensional disk $D$ in a contact $(2n+1)$-manifold $(M,\xi)$ such that $\xi\cap TD$ is diffeomorphic to $\FF$ on the model disk $\Theta$, given above.
\end{definition}

\subsubsection{Definition of the $(n+1)$-handle}

Define the contact form
	\begin{equation*}
		\alpha_{n+1} = (z^2 - |x|^2 +1) dz + 2zx \cdot dx + x \cdot dy
	\end{equation*}
on $\Theta \times \R^n$. Indeed $\alpha_{n+1}$ is contact by the following calculation:
	\begin{equation*}
		\alpha_{n+1} \wedge (d\alpha_{n+1})^n = (z^2 + 3|x|^2 + 1) dz \wedge (dx \wedge dy)^n > 0.
	\end{equation*}
Observe $\alpha_{n+1}|_{\Theta} = \beta$, which implies that all leaves of $\FF$ are Legendrian as desired. Moreover since each $\p_{y_i}$ is a contact vector field, it follows from the proof of \autoref{lemma: hypersurface_nbhd_size} that we can assume $\Theta \subset \Theta \times \R^n$ has an arbitrarily large neighborhood with respect to the contact form $\alpha_{n+1}$.

Next we construct a contact vector field $v_{n+1}$ on $(\Theta \times \R^n, \alpha_{n+1})$ which is gradient-like for some Morse function on $\Theta \times \R^n$ with a unique critical point of index $n+1$ at the origin.  Let $v_{n+1}$ be the contact vector field associated to the contact Hamiltonian function $f(x,y,z) = -2z + x \cdot y$ and contact form $\alpha_{n+1}$ (i.e., $\alpha_{n+1}(v_{n+1})=f$).  One can verify that
\begin{align*}
	(z^2 + 3|x|^2 +1) v_{n+1}  = & -(2 - 2|x|^2) z \p_z - (z^2 - |x|^2 +3) x \cdot \p_x\\
 & \qquad + \left( (z^2 + 3|x|^2 +1) y - 4zx \right) \cdot \p_y.
\end{align*}
Observe that $v_{n+1}$ has a unique zero at the origin. Moreover, in a small neighborhood of the origin, $v_{n+1}$ is approximated, up to first order and rescaling, by
	\begin{equation*}
		\widetilde{v}_{n+1} = -2z \p_z -3x \cdot \p_x + y \cdot \p_y.
	\end{equation*}
Since $\widetilde{v}_{n+1}$ has a nondegenerate zero at the origin of index $n+1$, so does $v_{n+1}$.

Now we define the contact $(n+1)$-handle to be the triple $(H_{n+1}, \alpha_{n+1}, v_{n+1})$, where $\alpha_{n+1}, v_{n+1}$ are as defined above and
$$H_{n+1} = \left\{ z^2 + |x|^2 \leq 1 \right\} \times \left\{ |y| \leq 2 \right\} \subset \Theta \times \R^n.$$
As in \autoref{subsec:n_handle}, let us decompose the boundary $\p H_{n+1} = \p_1 H_{n+1} \cup \p_2 H_{n+1}$ such that
\begin{gather*}
\p_1 H_{n+1} = \left\{ z^2 + |x|^2 = 1 \right\} \times \left\{ |y| \leq 2 \right\},\\
\p_2 H_{n+1} = \left\{ z^2 + |x|^2 \leq 1 \right\} \times \left\{ |y| = 2 \right\}.
\end{gather*}

\begin{claim} \label{claim: transverse}
$\p_1 H_{n+1}$ and $\p_2 H_{n+1}$ are $v_{n+1}$-convex, with $v_{n+1}$ pointing into $H_{n+1}$ along $\p_1 H_{n+1}$ and out of $H_{n+1}$ along $\p_2 H_{n+1}$.
\end{claim}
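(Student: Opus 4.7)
The plan is to verify $v_{n+1}$-convexity directly from the explicit formulas. Since $v_{n+1}$ was defined as the contact vector field associated to the contact Hamiltonian $f = -2z + x\cdot y$ with respect to $\alpha_{n+1}$, it is a contact vector field by construction. Hence it suffices to check that $v_{n+1}$ is transverse to each of $\p_1 H_{n+1}$ and $\p_2 H_{n+1}$ with the asserted signs. Throughout, I write $N := z^2 + 3|x|^2 + 1 > 0$ for the common denominator appearing in the formula for $v_{n+1}$.

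For $\p_1 H_{n+1} = \{z^2 + |x|^2 = 1\}\times\{|y|\le 2\}$, I would compute the Lie derivative
\[ v_{n+1}(z^2 + |x|^2) = 2z\,(v_{n+1})_z + 2x\cdot(v_{n+1})_x \]
by plugging in the explicit formula for $v_{n+1}$. After combining over the common denominator $N$ and using the boundary relation $z^2 = 1 - |x|^2$, the numerator should collapse to a constant; I expect the final answer to be $-4/N$, which is strictly negative. This shows $v_{n+1}$ points into $H_{n+1}$ along $\p_1 H_{n+1}$.

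For $\p_2 H_{n+1} = \{z^2 + |x|^2 \le 1\}\times\{|y|=2\}$, I would similarly compute
\[ v_{n+1}(|y|^2) = 2y\cdot(v_{n+1})_y = 2|y|^2 - \frac{8z(x\cdot y)}{N}, \]
which equals $8 - 8z(x\cdot y)/N$ on $|y|=2$. Strict positivity then follows by combining Cauchy--Schwarz ($|x\cdot y|\le |x||y| = 2|x|$) and AM-GM ($2|z||x|\le z^2+|x|^2$) to obtain
\[ \left|\frac{8z(x\cdot y)}{N}\right| \le \frac{16|z||x|}{N} \le \frac{8(z^2+|x|^2)}{z^2+3|x|^2+1} < 8. \]
The entire calculation is routine; the one point I expect to require any care, and hence the main obstacle, is this last strict inequality. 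One must keep the $+1$ in the denominator rather than rely on the AM-GM estimate alone, since equality $2|z||x| = z^2+|x|^2$ is achieved on $\p_2 H_{n+1}$ (e.g., at $|z|=|x|=1/\sqrt{2}$), so without the $+1$ the bound would only be nonstrict.
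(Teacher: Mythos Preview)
Your proposal is correct and follows essentially the same approach as the paper: direct computation of $v_{n+1}$ applied to the defining functions, with Cauchy--Schwarz and AM--GM for the $\p_2 H_{n+1}$ estimate and the same observation that the $+1$ in $N$ is what yields strict inequality. The only cosmetic difference is that for $\p_1 H_{n+1}$ you simplify the numerator all the way to the constant $-4$, whereas the paper stops at the manifestly negative expression $-4(1-|x|^2)z^2 - 2(z^2-|x|^2+3)|x|^2$; your simplification is correct and arguably cleaner.
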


\begin{proof}
The fact that $v_{n+1}$ is negatively transverse to $\p_1 H_{n+1}$ follows from
	\begin{equation*}
		\left( z^2 + 3|x|^2 +1 \right) v_{n+1} \left( z^2 + |x|^2 \right) = -4 \left( 1 - |x|^2 \right) z^2 - 2 \left( z^2 - |x|^2 +3 \right) |x|^2 < 0
	\end{equation*}
near $\left\{ z^2 + |x|^2 =1 \right\}$. Similarly, the fact that $v_{n+1}$ is positively transverse to $\p_2 H_{n+1}$ follows from
	\begin{align*}
		\left( z^2 + 3|x|^2 +1 \right) v_{n+1} (|y|^2) &= 2\left(\left( z^2 + 3|x|^2 +1 \right) |y|^2 - 4zx \cdot y \right)\\
										  &\geq 2\left( (z^2 + 3|x|^2 +1) |y|^2 - 4 |z| |x| |y|\right) \\
										  &\geq 2\left( (z^2 + 3|x|^2 +1) |y|^2 - 2\left( z^2 + |x|^2 \right) |y|\right) >0
	\end{align*}
near $\left\{ |y|=2 \right\}$.
\end{proof}

It remains to describe the convex hypersurfaces $\p_1 H_{n+1}$ and $\p_2 H_{n+1}$ in more detail.

\subsubsection{Description of $\p_1 H_{n+1}$}

Since $\alpha_{n+1}(v_{n+1})=f$, the $v_{n+1}$-dividing set of $\p_1 H_{n+1}$ is given by:
$$\Gamma_{\p_1 H_{n+1}} = \left\{ 2z = x \cdot y \right\} \subset \p_1 H_{n+1}.$$

\begin{claim} \label{claim: Legendrian ruling}
$R_+ (\p_1 H_{n+1})$ and $R_-(\p_1 H_{n+1})$ are exact deformation equivalent (relative to the boundary) to $T^\ast D^n$ with the standard Liouville form $p \cdot dq$ and Liouville vector field $p\cdot \bdry_p$.  Moreover, $\overline{{R}_{\pm} (\p_1 H_{n+1})}$ is the ideal compactification of $R_{\pm} (\p_1 H_{n+1})$, obtained by compactifying each fiber to a closed disk by adding the sphere at infinity.
\end{claim}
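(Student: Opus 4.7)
The plan is to compute the induced Liouville data on $\p_1 H_{n+1}$ directly and exhibit an explicit exact symplectomorphism $R_+ \xrightarrow{\sim} T^*D^n$, which is strictly stronger than the exact deformation equivalence the claim asks for.

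First I would pull back $\alpha_{n+1}$ to $\p_1 H_{n+1} = \{z^2+|x|^2=1\}\times\{|y|\le 2\}$. Using $z\,dz + x\cdot dx = 0$ on $S^n$, the first two terms of $\alpha_{n+1}$ collapse,
\[
(z^2-|x|^2+1)\,dz + 2z\,x\cdot dx = 2z^2\,dz + 2z(x\cdot dx) = 2z^2\,dz - 2z^2\,dz = 0,
\]
leaving $\alpha_{n+1}|_{\p_1 H_{n+1}} = x\cdot dy$. Taking $v_{n+1}$ as the transverse contact vector field, the recipe from the proof of \autoref{lem:ideal_compactification} identifies $R_+ = \{x\cdot y > 2z\}$ equipped with the Liouville form $\lambda_+ = (x\cdot dy)/(x\cdot y - 2z)$.

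Next I would introduce the map
\[
\phi_+ \colon R_+ \longrightarrow T^*D^n, \qquad \phi_+(z,x,y) = \Bigl(y,\,\tfrac{x}{x\cdot y - 2z}\Bigr) =: (q,p),
\]
for which $\phi_+^*(p\cdot dq) = \lambda_+$ is immediate by inspection. For bijectivity I would invert $\phi_+$: the system $x = p_0(x\cdot y_0 - 2z)$ combined with $z^2+|x|^2 = 1$ yields $x = -2z\,p_0/(1 - p_0\cdot y_0)$ together with an explicit formula for $z^2$, with the sign of $z$ pinned down by $x\cdot y_0 > 2z$; the codimension-one locus $p_0\cdot y_0 = 1$ is filled in by continuity. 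The zero section of $T^*D^n$ is recovered as the image of the south pole fiber $\{z=-1,\,x=0\}\times D^n_y$, where $\lambda_+$ vanishes. An analogous map with signs flipped handles $R_-$.

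For the ideal compactification, as a point of $R_+$ approaches $\Gamma = \{x\cdot y = 2z\}$, the denominator $x\cdot y - 2z \to 0$ while $|x|\le 1$ remains bounded, forcing $|p|\to\infty$ along the direction $x/|x|\in S^{n-1}$. Hence $\phi_+$ extends continuously to $\overline{R_+}\to D^n_q\times\overline{\R^n_p}$ sending $\Gamma$ onto the sphere at infinity $D^n_q\times S^{n-1}_\infty$ in each cotangent fiber, which is exactly the fiberwise radial compactification described. The only genuinely delicate step is the sign bookkeeping and the degeneration at $p\cdot q=1$; neither is a serious obstacle, and the main computational content reduces to the one-line verification that $\phi_+$ pulls back $p\cdot dq$ to $\lambda_+$.
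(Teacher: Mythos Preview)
Your proof is correct and takes a genuinely different route from the paper's.

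The paper observes that $\partial_1 H_{n+1}$ is foliated by the Legendrian spheres $S^n_{y=b} = \{z^2+|x|^2=1\}\times\{y=b\}$, each meeting $\Gamma_{\partial_1 H_{n+1}}$ in an equatorial Legendrian $S^{n-1}$, and then invokes a general lemma (a relative Arnold--Liouville statement) asserting that any Liouville form on $\R^n\times D^n$ whose restriction to each slice $\{y=b\}$ vanishes is, after a fiberwise diffeomorphism, exact deformation equivalent to $x\cdot dy$. So the paper's argument is structural: the existence of a Lagrangian fibration by half-spheres forces the standard model. You instead carry out the computation directly, producing the explicit map $\phi_+(z,x,y) = (y,\,x/(x\cdot y - 2z))$ and checking in one line that it pulls back $p\cdot dq$ to $\lambda_+$. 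Your map is in fact fiberwise over $y$, so implicitly you are exploiting the same Lagrangian foliation, but you bypass the abstract normalization lemma entirely.

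What each approach buys: your argument is shorter and yields the sharper conclusion of an honest exact symplectomorphism rather than merely a deformation equivalence; the inverse formula via $\mu = 2/\sqrt{(1-p\cdot q)^2 + 4|p|^2}$ makes smoothness transparent and your treatment of the ideal boundary is clean. The paper's approach, by contrast, isolates the Legendrian ruling structure as a standalone lemma, which it then reuses when describing how the contact $(n+1)$-handle is attached (\autoref{prop:(n+1)_handle}); the normalization lemma is of independent interest and the foliation viewpoint is what makes the handle attachment go through. For the purpose of proving this single claim, though, your explicit map is the more economical argument.
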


\begin{proof}
By construction $\p_1 H_{n+1}$ is foliated by Legendrian spheres
$$S^{n}_{y=b}=\{z^2+|x|^2=1\}\times\{y=b\}$$
such that each $S^{n}_{y=b}$ intersects $\Gamma_{\p_1 H_{n+1}}$ in an equatorial sphere $S^{n-1}_{y=b}$ which is Legendrian in $\Gamma_{\p_1 H_{n+1}}$. The claim is a consequence of the following general lemma.
\end{proof}

\begin{lemma}\label{lemma: normalization of foliation}
Let $\alpha$ be a Liouville form on $\R^n\times D^n$ with coordinates $(x,y)$ such that the pullback of $\alpha$ to each $y=b$, $b\in D^n$, is zero and the Liouville vector field is positively transverse to each $S^{n-1}_{|x|=r}\times D^n$ for $r\geq 1$. Then after applying a fiberwise diffeomorphism of $\R^n\times D^n$ (i.e., takes each $\R^n\times\{y=b\}$ to itself) $\alpha$ is exact deformation equivalent to the Liouville form $x \cdot dy$.
\end{lemma}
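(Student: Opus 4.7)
The plan. Since $\alpha$ vanishes on every Lagrangian leaf $\R^n\times\{b\}$, we may write $\alpha=\sum_i g_i(x,y)\,dy_i$ for smooth functions $g_i$. Non-degeneracy of $d\alpha$ is equivalent to the invertibility at every point of the matrix $C_y(x):=(\partial g_i/\partial x_j)_{ij}$; equivalently, the fiberwise map
$$G_y:\R^n\to\R^n,\qquad x\mapsto (g_1(x,y),\dots,g_n(x,y)),$$
is a local diffeomorphism for each $y\in D^n$. In invariant language, $\alpha$ is the pullback of the tautological $1$-form $\sum p_i\,dq_i$ on $T^*D^n$ under $(x,y)\mapsto(G_y(x),y)$.

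Next I would compute the Liouville vector field from $i_Z\,d\alpha=\alpha$. Splitting $Z$ into $\partial_{x_k}$- and $\partial_{y_k}$-components and reading off the $dx_j$-coefficients forces the $\partial_{y_k}$-components of $Z$ to vanish---so $Z$ is tangent to the Lagrangian foliation, as it must be whenever a Lagrangian admits zero as a primitive---and the $dy_i$-coefficients then give $C_y Z_y=G_y$. Equivalently, $G_y$ intertwines the flow of $Z_y$ with the Euler flow on the target: $G_y\circ\phi_t^{Z_y}(x)=e^t G_y(x)$.

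The crux is to upgrade $G_y$ from a local to a global diffeomorphism of $\R^n$. First, the positive transversality of $Z_y$ at $|x|=1$ rules out a zero of $Z_y$ (hence of $G_y=C_y Z_y$) on the unit sphere, so $|G_y|\geq c>0$ there. Second, the outward transversality at every larger sphere $|x|=r\geq 1$ lets one track the forward flow of $Z_y$ starting on $\{|x|=1\}$: a standard dynamical argument shows that this flow sweeps out all of $\{|x|\geq 1\}$ with $|x|\to\infty$ along it; combining this with the intertwining relation and the lower bound on $|G_y|\big|_{\{|x|=1\}}$ yields $|G_y(x)|\to\infty$ as $|x|\to\infty$. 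Hence $G_y$ is proper, and a proper local diffeomorphism onto the simply connected target $\R^n$ is a covering of degree one, hence a diffeomorphism. Smooth dependence on $y$ of the inverse follows from the implicit function theorem.

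The fiberwise diffeomorphism $\Phi(x,y):=(G_y^{-1}(x),y)$ then satisfies
$$\Phi^*\alpha=\sum_i g_i(G_y^{-1}(x),y)\,dy_i=\sum_i x_i\,dy_i=x\cdot dy$$
on the nose, giving a tautological exact deformation equivalence (the deformation is constant). The main obstacle is the dynamical analysis underpinning properness of $G_y$: one must ensure the forward flow of $Z_y$ starting from $\{|x|=1\}$ sweeps out the entire exterior region $\{|x|\geq 1\}$ with $|x|\to\infty$, rather than, say, exiting finite regions by blow-up in finite flow time while $|G_y|$ remains bounded. Everything else is largely formal bookkeeping with the flow-conjugacy $G_y\circ\phi_t^{Z_y}=e^t G_y$.
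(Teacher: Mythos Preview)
Your setup---writing $\alpha=\sum_i g_i\,dy_i$, identifying $\alpha$ as the pullback of $x\cdot dy$ under $(x,y)\mapsto(G_y(x),y)$, and showing the Liouville field has no $\partial_y$-component---matches the paper's. The divergence is in the target: you aim to prove $G_y$ is a \emph{global} diffeomorphism of $\R^n$, hence $\Phi^*\alpha=x\cdot dy$ on the nose. The paper only claims exact \emph{deformation} equivalence, and this distinction is essential, because the stronger conclusion is false.

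The blow-up scenario you flag as the ``main obstacle'' actually occurs. Take $\alpha=\dfrac{x}{\sqrt{1+|x|^2}}\cdot dy$ on $\R^n\times D^n$. The Jacobian $C_y$ has determinant $(1+|x|^2)^{-(n+2)/2}>0$, so $d\alpha$ is symplectic; one computes $Z_y=(1+|x|^2)\sum_i x_i\,\partial_{x_i}$, which is positively transverse to every sphere $|x|=r$. But $G_y(x)=x/\sqrt{1+|x|^2}$ maps $\R^n$ onto the open unit ball and is not proper: the forward flow of $Z_y$ from $|x|=1$ reaches $|x|=\infty$ in finite time while $|G_y|$ stays below $1$. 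Since the image of $G_y$ is unchanged by precomposing with any diffeomorphism of $\R^n$, \emph{no} fiberwise diffeomorphism can make $\Phi^*\alpha=x\cdot dy$. (The lemma still holds here: the eigenvalues of $C_y$ are positive, so the linear interpolation $\alpha_t=(1-t)\alpha+t\,x\cdot dy$ remains symplectic.) Thus your properness step is not a gap to be filled but a wrong turn.

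The paper's route is different. After observing $Y=0$, it applies a fiberwise diffeomorphism to normalize the Liouville field to the Euler field $\sum_i x_i\,\partial_{x_i}$, so that in radial coordinates $\alpha=e^s\sum_i g_i(\theta)\,dy_i$ with $g_i$ depending only on $\theta\in S^{n-1}$. The symplectic condition then forces $G=(g_1,\dots,g_n)\colon S^{n-1}\to\R^n\setminus\{0\}$ to descend to a local diffeomorphism $\overline{G}\colon S^{n-1}\to S^{n-1}$, which for $n>2$ has degree $\pm1$ by simple connectivity; this lets one arrange $\alpha=x\cdot dy$ \emph{outside a compact region}, and the proof concludes by interpolating inside. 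The reduction to the sphere map replaces your properness argument, and the final interpolation---rather than being bookkeeping---is what allows the weaker (and correct) conclusion. (One might note that the paper's normalization of $X$ to the Euler field is itself delicate for exactly the completeness reason you raise; but the paper's framing at least leaves room for the interpolation to absorb such difficulties, whereas aiming for literal equality does not.)
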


\begin{proof}
This is the relative version of the Arnold-Liouville theorem.  Observe that $\alpha=\sum_i f_i(x,y) dy_i$ since the pullback of $\alpha$ to each $y=b$ is zero.  Since
\begin{equation}\label{d alpha}
d\alpha=\sum_i d_x f_i \wedge dy_i + \sum_i d_y f_i\wedge dy_i
\end{equation}
is symplectic, it follows that $d_xf_1\wedge \dots \wedge d_xf_n$ is nowhere vanishing, where $d_x$ and $d_y$ refer to the exterior derivatives in the $x$- and $y$-directions.  Hence $F: \R^n \to \R^n$, $(x,y)\mapsto (f_1(x,y),\dots,f_n(x,y),y)$, is a local diffeomorphism.

Next we normalize the Liouville vector field.  Writing it as $X+Y$, where $X$ and $Y$ are components in the $\bdry_x$ and $\bdry_y$-directions, we claim that $Y=0$.  Indeed, $$i_Xd\alpha=\sum_i X(f_i) dy_i, \quad i_Yd\alpha= -\sum_i Y(x_i) d_x f_i+\dots,$$
where the dots refer to terms in $dy_i$.  Then $i_{X+Y}d\alpha=\alpha$ implies $Y=0$, keeping in mind $d_x f_1\wedge\dots\wedge d_x f_n\not=0$. Hence, after applying a fiberwise diffeomorphism of $\R^n\times D^n$, we may assume that $X=\sum_i x_i\bdry_{x_i}$. Integrating along $X$ we obtain a radial coordinate $s$ and write $\alpha= e^s (g_1dy_1+\dots +g_n dy_n)$, where $(g_1,\dots,g_n)$ are functions of coordinates on $S^{n-1}$.

Finally, the contact condition for $\alpha$ implies that the map $G=(g_1,\dots,g_n): S^{n-1}\to \R^n$ avoids the origin and descends to a local diffeomorphism $\overline G: S^{n-1} \to S^{n-1}=(\R^n-\{0\})/ \R^+$.  If $n>2$, then this implies that $\deg\overline G=\pm 1$ (say $1$), which in turn implies that $F$ is a diffeomorphism.  Outside of a compact region, we can therefore arrange $\alpha$ to be of the form $x\cdot dy$.  Inside the compact region, we can interpolate between $\alpha$ and $x\cdot dy$, which gives the exact deformation of Liouville forms.
\end{proof}

\begin{remark}
The foliation $\{S^n_{y=b}\}_{b\in D^n}$ is the higher-dimensional analog of the {\em Legendrian ruling} in dimension three~\cite{Hon00}.
\end{remark}

\subsubsection{Legendrian foliation of a neighborhood of $0$-section of $J^1(S^{n-1})$} \label{subsubsection: 1 jet space}

We now explain how to foliate a (specific) neighborhood $N$ of the $0$-section of the $1$-jet space $(J^1(S^{n-1}),\ker (du+p\cdot dq))$ by an $S^{n-1}$-family of $n$-dimensional $\Theta$-disks $\Theta_q$.

Observe that $X= u \bdry_u +p\cdot \bdry_p$ is a contact vector field.  Writing $r=|p|$, $X$ can be written as $u\bdry_u + r\bdry_r$ and the $1$-form as $du + r\beta$, where $\beta$ is a contact $1$-form on $\Gamma_{u_0}:=\{r=R, u=u_0\}$ that does not depend on $u_0$.  We now use the contact form $\alpha= {1\over r} du +\beta$ for $J^1(S^{n-1})$.

Fix $R>0$. The idea is to start with the Legendrian foliation
$$F_{u_0,q_0}:=\{u=u_0, q=q_0, r\leq R-\epsilon\}, \quad |u_0|\leq \epsilon, \quad q_0\in S^{n-1},$$
perturb it, and to ``connect" the boundary components of $F_{u_0,q_0}$ for each $q_0$: First define a $1$-parameter of pairwise disjoint embedded arcs $\gamma_{u_0}\subset \R^2_{r,u}$, $u_0\in[-\epsilon,\epsilon]$, that connect from $(R-\epsilon,u_0)$ to $(R,0)$. Consider the Legendrian $$\Lambda_{q_0}=\{u=0,q=q_0, r=R\}\subset \Gamma_0.$$
We then apply Lemma~\ref{lemma: contact parallel transport} to construct a family of Legendrian annuli $L_{u_0,q_0}$ that lie over $\gamma_{u_0}$ and end at $\Lambda_{q_0}\subset \Gamma_0$.

It remains to connect up $F_{u_0,q_0}$ and $L_{u_0,q_0}$.  We leave it to the reader to verify using the technique of Lemma~\ref{lemma: Legendrian realization} that there exists a Legendrian foliation $F'_{u_0,q_0}$ such that $F'_{u_0,q_0}$ is close to $F_{u_0,q_0}$ if $\epsilon>0$ is small and $\bdry F'_{u_0,q_0}= \bdry L_{u_0,q_0}$.

We then set
\begin{gather*}
N= \sqcup_{q_0\in S^{n-1}} \Theta_{q_0}, \quad \Theta_{q_0}=\cup_{|u_0|\leq \epsilon} (F'_{u_0,q_0}\cup L_{u_0,q_0}).
\end{gather*}

\subsubsection{Description of $\p_2 H_{n+1}$}

Next let us turn to $\p_2 H_{n+1}$, which is an $S^{n-1}$-family of $(n+1)$-dimensional $\Theta$-disks $\Theta_y$, $y\in S^{n-1}$. The $v_{n+1}$-dividing set is
$$\Gamma_{\p_2 H_{n+1}} = \left\{ 2z = x \cdot y \right\} \subset \p_2 H_{n+1},$$
and the restriction $\Theta'_y:=\Gamma_{\p_2 H_{n+1}}\cap \Theta_y$ is an $n$-dimensional $\Theta$-disk.  By \cite[Lemma 3.8]{Hua15}, $\Gamma_{\p_2 H_{n+1}}$ is contactomorphic to $N$.  Next, the domain $R_+(\bdry_2 H_{n+1})^c$ is a bundle over $S^{n-1}$ whose fiber is half of the $(n+1)$-dimensional $\Theta$-disk $\Theta_y$ which we call $\Theta_y^+$, and $L_y:=\bdry \Theta_y^+ \cap R_+(\bdry_2 H_{n+1})^c$ is the standard Lagrangian disk that bounds the Legendrian unknot $\Lambda_y:=\bdry\Theta'_y$.  By standardizing the Legendrian unknots $\Lambda_y$ and the bounding Lagrangian disks $L_y$ together with the Lagrangian foliations, it is not hard to see that there is a Liouville vector field that:
\be
\item points out of $\Gamma_{\p_2 H_{n+1}}$, viewed as a subset of $\bdry (R_+(\bdry_2 H_{n+1})^c)$;
\item into $\bdry (R_+(\bdry_2 H_{n+1})^c)- \Gamma_{\p_2 H_{n+1}}$; and
\item is foliated by intervals.
\ee
The case of $R_-(\bdry_2 H_{n+1})^c$ is analogous.

\cb

\subsubsection{Description of $(n+1)$-handle}

The analog of \autoref{prop:n_handle} for contact $(n+1)$-handles is the following:

\begin{prop} \label{prop:(n+1)_handle}
Let $(M,\xi)$ be a $(2n+1)$-dimensional contact manifold with convex boundary $\Sigma = \p M$. Fix an outward-pointing contact vector field $v$ along $\Sigma$, let $\Gamma$ be the $v$-dividing set, viewed as a codimension 2
contact submanifold, and let $D_\pm \subset R_\pm$ be open Lagrangian disks (where $R_\pm$ are regarded as Liouville manifolds as in \autoref{lem:ideal_compactification}) such that $D_\pm$ have cylindrical ends that limit to the same Legendrian sphere $\Lambda \subset \Gamma = \p \overline{R}_\pm$. Let $\Sigma \setminus \Gamma = R_+ \cup R_-$ be the decomposition into positive and negative regions as in \autoref{subsec:convex_surface}. Then a contact $(n+1)$-handle can be attached to $(M, \xi)$ along $D_+ \cup D_-$ after applying Legendrian realization (\autoref{lemma: Legendrian realization}) to yield a new contact manifold $(M', \xi')$ with convex boundary $\Sigma' = \p M'$ such that if we write $\Sigma' \setminus \Gamma' = R'_+ \cup R'_-$, then:
		\begin{enumerate}
			\item as a Liouville manifold, $R'_+$ is the completion of a Liouville domain obtained by removing a standard neighborhood of the Lagrangian disk $D^c_+ := D_+ \cap R^c_+$ from $R^c_+$;
		
			\item as a Liouville manifold, $R'_-$ is the completion of a Liouville domain obtained by removing a standard neighborhood of the Lagrangian disk $D^c_- := D_- \cap R^c_-$ from $R^c_-$;
		
			\item as a contact manifold, $\Gamma'$ is obtained from $\Gamma$ by performing a contact $(+1)$-surgery along $\Lambda \subset \Gamma$; and
		
			\item as a smooth manifold, $\Sigma'$ is diffeomorphic to the union of $\overline{R'_+}$ and $\overline{R'_-}$, glued along their boundaries by the identity map.
		\end{enumerate}
\end{prop}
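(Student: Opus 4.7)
The plan is to mirror the proof of Proposition \ref{prop:n_handle} but with the contact $(n+1)$-handle playing the dual role. As a first step, I would apply Legendrian realization (Lemma \ref{lemma: Legendrian realization}) to $D_+$ and to $D_-$ inside an invariant collar $\Sigma\times\R$: after a $C^0$-small perturbation supported away from $\Gamma$, the union $\widetilde D_+\cup \widetilde D_-$ becomes a properly embedded Legendrian $(n+1)$-disk $\widetilde D$ in a new convex hypersurface, whose boundary is $\Lambda\subset \Gamma$ and whose intersection with each $R_\pm$ is $\widetilde D_\pm$. Call the resulting contact manifold (still called) $(M,\xi)$.

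Next I would identify a contact neighborhood of $\widetilde D$ in $M$ with a neighborhood of $\bdry_1 H_{n+1}$ in $(H_{n+1},\alpha_{n+1})$. By Claim \ref{claim: Legendrian ruling} and Lemma \ref{lemma: normalization of foliation}, each of $R_\pm(\bdry_1 H_{n+1})$ is, up to exact Liouville deformation, a standard $T^*D^n$ whose zero section corresponds to the half-disk on the handle side of $D_\pm^c$; on the other hand, a Weinstein Lagrangian neighborhood of $D_\pm^c\subset R_\pm^c$ is also a standard $T^*D^n$, so the two match. Near the boundary $\Lambda$, the dividing-set piece of $\bdry_1 H_{n+1}$ is, by the construction in \autoref{subsubsection: 1 jet space}, a standard $1$-jet neighborhood of $\Lambda$ in $J^1(S^{n-1})$, which agrees with a contact tubular neighborhood of $\Lambda \subset \Gamma$. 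Applying Lemma \ref{lemma: flexibility} and a Moser argument packaged as in Proposition \ref{prop:n_handle} upgrades this pair of matchings to a contactomorphism from a neighborhood of $\bdry_1 H_{n+1}$ to a neighborhood of $\widetilde D$. I would then glue $H_{n+1}$ to $M$ along this identification, extending $v$ across the handle by (a suitably cut off) $v_{n+1}$, and round the corner along $\Gamma_{\bdry_1 H_{n+1}}$ as in Remark \ref{rmk:corner_rounding}.

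With the handle attached, the new boundary is $\Sigma'=(\Sigma\setminus\text{attaching region})\cup \bdry_2 H_{n+1}$. For (1) and (2), the description after Claim \ref{claim: Legendrian ruling} shows that $R_\pm(\bdry_2 H_{n+1})^c$ is an $S^{n-1}$-bundle of half $\Theta$-disks, carrying a Liouville vector field which points out along $\Gamma_{\bdry_2 H_{n+1}}$ and has interval fibers. Under the gluing, this is precisely the dual picture to removing a standard cotangent neighborhood of $D_\pm^c$ from $R_\pm^c$, yielding the claimed Liouville manifolds after completion. For (3), the effect on the dividing set is obtained by deleting the standard neighborhood of $\Lambda$ and filling back in $\Gamma_{\bdry_2 H_{n+1}}\cong N$; this is exactly contact $(+1)$-surgery along $\Lambda\subset\Gamma$ (dual to the $(-1)$-surgery in Proposition \ref{prop:n_handle}(3), since the handle is ``upside down''). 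Item (4) is tautological from the construction.

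The main obstacle will be the contact identification in the second step. Unlike the $n$-handle case, where $\bdry_1 H_n$ is essentially a contact tubular neighborhood of a Legendrian sphere in $\Gamma$, here $\bdry_1 H_{n+1}$ is a genuinely higher-dimensional convex hypersurface whose $R_\pm$-sides are cotangent disks foliated by Legendrian spheres and whose dividing set is a neighborhood of a Legendrian in $\Gamma$. Matching the Legendrian ruling on the handle with the one induced on a neighborhood of $\widetilde D$ requires combining Lemma \ref{lemma: normalization of foliation} with the explicit perturbation scheme of \autoref{subsubsection: 1 jet space}; the delicate point is patching these two local models into a single contactomorphism that is the identity near $\Gamma$ and compatible with $v$ on the overlap, so that Lemma \ref{lemma: flexibility} can finish the argument.
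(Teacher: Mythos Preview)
Your approach is essentially the same as the paper's: apply the Flexibility Lemma to arrange a Legendrian-ruled neighborhood of $D_+\cup D_-$, use Claim~\ref{claim: Legendrian ruling} (together with Flexibility) to identify the contact germ on $\bdry_1 H_{n+1}$ with that on this neighborhood, and then read off (1)--(4) from the description of $\bdry_2 H_{n+1}$. One small slip: $\widetilde D_+\cup\widetilde D_-$ is a Legendrian $n$-sphere (two $n$-disks glued along $\Lambda$), not an $(n+1)$-disk with boundary $\Lambda$; this is the attaching sphere of the $(n+1)$-handle, and the rest of your argument is unaffected.
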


\begin{proof}
By applying the Flexibility Lemma (Lemma~\ref{lemma: flexibility}), we may assume that $D_+\cup D_-$ has a Legendrian ruled neighborhood $N(D_+\cup D_-)$ in $\Sigma$.  By Claim~\ref{claim: Legendrian ruling} (and also possibly applying the Flexibility Lemma) the contact germ on $\p_1 H_{n+1}$ can be identified with the restriction of the contact germ on $\Sigma$ to $N(D_+\cup D_-)$.

(1), (2).  The analysis on the contact germ on $\p_2 H_{n+1}$ from above shows that $R'_+ = R_+ (\Sigma')$ is obtained from $R_+$ by first removing a standard neighborhood of the Lagrangian disk $D_+ \subset R_+$ and then (partially) completing the Liouville domain. The $R'_-$ case is analogous.  (3) and (4) are immediate.
\end{proof}

\begin{remark}
We say that a convex hypersurface $\Sigma$ is {\em Weinstein} if both $R_{\pm} (\Sigma)$ are Weinstein manifolds.  The Weinstein property is preserved by arbitrary contact $n$-handle attachments by \autoref{prop:n_handle}. On the other hand, the Weinstein property is not necessarily preserved by contact $(n+1)$-handle attachments since we are removing neighborhoods of Lagrangian disks by \autoref{prop:(n+1)_handle}(1) and (2). This problem is closely related to the regularity problem of Lagrangian submanifolds in Weinstein manifolds in the sense of Eliashberg-Ganatra-Lazarev \cite[Problem 2.5]{EGL15}.
\end{remark}

\section{The Legendrian sum and Legendrian handleslides} \label{sec:Legendrian_sum}

In this section we review the Legendrian sum operation and the description of Legendrian handleslides in terms of Legendrian sums. The material in this section is well-known to the experts. In particular the theory of Legendrian handleslides and their front presentations is worked out in some detail by Casals-Murphy \cite{CM16,CM16_preprint}. For our later purposes, we will also consider the Legendrian boundary sum operation in \autoref{subsec:boundary_sum}.

\subsection{Resolution of Legendrian intersections and the Legendrian sum} \label{subsec:resolution}

Consider two Legendrian submanifolds $\Lambda_1$ and $\Lambda_2$ in a contact manifold $(M,\xi)$ which intersect at one point $p$ and such that the intersection is {\em $\xi$-transverse}, i.e., $\xi_p=T_p \Lambda_1\oplus T_p \Lambda_2$. This nongeneric condition is usually achieved by starting with the shortest Reeb chord for some contact form $\alpha$ for $\xi$ from $\Lambda_2$ to $\Lambda_1$ and flowing $\Lambda_2$ using the time-$t$ Reeb flow $\phi_t$ for the Reeb vector field $R_\alpha$ until $\phi_t(\Lambda_2)$ intersects $\Lambda_1$.

The goal of this subsection is to construct a new Legendrian submanifold $\Lambda_1 \uplus \Lambda_2$ by resolving the $\xi$-transverse intersection at $p$. We will call the resulting Legendrian $\Lambda_1 \uplus \Lambda_2$ the {\em Legendrian sum} of $\Lambda_1$ and $\Lambda_2$ at $p$, and call the triple $(\Lambda_1, \Lambda_2, \Lambda_1 \uplus \Lambda_2)$ a {\em Legendrian triangle}.

\begin{remark}
Our construction here is the contact analog of Polterovich's \cite{Pol91} Lagrangian connected sum for transverse Lagrangian intersections in symplectic manifolds.
\end{remark}

\begin{remark}
We write $\Lambda_1 \uplus \Lambda_2$ instead of $\Lambda_1 \# \Lambda_2$ since the latter is commonly used to denote the Legendrian connected sum, {\em which is a different operation,} although smoothly $\Lambda_1 \uplus \Lambda_2$ is diffeomorphic to the connected sum of $\Lambda_1$ and $\Lambda_2$.
\end{remark}

\subsubsection{The construction} \label{subsec:legendrian_sum}

Since our construction is local, consider a Darboux ball $B(p)$ around $p$ in $(M,\xi)$ which is contactomorphic to the unit open ball in $(\R^{2n+1},\xi_0)$, where $\xi_0=\ker\alpha_0$ and $\alpha_0 = dz - 2y \cdot dx - x \cdot dy$. Here $x = (x_1, \dots, x_n), y=(y_1, \dots, y_n)$, and $z$ are the usual coordinates on $\R^{2n+1}$. Without loss of generality, we may further assume $\Lambda_1 \cap B(p)$ is identified with $D_1 = \left\{ z=y=0, |x|<1 \right\}$ and $\Lambda_2 \cap B(p)$ is identified with $D_2 = \left\{ z=x=0, |y|<1 \right\}$.

Given $\epsilon > 0$, consider the smooth embedding
\begin{gather*}
\phi_\epsilon: S^{n-1} \times \R \to \R^{2n+1}_{x,y,z},\\
(a=(a_1,\dots,a_n),t)\mapsto (\epsilon e^t a,  \epsilon e^{-2t} a,0),
\end{gather*}
where $ S^{n-1}$ is regarded as the unit sphere in $\R^n$.

\begin{lemma}
The image $T:=\phi(S^{n-1}\times\R)$ of $\phi$ is a Legendrian submanifold in $(\R^{2n+1},\xi_0)$.
\end{lemma}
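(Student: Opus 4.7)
The plan is to verify the two defining properties of a Legendrian submanifold by direct computation: $T$ has the correct dimension $n = \tfrac{1}{2}(\dim \R^{2n+1} - 1)$, and $\phi_\epsilon^* \alpha_0 = 0$. Both follow from elementary manipulations, with the single key input being the constraint $|a|^2 = 1$ (and hence $a \cdot da = 0$) that holds on $S^{n-1}$.

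First I would confirm the dimension and embedded nature of $T$. Parametrizing $(a,t) \in S^{n-1}\times\R$, the radial coordinate recovers $t$ via $|x| = \epsilon e^t$, and then $a = x/|x|$, so $\phi_\epsilon$ is injective; moreover the tangent vectors $\phi_{\epsilon *}(\partial_t)$ and $\phi_{\epsilon *}(v)$ for $v \in T_a S^{n-1}$ are linearly independent since the former has a nonzero radial component in the $x$-direction while the latter are purely tangential, so $\phi_\epsilon$ is an immersion of an $n$-manifold into $\R^{2n+1}$.

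Next I would compute $\phi_\epsilon^* \alpha_0$. Since $z \equiv 0$ on $T$, the first term contributes nothing. Writing $x_i = \epsilon e^t a_i$ and $y_i = \epsilon e^{-2t} a_i$, one gets
\begin{align*}
\phi_\epsilon^*(2y\cdot dx) &= 2\epsilon^2 e^{-t}\left(|a|^2\,dt + a\cdot da\right),\\
\phi_\epsilon^*(x\cdot dy) &= \epsilon^2 e^{-t}\left(-2|a|^2\,dt + a\cdot da\right).
\end{align*}
The crucial observation is that the constraint $|a|^2=1$ on $S^{n-1}$ gives $a \cdot da = \tfrac{1}{2}d(|a|^2) = 0$ and $|a|^2=1$, so both pullbacks collapse to multiples of $dt$ alone: $\phi_\epsilon^*(2y\cdot dx) = 2\epsilon^2 e^{-t}\,dt$ and $\phi_\epsilon^*(x\cdot dy) = -2\epsilon^2 e^{-t}\,dt$. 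The two contributions cancel, giving $\phi_\epsilon^* \alpha_0 = 0$.

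There is no real obstacle here — the exponents $e^t$ and $e^{-2t}$ in the definition of $\phi_\epsilon$ are precisely engineered so that the two cross terms in $\alpha_0 = dz - 2y\cdot dx - x\cdot dy$ cancel after restricting to the unit sphere. (This is also the reason for the asymmetric $2{:}1$ weighting in the contact form $\alpha_0$.) The one thing worth flagging for the reader is that the same computation would fail if $a$ were allowed to vary in $\R^n$ rather than $S^{n-1}$, so the role of the sphere constraint $|a|=1$ should be made explicit in the writeup.
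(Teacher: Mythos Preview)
Your proof is correct and takes essentially the same approach as the paper: both verify $\phi_\epsilon^*\alpha_0 = 0$ by direct computation, using $|a|^2 = 1$ and $a\cdot da = 0$ on $S^{n-1}$ so that the $2y\cdot dx$ and $x\cdot dy$ terms cancel. You additionally spell out the embedding and dimension count, which the paper leaves implicit.
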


\begin{proof}
This follows from the following calculation:
	\begin{align*}
		\alpha_0|_T &= -\epsilon^2 \left(2e^{-2t} a \cdot d(e^t a) + e^t a \cdot d(e^{-2t} a) \right) \\
				  &= -\epsilon^2 \left(2e^{-2t} d(e^t) +e^t d(e^{-2t}) \right) = 0,
	\end{align*}
where we used the identity $a \cdot da=0$ since $|a|^2= 1$.
\end{proof}

To actually construct $\Lambda_1 \uplus \Lambda_2$, we need some cutoff operation which we explain now: Let
\begin{gather*}
A_1(r) = \left\{ r < |x| <1, z=y=0 \right\} \subset D_1,\\
A_2(r) = \left\{ r < |y| <1, z=x=0 \right\} \subset D_2.
\end{gather*}
Choosing $0 < \epsilon \ll r \ll 1$, we can assume, by the Legendrian neighborhood theorem, that $T \cap \left\{ |x|>r \right\}$ is given as the 1-jet $J^1 (f)$ of a $C^1$-small smooth function $f: A_1(r) \to \R$. Now choose a cutoff function $\psi: (r,1) \to \R$ such that $\psi(s)=1$ for $s \in (r, 2r)$ and $\psi(s)=0$ for $s \in (3r, 1)$. We approximate $J^1(f)$ by $J^1(\widetilde f)$, where $\widetilde f: A_1(r) \to \R$ is given by $\widetilde f(x)=f(x) \psi(|x|)$. Similarly we approximate $T \cap \left\{ |y|>r \right\}$ by the 1-jet $J^1(\widetilde g)$ of a $C^1$-small function $\widetilde g: A_2(r) \to \R$.
By construction $J^1 (\widetilde f)$ and $J^1 (\widetilde g)$ agree with $\Lambda_1$ and $\Lambda_2$ outside of a ball of radius $3r$, respectively.

We finally define $\Lambda_1 \uplus \Lambda_2$ by
\begin{equation} \label{eqn:legsurgery}
	\Lambda_1 \uplus \Lambda_2=
		\begin{cases}
			\Lambda_1 \cup \Lambda_2 & \text{ on } M \setminus B(p), \\
			\left( T \cap \left\{ |x| < 2r, |y| < 2r, z=0 \right\} \right) \cup J^1(\widetilde f) \cup J^1(\widetilde g) & \text{ on } B(p).
		\end{cases}	
\end{equation}
The Legendrian isotopy class of $\Lambda_1 \uplus \Lambda_2$ is independent of the choice of the Darboux ball, sufficiently small constants $\epsilon, r$, and the cutoff functions.

\subsubsection{Front and Lagrangian projections} \label{subsec:front_vs_lagrangian}

We now describe the Legendrian sum in the front and Lagrangian projections. Given the local nature of Legendrian sums, it suffices to consider the standard contact space $(\R^{2n+1},\xi_{\std}= \ker(dz- y \cdot dx))$.

\s\n
{\em Front projection.}
Suppose $\Lambda_1$ and $\Lambda_2$ intersect transversely at the origin such that the front projection is as shown in \autoref{fig:front_resolution}(a). Then the front projections of $\Lambda_1 \uplus \Lambda_2$ and $\Lambda_2 \uplus \Lambda_1$ are shown in \autoref{fig:front_resolution}(b) and \autoref{fig:front_resolution}(c). It is clear from this point of view that $\Lambda_1 \uplus \Lambda_2$ is in general not Legendrian isotopic to $\Lambda_2 \uplus \Lambda_1$.

\begin{figure}[ht]
\centerline{	\begin{overpic}[scale=.3]{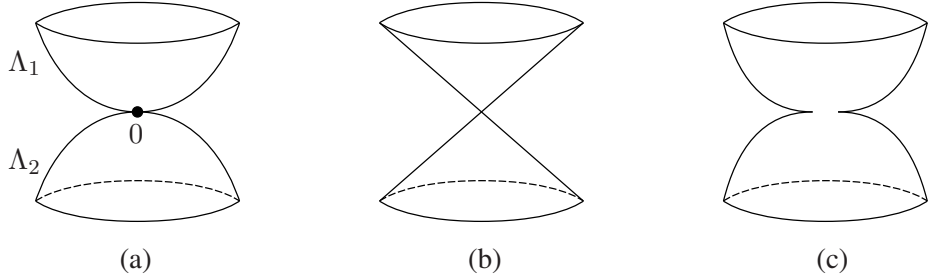}
		\put(10.5,9){$0$}
		\put(-3,17){$\Lambda_1$}
		\put(-3,6){$\Lambda_2$}
		\put(9.5,-5){(a)}
		\put(48.5,-5){(b)}
		\put(87.5,-5){(c)}
	\end{overpic}}
	\vspace{5mm}
	\caption{(a) The Legendrians $\Lambda_1$ and $\Lambda_2$ intersect $\xi$-transversely at 0. (b) The front projection of $\Lambda_1 \uplus \Lambda_2$. (c) The front projection of $\Lambda_2 \uplus \Lambda_1$.}
	\label{fig:front_resolution}
\end{figure}

\s\n
{\em Lagrangian projection.} Now suppose that $\Lambda_1$ and $\Lambda_2$ are diffeomorphic to $S^n$. Consider the Milnor fiber $A_2$ obtained by plumbing two copies of $T^\ast S^n$, and let $\lambda_2$ be its standard Liouville form. Let $L_1, L_2 \subset A_2$ be the $0$-sections of the two copies of $T^\ast S^n$. Then $\Sk(A_2) := L_1 \cup L_2$ is a {\em Lagrangian skeleton} of $A_2$.

Consider the contact manifold $(\R_t \times A_2, \xi_2)$ with the contact form $\alpha_2 = dt + \lambda_2$. Then the natural embedding $\Sk(A_2) \subset \left\{ 0 \right\} \times A_2 \subset \R \times A_2$ defines an immersed Legendrian submanifold with one $\xi_2$-transverse double point. Let $N_\epsilon \left( \Sk(A_2) \right) \subset \R \times A_2$ be an $\epsilon$-neighborhood of $\Sk(A_2)$. Then by the standard Legendrian neighborhood theorem, there exists a contact embedding
	\begin{equation*}
		\iota: \left( N_\epsilon \left( \Sk(A_2) \right), \xi_2|_{N_\epsilon \left( \Sk(A_2) \right)} \right) \hookrightarrow (M,\xi),
	\end{equation*}
such that $\iota(L_1) = \Lambda_1$ and $\iota(L_2) = \Lambda_2$.

Let $\pi: \R \times A_2 \to A_2$ be the projection map. Then the Lagrangian projection $\pi \circ \iota^{-1} (\Lambda_1 \uplus \Lambda_2)$ is exact\footnote{Here ``exact'' is automatic since we are dealing with spheres.} Lagrangian isotopic to $\tau_{L_1}^{-1}(L_2)=\tau_{L_2} (L_1)$, where $\tau_{L_1}$ is the positive Dehn twist along $L_1$ (cf.\ Seidel \cite{Sei03}). Similarly, $\pi \circ \iota^{-1} (\Lambda_2 \uplus \Lambda_1)$ is exact Lagrangian isotopic to $\tau_{L_2}^{-1} (L_1)=\tau_{L_1}(L_2)$. Conversely, using the contact embedding $\iota$, we may define $\Lambda_1 \uplus \Lambda_2$ to be the Legendrian lift of the exact Lagrangian sphere $\tau_{L_2} (L_1)$.

\begin{remark}
$\tau_{L_1}^{-1}(L_2)=\tau_{L_2} (L_1)$ is the same as the Lagrangian connected sum $L_1\# L_2$, where we are using the conventions of \cite{Au14}.
\end{remark}

From now on we assume the following convention:

\begin{convention} \label{convention for triangle}
When we write $(\Lambda_1, \Lambda_2, \Lambda_1 \uplus \Lambda_2)$ for a Legendrian triangle, we always assume that $\Lambda_1\uplus \Lambda_2\subset \iota( N_\epsilon(\Sk(A_2)))$ is a representative of its isotopy class such that:
\be
\item $\Lambda_2$ and $\Lambda_1\uplus \Lambda_2$ and $\xi$-transversely intersect at a point;
\item there are no $\iota_*\alpha_2$-Reeb chords between $\Lambda_2$ and $\Lambda_1\uplus \Lambda_2$ inside $\iota( N_\epsilon(\Sk(A_2)))$ besides the $\xi$-transverse intersection; and
\item there is a single $\iota_*\alpha_2$-Reeb chord from $\Lambda_1$ to $\Lambda_1\uplus \Lambda_2$ inside $\iota( N_\epsilon(\Sk(A_2)))$.
\ee
\end{convention}

\subsection{Basic properties of the Legendrian sum} \label{subsec:Legendrian_sum_property}

In this section we study some basic properties of the Legendrian sum operation which will be used in our analysis of bypass attachments.

Let $U \in (M, \xi)$ be the {\em standard Legendrian unknot}, i.e., i.e., it is Legendrian isotopic to the Legendrian unknot given by \autoref{fig:unknot} in a Darboux chart.
\begin{figure}[ht]
	\begin{overpic}[scale=.4]{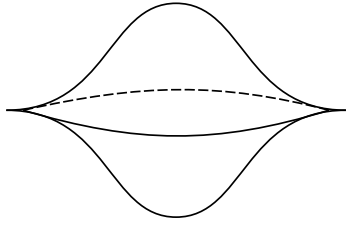}
	\end{overpic}
	\caption{The front projection of the standard Legendrian unknot.}
	\label{fig:unknot}
\end{figure}

\begin{claim}
A Legendrian sphere is standard if and only if it bounds a $\Theta$-disk.
\end{claim}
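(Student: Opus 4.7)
The plan is to leverage two ingredients: the uniqueness of the contact germ on a $\Theta$-disk (\cite[Lemma~3.8]{Hua15}, already cited in the paper) and an identification of the boundary of the model $\Theta$-disk with the standard Legendrian unknot of Figure 3.1. Once these are in hand, both directions of the equivalence are routine.

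For the reverse direction, suppose $\Lambda = \p D$ for some $\Theta$-disk $D$ in $(M,\xi)$. By \cite[Lemma~3.8]{Hua15}, there is a contactomorphism from a neighborhood of $D$ in $M$ onto a neighborhood of the model disk $\Theta \subset (\Theta \times \R^n_y, \ker \alpha_{n+1})$ sending $D$ to $\Theta$. In particular $\Lambda$ is carried to $\p\Theta$. It then suffices to show $\p\Theta$ is the standard Legendrian unknot in the Darboux ball $(\Theta \times \R^n_y, \ker \alpha_{n+1})$.

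For the forward direction, suppose $\Lambda$ is standard, so that there is a Darboux ball $B \subset M$ and a Legendrian isotopy inside $B$ from $\Lambda$ to the front of Figure 3.1. Given the identification of $\p\Theta$ with the standard unknot, we can place a model $\Theta$-disk in $B$ whose boundary realizes the standard unknot; applying the Legendrian isotopy extension theorem to prolong the isotopy to an ambient contactomorphism supported in a slight enlargement of $B$, and pulling back the model $\Theta$-disk, produces a $\Theta$-disk bounded by $\Lambda$.

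The main obstacle is the identification of $\p\Theta \subset (\Theta \times \R^n_y, \ker\alpha_{n+1})$ with the standard unknot of Figure 3.1. I would do this by first verifying that $(\Theta \times \R^n_y, \ker\alpha_{n+1})$ is a Darboux ball (the contact form is defined on a contractible domain with no overtwisted object), so $\p\Theta$ sits in standard contact $\R^{2n+1}$, and then exhibiting an explicit Legendrian isotopy: the front projection of $\p\Theta$, obtained by projecting out the $y$-coordinates, is a smooth $n$-sphere whose front is a pair of caps meeting along the cusp locus $\{z=0,|x|=1\}$, which matches the front of Figure 3.1 up to a standard isotopy. Alternatively, one can argue abstractly by noting that a standard neighborhood of the zero section of $J^1(S^n)$ naturally contains a $\Theta$-disk whose boundary is the zero section (analogous to the construction in \autoref{subsubsection: 1 jet space} shifted up by one dimension), so the uniqueness of contact germs then forces $\p\Theta$ in the model to be Legendrian isotopic to any Legendrian sphere with a standard $J^1$-neighborhood, i.e., to the standard unknot.
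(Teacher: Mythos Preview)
The paper states this claim without proof, so there is no reference argument to compare against. Your reduction is the natural one: both directions come down to showing that in a Darboux ball the boundary of the model $\Theta$-disk is the standard Legendrian unknot, and then invoking \cite[Lemma~3.8]{Hua15} and the Legendrian isotopy extension theorem. That skeleton is correct.

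The execution of the key identification, however, has genuine gaps. In your first argument, you treat ``projecting out the $y$-coordinates'' as a front projection, but the model carries the contact form $\alpha_{n+1}$, not $dz-y\cdot dx$; since $\partial\Theta$ already sits in $\{y=0\}$, that projection lands on the \emph{smooth} unit sphere $\{z^2+|x|^2=1\}$, which has no cusp locus and is not a front in the standard sense. One must first produce an explicit contactomorphism to standard $(\R^{2n+1},\xi_{\std})$ before any front comparison is meaningful. Your second argument is incorrect as written: the zero section of $J^1(S^n)$ generates $H_n(J^1(S^n))\cong\Z$ and hence cannot bound \emph{any} $(n{+}1)$-disk inside $J^1(S^n)$; the construction of \S\ref{subsubsection: 1 jet space} foliates a neighborhood of the zero section by $\Theta$-disks whose boundaries lie on $\partial N$, not on the zero section. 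Moreover, your concluding clause (``Legendrian isotopic to any Legendrian sphere with a standard $J^1$-neighborhood'') proves too much, since by the Legendrian neighborhood theorem every Legendrian sphere has such a neighborhood.

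A workable repair: first check that the linear path $\alpha_s = dz + x\cdot dy + s\bigl((z^2-|x|^2)\,dz + 2z\,x\cdot dx\bigr)$ satisfies $\alpha_s\wedge(d\alpha_s)^n = n!\,(1+s(z^2+3|x|^2))\,dz\wedge(dx\wedge dy)^n>0$, so Moser identifies $(\R^{2n+1},\ker\alpha_{n+1})$ with the standard structure. Then, in standard coordinates, explicitly build a $\Theta$-disk bounding the unknot of Figure~\ref{fig:unknot}---for instance as the union over $s\in[-1,1]$ of the $1$-jet graphs of $s(1-|x|^2)^{3/2}$ over $|x|\le 1$, checked to give an embedded $(n{+}1)$-disk with the correct singular Legendrian foliation. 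The germ uniqueness of \cite[Lemma~3.8]{Hua15}, combined with the connectedness of the contactomorphism group of standard $\R^{2n+1}$ (contact Alexander trick via the dilation $(z,x,y)\mapsto(t^2z,tx,ty)$), then yields the desired Legendrian isotopy between $\partial\Theta$ and the standard unknot.
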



Let us first examine the properties of $\Lambda \uplus U$, where $\Lambda$ is a Legendrian submanifold that intersects $U$ $\xi$-transversely at one point. To do so, we need to specify the relative positions of $\Lambda$ and $U$. Here we are interested in two cases:
	\begin{enumerate}
		\item We say $U$ is {\em above} $\Lambda$ if $U$ bounds a $\Theta$-disk $B$ such that $\Lambda \cap B^\epsilon = \varnothing$ for any small $\epsilon>0$. Here $B^\epsilon = \phi_\epsilon (B)$ and $\phi_t: (M, \xi)\stackrel\sim \to (M, \xi)$ denotes the time-$t$ flow of the Reeb vector field associated to some contact form.

		\item We say $U$ is {\em below} $\Lambda$ if $U$ bounds a $\Theta$-disk $B$ such that $\Lambda \cap B^{-\epsilon} = \varnothing$ for any small $\epsilon > 0$.
	\end{enumerate}

We then have the following:

\begin{lemma} \label{lem:plus_unknot}
	If $U$ is above $\Lambda$, then $U \uplus \Lambda$ is Legendrian isotopic to $\Lambda$ and $\Lambda \uplus U$ is a stabilization of $\Lambda$. If $U$ is below $\Lambda$, then $U \uplus \Lambda$ is a stabilization of $\Lambda$ and $L \uplus U$ is Legendrian isotopic to $\Lambda$.
\end{lemma}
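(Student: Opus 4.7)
The proof is local: the entire Legendrian sum construction takes place in a Darboux ball around $p = \Lambda \cap U$. I would first normalize the local picture. By the Legendrian neighborhood theorem, identify $(B(p), \xi|_{B(p)})$ with a ball in $(\R^{2n+1}, \ker \alpha_0)$ so that $\Lambda$ coincides with $D_1 = \{z = y = 0\}$ and the germ of $U$ at $p$ coincides with $D_2 = \{z = x = 0\}$. The Reeb vector field of $\alpha_0$ at the origin is a positive multiple of $\partial_z$, so the hypothesis that $U$ is above (resp.\ below) $\Lambda$ translates to the $\Theta$-disk $B$ bounded by $U$ lying on the $\{z > 0\}$ (resp.\ $\{z < 0\}$) side of $\Lambda$ after a small positive Reeb push.

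Assume $U$ is above $\Lambda$, and consider $U \uplus \Lambda$ first. By the local formula \eqref{eqn:legsurgery}, outside a ball of radius $3r$ this Legendrian equals $\Lambda \cup U$, and inside it is a smoothing through the standard torus $T$ joined to the cut-off $1$-jets $J^1(\widetilde f)$ and $J^1(\widetilde g)$. The plan is to isotope $U \uplus \Lambda$ to $\Lambda$ by first shrinking the portion coming from $U$ across its $\Theta$-disk $B$ until it becomes an arbitrarily small standard Legendrian unknot sitting well above $\Lambda$, and then cancelling this unknot. The shrinking step is realized by contact parallel transport (\autoref{lemma: contact parallel transport}) along a path in a product neighborhood of $B$ produced by \autoref{lemma: hypersurface_nbhd_size}; the condition that $B$ lies above $\Lambda$ ensures that this isotopy stays disjoint from $\Lambda$ throughout. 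The cancellation step is an inverse stabilization, possible because a sufficiently small standard Legendrian unknot bounds an embedded $\Theta$-disk and admits a compactly supported Legendrian isotopy into a Darboux chart where it shrinks to a point.

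For $\Lambda \uplus U$ in the same "above" configuration, the opposite smoothing of the intersection pulls a $\Theta$-disk-shaped bulge out of $\Lambda$ in the direction of $B$, producing exactly the standard front model of a Legendrian stabilization: the new Legendrian agrees with $\Lambda$ outside a small ball and acquires an $S^{n-1}$-family of cusps inside, whose cusp locus is the equator of the $\Theta$-disk assembled from $B$ and a small disk in $\Lambda$. An explicit coordinate comparison in the local model of Section 4.1.1 with the standard stabilization front identifies the two. The "below" case is obtained by the symmetric argument: an orientation-reversing symmetry of $(\R^{2n+1}, \ker \alpha_0)$ exchanging $\{z > 0\}$ and $\{z < 0\}$ simultaneously exchanges the above/below hypothesis and interchanges the two resolutions, swapping $U \uplus \Lambda$ with $\Lambda \uplus U$. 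The main technical obstacle is making the shrinking isotopy in the second paragraph rigorous — in particular, realizing it as a compactly supported ambient contact isotopy of $(M, \xi)$ whose support avoids $\Lambda$; this is where the neighborhood-size estimates of \autoref{lemma: hypersurface_nbhd_size} and \autoref{cor:dividing_set_nbhd_size}, together with Legendrian realization (\autoref{lemma: Legendrian realization}) applied to $B$, are essential.
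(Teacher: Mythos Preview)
Your overall strategy --- shrink the unknot $U$ across its $\Theta$-disk and observe that one resolution becomes a Reidemeister~I feature while the other becomes a stabilization --- is morally the right picture, and it is essentially what the paper does. But the paper carries it out in one line by working directly in the front projection: with $U$ above $\Lambda$, the front description of the Legendrian sum in \autoref{fig:front_resolution}(b) shows that $U \uplus \Lambda$ is obtained from $\Lambda$ by a generalized Reidemeister~I move, while \autoref{fig:front_resolution}(c) shows that $\Lambda \uplus U$ is the standard stabilization model. No ambient contact isotopy needs to be constructed by hand; the front picture already exhibits the Legendrian isotopy.

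Where your proposal goes astray is in the machinery you invoke. \autoref{lemma: hypersurface_nbhd_size} and \autoref{cor:dividing_set_nbhd_size} concern invariant neighborhoods of convex \emph{hypersurfaces} and their dividing sets; the $\Theta$-disk $B$ is middle-dimensional, not a hypersurface, so these lemmas do not apply, and there is no dividing set in sight. Likewise, Legendrian realization (\autoref{lemma: Legendrian realization}) lifts Lagrangians in $R_\pm$ to Legendrians on a convex hypersurface --- again irrelevant here. The ``shrinking isotopy'' you want is simply the contraction of the front of $U$ through its $\Theta$-disk, which in the front projection is exactly the Reidemeister~I cancellation; no neighborhood-size estimates are needed. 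Finally, the phrase ``inverse stabilization'' is dangerous: stabilizations are not invertible by Legendrian isotopy. What you mean is undoing a Reidemeister~I move, which \emph{is} a Legendrian isotopy --- but that is precisely the distinction between the two cases of the lemma, so the terminology should be kept clean.
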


\begin{proof}
	The assertions of the lemma are most easily verified using the front projection picture of the Legendrian sum (cf.\ \autoref{fig:front_resolution}). If $U$ is above $\Lambda$, then, by Figure~\ref{fig:front_resolution}(b), $U \uplus \Lambda$ is obtained from $\Lambda$ by performing a generalized Reidemeister I move and is therefore Legendrian isotopic to $\Lambda$; by Figure~\ref{fig:front_resolution}(c), $\Lambda\uplus U$ is a stabilization of $\Lambda$. The case of $U$ below $\Lambda$ is analogous.
\end{proof}

The following lemma asserts that the Legendrian sum operation is associative and justifies the notation $\Lambda_1\uplus \Lambda_2\uplus \Lambda_3$.

\begin{lemma}[Associativity] \label{lem:associativity}
If $\Lambda_i, i=1,2,3$, are Legendrian submanifolds such that $\Lambda_1$ $\xi$-transversely intersects $\Lambda_2$ at one point, $\Lambda_2$ $\xi$-transversely intersects $\Lambda_3$ at one point, and $\Lambda_1 \cap \Lambda_3 = \emptyset$, then $(\Lambda_1 \uplus \Lambda_2) \uplus \Lambda_3$ is Legendrian isotopic to $\Lambda_1 \uplus (\Lambda_2 \uplus \Lambda_3)$.
\end{lemma}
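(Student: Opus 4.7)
The plan is to use the Lagrangian-projection description of the Legendrian sum from \autoref{subsec:front_vs_lagrangian}, which identifies $\Lambda_i \uplus \Lambda_j$ (up to Hamiltonian isotopy of its projection) with the Dehn twist $\tau_{L_j}(L_i)$. Reducing associativity of $\uplus$ to an identity of iterated Dehn twists, the statement then follows from the standard naturality of Dehn twists under symplectomorphisms.

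First, I would fix a common local model. Since the two intersection points $p_{12} = \Lambda_1 \cap \Lambda_2$ and $p_{23} = \Lambda_2 \cap \Lambda_3$ are disjoint and $\Lambda_1 \cap \Lambda_3 = \varnothing$, the Legendrian neighborhood theorem applied to the skeleton $\Lambda_1 \cup \Lambda_2 \cup \Lambda_3$ provides a contact embedding $\iota : (\R_t \times A, \, dt + \lambda) \hookrightarrow (M,\xi)$, where $A$ is the chain plumbing of three copies of $T^\ast S^n$ along zero sections $L_1, L_2, L_3$ (with $L_1$ and $L_3$ disjoint), and $\iota(L_i) = \Lambda_i$. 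Inside this model, perform the Legendrian sums of \autoref{subsec:legendrian_sum} with cutoff functions supported in disjoint Darboux balls around $p_{12}$ and $p_{23}$, arranged to satisfy \autoref{convention for triangle} at each step.

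Next, applying the Dehn-twist dictionary of \autoref{subsec:front_vs_lagrangian} at the two intersection points in turn, the Lagrangian projection of $(\Lambda_1 \uplus \Lambda_2) \uplus \Lambda_3$ is Hamiltonian isotopic to $\tau_{L_3}\bigl(\tau_{L_2}(L_1)\bigr)$, while the Lagrangian projection of $\Lambda_1 \uplus (\Lambda_2 \uplus \Lambda_3)$ is Hamiltonian isotopic to $\tau_{\tau_{L_3}(L_2)}(L_1)$. The standard conjugation identity $\tau_{\phi(L)} = \phi \circ \tau_L \circ \phi^{-1}$, valid for any symplectomorphism $\phi$ and Lagrangian sphere $L$ (see \cite{Sei03}), together with the fact that $\tau_{L_3}$ is compactly supported in an arbitrarily small neighborhood of $L_3$ and $L_1 \cap L_3 = \varnothing$, yields
$$\tau_{\tau_{L_3}(L_2)}(L_1) \;=\; \tau_{L_3} \circ \tau_{L_2} \circ \tau_{L_3}^{-1}(L_1) \;=\; \tau_{L_3}\bigl(\tau_{L_2}(L_1)\bigr),$$
where in the last equality $\tau_{L_3}^{-1}(L_1) = L_1$ can be arranged on the nose by shrinking the support of the twist. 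Since Hamiltonian isotopies of exact Lagrangian projections lift to Legendrian isotopies (after a vertical shift), this proves the claim.

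The step I expect to be the main obstacle is the setup: one must verify that the two nested iterations of the Legendrian sum can be realized coherently inside a single chain-plumbing neighborhood, with disjoint Reeb pushoffs and cutoff regions at $p_{12}$ and $p_{23}$ chosen so that \autoref{convention for triangle} is satisfied for \emph{both} outer sums simultaneously. Once this geometric bookkeeping is handled, the Dehn-twist identity reduces the problem to a short braid-type computation as above.
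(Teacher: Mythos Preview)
Your argument is correct (at least for Legendrian spheres), but it takes a substantially longer route than the paper. The paper's proof is one sentence: since the two $\xi$-transverse intersection points $p_{12}=\Lambda_1\cap\Lambda_2$ and $p_{23}=\Lambda_2\cap\Lambda_3$ are distinct (and $\Lambda_1\cap\Lambda_3=\varnothing$), one can choose \emph{disjoint} Darboux balls around them and perform the local surgeries of \autoref{subsec:legendrian_sum} independently; the resulting Legendrian is literally the same regardless of order. No Dehn twists, no conjugation identity, no plumbing model.

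Two remarks on your approach. First, the Dehn-twist dictionary of \autoref{subsec:front_vs_lagrangian} is only set up for Legendrian \emph{spheres}, whereas the lemma is stated for arbitrary Legendrian submanifolds; the paper's locality argument needs no such hypothesis. Second, and somewhat ironically, the ``main obstacle'' you flag at the end --- arranging disjoint cutoff regions at $p_{12}$ and $p_{23}$ so that the two sums can be performed coherently --- \emph{is} the entire content of the paper's proof. Once you have disjoint Darboux balls, the two local modifications commute on the nose and the Dehn-twist computation becomes redundant. Your conjugation identity $\tau_{\tau_{L_3}(L_2)}=\tau_{L_3}\tau_{L_2}\tau_{L_3}^{-1}$ combined with $\tau_{L_3}^{-1}(L_1)=L_1$ is a correct and pleasant verification, but it is re-proving algebraically what the disjoint-support picture already gives for free.
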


Strictly speaking, $(\Lambda_1\uplus\Lambda_2)\uplus \Lambda_3$ means the following: First take $\Lambda_1\uplus \Lambda_2$.  By Convention~\ref{convention for triangle}, $\Lambda_1\uplus \Lambda_2$ does not intersect $\Lambda_3$ but there is a short Reeb chord from $\Lambda_3$ to $\Lambda_1\uplus \Lambda_2$.  We then apply a small isotopy to $\Lambda_3$ so it $\xi$-transversely intersects $\Lambda_1\uplus \Lambda_2$ and then apply the Legendrian sum.

\begin{proof}
We use the description of the Legendrian sum from \autoref{subsec:legendrian_sum}. The associativity follows from the fact that there exists disjoint Darboux balls around $\Lambda_1 \cap \Lambda_2$ and $\Lambda_2 \cap \Lambda_3$.
\end{proof}

\begin{lemma} \label{lem:conjugation}
If $\Lambda_1$ and $\Lambda_2$ are Legendrian spheres which intersect $\xi$-transversely at one point, then $(\Lambda_2 \uplus \Lambda_1) \uplus \Lambda_2$ is Legendrian isotopic to $\Lambda_1$.
\end{lemma}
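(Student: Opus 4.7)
The plan is to pass to the Lagrangian projection and reduce the statement to a one-line identity between Dehn twists in the $A_2$ Milnor fiber. Recall from \autoref{subsec:front_vs_lagrangian} that if $L_1, L_2$ denote the two zero sections in the Milnor fiber $A_2$ obtained by plumbing two copies of $T^\ast S^n$, then a neighborhood of the skeleton $\Sk(A_2) = L_1 \cup L_2$ in the contactization $\R \times A_2$ embeds contactomorphically onto a neighborhood of $\Lambda_1 \cup \Lambda_2$ in $(M,\xi)$ with $L_i \mapsto \Lambda_i$, and the Lagrangian projection of $\Lambda_i \uplus \Lambda_j$ is exact Lagrangian isotopic to $\tau_{L_j}(L_i) = \tau_{L_i}^{-1}(L_j)$, where $\tau_L$ is the positive Dehn twist along $L$.

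I would apply this formula twice. The first application gives that $\Lambda_2 \uplus \Lambda_1$ projects to $\tau_{L_1}(L_2)$. For the second application, use item (3) of \autoref{convention for triangle} to produce a short Reeb chord from $\Lambda_2$ to $\Lambda_2 \uplus \Lambda_1$ and flow $\Lambda_2$ briefly along this chord as in the opening of \autoref{subsec:resolution}; the result is a $\xi$-transverse intersection of a Legendrian isotopic copy of $\Lambda_2$ with $\Lambda_2 \uplus \Lambda_1$, sitting inside a standard $A_2$-plumbing neighborhood of $L_2 \cup \tau_{L_1}(L_2)$ via the Weinstein neighborhood theorem for transverse Lagrangian spheres. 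The formula then identifies the Lagrangian projection of $(\Lambda_2 \uplus \Lambda_1) \uplus \Lambda_2$ with $\tau_{L_2}(\tau_{L_1}(L_2))$.

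The crucial identity is obtained by swapping the roles of $L_1$ and $L_2$ in the excerpt's equality $\tau_{L_1}^{-1}(L_2) = \tau_{L_2}(L_1)$, which yields $\tau_{L_1}(L_2) = \tau_{L_2}^{-1}(L_1)$. Substituting,
\[
\tau_{L_2}\bigl(\tau_{L_1}(L_2)\bigr) \;=\; \tau_{L_2}\bigl(\tau_{L_2}^{-1}(L_1)\bigr) \;=\; L_1,
\]
as exact Lagrangians up to Hamiltonian isotopy. Since Legendrian lifts in the contactization are unique up to Legendrian isotopy once an initial lift is fixed, this exact Lagrangian isotopy lifts to the desired Legendrian isotopy from $(\Lambda_2 \uplus \Lambda_1) \uplus \Lambda_2$ to $\Lambda_1$.

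The only real subtlety is the second application of the Lagrangian-projection formula: after forming $\Lambda_2 \uplus \Lambda_1$, one must verify that together with an appropriately Reeb-flowed copy of $\Lambda_2$ it sits as the skeleton of a single standard $A_2$-plumbing, so that the Dehn-twist description of the next Legendrian sum is legitimate. This is routine given \autoref{convention for triangle} and the Weinstein neighborhood theorem, but it is the one step that requires care, since $\tau_{L_1}(L_2)$ is no longer the original zero section $L_1$ used in the first plumbing identification.
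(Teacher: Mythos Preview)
Your proof is correct and follows essentially the same approach as the paper's: reduce to the Lagrangian projection in the $A_2$ Milnor fiber and compute $\tau_{L_2}(\tau_{L_2}^{-1}(L_1)) = L_1$. The paper's one-line argument is slightly more streamlined in that it never leaves the original $A_2$ plumbing---since $\Lambda_2 \uplus \Lambda_1$ already lies in $\iota(N_\epsilon(\Sk(A_2)))$ by \autoref{convention for triangle}, the second sum can be computed there directly, so your care in setting up a \emph{new} plumbing neighborhood for the second step is unnecessary (though not wrong).
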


\begin{proof}
By the discussion of the Lagrangian projection interpretation of the Legendrian sum from \autoref{subsec:front_vs_lagrangian}, $(\Lambda_2 \uplus \Lambda_1) \uplus \Lambda_2$ is the Legendrian lift of $\tau_{L_2} (\tau^{-1}_{L_2} (L_1)) = L_1$, where $L_i$ is the Lagrangian projection of $\Lambda_i$ for $i=1,2$.
\end{proof}

\begin{remark}
On the other hand, $(\Lambda_1 \uplus \Lambda_2) \uplus \Lambda_2$ is the Legendrian lift of the Lagrangian sphere $\tau^2_{L_2} (L_1)$. Hence $(\Lambda_1 \uplus \Lambda_2) \uplus \Lambda_2$ is in general not Legendrian isotopic to $\Lambda_1$.
\end{remark}

\subsection{Legendrian boundary sum} \label{subsec:boundary_sum}

In this subsection we generalize the construction from \autoref{subsec:resolution} to the case of Legendrian submanifolds with boundary.

Let $(M,\xi)$ be a contact manifold and $\Gamma \subset M$ be a compact codimension 2 contact submanifold with trivial normal bundle, e.g., $\Gamma$ is the dividing set of a closed convex hypersurface.
Consider a tubular neighborhood $N(\Gamma) \subset (M,\xi)$ of $\Gamma$ contactomorphic to $(\Gamma\times D_\delta,\zeta= \ker (\lambda +r^2d\theta))$, where $\lambda$ is a contact form on $\Gamma$ defining $\xi|_{\Gamma}$ and $D_\delta \subset \R^2$ is the disk of radius $\delta>0$ with polar coordinates $(r,\theta)$. (We assume that $r\geq 0$.)

Let $\Lambda_1, \Lambda_2$ be compact Legendrian submanifolds in $M$ with nonempty boundary such that the following conditions hold:
\begin{itemize}
\item[(A)]({\em Disjoint interior}) The interiors of $\Lambda_1$ and $\Lambda_2$ are disjoint;
\item[(B)]({\em Legendrian boundary}) $\p \Lambda_1, \p \Lambda_2 \subset (\Gamma, \xi|_{\Gamma})$ are Legendrian submanifolds that intersect $\xi|_\Gamma$-transversely at one point $p$;
\item[(C)]({\em Cylindrical end}) $\Lambda_1 \cap N(\Gamma) = \p \Lambda_1 \times \{\theta=\theta_1\}$, $\Lambda_2 \cap N(\Gamma) = \p \Lambda_2 \times \{\theta=\theta_2\}$, and $0<\theta_1<\theta_2<\pi$.
\end{itemize}

The {\em Legendrian boundary sum} $\Lambda_1 \uplus_b \Lambda_2$ is a Legendrian submanifold of $M$ with boundary $\p (\Lambda_1 \uplus_b \Lambda_2) = (\p \Lambda_1) \uplus (\p \Lambda_2) \subset \Gamma$, which is smoothly a boundary connected sum of $\Lambda_1$ and $\Lambda_2$.
We present two equivalent definitions of $\Lambda_1 \uplus_b \Lambda_2$ --- one in the Lagrangian projection and one in the front projection --- and leave the verification of the equivalence to the reader.

\s\n
{\em Lagrangian projection.} We first generalize Polterovich's Lagrangian surgery operation to Lagrangian submanifolds with nonempty boundary, where the transverse intersection point lies on the boundary. We only give a local model of the construction, leaving the globalization/smoothing operation to the reader.

Consider the standard symplectic vector space $(\R^{2n}, \omega_0)$, where $\omega_0 = dx \wedge dy$ and $x=(x_1, \dots, x_n), y=(y_1, \dots, y_n)$ are the coordinates on $\R^{2n}$. Let $\R^{2n-2} = \{ x_1=y_1=0 \} \subset \R^{2n}$ be a codimension 2 symplectic subspace. Consider Lagrangian submanifolds $\widetilde{L}_1 = \{y=0\} \cap \R^{2n-2}$ and $\widetilde{L}_2 = \{x=0\} \cap \R^{2n-2}$ in $\R^{2n-2}$, which transversely intersect at $0$. Define Lagrangian submanifolds $L_1 = \R_{x_1 \geq 0} \times \widetilde{L}_1$ and $L_2 = \R_{y_1 \geq 0} \times \widetilde{L}_2$. Then $L_1$ and $L_2$ transversely intersect at $\{0\} = \p L_1 \cap \p L_2$ and the local model is:
\begin{equation*}
L_1 \#_b L_2 = \left\{ x_i = e^t a_i, y_i = e^{-t} a_i ~|~ \sum\nolimits_{i=1}^{n} a_i^2 = 1, a_1 \geq 0, t \in \R \right\}.
\end{equation*}

Now given two Legendrian submanifolds $\Lambda_i, i=1,2$, with nonempty boundary satisfying Conditions (A), (B), and (C), we can choose a Darboux chart near the $\xi$-transverse intersection $p \in \p \Lambda_1 \cap \p \Lambda_2$ such that the Lagrangian projection of $\Lambda_i$ coincides with the $L_i$ constructed above for $i=1,2$. Then we define $\Lambda_1 \uplus_b \Lambda_2$ to be the Legendrian lift of $L_1 \#_b L_2$.

\s\n
{\em Front projection.} Let $B(p) \subset M$ be a Darboux ball around the intersection point $p\in \p \Lambda_1 \cap \p \Lambda_2$. We may assume that $B(p)$ is contactomorphic to (a neighborhood of the origin of) $(\R^{2n+1}, \xi_{\std}=dz-y\cdot x)$ and that $\Gamma \cap B(p)$ is identified with (a neighborhood of the origin of) $\R^{2n-1} = \{ x_1 = y_1 =0 \}$. By Condition (C), we may assume that the front projections of $\Lambda_1 \cap B(p)$ and $\Lambda_2 \cap B(p)$ are as in \autoref{fig:Legendrian_bdry_sum_front}(a).
Then the front projection of $\Lambda_1 \uplus_b \Lambda_2$ is given by \autoref{fig:Legendrian_bdry_sum_front}(b), where a disk family of cusps is formed.

\begin{figure}[ht]
	\centerline{
	\begin{overpic}[scale=.3]{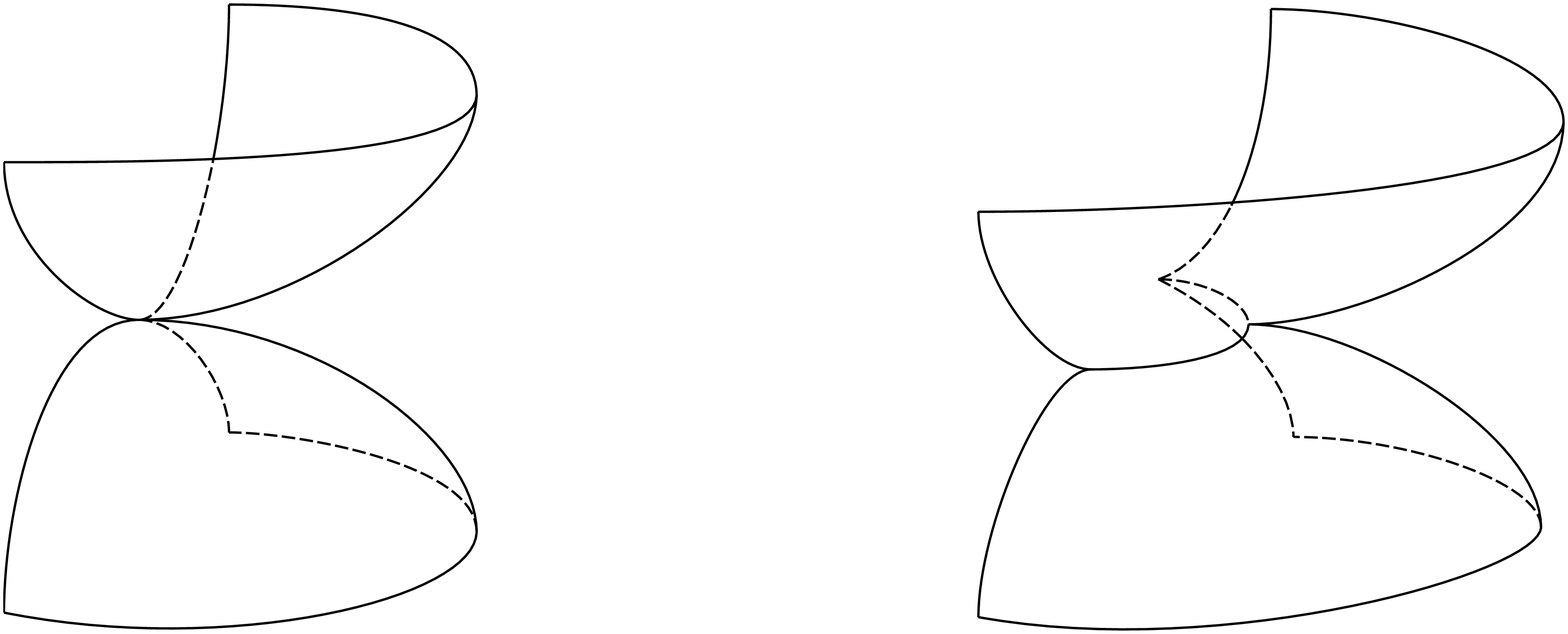}
		\put(12,-5){(a)}
		\put(78,-5){(b)}
		\put(32,6){$\Lambda_1$}
		\put(32,33){$\Lambda_2$}
	\end{overpic}
	}
	\vspace{5mm}
	\caption{(a) The Legendrians $\Lambda_1$ and $\Lambda_2$ intersect $\xi$-transversely at a point $p \in \p \Lambda_1 \cap \p \Lambda_2$. (b) The front projection of $\Lambda_1 \uplus_b \Lambda_2$.}
	\label{fig:Legendrian_bdry_sum_front}
\end{figure}

\s
We now prove a technical result which will be used in analyzing bypass attachments.
Let $\Lambda_k, k=1,2,3$, be Legendrian submanifolds in $(M, \xi)$ such that:
\begin{enumerate}
\item[(i)] $\p \Lambda_k \subset \Gamma$ and $\bdry \Lambda_k\simeq S^{n-1}$;
\item[(ii)] Conditions (A), (B), and (C) are satisfied for the pairs $(\Lambda_1,\Lambda_2)$, $(\Lambda_2,\Lambda_3)$, and $(\Lambda_3,\phi_\epsilon(\Lambda_1))$, where $\phi_\epsilon$ is a time-$\epsilon$ Reeb pushoff for $\epsilon>0$ small (the Reeb vector field is defined on $M$ and restricts to a Reeb vector field on $\Gamma$); and
\item[(iii)] the triple $( \p \Lambda_1, \p \Lambda_2, \p \Lambda_3)$ forms a Legendrian triangle in $\Gamma$.
\end{enumerate}
Since $\p \Lambda_3=\p \Lambda_1 \uplus \p \Lambda_2$ by (iii) and Convention~\ref{convention for triangle}, we can form a closed Legendrian $\Lambda_3 \cup (\Lambda_1 \uplus_b \Lambda_2)$ by gluing along the boundary.
By (iii), $(\bdry\Lambda_2,\bdry\Lambda_3,\bdry \phi_\epsilon(\Lambda_1))$ and $(\bdry\Lambda_3, \bdry \phi_\epsilon(\Lambda_1),\bdry\phi_\epsilon (\Lambda_2))$ also form Legendrian triangles, and we can form $\phi_\epsilon(\Lambda_1)\cup (\Lambda_2\uplus_b\Lambda_3)$ and $\phi_\epsilon(\Lambda_2)\cup (\Lambda_3\uplus_b \phi_\epsilon(\Lambda_1))$. {\em By abuse of notation we will often omit $\phi_\epsilon$ and refer to them as $\Lambda_1 \cup (\Lambda_2 \uplus_b \Lambda_3)$ and $\Lambda_2 \cup (\Lambda_3 \uplus_b \Lambda_1)$.}

\begin{lemma} \label{lem: Legendrian_wheel}
The Legendrians $\Lambda_3 \cup (\Lambda_1 \uplus_b \Lambda_2)$, $\Lambda_1 \cup (\Lambda_2 \uplus_b \Lambda_3)$, and $\Lambda_2 \cup (\Lambda_3 \uplus_b \Lambda_1)$ are all Legendrian isotopic.
\end{lemma}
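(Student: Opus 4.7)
The plan is to reduce to a local Lagrangian model near the Legendrian triangle, establish a Lagrangian surgery identity there, and then lift back to a Legendrian isotopy in $M$.

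First, using the Flexibility Lemma (\autoref{lemma: flexibility}) together with the standard contact neighborhood theorem, I would normalize each $\Lambda_k$ so that in the tubular neighborhood $N(\Gamma) = \Gamma \times D_\delta$ it has the cylindrical form $\partial \Lambda_k \times \{\theta = \theta_k\}$ for prescribed angles $\theta_1, \theta_2, \theta_3 \in (0,\pi)$, with small Reeb shifts $\phi_\epsilon$ incorporated as prescribed by hypothesis (ii) and Convention~\ref{convention for triangle}. Outside of $N(\Gamma)$ together with a small thickening of the Legendrian triangle $(\partial \Lambda_1, \partial \Lambda_2, \partial \Lambda_3)$, the three closed Legendrians $\Lambda_3 \cup (\Lambda_1 \uplus_b \Lambda_2)$, $\Lambda_1 \cup (\Lambda_2 \uplus_b \Lambda_3)$, and $\Lambda_2 \cup (\Lambda_3 \uplus_b \Lambda_1)$ already agree, so it suffices to construct the isotopies in this model neighborhood.

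Second, I would pass to a Lagrangian projection as in \autoref{subsec:front_vs_lagrangian}. Model a neighborhood of the singular union $\Lambda_1 \cup \Lambda_2 \cup \Lambda_3$ on $\R \times A$, where $A$ is a Weinstein plumbing built from three copies of cotangent bundles of half-disks, with zero-section Lagrangians $L_1, L_2, L_3$. Under this projection, the Legendrian triangle condition $\partial \Lambda_3 = \partial \Lambda_1 \uplus \partial \Lambda_2$ corresponds to the Dehn twist relation $\partial L_3 = \tau_{\partial L_2}(\partial L_1)$ on the ideal boundary, and each Legendrian boundary sum $\Lambda_i \uplus_b \Lambda_j$ is the Legendrian lift of the Polterovich-type Lagrangian boundary surgery $L_i \#_b L_j$. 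Consequently each closed Legendrian in the statement is the Legendrian lift of a closed Lagrangian of the form $L_k \cup (L_i \#_b L_j)$.

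Third, I would establish the Lagrangian identity: the three closed Lagrangians $L_3 \cup (L_1 \#_b L_2)$, $L_1 \cup (L_2 \#_b L_3)$, $L_2 \cup (L_3 \#_b L_1)$ are exact Lagrangian isotopic in the plumbing model. This follows from viewing each as a single smoothing of the tripodal Lagrangian skeleton $L_1 \cup L_2 \cup L_3$, combined with the associativity (\autoref{lem:associativity}) and conjugation (\autoref{lem:conjugation}) identities that relate $\partial L_k = \partial L_i \uplus \partial L_j$ consistently for all cyclic permutations of the indices. Lifting the resulting compactly supported Hamiltonian isotopy to a Legendrian isotopy in $M$ via \autoref{prop:isotopy_to_cylinder} and \autoref{cor:isotop_Lagrangian_disk} yields the desired conclusion.

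The main technical obstacle I anticipate is the careful bookkeeping of the cyclic asymmetry built into Convention~\ref{convention for triangle}: the short Reeb chord versus $\xi$-transverse intersection between distinct pairs of Legendrians, together with the Reeb pushoffs $\phi_\epsilon$ used to realize intersections in pairs such as $(\Lambda_3, \phi_\epsilon(\Lambda_1))$, must be consistently arranged across all three cyclic orderings so that each closed Legendrian is unambiguously defined and the three identifications in Step 3 chain together. No new contact-topological ingredients beyond those of \autoref{subsec:resolution} and \autoref{subsec:boundary_sum} should be required.
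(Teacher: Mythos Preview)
Your approach differs substantially from the paper's, and Step~3 contains a genuine gap. The paper does not pass to a Lagrangian projection or a plumbing model; it works directly in the front picture near $\Gamma$. The key idea is to show that each of the three closed Legendrians is Legendrian isotopic to a \emph{common} Legendrian of the form $(\Lambda_1 \uplus_b \Lambda_2 \uplus_b \Lambda_3) \cup K$, where $K$ is half of a standard Legendrian unknot (a ``cap'', \autoref{fig:cap}). The triple boundary sum here is unambiguous because the two $\uplus_b$ operations take place at disjoint intersection points and hence commute. Concretely, the paper rewrites $\Lambda_3 \cup (\Lambda_1 \uplus_b \Lambda_2)$ by replacing the smooth gluing along $\partial \Lambda_3$ with a cusp annulus $A$ (via \autoref{lemma: contact parallel transport}), and then decomposes $A$ into two disk families of cusps: attaching one realizes the remaining boundary sum $(\Lambda_1 \uplus_b \Lambda_2) \uplus_b \Lambda_3$, and the other is the cap $K$. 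Since $(\Lambda_1 \uplus_b \Lambda_2) \uplus_b \Lambda_3 = \Lambda_1 \uplus_b (\Lambda_2 \uplus_b \Lambda_3)$ by disjointness of the two sums, cyclic symmetry follows.

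Your Step~3 asserts the analogous Lagrangian identity but does not prove it. The claim that each $L_k \cup (L_i \#_b L_j)$ is ``a single smoothing of the tripodal Lagrangian skeleton $L_1 \cup L_2 \cup L_3$'' is precisely what needs to be shown: a tripod admits several inequivalent Lagrangian smoothings, and you must argue that these three particular ones coincide. The lemmas you invoke (\autoref{lem:associativity} and \autoref{lem:conjugation}) concern the \emph{closed} Legendrian sum $\uplus$, not the boundary sum $\uplus_b$, and do not directly yield the needed relation among the $L_i \#_b L_j$. The missing ingredient is exactly the paper's cap trick (or some equivalent normal form) that makes the cyclic symmetry manifest; without it your Lagrangian-projection route remains a plausible heuristic rather than a proof.
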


\begin{proof}
We start by describing $\Lambda_3 \cup (\Lambda_1 \uplus_b \Lambda_2)$ in more detail.  Assume that
$$\Lambda_k\cap N(\Gamma)=\bdry\Lambda_k \times\{\theta=\tfrac{2\pi k}{3}\}, \mbox{ for } k=1,2,3.$$
By the front description, we may take $\Lambda_1\uplus_b\Lambda_2$ so that
$$(\Lambda_1\uplus_b\Lambda_2)\cap (\Gamma \times D_{\delta'})=\bdry (\Lambda_1\uplus_b \Lambda_2)\times \{\theta=\pi\}=\bdry \Lambda_3\times\{\theta=\pi\}$$
for some $0<\delta'\ll \delta$.  This glues smoothly with $\Lambda_3\cap N(\Gamma)=\bdry \Lambda_3\times\{\theta=0\}$.

Next we give an alternate description of $\Lambda_3 \cup (\Lambda_1 \uplus_b \Lambda_2)$. Legendrian isotop $\Lambda_3$ to $\Lambda_3'$ using Lemma~\ref{lemma: contact parallel transport} so that $\Lambda_3=\Lambda_3'$ outside of $\Gamma\times D_{2\delta'}$ and  $\Lambda_3'\cap (\Gamma\times D_{\delta'})= \phi_{\epsilon'}(\bdry \Lambda_3) \times\{\theta={3\pi\over 2}\}$, where $\epsilon'>0$ is small.  We remove small collar neighborhoods of the boundary from $\Lambda_3'$ and $\Lambda_1\uplus_b\Lambda_2$ to obtain $\mbox{sh}(\Lambda_3')$ and $\mbox{sh}(\Lambda_1\uplus_b\Lambda_2)$ (here $\mbox{sh}$ stands for ``shrinking") and add a Legendrian annulus $A$ with boundary $\bdry (\mbox{sh}(\Lambda_3'))\sqcup \bdry (\mbox{sh}(\Lambda_1\uplus_b\Lambda_2))$ of the form $\bdry \Lambda_3$ times a standard cusp.  Then $\Lambda_3 \cup (\Lambda_1 \uplus_b \Lambda_2)$ is Legendrian isotopic to $\mbox{sh}(\Lambda_3')\cup \mbox{sh}(\Lambda_1\uplus_b\Lambda_2)\cup A$ (verification left to the reader).

Now we claim that $\Lambda_3 \cup (\Lambda_1 \uplus_b \Lambda_2)$ is Legendrian isotopic to $((\Lambda_1 \uplus_b \Lambda_2)\uplus_b \Lambda_3  ) \cup K$, where $K$ is half of a standard Legendrian unknot as given in Figure~\ref{fig:cap}.  This can be seen by decomposing $A$ into two disk families of cusps $A_1$ and $A_2$.  Attaching $A_1$ to $\mbox{sh}(\Lambda_3')\cup \mbox{sh}(\Lambda_1\uplus_b\Lambda_2)$ is equivalent to $(\Lambda_1\uplus_b\Lambda_2)\uplus_b \Lambda_3$ and we then set $K=A_2$.
	\begin{figure}[ht]
		\centerline{	
			\begin{overpic}[scale=.3]{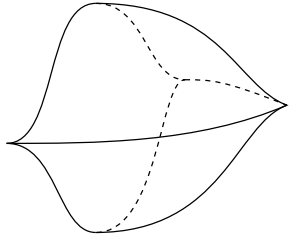}
			\end{overpic}
		}
		\caption{The front projection of a half Legendrian unknot $K$.}
		\label{fig:cap}
\end{figure}

Finally, $((\Lambda_1 \uplus_b \Lambda_2)\uplus_b \Lambda_3  ) \cup K= (\Lambda_1\uplus_b (\Lambda_2\uplus_b\Lambda_3))\cup K$, since the two handle attachments of the type given by Figure~\ref{fig:Legendrian_bdry_sum_front} commute.  Hence $\Lambda_3\cup(\Lambda_1\uplus_b \Lambda_2)$ is Legendrian isotopic to $\Lambda_1\cup (\Lambda_2\uplus_b \Lambda_3)$ and the lemma follows by cyclically rotating the Legendrians $\Lambda_k$, $k=1,2,3$.
\end{proof}

\subsection{Legendrian handleslides} \label{subsec:handleslide}

The goal of this subsection is give a quick review of the theory of Legendrian handleslides. Since a contact handle attachment changes $R_\pm$ by either attaching or removing Weinstein handles as we saw in \autoref{sec:contact_handles}, it is useful to understand how Legendrians slide over Weinstein handles. This is worked out by Ding-Geiges \cite{DG09} in dimension 4 and Casals-Murphy \cite{CM16_preprint} in general.

Let $(W^{2n},d\lambda)$ be a Liouville domain. Then $M=\bdry W$ is a contact manifold with contact form $\lambda|_M$. Let $\Lambda_i, i=1,2$, be Legendrian spheres in $M$ that intersect $\xi$-transversely at a single point $p$.

\begin{notation}
We use the notation $\Lambda^{t}_i$ to denote the pushoff of $\Lambda_i$ in the direction of the Reeb flow by time $t$.
\end{notation}

A {\em Legendrian handleslide} is a Legendrian isotopy whose trace intersects the cocore of a Weinstein handle once in the case of a handle attachment and whose trace intersects the core of the Weinstein handle once in the case of a handle removal.

Let $\epsilon > 0$ be a small positive number. We consider the following four scenarios for Legendrian handleslides:
\begin{enumerate}
	\item[(UA)] Slide $\Lambda_1^{-\epsilon}$ up over a handle attachment along $\Lambda_2$.
	
	\item[(DA)] Slide $\Lambda_1^{\epsilon}$ down over a handle attachment along $\Lambda_2$.
	
	\item[(UR)] Slide $\Lambda_1^{-\epsilon}$ up over a handle removal along $\Lambda_2$.
	
	\item[(DR)] Slide $\Lambda_1^{\epsilon}$ down over a handle removal along $\Lambda_2$.
\end{enumerate}
In the case of a handle removal, we are assuming that $\Lambda_2$ bounds a Lagrangian disk in $(W, d\lambda)$.

The result of the Legendrian handleslides is summarized in the following lemma:

\begin{lemma} \label{lemma: handle_slide}
With the above conventions, after the Legendrian handleslide the resulting Legendrian is:
		\begin{itemize}
			\item $(\Lambda_1 \uplus \Lambda_2)^\epsilon$ in Case {\em (UA)};

			\item $(\Lambda_2 \uplus \Lambda_1)^{-\epsilon}$ in Case {\em (DA)};

			\item $(\Lambda_2 \uplus \Lambda_1)^\epsilon$ in Case {\em (UR)};

			\item $(\Lambda_1 \uplus \Lambda_2)^{-\epsilon}$ in Case {\em (DR)}.
		\end{itemize}
\end{lemma}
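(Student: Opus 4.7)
The plan is to establish Case (UA) via an explicit local model, and then derive the remaining three cases by symmetry. In Case (UA), fix a Weinstein model for the $n$-handle attached along $\Lambda_2$: a neighborhood of the core disk is symplectomorphic to $T^*D^n$, and the contact germ on the upper boundary restricted to a neighborhood of $\Lambda_2$ is the standard $1$-jet model of \autoref{claim: contact embedding}. After the handle attachment, $\Lambda_2$ bounds the core Lagrangian disk, so there is a unique (up to Hamiltonian isotopy) Lagrangian disk $L_2$ with $\bdry L_2=\Lambda_2$. The Legendrian $\Lambda_1^{-\epsilon}$ lives below $\Lambda_2$ with respect to the Reeb direction, and the handleslide is, by definition, a Legendrian isotopy whose trace meets the cocore of the handle transversely once.

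The key observation is that the resulting Legendrian, viewed just above the handle, is contained in a standard neighborhood of $\Sk(A_2)=L_1\cup L_2$ inside a plumbing $A_2$ of two copies of $T^*S^n$: $L_2$ is the cocore-transverse end of the handle, and $L_1$ arises from capping off $\Lambda_1$ by a small Lagrangian co-disk in the complementary direction dictated by the pushoff parameter $-\epsilon$. By the Lagrangian-projection description of the Legendrian sum from \autoref{subsec:front_vs_lagrangian}, the Legendrian obtained from a handleslide through such a plumbing corresponds, after projecting to $A_2$, to applying a positive Dehn twist $\tau_{L_2}$ to $L_1$. Thus the result is Legendrian isotopic to the Legendrian lift of $\tau_{L_2}(L_1)$, which is $\Lambda_1\uplus \Lambda_2$, and it naturally sits at height $+\epsilon$ above $\Lambda_2$ since the trace of the isotopy ascends through the handle. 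This proves Case (UA).

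For Case (DA), apply the same model but reverse the Reeb direction: sliding $\Lambda_1^{\epsilon}$ downward past $\Lambda_2$ interchanges the roles of $L_1$ and $L_2$ in the Lagrangian projection, so the resulting Lagrangian is $\tau_{L_1}(L_2)$, whose Legendrian lift is $\Lambda_2\uplus \Lambda_1$ by \autoref{subsec:front_vs_lagrangian}, and it sits at height $-\epsilon$. For Cases (UR) and (DR), we use the ``upside-down'' viewpoint from \autoref{subsec:(n+1)_handle}: a handle removal along a Lagrangian-disk-bounding Legendrian $\Lambda_2$ is locally dual to an $(n+1)$-handle attachment whose attaching region has core $\Lambda_2$. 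In this dual picture the positions of core and cocore are swapped, which exchanges the roles of ``attachment/removal'' but also flips the effective sign of the Reeb pushoff needed for the Legendrian sum. Concretely, sliding up over a handle removal is dual to sliding down over an attachment, which by Case (DA) yields $(\Lambda_2\uplus\Lambda_1)^{\epsilon}$, giving Case (UR); sliding down over a handle removal is dual to Case (UA) and yields $(\Lambda_1\uplus\Lambda_2)^{-\epsilon}$, giving Case (DR).

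The main obstacle is bookkeeping: making sure the two pieces of data—the order of the factors in $\uplus$ and the sign of the Reeb pushoff—are consistently tracked under the Lagrangian-projection identification, under reversal of the Reeb direction, and under the duality between $n$-handle attachments and removals. The cleanest way to do this is to fix once and for all a model plumbing $A_2$ with a chosen contact form $\alpha_2=dt+\lambda_2$, record the direction of the Reeb vector field $\partial_t$ in each of the four scenarios, and then verify that the positive Dehn twist $\tau_{L_2}$ corresponds to crossing from below to above $\Lambda_2$ in Case (UA). Once that base calculation is in place, the remaining three cases are purely formal consequences of the $\Z/2\times\Z/2$ symmetry generated by Reeb reversal and handle duality.
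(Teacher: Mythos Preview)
Your approach differs from the paper's, and there is a genuine gap at the crucial step. You assert that ``the Legendrian obtained from a handleslide through such a plumbing corresponds, after projecting to $A_2$, to applying a positive Dehn twist $\tau_{L_2}$ to $L_1$,'' and cite \autoref{subsec:front_vs_lagrangian} for this. But that subsection only establishes the equivalence between the Legendrian sum $\Lambda_1\uplus\Lambda_2$ and the Dehn twist $\tau_{L_2}(L_1)$; it says nothing about handleslides. The identification ``handleslide $=$ Dehn twist in the Lagrangian projection'' is precisely the content of the lemma you are trying to prove, so invoking it here is circular. Your attempt to set up the plumbing $A_2$ inside the handle-attached manifold is also unclear: you describe $L_2$ as ``the cocore-transverse end of the handle'' and $L_1$ as arising from ``capping off $\Lambda_1$ by a small Lagrangian co-disk,'' but neither of these is a closed Lagrangian sphere in any evident ambient symplectic manifold, so it is not clear what plumbing you are actually working in.

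The paper's argument is more hands-on and avoids this circularity. It works directly in the Weinstein handle $H$, exhibits an explicit ``meridian'' Legendrian unknot $m\subset\partial(\partial_1 H)$ bounding a $\Theta$-disk, and a ``longitude'' $\ell$ equal to the zero section in $R_+\cong T^*S^{n-1}$. The key computation is that $m\uplus\ell$ is again a standard Legendrian unknot, but now bounding a $\Theta$-disk in $\partial_2 H$. The handleslide is then realized concretely: isotope $\Lambda_1$ so that $\Lambda_1\cap\partial_1 H=\overline D_-$, and replace this cap by $\overline D_+\uplus\ell$ on the other side of the handle. This yields $(\Lambda_1\uplus\Lambda_2)^\epsilon$ directly, with the $\uplus$ appearing from the explicit local move rather than from an appeal to Dehn twists. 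Your symmetry reductions for (DA), (UR), (DR) are reasonable in spirit, but they inherit the same gap, and the duality you invoke between handle attachment and removal via \autoref{subsec:(n+1)_handle} would itself need a careful check of how the Reeb-pushoff sign interacts with reversing the contact vector field.
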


\begin{proof}[Sketch of proof]
We treat Case (UA); the other cases can be handled in a similar manner.

We first recall the description of the Weinstein handle. It is a triple $(H, \omega, X)$, where
$$H = \{ |x| \leq 1 \} \times \{ |y| \leq 1 \} \subset \R^{2n}_{x,y},$$
$\omega = dx \wedge dy$ is the symplectic form, and $X = -x \cdot \p_x + 2y \cdot \p_y$ is the Liouville vector field. The boundary $\p H$ admits a decomposition  $\p_1 H \cup \p_2 H$ such that
$$\p_1 H = \{ |x| = 1 \} \times \{ |y| \leq 1 \}, \quad \p_2 H = \{ |x| \leq 1 \} \times \{ |y| = 1 \},$$
and $X$ points transversely into $\p_1 H$ and out of $\p_2 H$. 
Observe that:
\be
\item[(i)] $\p_1 H$ and $\p_2 H$ are contactomorphic to tubular neighborhoods of the $0$-section of the $1$-jet space $J^1 (S^{n-1})$ with the standard contact structure;
\item[(ii)] $\p (\p_1 H) = \p (\p_2 H)$ is a convex hypersurface;
\item[(iii)] if we write $\p (\p_1 H) = R_+ \cup_\Gamma R_-$ as usual, then both $R_\pm$ are symplectomorphic to $T^\ast S^{n-1}$ and $\p (\p_1 H)$ is obtained by gluing two copies of the ideal compactification of $T^\ast S^{n-1}$ along their boundaries by the identity map.
\ee
The handle $H$ is attached to $(W, d\lambda)$ along the Legendrian sphere $\Lambda_2 \subset M$, by identifying a tubular neighborhood of $\Lambda_2$ in $M$ with $\p_1 H$ via a contactomorphism which sends $\Lambda_2$ to the $0$-section in $J^1 (S^{n-1})$.

Fix $p\in S^{n-1}$. Let $D_\pm$ be the fiber $T_p^\ast S^{n-1}\subset R_\pm$ and let $\overline D_\pm$ be its compactification in $\overline{R}_\pm$. Here we are assuming that $m:=\overline D_+ \cup \overline D_- \subset \p (\p_1 H)$
\begin{itemize}
\item is Legendrian isotopic to the standard Legendrian unknot in $\p_1 H$ and
\item bounds an $n$-disk which is foliated by Legendrian disks as in \autoref{fig:std_foliation}.
\end{itemize}
 Then $m$ intersects the zero section $\ell\subset R_+$ $\xi$-transversely in one point (i.e., at a copy of $p$); see \autoref{fig:contact_framing}(a). One can verify that $m\uplus \ell$
\begin{itemize}
\item is Legendrian isotopic to the standard Legendrian unknot in $\p_2 H$ and
\item bounds an $n$-disk which is foliated by Legendrian disks.
\end{itemize}
Finally, the procedure of sliding $\Lambda_1^{-\epsilon}$ up across $H$ is done by first isotoping $\Lambda_1$ so that $\Lambda_1\cap \bdry_2 H=\overline D_-$, and then replacing $\Lambda_1$ by $(\Lambda_1-\overline D_-) \cup (\overline D_+\uplus \ell)$, which is Legendrian isotopic to $(\Lambda_1 \uplus \Lambda_2)^\epsilon$.

This finishes the proof of the lemma in Case (UA).
\end{proof}

\begin{figure}[ht]
\s
\centerline{\begin{overpic}[scale=.33]{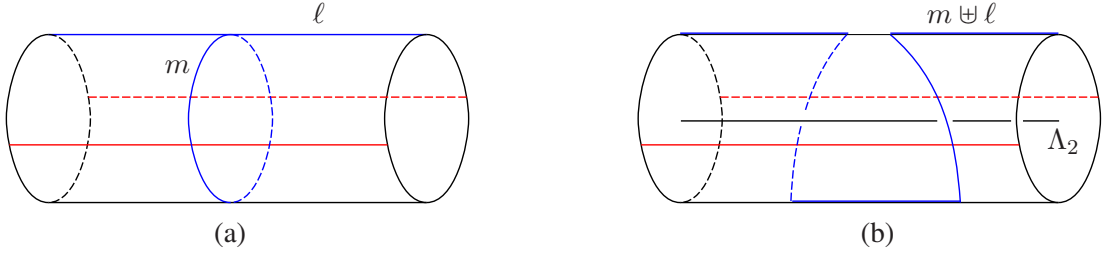}
				\put(14.5,12){$m$}
				\put(28,16.5){$\ell$}
				\put(84,16.5){$m \uplus \ell$}
				\put(95,5){$\Lambda_2$}
				\put(19,-3.5){(a)}
				\put(78,-3.5){(b)}
			\end{overpic}}
			\vspace{3mm}
			\caption{Legendrians $m$, $\ell$, and $m\uplus \ell$ drawn in blue on the convex boundary $\p (\p_1 H)$. The dividing set is drawn in red.}
			\label{fig:contact_framing}
\end{figure}

\begin{remark}
\autoref{lemma: handle_slide} and the results from \autoref{subsec:Legendrian_sum_property} allow us to do the higher-dimensional analog of Kirby calculus for Legendrians.
\end{remark}

\section{Bypass attachment} \label{sec:bypass_attachment}

The goal of this section is to construct the bypass attachment and study its properties using the techniques developed in Sections~\ref{sec:contact_handles} and \ref{sec:Legendrian_sum}.

\subsection{Construction of the bypass attachment} \label{subsec:construct_bypass_attachment}

Let $\Sigma$ be a convex hypersurface with contact germ $\xi$ and dividing set $\Gamma$.  Recall we have the decomposition $\Sigma \setminus \Gamma = R_+ \cup R_-$, where $R_\pm$ can be equipped with the structure of Liouville manifolds and $\Sigma$ can be obtained by gluing the ideal compactifications $\overline{R}_\pm$ along the boundary $\Gamma$ by the identity map. In the following we will continue to use the terminology from \autoref{sec:convex_surface}.

Let us first introduce the initial data for a bypass attachment:

\begin{definition} \label{defn:bypass_data}
	The quadruple $(\Lambda_+, \Lambda_-; D_+, D_-)$ is called a {\em bypass attachment data} on $\Sigma$ if $\Lambda_\pm \subset \Gamma$ are Legendrian spheres that $\xi|_{\Gamma}$-transversely intersect at one point and $D_\pm \subset (R_\pm,d\lambda_\pm)$ are Lagrangian disks with cylindrical ends asymptotic to $\Lambda_\pm$, respectively. Here $\lambda_{\pm}$ are the Liouville forms on $R_{\pm}$ specified by \autoref{lem:ideal_compactification}.
\end{definition}

Given $ (M, \xi=\ker\alpha)$, let  $\phi_t: M \stackrel\sim\to M$ be the time-$t$ flow of the Reeb vector field for $\alpha$.  If $\Lambda \subset (M, \xi)$ is a Legendrian submanifold, then we write $\Lambda^t := \phi_t (\Lambda)$.   Next let $(W,\lambda)$ be a Liouville domain such that $\bdry W=M$ and $\lambda|_{M}= \alpha$.  There exists a collar neighborhood $N(M)=[-\epsilon,0]\times M$ of $M=\{0\}\times M$ with Liouville form $d(e^s\alpha)$.  Using a Hamiltonian function $H=H(s)$ with support on $N(M)$, we can extend $\phi_t$ to a symplectic isotopy $\widetilde{\phi}_t: W\stackrel\sim\to W$ with support on $N(M)$. If $\Lambda$ bounds a Lagrangian disk $D \subset W$, then we set $D^t := \widetilde\phi_t (D)$.

\begin{notation}
Sometimes $\Lambda$ (resp.\ $D$) can be viewed as a subset of two different contact manifolds $M, M'$ (resp.\ $W,W'$).  In that case we write $\Lambda^{t,M}$ or $\Lambda^{t,M'}$ (resp.\ $D^{t,W}$ or $D^{t,W'}$) to distinguish the contact manifold (resp.\ Liouville domain).
\end{notation}

Given a bypass attachment data, the following theorem constructs/defines the bypass attachment:

\begin{theorem}[Bypass attachment] \label{thm:bypass_attachment}
Let $\Sigma$ be a convex hypersurface with contact germ $\xi$ and let $(\Lambda_+, \Lambda_-; D_+, D_-)$ be a bypass attachment data on $\Sigma$. Then there exists an extension of $\xi$ to $\Sigma\times[0,1]$ and a contact vector field $v$ on $\Sigma \times [0,1]$ such that:
\begin{enumerate}
\item $v$ is gradient-like for a Morse function $f: \Sigma \times [0,1] \to \R$ such that $\Sigma \times \{0,1\}$ are regular level sets of $f$ and $f$ has exactly two critical points --- one of index $n$ and one of index $n+1$ --- which are connected by a unique gradient trajectory.			
\end{enumerate}
For $i=0,1$ let $\Gamma^i$ be the $v$-dividing set of $\Sigma^i=\Sigma\times\{i\}$ and $\Sigma^i\setminus \Gamma^i=R_+^i\cup R_-^i$. Let $\epsilon>0$ be a small constant. Then:
\begin{enumerate}
\item[(2)] $R^1_\pm$ is the completion of the Liouville domain $(R^1_\pm)^c$ which is obtained from $(R^0_\pm)^c$ by attaching a Weinstein handle along an $\epsilon/4$-neighborhood of $\Lambda_- \uplus \Lambda_+$ and removing a standard (at most) $\epsilon/4$-neighborhood of the Lagrangian disk $D^{\mp\epsilon}_\pm \cap (R^0_\pm)^c$, made Legendrian after applying Legendrian realization. Here $D^{\mp\epsilon}_\pm= D^{\mp\epsilon, R^0_\pm}_\pm$.
\item[(3)] $\Gamma^1$ is obtained from $\Gamma^0$ by performing a contact $(-1)$-surgery along $\Lambda_- \uplus \Lambda_+$ and a contact $(+1)$-surgery along $\Lambda^{\mp\epsilon}_\pm=\Lambda^{\mp\epsilon, \Gamma^0}_\pm$.  Let $(\Lambda^{\mp\epsilon})'\subset \Gamma^1$ be the Legendrian corresponding to $\Lambda^{\mp\epsilon}\subset \Gamma^0$.
\item[(4)] $\Sigma^1$ is obtained by gluing $\overline{R^1_+}$ and $\overline{R^1_-}$ along the boundary using a contactomorphism $\psi: \p \overline{R^1_+} \stackrel\sim\to \p \overline{R^1_-}$ induced by sliding $(\Lambda^{-\epsilon}_+)'$ up over the contact $(-1)$-surgery along $\Lambda_- \uplus \Lambda_+$ to $(\Lambda^{\epsilon}_-)'$.
\end{enumerate}
\end{theorem}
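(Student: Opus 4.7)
The plan is to realize $\Sigma \times [0,1]$ as a contact cobordism built from a smoothly canceling pair of middle-dimensional contact handles: a contact $n$-handle $(H_n,\alpha_n,v_n)$ from \autoref{subsec:n_handle}, followed by a contact $(n+1)$-handle $(H_{n+1},\alpha_{n+1},v_{n+1})$ from \autoref{subsec:(n+1)_handle}, attached so that the core of the $(n+1)$-handle and the cocore of the $n$-handle meet transversely in a single point. Granting this, assertion (1) follows from standard Morse-theoretic cancellation: after smoothing across the intermediate convex level $\Sigma^{1/2}$, the contact vector field obtained by gluing $v_n$ and $v_{n+1}$ is gradient-like for a Morse function with exactly the two critical points contributed by the handles, joined by a single gradient trajectory.

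Concretely I would proceed in two steps. First, attach the $n$-handle to a collar $\Sigma \times [0,1/2]$ along the Legendrian sphere $\Lambda_- \uplus \Lambda_+ \subset \Gamma^0$, using the contact embedding of the attaching region supplied by \autoref{claim: contact embedding} and \autoref{claim: contact embedding 2}. \autoref{prop:n_handle} immediately delivers the intermediate convex hypersurface $\Sigma^{1/2}$: its dividing set $\Gamma^{1/2}$ is the contact $(-1)$-surgery of $\Gamma^0$ along $\Lambda_- \uplus \Lambda_+$, and the regions $R^{1/2}_\pm$ are Weinstein $n$-handle attachments to $(R^0_\pm)^c$ along parallel copies of $\Lambda_- \uplus \Lambda_+$. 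Second, use \autoref{cor:isotop_Lagrangian_disk} to replace $D_\pm$ by their Reeb pushoffs $D_\pm^{\mp\epsilon}$ in $R^0_\pm$, so that they extend to Lagrangian disks $\widetilde D_\pm \subset R^{1/2}_\pm$ (disjoint from the surgery region for $\epsilon>0$ small). The Flexibility Lemma (\autoref{lemma: flexibility}) lets us arrange a Legendrian-ruled neighborhood of these disks, after which \autoref{prop:(n+1)_handle} attaches the $(n+1)$-handle and outputs $\Sigma^1$ with the Liouville manifolds and dividing set described in (2) and (3).

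The subtle step, and the main obstacle, is item (4): \autoref{prop:(n+1)_handle} requires $\widetilde D_+$ and $\widetilde D_-$ to have the same Legendrian boundary in $\Gamma^{1/2}$, whereas their boundaries are the distinct Legendrians $\Lambda_+^{-\epsilon}$ and $\Lambda_-^{\epsilon}$. The resolution is exactly the handleslide of \autoref{lemma: handle_slide}, Case (UA): sliding $\Lambda_+^{-\epsilon}$ upward across the $n$-handle produces the Legendrian $(\Lambda_+ \uplus (\Lambda_- \uplus \Lambda_+))^{\epsilon}$, which by associativity (\autoref{lem:associativity}) equals $((\Lambda_+ \uplus \Lambda_-) \uplus \Lambda_+)^{\epsilon}$ and by conjugation (\autoref{lem:conjugation}) is Legendrian isotopic to $\Lambda_-^{\epsilon}$. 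Extending this isotopy to a Hamiltonian isotopy of Lagrangian disks via \autoref{cor:isotop_Lagrangian_disk} matches the boundaries and simultaneously encodes the gluing contactomorphism $\psi$ asserted in (4). Finally, one checks that the core of the $(n+1)$-handle meets the cocore of the $n$-handle transversely in a single point: under the identification of \autoref{claim: contact embedding 2}, the cocore lifts to a cotangent fiber that intersects $\widetilde D_\pm$ once, which is precisely the input guaranteed by \autoref{convention for triangle} applied to the Legendrian triangle $(\Lambda_+,\Lambda_-,\Lambda_- \uplus \Lambda_+)$ for $\epsilon$ small. This transverse single intersection is the smooth cancellation that delivers the trivial cobordism and completes the proof.
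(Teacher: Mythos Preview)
Your proposal is correct and follows essentially the same three-step strategy as the paper: attach $H_n$ along $\Lambda_-\uplus\Lambda_+$, use the handleslide (UA) together with \autoref{lem:conjugation} to carry $\Lambda_+^{-\epsilon}$ to $\Lambda_-^{\epsilon}$ and thereby produce matching Lagrangian disks for the $(n+1)$-handle via \autoref{cor:isotop_Lagrangian_disk}, then read off (2)--(4) from \autoref{prop:n_handle} and \autoref{prop:(n+1)_handle}. The paper's argument is the same in content; it does not explicitly invoke associativity (your route through \autoref{lem:associativity} is fine, but one can also see $\Lambda_+\uplus(\Lambda_-\uplus\Lambda_+)\cong\Lambda_-$ directly from the Dehn-twist description $\tau_{L_+}^{-1}(\tau_{L_+}(L_-))=L_-$).

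One point to sharpen: your justification that the handles smoothly cancel is phrased a bit loosely. The cocore (belt sphere) of $H_n$ does \emph{not} meet the original disks $D_+^{-\epsilon}$ or $D_-^{\epsilon}$, which sit in $R^0_\pm$ away from the handle; the single transverse intersection arises from the \emph{trace of the handleslide} across $\p_2 H_n$, which crosses the belt sphere exactly once. Your appeal to \autoref{convention for triangle} is tangential here---that convention governs Reeb chords among $\Lambda_1,\Lambda_2,\Lambda_1\uplus\Lambda_2$, not the belt/attaching intersection. Replace that sentence with the direct observation that (up to smooth isotopy) the attaching sphere of $H_{n+1}$ is $D_+^{-\epsilon}\cup(\text{handleslide trace})\cup D_-^{\epsilon}$, and only the middle piece meets the belt sphere, once.
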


The contact structure $\xi$ on $\Sigma \times [0,1]$ is called the {\em bypass attachment} along $(\Lambda_+, \Lambda_-; D_+, D_-)$.

\begin{proof}
The proof consists of three steps.

\s\n
\textsc{Step 1.} \emph{Attaching a pair of contact handles.}
		
\s
Consider the Legendrian sphere $\Lambda_- \uplus \Lambda_+ \subset \Gamma^0 \subset \Sigma^0$. By \autoref{prop:n_handle}, a contact $n$-handle $H_n$ can be attached to $\Sigma^0$ along $\Lambda_- \uplus \Lambda_+$ to yield a convex hypersurface $S=(\Sigma^0\setminus \bdry_1 H_n) \cup \bdry_2 H_n$. Let $S \setminus \Gamma_S = R_+ (S) \cup R_- (S)$ be the usual decomposition. Then $(R_\pm(S))^c$ is obtained from $(R_\pm(\Sigma))^c$ by a Weinstein handle attachment along (copies of) $\Lambda_- \uplus \Lambda_+$, respectively, and $\Gamma_S$ is obtained from $\Gamma_{\Sigma^0}$ by a contact $(-1)$-surgery along $\Lambda_- \uplus \Lambda_+$.

Next we describe the attaching locus of the contact $(n+1)$-handle on $S$. By \autoref{lemma: handle_slide} and \autoref{lem:conjugation}, $\Lambda_-^\epsilon$ can be obtained by handlesliding $\Lambda_+^{-\epsilon}$ up across the contact $(-1)$-surgery along $\Lambda_- \uplus \Lambda_+$. Note that we have chosen the surgery region to be disjoint from $\Lambda_{\pm}^{\mp \epsilon}$. At this point we apply \autoref{cor:isotop_Lagrangian_disk} to $D_+^{-\epsilon}$ and the Legendrian handleslide from $\Lambda_+^{-\epsilon}$ to $\Lambda_-^\epsilon$ to construct a Lagrangian disk $D'_+ \subset R_+(S)$ with cylindrical end such that $\p \overline{D'_+} = \Lambda^{\epsilon}_- = \p \overline{D^{\epsilon}_-}$. Then view $D'_+$ as Legendrian disk using Legendrian realization (Lemma~\ref{lemma: Legendrian realization}).  By \autoref{prop:(n+1)_handle}, one can attach a contact $(n+1)$-handle $H_{n+1}$ along the Legendrian sphere $\overline{D'_+} \cup_{\Lambda^{\epsilon}_-} \overline{D^{\epsilon}_-} \subset S$.

Note that, instead of sliding from $\Lambda^{-\epsilon}_+$ to $\Lambda^{\epsilon}_-$, one can also slide from $\Lambda^{\epsilon}_-$ to $\Lambda^{-\epsilon}_+$ which, by applying \autoref{cor:isotop_Lagrangian_disk} as above, yields a Lagrangian disk $D'_- \subset R_-(S)$ such that $\p \overline{D'_-} = \Lambda^{-\epsilon}_+$. Then a contact $(n+1)$-handle can be attached along $\overline{D^{-\epsilon}_+} \cup_{\Lambda^{-\epsilon}_+} \overline{D'_-} \subset S$. The reader can verify that these two contact $(n+1)$-handle attachments are contactomorphic.  For the rest of the paper we use the first description.

\s\n
\textsc{Step 2.} \emph{The contact handles form a smoothly canceling pair.}

\s		
It suffices to show that up to smooth (but not necessarily contact) isotopy, the belt sphere of $H_n$ geometrically intersects the attaching locus of $H_{n+1}$ in one point. Observe that, up to smooth isotopy, the attaching sphere of $H_{n+1}$ can be obtained by taking the union of $D_+^{-\epsilon}$, $D_-^{\epsilon}$ and the trace of the handleslide from $\Lambda_+^{-\epsilon}$ to $\Lambda_-^{\epsilon}$ in $\Gamma_{\p_2 H_n} \subset \Gamma_S$, where $\p_2 H_n$ is defined in \autoref{subsec:n_handle}. Clearly $D_+^{-\epsilon} \cup D_-^{\epsilon}$ is disjoint from the belt sphere of $H_n$. Then the assertion follows from the observation that the trace of the handleslide intersects the belt sphere of $H_n$ in precisely one point.
	
\s\n
\textsc{Step 3.} \emph{Verify Properties (2)--(4).}
	
\s	
We show (2) for $R_-^1$, leaving the rest to the reader. By \autoref{prop:n_handle}, the contact $n$-handle attachment changes $R_-^0$ by a Weinstein handle attachment along $\Lambda_- \uplus \Lambda_+$. Now using the first description of the attaching locus of the $(n+1)$-handle from Step 1 and \autoref{prop:(n+1)_handle}, we see that $R^1_-$ is precisely the completion of the Liouville domain obtained from $(R^0_-)^c$ by attaching a Weinstein handle along $\Lambda_- \uplus \Lambda_+$ and removing a standard neighborhood of the Lagrangian disk $D^{\epsilon}_- \cap (R^0_-)^c$, as claimed in (2).
\end{proof}

The characterization of the bypass attachment from \autoref{thm:bypass} can be slightly strengthened as follows: Fix a parametrization $\Sigma \cong \Sigma^0$. The product structure on $\Sigma \times [0,1]$ then gives a canonical identification $\Sigma\cong\Sigma^0\cong \Sigma^1$. We briefly explain how $\overline{R^1_+}$ is embedded in $\Sigma$ as a smooth submanifold; the $\overline{R^1_-}$ case is similar. To this end, let $N_{\epsilon/4} ({D}_+) \subset {R}_+^0$ be an $\epsilon/4$-neighborhood of $D_+$ and consider
$$T = R_+^0 \setminus N_{\epsilon/4} (D_+) \subset \Sigma.$$
The Legendrian sphere $\Lambda^{\epsilon}_+=\Lambda^{\epsilon,\Gamma^0}_+\subset \p T$ bounds a Legendrian disk $D^{\vee}_+$ on (some parallel copy of) $\Sigma \setminus T$ with $(-1)$-framing, i.e., any pushoff of $\Lambda^\epsilon_+$ in $\p T$ with the contact $(-1)$-framing bounds a disk parallel to $D^\vee_+$ in $\Sigma\setminus T$. In fact $D^\vee_+$ is essentially the cocore disk of $N_{\epsilon/4} (D_+)$ (i.e., dual to the core $D_+$). Now $\Lambda^{\epsilon}_- \subset \p T$ bounds a Legendrian disk $D^\epsilon_-$ on (some parallel copy of) $\Sigma\setminus T$ with contact $(+1)$-framing. Moreover we may assume that $D^\epsilon_-$ and $D^\vee_+$ satisfy (A), (B), (C) from \autoref{subsec:boundary_sum}, so one can construct the Legendrian disk $D^\epsilon_- \uplus_b D^\vee_+$ with $\p (D^\epsilon_- \uplus_b D^\vee_+) = (\Lambda_- \uplus \Lambda_+)^\epsilon \subset \p T$. It turns out that $D^\epsilon_- \uplus_b D^\vee_+$ has $(-1)$-framing in the sense explained above, so its neighborhood can be equipped with the structure of a Weinstein handle.

\begin{lemma}
$R^1_+ \subset \Sigma$, up to completion, is obtained by attaching the Weinstein handle with core $D^\epsilon_- \uplus_b D^\vee_+$ to $(R_+^0 \setminus N_{\epsilon/4} (D_+))^c $ along $(\Lambda_- \uplus \Lambda_+)^\epsilon$ and $\overline{R^1_+}$ is the ideal compactification of $R^1_+$.
\end{lemma}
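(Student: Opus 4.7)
The plan is to establish the lemma by reconciling the description of $R_+^1$ from \autoref{thm:bypass_attachment}(2) --- obtained by attaching a Weinstein $n$-handle along $\Lambda_-\uplus\Lambda_+$ to $(R_+^0)^c$ and then removing $N_{\epsilon/4}(D_+^{-\epsilon})$ --- with the alternative description in the lemma, by commuting the two operations and then invoking a Weinstein handleslide.

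First, for sufficiently small $\epsilon$, the attaching region of the Weinstein handle (a small neighborhood of $\Lambda_-\uplus\Lambda_+$ in $\Gamma^0$) and the disk neighborhood $N_{\epsilon/4}(D_+^{-\epsilon})$ are disjoint, since the latter meets $\Gamma^0$ only near the Reeb pushoff $\Lambda_+^{-\epsilon}$. Hence the two operations in \autoref{thm:bypass_attachment}(2) commute up to Liouville isotopy, and after a small Hamiltonian shift along the Liouville vector field of $R_+^0$ (replacing $D_+^{-\epsilon}$ by $D_+$), one may equivalently describe $R_+^1$ as $T=(R_+^0\setminus N_{\epsilon/4}(D_+))^c$ with a Weinstein $n$-handle attached along $\Lambda_-\uplus\Lambda_+\subset\partial T$. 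I would next Reeb-shift the attaching sphere to $(\Lambda_-\uplus\Lambda_+)^\epsilon$ by an ambient contact isotopy supported near $\partial T$, which does not alter the Liouville topology.

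The main step is to identify the core of this handle with the Legendrian disk $D_-^\epsilon\uplus_b D_+^\vee$ (made Lagrangian by \autoref{lemma: Legendrian realization}). The cocore $D_+^\vee$ of the removed piece $N_{\epsilon/4}(D_+)$ bounds $\Lambda_+^\epsilon$ on a parallel copy of $\Sigma\setminus T$ with contact $(-1)$-framing, and the disk $D_-^\epsilon$, provided by the bypass attachment data and pushed off by ambient Reeb flow, bounds $\Lambda_-^\epsilon$ on the same parallel copy with contact $(+1)$-framing. Conditions (A)--(C) of \autoref{subsec:boundary_sum} are satisfied by construction, so the Legendrian boundary sum $D_-^\epsilon\uplus_b D_+^\vee$ is defined and has boundary $(\Lambda_-\uplus\Lambda_+)^\epsilon$. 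I would then invoke a Weinstein handleslide: the standard core of the handle along $\Lambda_-\uplus\Lambda_+$ is slid across the cavity left by the removal of $N_{\epsilon/4}(D_+)$ using the Legendrian handleslides of \autoref{lemma: handle_slide} together with the conjugation identity of \autoref{lem:conjugation}, producing a core disk Hamiltonian isotopic to $D_-^\epsilon\uplus_b D_+^\vee$.

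The main obstacle is verifying that the combined framing of $D_-^\epsilon\uplus_b D_+^\vee$ is the $(-1)$ Weinstein framing needed for the handle attachment on $T$ to reproduce the Liouville structure of \autoref{thm:bypass_attachment}(2). This framing arithmetic --- a $(+1)$ contact framing combining with a $(-1)$ contact framing via $\uplus_b$ to yield a $(-1)$ Weinstein framing --- is built into the Legendrian boundary sum construction of \autoref{subsec:boundary_sum} and parallels the cyclic symmetry exhibited in \autoref{lem: Legendrian_wheel}. The final assertion that $\overline{R_+^1}$ is the ideal compactification of $R_+^1$ then follows directly from the definition of ideal compactification applied to the Liouville manifold $R_+^1$.
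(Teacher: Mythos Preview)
The paper states this lemma without a formal proof; the paragraph preceding it sets up the disks $D_+^\vee$ and $D_-^\epsilon$, constructs the boundary sum, asserts the $(-1)$-framing, and then records the lemma as a summary. So there is no detailed argument in the paper to compare against, and your proposal is in effect supplying a proof where the authors left verification to the reader.

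Your first step---commuting the handle attachment along $\Lambda_-\uplus\Lambda_+$ with the removal of $N_{\epsilon/4}(D_+^{-\epsilon})$---is correct and is exactly the right way to start: the two operations take place in disjoint regions for small enough $\epsilon$, so one passes from the description in \autoref{thm:bypass_attachment}(2) to ``first remove, then attach.'' This brings you to $T=(R_+^0\setminus N_{\epsilon/4}(D_+))^c$ with a Weinstein $n$-handle attached along $(\Lambda_-\uplus\Lambda_+)^\epsilon$.

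The gap is in your second step. You propose to identify the core disk with $D_-^\epsilon\uplus_b D_+^\vee$ by ``sliding the standard core of the handle across the cavity left by the removal of $N_{\epsilon/4}(D_+)$'' using \autoref{lemma: handle_slide} and \autoref{lem:conjugation}. But this cannot produce $D_-^\epsilon$: the disk $D_-$ lives in $R_-^0\subset\Sigma$, not in $R_+^0$, and none of the objects you are sliding---the abstract core of the Weinstein handle, the cocore $D_+^\vee$---carry any information about $D_-$. Sliding a core across the removed region would at best give a disk built out of $D_+^\vee$ and the handle core, not $D_-^\epsilon$. The point of the lemma is precisely that, under the identification $\Sigma^1\cong\Sigma$ via the product structure on $\Sigma\times[0,1]$, the Weinstein handle (which in the abstract model sits over $\Gamma^0$) projects to a neighborhood of a disk that crosses into $R_-^0$; this is why $D_-^\epsilon$ appears. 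A correct argument has to trace the contact $n$-handle $H_n$ from the bypass construction through the projection $\Sigma\times[0,1]\to\Sigma$: the core $K$ of $H_n$ projects into $\Sigma\setminus T$, and the core of the resulting Weinstein handle on $R_+$ is built from $K$, $D_+^\vee$, and the portion in $R_-^0$, which is where $D_-^\epsilon$ enters (compare the later \autoref{lemma: trace equals boundary connect sum}). Your handleslide formalism is the wrong tool here; what is needed is a direct tracking of how $R_+^1$ sits inside $\Sigma$ under the smooth cancellation of the contact $n$- and $(n+1)$-handles.
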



\subsection{Some lemmas}

\subsubsection{Piecewise smooth descriptions of Lagrangians} \label{subsection: piecewise smooth descriptions}

The goal of this subsection is to give piecewise smooth descriptions of Lagrangians in the disk cotangent bundle $\D T^* D^n$ of the unit ball $D^n\subset \R^n$ and Lagrangian and Lagrangian boundary sums. Let $x=(x_1,\dots,x_n)$ be the coordinates on $\R^n$ and $y=(y_1,\dots,y_n)$ be the dual coordinates on $T^* \R^n$.

\s\n
A. Let $L= \D T^*_0 D^n$ be the fiber over $0$ in $\D T^* D^n$.  Let
$$L^0=int(D^n)\quad \mbox{and} \quad L^1=\bdry D^n.$$
Consider the positive and negative conormals
\begin{align*}
C_+(L^1)=& \{\omega\in \D T^*_{L^1} D^n~|~ \omega(\bdry_r) >0 \},\\
C_-(L^1)=& \{\omega\in \D T^*_{L^1} D^n~|~ \omega(\bdry_r) <0 \}\subset ~~ \D T^*_{L^1} D^n \cong L^1 \times [-1,1],
\end{align*}
where $r=|x|$.
Then define
$$L^\square_+:= L^0 \cup C_+(L^1) \quad \mbox{ and } \quad L^\square_-:=L^0\cup C_-(L^1),$$
where we are viewing $L^0$ as a portion of the zero section. Then $L$ is Lagrangian isotopic to smoothings of $L^\square_+$ and $L^\square_-$ through Lagrangian disks with Legendrian boundary on the unit cotangent bundle $\mathbb{S}T^*D^n$.  This is a straightforward limit of graphs of the form $df$ where $f:D^n\to \R$ satisfies $f(x)=f(r)$.

\s\n
B. Let $L$ be  the standard Lagrangian disk in $\D T^* D^n$ that bounds the standard Legendrian unknot $\Lambda$ in the unit contangent bundle $\mathbb{S}T^* D^n$.   Let
$$L^0=int(D^n),\quad L^1=\bdry D^n\setminus \{x_1=0\}, \quad L^2= \bdry D^n \cap \{x_1=0\}.$$
We take
\begin{gather*}
C(L^1)= C_+(L^1\cap \{x_1>0\}) \cup C_-(L^1\cap \{x_1<0\}),\\
C(L^2)=\{\omega\in \D T^*_{L^2} D^n~|~ \omega(\bdry_{x_1})\geq 0\}.
\end{gather*}
Then define $L^\square:= L^0\cup C(L^1)\cup C(L^2)$.  The fact that $L$ is isotopic to a smoothing of $L^\square$ through Lagrangian disks with Legendrian boundary on $\mathbb{S}T^*D^n$ is not obvious but left to the reader.

\s\n
C. (Lagrangian sum) Given Lagrangian submanifolds $L_1, L_2$ which intersect transversely at one point $p$, we may assume that in a Darboux chart $C=\D T^\ast D^n$ around $p=0$, $L_1, L_2$ are the $0$-section and the fiber over the origin, respectively. We will give a piecewise smooth description of the Lagrangian sum $L_1\# L_2$.

We first deform $L_2\cap C$ into piecewise smooth Lagrangians $(L_2\cap C)^\square_\pm$ as in (A) above. Although $(L_2\cap C)^\square_\pm$ does not agree with $L_2\cap C$ on $\mathbb{S}T^* D^n$, outside of a small neighborhood of the $0$-section it is a graphical Lagrangian (i.e., defined as the graph of an exact 1-form) over $L_2$. Hence using a cutoff function we may assume that $(L_2\cap C)^\square_\pm$ agrees with $L_2\cap C$ outside of a small neighborhood of the $0$-section of $\D T^\ast D^n$.  We then define $(L_2)^\square_\pm= (L_2\setminus C) \cup (L_2\cap C)^\square_\pm$.

\begin{lemma} \label{lem:alternative_Lagrangian_sum}
	Up to smoothing, $L_1 \# L_2$ is Lagrangian isotopic rel boundary to $( L_1 \cup (L_2)^\square_+ ) \setminus int(D^n)$ and $L_2 \# L_1$ is Lagrangian isotopic rel boundary to $( L_1 \cup (L_2)^\square_-) \setminus int(D^n)$.
\end{lemma}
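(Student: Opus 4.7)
The statement is local to the Darboux chart $C = \D T^*D^n$, since both sides agree with $L_1 \cup L_2$ outside $C$ by construction. It suffices, working inside $C$, to produce a Lagrangian isotopy (rel the boundary of $C$, through Lagrangian disks with Legendrian boundary on $\mathbb{S}T^*D^n$) between $(L_1 \# L_2) \cap C$ and a smoothing of $((L_1 \cup (L_2)^\square_+) \setminus \op{int}(D^n)) \cap C$.

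The key observation is that both Lagrangians are, away from a controlled degenerate locus, graphs of $d\phi(|x|)$ for a radially symmetric potential $\phi$ on an annular region. Explicitly, the Polterovich handle $\{x = ra,\, y = a/r : |a|=1\}$ is the graph of $d\phi_{\mathrm{Pol}}$ with $\phi_{\mathrm{Pol}}(r) = \log r$, blowing up at $r = 0$ where the cutoff interpolates smoothly onto the fiber $L_2$. The smoothed lollipop is the graph of $d\phi_{\mathrm{lol}}$ for a function with $\phi_{\mathrm{lol}}'(r)\to +\infty$ as $r\to 1^-$ (so the graph collapses onto the positive conormal $C_+(\partial D^n)$) and $\phi_{\mathrm{lol}}'(r)\to 0$ as $r\to r_0^+$ (so the graph matches $L_1$ along a small sphere), extended across $\{|x|<r_0\}$ by the piecewise smooth fiber model of part (A).

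The proof proceeds in three steps. First, I choose a one-parameter family $\{\phi_s\}_{s\in[0,1]}$ of radially symmetric potentials with $\phi_0 = \phi_{\mathrm{Pol}}$ and $\phi_1 = \phi_{\mathrm{lol}}$, arranged to match the prescribed boundary behavior at $r = r_0$ and $r = 1$ for every $s$; the graphs $\{(x, d\phi_s(|x|))\}$ are automatically Lagrangian, yielding the isotopy over the annulus $\{r_0\le |x|\le 1\}$. Second, I extend the isotopy across $\{|x|\le r_0\}$ using part (A): at $s=0$ the inner piece is the Polterovich cutoff onto the fiber $L_2$, at $s=1$ it is the piecewise smooth model $L^0\cup C_+(L^1)$, and the already established Lagrangian isotopy of part (A) between these two pieces can be inserted by a collar argument and reparametrization in $s$. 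Third, I verify the sign convention: the graph direction $y = \phi_s'(|x|)\hat{x}$ with $\phi_s'>0$ realizes the positive conormal $C_+$; reversing the sign in the Polterovich parametrization ($y = -e^{-t}a$ instead of $y = e^{-t}a$) swaps the ordering of the sum and produces $L_2\#L_1$ together with $C_-(L^1)$.

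The main obstacle is the simultaneous control of (i) embeddedness of the graphs of $d\phi_s$ throughout the interpolation and (ii) smooth compatibility between the annular graph region and the inner piecewise smooth region near $|x| = r_0$, so that no spurious self-intersections are introduced as the potential transitions between the two asymptotic behaviors. Both issues are handled by making the interpolation sufficiently slow, by exploiting the flexibility in the cutoff and smoothing built into parts (A) and (C), and by a standard application of Moser-type arguments on the collar $\{r_0 \le |x| \le r_0+\delta\}$.
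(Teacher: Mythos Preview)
The paper leaves this lemma to the reader, so there is no proof to compare against. Your overall strategy---exploit the $O(n)$-invariance under the diagonal action $(x,y)\mapsto(Ax,Ay)$ to reduce to an isotopy of curves in a $2$-dimensional quarter-plane---is the natural one and is surely what the authors have in mind by ``straightforward.''

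However, your explicit model of the lollipop side is wrong, and this breaks both Step~1 and Step~2 as written. After removing $\operatorname{int}(D^n)$ from $L_1 \cup (L_2)^\square_+$, the piece $L^0 = \operatorname{int}(D^n)$ is excised from \emph{both} $L_1$ and $(L_2)^\square_+$ (it lies in their overlap). What survives in the chart is only the positive conormal $C_+(\partial D^n)$ sitting over $|x|=1$, together with the cutoff bend toward the fiber for $|y|$ not small, meeting $L_1$ (which enters from outside the chart) at a corner along $\{|x|=1,\,y=0\}$. There is nothing over $\{|x|<1\}$ near the zero section. Hence your Step~2 claim that the inner piece at $s=1$ is ``the piecewise smooth model $L^0\cup C_+(L^1)$'' is false---$L^0$ has been removed---and your Step~1 cannot keep the boundary behavior at $r=1$ fixed, since the Polterovich side (after its own cutoff) agrees with the zero section there ($\phi'_{\mathrm{Pol}}=0$ near $r=1$) while you have $\phi'_{\mathrm{lol}}(1^-)=+\infty$. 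These are incompatible with an isotopy rel boundary via your annulus/inner-region decomposition.

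The repair is immediate and does not need potentials at all. Writing $x=r\hat a$, $y=s\hat a$ with $\hat a\in S^{n-1}$, every such $O(n)$-invariant cylinder is automatically Lagrangian (the symplectic form vanishes on pairs of tangent vectors by $\hat a\cdot d\hat a=0$). Both objects then become embedded arcs in the $(r,s)$ quarter-plane from $(1,0)$ to $(0,1)$: the Polterovich handle is the cut-off hyperbola, while the smoothed lollipop runs up near $r=1$, bends across, and follows $r=0$. Once the corner at $(1,0)$ is rounded so that the lollipop arc also agrees with $s=0$ near $(1,0)$, both arcs coincide with the coordinate axes near the endpoints, and any isotopy of such arcs rel neighborhoods of the endpoints lifts to the desired Lagrangian isotopy rel boundary. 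Your Step~3 sign check survives unchanged.
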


The proof of the lemma is straightforward and is left to the reader.

\s\n
D. (Lagrangian boundary sum $L_1 \#_b L_2$)  Assume in a Darboux chart $C=\D T^\ast D^n$ around $p=0$,
$$L_1\cap C = \{ x_1 \geq 0, y=0 \} \subset D^n\times\{0\}, \quad L_2 \cap C= \{ y_1 \geq 0,x=0 \} \subset \D T^\ast_0 D^n.$$
Let $D^n_{x_1\geq 0} = D^n \cap \{ x_1 \geq 0 \}$.  We then define
$$(L_2)^\square_+= (L_2\setminus C)\cup ((L_2\cap C)^\square_+ \cap \{x_1\geq 0\}),$$
where we are assuming $(L_2\cap C)^\square_+ \cap \{x_1\geq 0\}$ is identified with $L_2\cap C$ outside of a small neighborhood of the $0$-section of $\D T^* D^n$.

\begin{lemma} \label{lem:alternative_boundary_sum}
	Up to smoothing, $L_1 \#_b L_2$ is Lagrangian isotopic rel boundary to $( L_1 \cup (L_2)^\square_+) \setminus int(D^n_{x_1\geq 0})$.
\end{lemma}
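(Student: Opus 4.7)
The plan is to mirror the proof of Lemma~\ref{lem:alternative_Lagrangian_sum} while respecting the half-space structure $\{x_1 \geq 0\}$ and fixing the boundary. Since the construction of the Lagrangian boundary sum is local around the transverse intersection point $p=0$, I would first observe that outside a slightly larger neighborhood of $0$ in the Darboux chart $C = \D T^\ast D^n$, both $L_1 \#_b L_2$ and $(L_1 \cup (L_2)^\square_+) \setminus int(D^n_{x_1\geq 0})$ coincide with $L_1 \cup L_2$: the boundary sum model is supported near the origin, and by definition $(L_2)^\square_+$ agrees with $L_2 \cap \{x_1\geq 0\}$ outside a small neighborhood of the zero section of $\D T^\ast D^n$. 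Hence it suffices to construct a Lagrangian isotopy, rel boundary, inside $C$ between the two local pictures.

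Next I would use the explicit Polterovich formula. Restricting the parametrization from \S\ref{subsec:resolution} to $\{a_1 \geq 0\}$, the local model $(L_1 \#_b L_2) \cap C$ is a half of the hyperboloid $\{x_i = e^t a_i, y_i = e^{-t} a_i\}$, parametrized by the hemisphere $\{a \in S^{n-1} : a_1 \geq 0\} \cong D^{n-1}$ crossed with $\R_t$. On the other hand, $(L_1 \cup (L_2)^\square_+) \setminus int(D^n_{x_1\geq 0})$ intersected with $C$ consists of the half annulus $\{1/2 \leq |x| \leq 1,\, x_1 \geq 0,\, y = 0\}$ (say), the positive conormal over the boundary hemisphere $\{|x|=1,\, x_1\geq 0\}$, and the half-fiber part of $L_2$ near the origin, glued piecewise smoothly. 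Both pieces lie in $\{x_1 \geq 0\}$ and their boundaries on the codimension-two slice $\{x_1 = y_1 = 0\}$ coincide with the lower-dimensional Polterovich sum $\p L_1 \# \p L_2$ --- this is exactly where Lemma~\ref{lem:alternative_Lagrangian_sum} applied to the $(n-1)$-dimensional pair $(\p L_1, \p L_2)$ identifies the two boundaries.

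The main step is the explicit Lagrangian isotopy. I would produce it as a family $L_s$, $s\in[0,1]$, of Lagrangian half-disks in $C$ with Legendrian boundary on the unit cotangent bundle, interpolating from $L_1 \#_b L_2$ to the piecewise smooth model by scaling the Polterovich parameter $t$ and simultaneously degenerating the hyperboloid onto the union of the zero section and the conormal. Concretely, a family of the form $\{x_i = \rho_s(|a|) e^t a_i,\; y_i = \rho_s(|a|) e^{-t} a_i\}$ for a suitable $s$-dependent cutoff function $\rho_s$ realizes the desired degeneration, staying Lagrangian throughout because each level set is a half-symplectization of $S^{n-1}$ pulled back from a cylindrical Legendrian. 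Finally, the smoothing is effected in a neighborhood of the corners along $C_\pm(L^1)$ exactly as in Lemma~\ref{lem:alternative_Lagrangian_sum}.

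The main obstacle I expect is the ``rel boundary'' condition: the boundary of $L_1 \#_b L_2$ lies in the symplectic slice $\{x_1 = y_1 = 0\}$, where it is the ordinary Lagrangian sum $\p L_1 \# \p L_2$, and the isotopy must restrict to the constant isotopy there. This forces me to choose $\rho_s$ to be independent of $s$ near the boundary locus $\{a_1 = 0\}$ of the hemisphere, and to choose the smoothing profile near the corner $\{x_1 = 0\}\cap\{|x|=1\}$ to agree on both sides of the isotopy. These compatibility choices are the only delicate part; the verification of the Lagrangian condition and the smoothing itself are then routine, following the reasoning of Lemma~\ref{lem:alternative_Lagrangian_sum} that the authors leave to the reader.
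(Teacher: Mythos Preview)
The paper omits the proof of this lemma entirely (it reads ``We also omit the proof of this lemma''), and it likewise leaves the proof of \autoref{lem:alternative_Lagrangian_sum} to the reader. So there is no argument in the paper to compare your proposal against.

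Your overall strategy --- localize to the Darboux chart $C$, observe that both sides agree outside a neighborhood of the origin, and then interpolate the half-Polterovich hyperboloid to the piecewise smooth model inside $C$ --- is the natural one, and your attention to the rel-boundary constraint along $\{x_1 = y_1 = 0\}$ is well placed: that is indeed the point requiring care.

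One minor caution: the explicit family you write, $\{x_i = \rho_s(|a|)e^t a_i,\; y_i = \rho_s(|a|)e^{-t} a_i\}$, does not obviously degenerate to the piecewise model as written. Since $|a|=1$ on the sphere, $\rho_s(|a|)$ is just a constant for each $s$, and scaling the hyperboloid uniformly shrinks it toward the origin rather than toward the union of zero section and positive conormal. A cleaner way to produce the isotopy is to use the description in \autoref{subsection: piecewise smooth descriptions}(A) directly: the fiber $\D T^*_0 D^n$ is isotopic to $L^\square_+$ through graphs $df$ of radial functions $f(r)$, and restricting this family to $\{x_1 \geq 0\}$ (and gluing to $L_1$ outside) gives the boundary-sum version. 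This avoids writing down an ad hoc parametrization and makes the rel-boundary condition transparent, since the graphs of $df$ restricted to $\{x_1 = 0\}$ are constant in the family.
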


We also omit the proof of this lemma.

\subsubsection{Legendrian handleslide lemma}

The goal of this subsection is to prove an important technical lemma called the {\em Legendrian handleslide lemma} and which is used frequently in the rest of the paper.  We continue to use the notation from the proof of \autoref{thm:bypass_attachment}.

Let $K$ be the core Legendrian disk of the contact $n$-handle $H_n$ with $\p K = \Lambda_- \uplus \Lambda_+ \subset \Gamma^0$.  We identify $R_+ (\p_2 H_n)$ with the disk cotangent bundle $\D T^\ast D^n$ of the unit ball $D^n\subset \R^n$ so that $K$ is identified with the $0$-section of $T^\ast D^n$. Let $x=(x_1,\dots,x_n)$ be the coordinates on $\R^n$ and $y=(y_1,\dots,y_n)$ be the dual coordinates on $T^\ast \R^n$.  Then $R_+(S)=R_+(\Sigma^0)\cup \D T^\ast D^n$. 
We will not be distinguishing Lagrangians and their Legendrian lifts.

\begin{lemma}[Legendrian handleslide lemma] \label{lemma: trace equals boundary connect sum}
	Let $(\Sigma \times [0,1], \xi)$ be a bypass attachment along $(\Lambda_+, \Lambda_-; D_+, D_-)$ and let $D'_+ \subset R_+(S)$ be the Lagrangian disk obtained from $D_+^{-\epsilon}$ by concatenating with the trace of a Legendrian handleslide from $\Lambda_+^{-\epsilon}$ to $\Lambda_-^\epsilon$. Then its Legendrian lift, also called $D_+'$, is Legendrian isotopic to $(D_+ \uplus_b K)^{\epsilon, \Gamma^0}$
	relative to the boundary in $\Sigma\times[0,1]$.
\end{lemma}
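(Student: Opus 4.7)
The plan is to produce a common piecewise smooth model for both Lagrangian disks using the framework of subsubsection~\ref{subsection: piecewise smooth descriptions}, and then smooth to obtain the desired Legendrian isotopy rel boundary.

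First I would set up coordinates in the handle region following the notation from the proof of Lemma~\ref{lemma: handle_slide}. By Claim~\ref{claim: def equiv}, the new portion $R_+(\p_2 H_n)$ of $R_+(S)^c$ is a Weinstein $n$-handle modeled on $T^*D^n$ in which the core Lagrangian disk $K$ is the zero section; its boundary $\partial K = \Lambda_-\uplus\Lambda_+$ is a standard Legendrian unknot on the unit cotangent bundle. In this model, the cap fibers $\overline{D}_\pm$ and the Lagrangian sum $\overline{D}_+\uplus\ell$ appearing in the proof of Lemma~\ref{lemma: handle_slide}(UA) are instances of the local Lagrangian geometry of subsubsection~\ref{subsection: piecewise smooth descriptions}(C).

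Next I would describe the trace of the handleslide piecewise smoothly. From the proof of Lemma~\ref{lemma: handle_slide}(UA), $\Lambda_+^{-\epsilon}$ is first isotoped so that $\Lambda_+^{-\epsilon}\cap\p_2 H_n = \overline{D}_-$, and the handleslide then replaces $\overline{D}_-$ by $\overline{D}_+\uplus\ell$. The trace of this motion, viewed in $R_+(S)^c$ via Proposition~\ref{prop:isotopy_to_cylinder}, is a product cylinder outside the handle and a Lagrangian cobordism from $\overline{D}_-$ to $\overline{D}_+\uplus\ell$ inside. Applying Lemma~\ref{lem:alternative_Lagrangian_sum} to replace $\overline{D}_+\uplus\ell$ by the piecewise smooth Lagrangian $\ell\cup\bigl((\overline{D}_+)^{\square}_+\setminus int(D^n)\bigr)$, the conormal piece $(\overline{D}_+)^{\square}_+\setminus int(D^n)$ becomes a collar over $\partial\ell$ that glues to the exterior cylindrical portion of the trace; concatenating with $D_+^{-\epsilon}$ outside the handle yields the piecewise smooth description
\begin{equation*}
D'_+ \;\simeq\; D_+^{-\epsilon}\;\cup\;(\text{conormal collar})\;\cup\; K
\end{equation*}
with Legendrian boundary $\Lambda_-^\epsilon$ on $\partial K$.

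Finally, by Lemma~\ref{lem:alternative_boundary_sum} the Lagrangian boundary sum $D_+\uplus_b K$ admits the piecewise smooth description $D_+\cup(\text{conormal collar})\cup K$ in a Darboux ball around the $\xi|_{\Gamma^0}$-transverse intersection $\partial D_+\cap\partial K$, and the Reeb push by $\epsilon$ produces the analogous model for $(D_+\uplus_b K)^{\epsilon,\Gamma^0}$; Lemma~\ref{lem:conjugation} identifies its boundary $(\Lambda_+\uplus(\Lambda_-\uplus\Lambda_+))^\epsilon$ with $\Lambda_-^\epsilon$. Up to the overall Reeb shift between $D_+^{-\epsilon}$ and $D_+^0$, which is absorbed into a Hamiltonian isotopy supported near $\Gamma^0$, the two piecewise smooth Lagrangians then coincide, and the smoothings provided in subsubsection~\ref{subsection: piecewise smooth descriptions} are supported away from the common boundary $\Lambda_-^\epsilon$, completing the Legendrian isotopy rel boundary. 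The main obstacle is making the second step precise: the trace is produced abstractly by Proposition~\ref{prop:isotopy_to_cylinder}, and one must verify that the conormal collar produced inside the handle agrees, up to Lagrangian isotopy rel boundary, with the collar prescribed by the Dehn-twist description of the Lagrangian sum from subsubsection~\ref{subsec:front_vs_lagrangian} that governs the boundary-sum model.
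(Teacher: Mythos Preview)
Your approach shares the paper's toolkit (the piecewise smooth Lagrangian models of \autoref{subsection: piecewise smooth descriptions}) but is organized quite differently, and the gap you flag at the end is real.

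The paper does \emph{not} try to model the handleslide trace $D'_+$ directly. Instead it first proves a general principle (Step 1): concatenating any Lagrangian disk $D$ with a handleslide trace across the Weinstein handle $\D T^*D^n$ is the same, up to Hamiltonian isotopy, as taking the Legendrian boundary sum $\Theta\uplus_b D$ with a specific $\Theta$-disk $\Theta\subset\mathbb{S}T^*D^n$ that intersects the cocore once. This replaces the abstractly-produced trace of \autoref{prop:isotopy_to_cylinder} by a concrete local object, and is exactly the step you are missing. With this in hand, the paper runs the argument \emph{backwards}: it slides $(D_+\uplus_b K)^{\epsilon,\Gamma^0}$ \emph{down} through the handle to obtain $D'''_+ = D_+^\epsilon\uplus_b\Theta\uplus_b K^{-\epsilon,\Gamma^0}$, and then the entire comparison reduces to a purely local computation inside the handle, namely showing $\Theta\uplus_b K^{-\epsilon,\Gamma^0}$ is Legendrian isotopic to another $\Theta$-disk $\Theta'$ (now intersecting the \emph{core} once) using the models (A) and (B). This forces $D'''_+ = D_+^{-\epsilon}$, which is equivalent to the lemma.

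Your direct comparison, by contrast, requires you to know the Lagrangian isotopy class of the trace cylinder inside $R_+(S)$ before you can invoke \autoref{lem:alternative_Lagrangian_sum} or \autoref{lem:alternative_boundary_sum}. The formula $D'_+\simeq D_+^{-\epsilon}\cup(\text{conormal collar})\cup K$ is morally correct, but the passage from the abstract trace to this model is precisely the content of the paper's Step~1, and without it your second paragraph is a heuristic rather than a proof. (There is also a notational collision: the $\overline{D}_\pm$ from \autoref{lemma: handle_slide} are cotangent fibers in $\partial(\partial_1 H)$, not the bypass disks $D_\pm$, and the relevant sum there is $m\uplus\ell$, not $\overline{D}_+\uplus\ell$.) The paper's indirect route buys exactly what you need: it localizes the comparison to the handle, where the piecewise smooth models can be matched by inspection.
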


Here we are assuming that $\bdry D'_+= \bdry(D_+ \uplus_b K)^{\epsilon, \Gamma^0}$.

\begin{proof}[Sketch of proof] $\mbox{}$

\s\n
{\em Step 1.}
Let $\Lambda\subset \Gamma^0$ be a Legendrian with a single short Reeb chord $c$ from $\Lambda$ to $\Lambda_-\uplus \Lambda_+$.  Applying a Legendrian handleslide to $\Lambda$ across $\D T^* D^n$ corresponding to $c$ is equivalent to taking the Legendrian sum $\bdry \Theta \uplus \Lambda$, where $\Theta$ is a $\Theta$-disk in the unit cotangent bundle $\mathbb{S}T^* D^n$ which intersects the cocore $T^*_0 D^n$ once.

A $\Theta$-disk is isotopic rel boundary to a Lagrangian disk with Legendrian boundary (still called $\Theta$).  If $\Lambda$ bounds a Lagrangian disk $D\subset R_+(\Sigma^0)$, then applying the Hamiltonian isotopy corresponding to the above Legendrian handleslide yields $\Theta\uplus_b D\subset R_+(S)$.

\s\n
{\em Step 2.}
Next we Legendrian isotop the boundary of $(D_+ \uplus_b K)^{\epsilon, \Gamma^0}$ down through $\mathbb{U}T^* D^n$. Let us write $D'''_+$ for the result of the corresponding Hamiltonian isotopy. Writing $D''_+$ for the Lagrangian disk obtained from $D_+^\epsilon$ by concatenating with the trace of a Legendrian handleslide across $\D T^* D^n$, we see that there exists a $\Theta$-disk in $\mathbb{S}T^* D^n$ which intersects the cocore $T^*_0 D^n$ once (basically the same one as in Step 1), such that:
\begin{gather}
\nonumber D''_+= D_+^\epsilon \uplus_b \Theta,\\
\label{eqn for D'''} D'''_+= D_+^\epsilon \uplus_b \Theta \uplus_b K^{-\epsilon,\Gamma^0}.
\end{gather}

\s\n
{\em Step 3.} We claim that $\Theta \uplus_b K^{-\epsilon,\Gamma^0}$ is Legendrian isotopic (relative to the point of intersection between $D_+^\epsilon$ and $\Theta$) to a $\Theta$-disk $\Theta'$ with boundary on $\D T^* D^n|_{\bdry D^n}$ and which intersects the core $K$ once.  This will imply the lemma by Step 1 since $$D'''_+= D_+^\epsilon \uplus_b \Theta'= D_+^{-\epsilon}.$$

To prove the claim we use piecewise smooth models $L^\square$ and $L^\square_+$ for $\Theta$ and $K^{-\epsilon,\Gamma^0}$ from \autoref{subsection: piecewise smooth descriptions}(B) and (A).  Since $L^\square$ and $L^\square_+$ agree on $\{x_1>0\}$ and on $L^0=int(D^n)$, by contracting the overlapping portions of $L^\square$ and $L^\square_+$ it follows that (the smoothed version of) $L^\square\uplus_b L^\square_+$ is Lagrangian isotopic to
$$(\D T^*_{L^1\cap\{x_1<0\}} D^n) \cup C(L^2),$$
which in turn is Lagrangian isotopic to $\Theta'$.
\end{proof}

\begin{lemma} \label{lem:bypass_criterion}
	Let $(\Sigma \times [0,1], \xi)$ be a bypass attachment along $(\Lambda_+, \Lambda_-; D_+, D_-)$. Then $(D_- \uplus_b D_+) \cup K$ is Legendrian isotopic to the standard Legendrian unknot.
\end{lemma}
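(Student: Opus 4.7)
The plan is to identify $(D_-\uplus_b D_+)\cup K$, up to the cyclic symmetry of a Legendrian triangle, with the attaching sphere of the contact $(n+1)$-handle used in \autoref{thm:bypass_attachment}. By the construction of the contact $(n+1)$-handle in \autoref{subsec:(n+1)_handle}, any attaching sphere in $\p_1 H_{n+1}$ bounds an $(n+1)$-dimensional $\Theta$-disk inside the handle, so by the characterization recalled in \autoref{subsec:Legendrian_sum_property} it is Legendrian isotopic to the standard Legendrian unknot. In the bypass construction the attaching sphere is $\overline{D'_+}\cup_{\Lambda^{\epsilon}_-}\overline{D^{\epsilon}_-}\subset S$, which is therefore standard.

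Applying the Legendrian handleslide lemma (\autoref{lemma: trace equals boundary connect sum}) replaces $D'_+$ by $(D_+\uplus_b K)^{\epsilon,\Gamma^0}$ relative to the common boundary $\Lambda^{\epsilon}_-$, so after undoing the small common Reeb pushoff the standard Legendrian unknot is Legendrian isotopic in $S$ to $(D_+\uplus_b K)\cup D_-$. At this point I would apply the Legendrian wheel lemma (\autoref{lem: Legendrian_wheel}) to the triple $(D_-, D_+, K)$, whose boundaries $(\Lambda_-,\Lambda_+,\Lambda_-\uplus\Lambda_+)$ form a Legendrian triangle in $\Gamma$ by construction (normalized as in \autoref{convention for triangle}). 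That lemma gives that $(D_-\uplus_b D_+)\cup K$, $(D_+\uplus_b K)\cup D_-$, and $(K\uplus_b D_-)\cup D_+$ are mutually Legendrian isotopic, and combining with the previous step finishes the argument.

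The main technical point to verify carefully is the cylindrical-end hypothesis (C) of \autoref{subsec:boundary_sum} simultaneously for all three pairs drawn from $(D_-, D_+, K)$: the disks $D_\pm$ have cylindrical ends by the definition of bypass data, and $K$ enters $\Gamma$ from the standard contact $n$-handle model of \autoref{subsec:n_handle}. A small application of the Flexibility Lemma (\autoref{lemma: flexibility}) should allow one to arrange $D_-$, $D_+$, and $K$ to meet $\Gamma$ along three distinct rays $\{\theta=\theta_i\}$ in a common tubular neighborhood $\Gamma\times D_\delta$, without altering any of the Legendrian isotopy classes involved.
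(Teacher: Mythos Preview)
Your proposal is correct and follows essentially the same route as the paper's own proof: the attaching sphere of the $(n+1)$-handle bounds the core $\Theta$-disk and is therefore the standard unknot, \autoref{lemma: trace equals boundary connect sum} identifies it with $(D_+\uplus_b K)\cup D_-$, and \autoref{lem: Legendrian_wheel} cycles this to $(D_-\uplus_b D_+)\cup K$. Your additional remarks on arranging hypothesis (C) for all three disks simultaneously are a reasonable elaboration of a point the paper leaves implicit.
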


\begin{proof}
	By construction $D'_+ \cup D_-$ bounds the core disk of the contact $(n+1)$-handle, which is a $\Theta$-disk.  By Lemma~\ref{lemma: trace equals boundary connect sum}, $D'_+$ is Legendrian isotopic to $D_+\uplus_b K$ relative to the boundary. Hence $(D_+ \uplus_b K) \cup D_-$ is Legendrian isotopic to the standard unknot. The lemma then follows from \autoref{lem: Legendrian_wheel}.
\end{proof}

\subsection{Dual bypass attachment} \label{subsec:up_side_down_bypass}

As indicated in \autoref{subsec:(n+1)_handle}, since the direction of a contact vector field may be reversed, a bypass attachment to $\Sigma \times \{0\}$ which yields $(\Sigma \times [0,1],\xi)$ may be viewed as an upside down bypass attachment to $\Sigma \times \{1\}$. 

Continuing to use the notation from \autoref{thm:bypass_attachment}, suppose that $(\Sigma \times [0,1],\xi)$ is given by a bypass attachment along $(\Lambda_+, \Lambda_-; D_+, D_-)$. The gluing contactomorphism $\psi: \p \overline{R^1_+} \to \p \overline{R^1_-}$ is given by a Legendrian handleslide.

We define the \emph{dual bypass attachment data} $(\Lambda^{\dagger}_+, \Lambda^{\dagger}_-; D^{\dagger}_+, D^{\dagger}_-)$ on $\Sigma^1$ as follows: Let $\Lambda^{\dagger}_+ := (\Lambda_- \uplus \Lambda_+)^{-\epsilon/2} \subset  \p \overline{R^1_+}$. Then $\Lambda^{\dagger}_+$ bounds a Lagrangian disk $D^{\dagger}_+$ in $R^1_+$ which is Hamiltonian isotopic to the cocore disk of the Weinstein handle attached along $\Lambda_- \uplus \Lambda_+$ (cf.\ \autoref{thm:bypass_attachment} (2)). Next let $\Lambda^{\dagger}_- := \Lambda^{-\epsilon/2}_- \in  \p \overline{R^1_+}$. A simple Kirby calculus implies that $\psi(\Lambda^{\dagger}_-) = (\Lambda_- \uplus \Lambda_+)^{\epsilon/2} \in \p \overline{R^1_-}$: this involves moving $\Lambda^\dagger_-$ below $(\Lambda^{-\epsilon}_+)'$ to $(\Lambda_-\uplus \Lambda_+)^{-2\epsilon}$, sliding $(\Lambda^{-\epsilon}_+)'$ above the handle attached along $\Lambda_-\uplus \Lambda_+$, and then moving $(\Lambda_-\uplus \Lambda_+)^{-2\epsilon}$ above the same handle. By the same argument as for the positive region, $\psi(\Lambda^{\dagger}_-)$ bounds a Lagrangian disk $D^{\dagger}_-$ in $R^1_-$.

By reversing the orientation of the contact vector field used in defining the bypass attachment to $\Sigma \times \{0\}$ along $(\Lambda_+, \Lambda_-; D_+, D_-)$, one can verify that $(\Sigma \times [0,1],\xi)$ may be realized as a bypass attachment to $\Sigma \times \{1\}$ along $(\Lambda^{\dagger}_+, \Lambda^{\dagger}_-; D^{\dagger}_+, D^{\dagger}_-)$ from below.

\subsection{Smoothly canceling pairs (SCP) of contact handle attachments} \label{subsec:smooth_canceling_pair}

As we have seen in \autoref{subsec:construct_bypass_attachment}, the bypass attachment is a canceling pair of contact handle attachments in the middle dimensions. However, the bypass attachment is not the only possible canceling pair of contact handle attachments in the middle dimensions.

Our goal here is to describe a more general situation, called an {\em SCP attachment}, which is shorthand for a ``smoothly canceling pair of contact handle attachments"; the bypass attachment then becomes a special case of an SCP attachment.
The material in this subsection is presented for completeness and will not be used anywhere else in the paper.  We are also not trying to describe the most general canceling pair of contact handle attachments.


Let $\Sigma$ be a convex hypersurface with contact germ $\xi$ and dividing set $\Gamma$ and let $\Sigma \setminus \Gamma = (R_+,d\lambda_+) \cup (R_-,d\lambda_-)$ be the usual decomposition into Liouville manifolds. 

\begin{definition} \label{defn:SCP_data}
The quadruple $(\Lambda, \Lambda'; D_{\mp}, A_{\pm})$ is called an \emph{SCP attachment data on $\Sigma$} if $D_{\mp} \subset (R_{\mp}, d\lambda_{\mp})$ is a Lagrangian disk with cylindrical end asymptotic to $\Lambda$, and $A_{\pm} \cong  \R\times S^{n-1} \subset R_{\pm}$ is a (necessarily exact) Lagrangian annulus with cylindrical ends which are asymptotic to $\Lambda$ as $t \to \infty$ and to $\Lambda'$ as $t \to -\infty$, where $t$ is the coordinate on $\R$.
\end{definition}

The following result is analogous to \autoref{thm:bypass_attachment}. We state the result for $(\Lambda, \Lambda'; D_-, A_+)$, leaving the corresponding statement for $(\Lambda, \Lambda'; D_+, A_-)$ to the reader.

\begin{prop} \label{prop:SCP_attachment}
	Let $\Sigma$ be a convex hypersurface with contact germ $\xi$ and let $(\Lambda, \Lambda'; D_-, A_+)$ be an SCP attachment data on $\Sigma$. Then there exists an extension of $\xi$ to $\Sigma \times [0,1]$ and a contact vector field $v$ on $\Sigma \times [0,1]$ such that \autoref{thm:bypass_attachment} (1) holds. Moreover, if we write $\Sigma^i \setminus \Gamma^i = R^i_+ \cup R^i_-$ for $i=0,1$, as in \autoref{thm:bypass_attachment}, then:
		\begin{enumerate}
			\item $(R^1_+)^c$ is obtained from $(R^0_+)^c$ by first attaching a Weinstein handle $H$ along $\Lambda'$, and then removing a standard neighborhood of the Lagrangian disk given by gluing $A_+$ and the core disk of $H$ along $\Lambda'$.

			\item $(R^1_-)^c$ is obtained from $(R^0_-)^c$ by attaching a Weinstein handle along $\Lambda'$ and removing a standard neighborhood of $D_-$.

			\item As the ideal boundary of $R^1_{\pm}$, $\Gamma^1$ is obtained from $\Gamma^0$ by performing a contact $(-1)$-surgery along $\Lambda'$ and a contact $(+1)$-surgery along $\Lambda$.

			\item As a smooth manifold, $\Sigma^1$ is given by gluing $\overline{R^1_+}$ and $\overline{R^1_-}$ along the common boundary by the identity map.
		\end{enumerate}
\end{prop}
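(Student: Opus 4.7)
The plan is to adapt the three-step strategy of the proof of \autoref{thm:bypass_attachment}, with the Lagrangian annulus $A_+$ playing the role that the Legendrian handleslide trace plays there. No explicit handleslide is required because $A_+$ is already built as a Lagrangian cobordism between the two boundary Legendrians $\Lambda$ and $\Lambda'$, which is exactly what the handleslide produces in the bypass case.

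\textsc{Step 1.} First I would apply \autoref{prop:n_handle} to attach a contact $n$-handle $H_n$ to $\Sigma^0$ along $\Lambda' \subset \Gamma^0$, producing an intermediate convex hypersurface $S$. By parts (1)--(3) of \autoref{prop:n_handle}, $(R_\pm(S))^c$ is obtained from $(R^0_\pm)^c$ by a Weinstein $n$-handle attachment along $\Lambda'$, and $\Gamma_S$ is obtained from $\Gamma^0$ by contact $(-1)$-surgery along $\Lambda'$.

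\textsc{Step 2.} Next, inside $R_+(S)$, I would concatenate $A_+$ with the core disk of the newly attached Weinstein handle along $\Lambda'$. After smoothing the corner at $\Lambda'$ and applying Legendrian realization (\autoref{lemma: Legendrian realization}), this produces a Lagrangian disk $D'_+ \subset R_+(S)$ with cylindrical end asymptotic to $\Lambda$. On the other side, $D_-$ already lies in $R_-(S)$, because the handle attachment along $\Lambda'$ takes place in a neighborhood disjoint from $D_-$ (after a small compactly supported Hamiltonian perturbation if necessary). The union $\overline{D'_+} \cup_\Lambda \overline{D_-}$ is then an embedded Legendrian sphere in $\bdry_2 H_n \subset S$.

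\textsc{Step 3.} I would then apply \autoref{prop:(n+1)_handle} to attach a contact $(n+1)$-handle $H_{n+1}$ along $\overline{D'_+} \cup \overline{D_-}$, producing $\Sigma^1$. Conclusions (1)--(4) then follow by combining Step 1 with parts (1)--(4) of \autoref{prop:(n+1)_handle}: removing a standard neighborhood of $D'_+$ in the Weinstein-enlarged positive region is the composite operation described in (1); removing a standard neighborhood of $D_-$ gives (2); (3) is the composite of the $(-1)$-surgery from $H_n$ and the $(+1)$-surgery from $H_{n+1}$; and (4) is immediate. Smooth cancellation of $H_n$ and $H_{n+1}$ follows exactly as in \autoref{thm:bypass_attachment}: the attaching sphere of $H_{n+1}$ is smoothly isotopic to $A_+ \cup (\text{core of the Weinstein handle}) \cup D_-$, of which only the core crosses the belt sphere of $H_n$, doing so transversely in a single point.

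The main obstacle will be Step 2 --- verifying that the piecewise smooth Lagrangian $A_+ \cup (\text{core of } H)$ can indeed be smoothed to a genuine Lagrangian disk with the prescribed asymptotics. The cylindrical form of $A_+$ near its $\Lambda'$-end (guaranteed by \autoref{defn:SCP_data}) must be compared with the standard Weinstein model on the attached handle, and the two pieces interpolated by a compactly supported exact Lagrangian isotopy, in the spirit of \autoref{cor:isotop_Lagrangian_disk}. Once this construction is carried out, the remainder of the argument is a direct transcription of the proof of \autoref{thm:bypass_attachment}, using only the building blocks developed in \autoref{sec:contact_handles}.
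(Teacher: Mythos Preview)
Your proposal is correct and follows essentially the same approach as the paper's (sketch of) proof: attach the contact $n$-handle along $\Lambda'$, glue $A_+$ to the core disk of the resulting Weinstein handle in $R_+(S)$ to produce the Lagrangian disk, and then attach the contact $(n+1)$-handle along its union with $D_-$, with smooth cancellation verified by the single transverse intersection with the belt sphere. One minor slip: the Legendrian sphere $\overline{D'_+} \cup_\Lambda \overline{D_-}$ lives in $S$, not in $\p_2 H_n$ (since $D_-$ sits in the original $R_-$), but this does not affect the argument.
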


\begin{proof}[Sketch of proof]
The proof is similar to that of \autoref{thm:bypass_attachment} but simpler, since no handleslides are involved. We first attach a contact $n$-handle $H_n$ to $\Sigma^0$ along $\Lambda'$ to obtain a new convex hypersurface $S$. Then on $R_+(S)$ there is a Lagrangian disk $D_+$ obtained by gluing $A_+$ to the core disk of $R_+ (\p_2 H_n)$ along a parallel copy of $\Lambda'$. Now a contact $(n+1)$-handle $H_{n+1}$ can be attached along $D_+ \cup D_- \subset S$ to complete the SCP attachment. The fact that the attachments of $H_n$ and $H_{n+1}$ cancel each other smoothly follows from observing that the attaching locus of $H_{n+1}$ intersects the belt sphere of $H_n$ in precisely one point.
\end{proof}

In fact, the bypass attachment constructed in \autoref{thm:bypass_attachment} can be realized as a special case of an SCP attachment. In this paper we will only outline the construction and leave the details to a future work.

Let $\Sigma$ be a convex hypersurface with bypass attachment data $(\Lambda_+, \Lambda_-; D_+, D_-)$.  The key observation is that one can construct a Lagrangian annulus from $D_+$ using the \emph{saddle cobordism} studied by \cite{EHK16} in dimension $3$ and \cite{DR16} in higher dimensions. In our situation, let $(\Gamma, \eta)$ be a contact manifold and $\Lambda_{\pm} \subset \Gamma$ be Legendrian spheres that $\eta$-transversely intersect in one point. Then there is a natural way to arrange $\Lambda_- \uplus \Lambda_+$ and $\Lambda_-$ so they also $\eta$-transversely intersect in one point and $(\Lambda_- \uplus \Lambda_+) \uplus \Lambda_-$ is Legendrian isotopic to $\Lambda_+$ (cf.\ \autoref{lem:conjugation}). Then there exists a saddle Lagrangian cobordism $L\subset \R\times \Gamma$ from $(\Lambda_- \uplus \Lambda_+) \cup \Lambda_-^{\epsilon}$ to $\Lambda_+$ such that $L$ is diffeomorphic to $S^n$ with $3$ points removed.

Consider the decomposition $R_+ = R_+^c \cup ([0,\infty) \times \Gamma)$, where $[0,\infty) \times \Gamma$ is the positive half-symplectization of $\Gamma$.   The previous paragraph (after $\R$-translation and truncation) gives a Lagrangian cobordism $L' \in [0,\infty) \times \Gamma$ asymptotic to $(\Lambda_- \uplus \Lambda_+) \cup \Lambda_-^{\epsilon}$ at $+\infty$ and such that $L' \cap ([0,1] \times \Gamma) = [0,1] \times \Lambda_+$. Hence we obtain a Lagrangian annulus $A_+ = L' \cup (D_+ \cap R^c_+) \subset R_+$. Define the associated SCP attachment data $(\Lambda, \Lambda'; D'_-, A_+)$ by setting $\Lambda = \Lambda^{\epsilon}_-, \Lambda' = \Lambda_- \uplus \Lambda_+$, and $D'_- = D^{\epsilon}_-$. The proof of the following lemma will be omitted.

\begin{lemma}
	The contact structure on $\Sigma \times [0,1]$ given by a bypass attachment along $(\Lambda_+, \Lambda_-; D_+, D_-)$ is contactomorphic to the SCP attachment along $(\Lambda, \Lambda'; D'_-, A_+)$ defined above.
\end{lemma}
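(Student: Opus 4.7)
The plan is to match the two constructions handle-by-handle and to reduce the equivalence of the resulting contact structures to a single Hamiltonian isotopy between two Lagrangian disks on the positive side of the dividing set. Both \autoref{thm:bypass_attachment} and \autoref{prop:SCP_attachment} begin with a contact $n$-handle attached along $\Lambda' = \Lambda_- \uplus \Lambda_+$, producing the same intermediate convex hypersurface $S$ with decomposition $S\setminus \Gamma_S = R_+(S)\cup R_-(S)$; write $K$ for the core Legendrian disk of this $n$-handle, so that $\p K = \Lambda'$. On the negative side both constructions then remove a standard neighborhood of the same Lagrangian disk $D_-^\epsilon \subset R_-(S)$ (which is $D'_-$ in the SCP data and the negative half of the $(n+1)$-handle attaching Legendrian in the bypass), so $(R^1_-)^c$ already agrees in the two pictures.

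On the positive side, the bypass $(n+1)$-handle core $D'_+$ is, by the Legendrian handleslide lemma (\autoref{lemma: trace equals boundary connect sum}), Legendrian isotopic rel boundary to $(D_+\uplus_b K)^{\epsilon,\Gamma^0}$, while the SCP $(n+1)$-handle core is $A_+\cup_{\Lambda'} K$, where $A_+ = L'\cup (D_+\cap R_+^c)$ and $L' \subset [0,\infty)\times \Gamma^0$ is the saddle Lagrangian cobordism from $\Lambda'\cup \Lambda$ at $+\infty$ down to $\Lambda_+$ at the finite end. Both disks have boundary Legendrian-isotopic to $\Lambda = \Lambda_-^\epsilon$ (which for the boundary sum side is seen by combining \autoref{lem:associativity} with \autoref{lem:conjugation}), and the task reduces to exhibiting a compactly supported Hamiltonian isotopy rel boundary between these two Lagrangian disks inside $R_+(S)$.

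To produce this isotopy, I would cap the $\Lambda'$-end of $L'$ by the core disk $K$ of the attached $n$-handle, obtaining a Lagrangian cobordism $\widetilde L$ in $R_+(S)\setminus R_+^c$ from $\Lambda$ at infinity to $\Lambda_+$ at a finite level, so that $A_+\cup_{\Lambda'} K = \widetilde L \cup (D_+\cap R_+^c)$. I would then invoke the standard correspondence between capped saddle cobordisms and Polterovich boundary sums, established in dimension three by \cite{EHK16} and in higher dimensions by \cite{DR16}: capping one asymptotic end of the pair-of-pants $L'$ with a Lagrangian disk (here $K$) produces a Lagrangian trace that is Hamiltonian isotopic to the boundary sum of $K$ with any cap of the remaining asymptotic ends (here $D_+$). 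Applied to the present situation this yields $\widetilde L\cup (D_+\cap R_+^c) \simeq D_+\uplus_b K$ rel boundary in $R_+(S)$, completing the identification of the two positive $(n+1)$-handle cores.

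Once the two positive attaching Lagrangian disks are matched up to Hamiltonian isotopy, the uniqueness of standard neighborhoods of Lagrangian disks (implicit in \autoref{prop:(n+1)_handle}) yields a contactomorphism between the resulting positive regions $R^1_+$. The final gluing along $\Gamma^1$ is determined in both constructions by the same pair of contact surgeries on $\Gamma^0$ (a $(-1)$-surgery along $\Lambda'$ and a $(+1)$-surgery along $\Lambda$), so the contactomorphisms on $R^1_\pm$ assemble into a contactomorphism of $\Sigma\times[0,1]$. The main obstacle is the saddle-to-boundary-sum identification in the previous paragraph: though intuitively compelling and folklore in the Lagrangian surgery literature, writing down the Hamiltonian isotopy explicitly inside the Liouville manifold $R_+(S)$ --- and verifying compact support --- is the technical content lurking behind the lemma, and is exactly the reason the authors defer it to future work.
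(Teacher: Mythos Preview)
The paper explicitly omits the proof of this lemma (it states ``The proof of the following lemma will be omitted'' and defers the details to future work), so there is no argument in the text to compare your proposal against. What you have written is therefore not a comparison but a candidate proof, and as such the overall strategy is the natural one: both constructions begin with the same contact $n$-handle along $\Lambda' = \Lambda_-\uplus\Lambda_+$, and the problem reduces to showing that the two $(n+1)$-handle attaching spheres on the intermediate hypersurface $S$ are Legendrian isotopic. You correctly identify the negative halves as literally equal and reduce everything to a single Hamiltonian (hence Legendrian, via Legendrian realization) isotopy between $A_+\cup K$ and $(D_+\uplus_b K)^{\epsilon}$ in $R_+(S)$.

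Two comments on the remaining step. First, your appeal to \cite{EHK16} and \cite{DR16} for the ``capped saddle equals boundary sum'' identification is somewhat optimistic: those references construct the saddle cobordism and relate it to Legendrian ambient surgery, but the precise statement you need---that capping one top end of the pair-of-pants with a Lagrangian disk yields, up to compactly supported Hamiltonian isotopy rel boundary, the Legendrian boundary sum of that disk with the bottom filling---is not stated there in a form you can quote. It is exactly this identification (together with the bookkeeping of the Reeb shifts $\epsilon$ and the matching of the boundary Legendrians via \autoref{lem:conjugation}) that constitutes the omitted content, and you are right to flag it as such in your final paragraph. Second, once the positive disks are matched rel boundary, the conclusion is cleaner than you indicate: the $(n+1)$-handle attachment depends only on the Legendrian isotopy class of the full attaching sphere (\autoref{prop:(n+1)_handle}), so there is no separate gluing argument along $\Gamma^1$ to carry out---the contactomorphism of $\Sigma\times[0,1]$ follows directly.
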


\section{Examples of bypass attachments} \label{sec:example_bypass}

In this section we discuss three examples: the trivial bypass attachment, the overtwisted bypass attachment, and the anti-bypass attachment. All three examples were first studied by the first author \cite{Hon00,Hon02} in dimension $3$. Our constructions in this section generalize those of the $3$-dimensional case and share many similarities with the $3$-dimensional case. However, as we will see, the higher-dimensional bypass attachments also exhibit phenomena which are not present in the $3$-dimensional case.

\subsection{Trivial bypass attachment} \label{subsec:trivial_bypass}

Let $(\R^{2n}, \lambda =\tfrac{1}{2}(x \cdot dy - y \cdot dx))$ be the standard Liouville domain and let $B^{2n} \subset \R^{2n}$ be the closed unit ball. Then $(S^{2n-1} = \p B^{2n}, \xi_{\std}=\ker \lambda|_{S^{2n-1}})$ is the standard contact sphere. The {\em standard Lagrangian disk} $\{ |x| \leq 1, y=0 \} \subset B^{2n}$ intersects $S^{2n-1}$ along the standard Legendrian unknot $U$.

Since the connected sum (resp.\ boundary connected sum) of a contact manifold $(M,\xi)$ (resp.\ Liouville domain $W$) with $(S^{2n-1}, \xi_{\std})$ (resp.\ $(B^{2n}, \lambda)$) is trivial, we define a Lagrangian disk $D\subset W$ with Legendrian boundary $\Lambda\subset M$ to be {\em standard} if it can be Lagrangian isotoped to the standard Lagrangian disk in $B^{2n}$ such that the boundary remains Legendrian in $M$ during the isotopy.  The boundary $\Lambda$ of a standard Lagrangian disk $D$ is clearly a standard Legendrian unknot.

\begin{definition} \label{defn:trivial_bypass}
A bypass attachment data $(\Lambda_+, \Lambda_-; D_+,D_-)$ on a convex hypersurface $\Sigma$ is {\em trivial} if
\begin{enumerate}
\item[(T$_\pm$)] $\Lambda_\pm$ is the standard Legendrian unknot, $D_\pm \subset R_\pm$ is (the completion of) the standard Lagrangian disk bounded by $\Lambda_\pm$, and $\Lambda_+$ is below $\Lambda_-$ (resp.\ $\Lambda_-$ is above $\Lambda_+$) in the sense of \autoref{subsec:Legendrian_sum_property}.
\end{enumerate}
A bypass attachment is {\em trivial} if the corresponding attaching data is trivial; see \autoref{fig:trivial_bypass}.
\end{definition}

\begin{figure}[ht]
	\begin{overpic}[scale=.3]{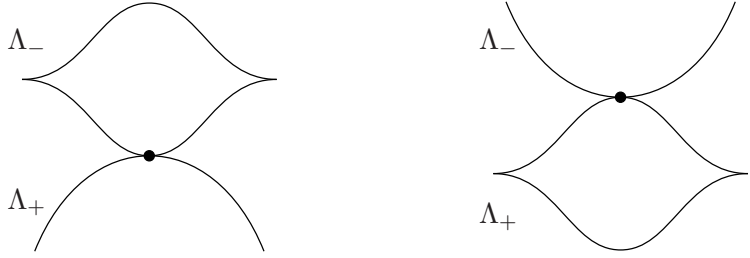}
		\put(-2,6){$\Lambda_+$}
		\put(-2,28){$\Lambda_-$}
		\put(63,4){$\Lambda_+$}
		\put(63,28){$\Lambda_-$}
	\end{overpic}
	\caption{The front projection of the trivial bypass attachment in the dividing set. (T$_-$) is to the left and (T$_+$) is to the right.}
	\label{fig:trivial_bypass}
\end{figure}

As the terminology suggests, a trivial bypass attachment on $\Sigma$ produces an $I$-invariant contact structure on $\Sigma \times [0,1]$. The proof of this fact in dimension $3$, as given in \cite{Hon02}, relies on Eliashberg's uniqueness result \cite{Eli92} for tight contact structures on the $3$-ball. Since the higher-dimensional analog of Eliashberg's result is known to be false (cf.\ Eliashberg~\cite{Eli91} and Ustilovsky~\cite{Us}), one needs a different approach. The proof that a trivial bypass attachment is trivial will be postponed to \autoref{subsec:bypass_to_OB} after we establish a dictionary between bypass attachments and (partial) open book decompositions.

For the moment we apply \autoref{thm:bypass_attachment} to show that the trivial bypass attachment does not change the germ of the contact structure on the convex hypersurface.

\begin{lemma} \label{lemma:trivial_bypass}
	Let $\Sigma$ be a convex hypersurface. If the contact structure $\zeta$ on $\Sigma \times [0,1]$ is given by a trivial bypass attachment on $\Sigma = \Sigma \times \{0\}$, then $\zeta|_{\Sigma \times \{0\}} = \zeta|_{\Sigma \times \{1\}}$ as contact germs.
\end{lemma}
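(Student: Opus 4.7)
My plan is to reduce the claim to a standard model by localizing the trivial bypass attachment in a Darboux ball, and then to verify the germ equality for the resulting standard canceling pair of contact handles.

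The first step is to observe that, by the standard neighborhood theorem for Lagrangian disks with Legendrian boundary applied to $(D_+\cup D_-, \Lambda_+\cup\Lambda_-)$ together with their $\epsilon$-Reeb-pushoffs, there is a Darboux ball $B\subset(\Sigma,\xi)$ containing all the Legendrian and Lagrangian data of the trivial bypass, and moreover $(B,\xi|_B)$ is contactomorphic to a fixed standard model depending only on the dimension. By \autoref{thm:bypass_attachment}, the contact $n$- and $(n+1)$-handles realizing the bypass attachment have supports contained in a neighborhood of $B\times[0,1]$ in $\Sigma\times[0,1]$; outside this neighborhood the bypass contact structure is the product, so the germs at $\Sigma^0$ and $\Sigma^1$ agree tautologically away from $B$.

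It remains to verify the germ equality across $B$. By \autoref{lem:plus_unknot}, the Legendrian sum $\Lambda_-\uplus\Lambda_+$ is Legendrian isotopic to one of $\Lambda_\pm$ in either subcase of \autoref{defn:trivial_bypass}, so the contact $n$-handle is attached along a standard Legendrian unknot bounding a standard Lagrangian disk. By \autoref{lem:bypass_criterion}, the Legendrian sphere $(D_-\uplus_b D_+)\cup K$ along which the contact $(n+1)$-handle is attached is Legendrian isotopic to a standard Legendrian unknot bounding a standard $\Theta$-disk. Hence $(B\times[0,1], \xi|_{B\times[0,1]})$ is contactomorphic to a standard canceling pair of contact handles attached to a Darboux ball with all attaching data standardized. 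A direct computation in this standard model, using the explicit gradient-like contact vector fields $v_n$ and $v_{n+1}$ of \autoref{subsec:n_handle} and \autoref{subsec:(n+1)_handle}, shows that these two handles admit a combined Morse function with a single canceling pair of critical points; Gray stability then yields a contact isotopy deforming the standard pair to the $I$-invariant product, from which the germ equality on the two boundary copies of $B$ follows at once.

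The main obstacle is the final model computation, namely the verification that the standard canceling pair of contact handles is contact-isotopic to the trivial $I$-invariant product. This is not entirely formal: although the smooth cancellation of the handle pair is immediate from \autoref{thm:bypass_attachment}, upgrading it to a contact isotopy requires that the standard attaching data, together with the canonical contact vector fields $v_n$ and $v_{n+1}$, admit a common Morse-theoretic cancellation along a single flow line. The standardness of all data in the trivial case and the explicit formulas for $v_n$ and $v_{n+1}$ make this tractable but tedious, which is precisely why the authors choose to postpone the stronger $I$-invariance statement to \autoref{sec:partial_OB}, where the partial open book machinery gives a much cleaner proof.
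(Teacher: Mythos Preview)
Your approach is genuinely different from the paper's and contains a real gap in the key step.

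The paper does not attempt to localize or to prove any kind of $I$-invariance. Instead it applies \autoref{thm:bypass_attachment}(2) directly: $R^1_\pm$ is obtained from $R^0_\pm$ by attaching a Weinstein $n$-handle along $\Lambda_-\uplus\Lambda_+$ and removing a standard neighborhood of $D^{\mp\epsilon}_\pm$. In the trivial case (say (T$_+$)), for $R^1_+$ the removal of the standard disk $D_+$ is the same as attaching a subcritical $(n{-}1)$-handle, and this Weinstein pair cancels symplectically; for $R^1_-$, $(\Lambda_-\uplus\Lambda_+)^{-\epsilon}$ is Legendrian isotopic to $\Lambda_-$ by a Reidemeister~I move, so the attached handle is filled back in by the removed disk. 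This is purely a computation at the level of Liouville manifolds and symplectically canceling Weinstein handles, and it yields $R^0_\pm\cong R^1_\pm$, which is what germ equality means here.

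Your proposal instead aims at the stronger $I$-invariance statement and then backs off from it. The gap is in your Step~4: you invoke Gray stability to pass from the smooth cancellation of the $n$- and $(n{+}1)$-handles to a contact isotopy, but Gray stability requires a one-parameter family of \emph{contact} structures, and a smooth handle-cancellation deformation of the Morse function has no reason to produce one. This is not a technicality --- it is exactly why the paper defers the genuine $I$-invariance to \autoref{prop:trivial_bypass} via partial open books. Your own last paragraph essentially concedes this.

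A secondary issue: your appeal to \autoref{lem:bypass_criterion} to standardize the $(n{+}1)$-handle attaching sphere is true for \emph{every} bypass attachment, trivial or not, so it does not by itself isolate what is special about the trivial case. What is special is the symplectic cancellation inside $R_\pm$, and that is precisely what the paper's argument exploits.
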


\begin{proof}
Using the notation from \autoref{thm:bypass_attachment}, it suffices to show $R^0_\pm \cong R^1_\pm$ as Liouville manifolds. We consider (T$_+$) in \autoref{defn:trivial_bypass}; (T$_-$) is similar.
	
	According to \autoref{thm:bypass_attachment}, modulo completion, $R^1_+$ can be obtained from $R^0_+$ by removing a standard neighborhood of the standard Lagrangian disk $D_+$ and attaching a Weinstein handle along $(\Lambda_- \uplus \Lambda_+)^{\epsilon}$. By the homotopy theory of Liouville manifolds (see for example \cite{CE2012}), removing a neighborhood of $D_+ \subset R^0_+$ is equivalent to attaching a (subcritical) symplectic $(n-1)$-handle to $R^0_+$, which in turn is canceled by the Weinstein handle attachment along $(\Lambda_- \uplus \Lambda_+)^\epsilon$; see \autoref{fig:trivial_bypass_slide}(a).
	
	Next we turn to the negative region. Modulo completion, $R^1_-$ can be obtained from $R^0_-$ by removing a standard neighborhood of $D_-$ and attaching a Weinstein handle along $(\Lambda_- \uplus \Lambda_+)^{-\epsilon}$. These two operations cancel each other since $(\Lambda_- \uplus \Lambda_+)^{-\epsilon}$ is Legendrian isotopic to $\Lambda_-$ by a (generalized) Reidemeister I move; see \autoref{fig:trivial_bypass_slide}(b).
\end{proof}

\begin{figure}[ht]
	\begin{overpic}[scale=.3]{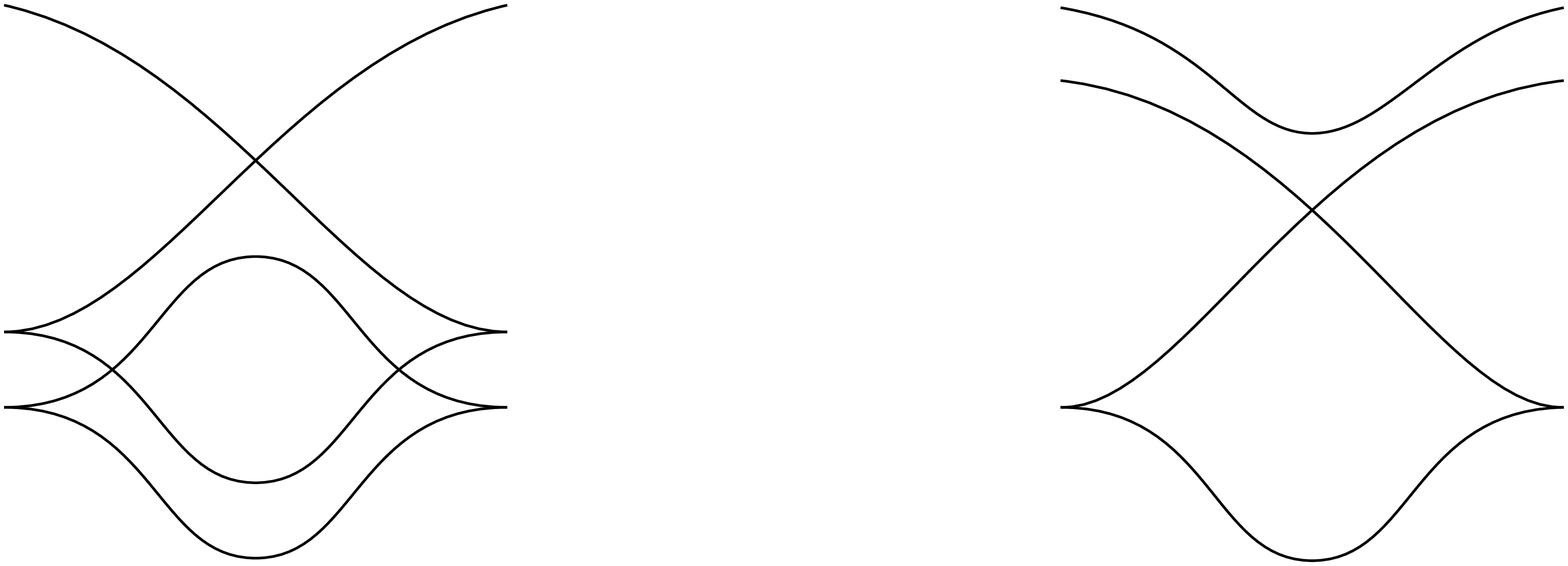}
		\put(-7,27){\small{$(\Lambda_- \uplus \Lambda_+)^\epsilon$}}
		\put(1,4){$\Lambda_+$}
		\put(53,3){\small{$(\Lambda_- \uplus \Lambda_+)^{-\epsilon}$}}
		\put(69,37){$\Lambda_-$}
		\put(28,4){\small{$(+1)$}}
		\put(18.5,37){\small{$(-1)$}}
		\put(90,37){\small{$(-1)$}}
		\put(94,4){\small{$(+1)$}}
		\put(15,-5){(a)}
		\put(82,-5){(b)}
	\end{overpic}
	\vspace{5mm}
	\caption{Legendrian surgery diagrams of $\Gamma = \p \overline{R}_+ = \p \overline{R}_-$ for the trivial bypass attachment.}
	\label{fig:trivial_bypass_slide}
\end{figure}

\subsection{Overtwisted bypass attachment} \label{subsec:OT_bypass}

Overtwisted contact structures, defined and classified in dimension $3$ by Eliashberg \cite{Eli89} and in all dimensions by Borman-Eliashberg-Murphy \cite{BEM15}, are flexible in the sense that they satisfy a Gromov-type $h$-principle. Besides the definition of overtwistedness given in \cite{BEM15} which we will refer to as ``BEM-overtwistedness'', there are now several equivalent criteria for overtwistedness by the work of Casals-Murphy-Presas \cite{CMP15}. The most relevant criterion for us is based on the theory of loose Legendrians introduced by Murphy \cite{Mur12}. However, since Murphy's theory works only in dimension greater than $3$, we assume throughout this section that the dimension of the ambient contact manifold is at least $5$; see \cite{Hon02} for the discussion of overtwisted bypass attachments in dimension $3$.

Let us recall the definition of a loose chart and loose Legendrian submanifolds in dimension at least $5$. First consider the standard contact $3$-space $(\R^3, \xi_{\std}=\ker (dz-ydx))$. Let $\gamma \subset (\R^3,\xi_{\std})$ be a stabilized Legendrian arc whose front projection is as shown in \autoref{fig:kink}. Specifically, the front projection of $\gamma$ has a unique transverse double point and a unique Reeb chord of length $a$, called the {\em action} of the stabilization. Let $B$ be an open ball in $\R^3$ containing $\gamma$ of action $a$ as defined above.

Now consider the standard Liouville manifold $(T^\ast \R^{n-1},-pdq)$ with the usual coordinates $q,p$. Let
\begin{equation*}
V_C = \{|p|< C, |q|<C\} \subset T^\ast \R^{n-1}.
\end{equation*}
Then $B \times V_C$ is an open subset of $(\R^{2n+1},\xi_{\std})$ which contains the Legendrian submanifold $\Lambda=\gamma \times \{|q|<C,p=0\}$. The pair $(B \times V_C, \Lambda)$ is called a {\em loose chart} if $a/C^2<2$. Finally, a Legendrian submanifold $L \subset (M,\xi)$ is {\em loose} if there exists a Darboux chart $U \subset M$ such that the $(U,U \cap L)$ is contactomorphic to a loose chart.

\begin{figure}[ht]
	\begin{overpic}[scale=.3]{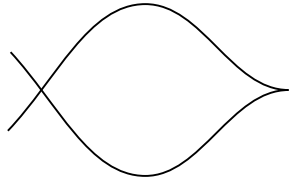}
	\end{overpic}
	\caption{The front projection of a stabilized Legendrian arc.}
	\label{fig:kink}
\end{figure}

The following overtwistedness criterion is due to Casals-Murphy-Presas~\cite{CMP15}:

\begin{theorem} \label{thm:OT_criterion}
	A contact structure is BEM-overtwisted if and only if the standard Legendrian unknot is loose.
\end{theorem}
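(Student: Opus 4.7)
The plan is to establish the equivalence via two h-principles operating in tandem: Murphy's h-principle for loose Legendrians in \cite{Mur12}, and the BEM/CMP flexibility theory for overtwisted contact structures. I will treat the two directions separately, with the harder direction relying on an explicit construction of a BEM overtwisted disk from a loose standard unknot together with its bounding $\Theta$-disk.

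\textbf{Overtwisted implies loose unknot.} Suppose $(M,\xi)$ is BEM-overtwisted and fix an embedded BEM overtwisted disk $D_{\ot}\subset M$. Since the standard Legendrian unknot $U$ is contained in an arbitrarily small Darboux ball, I can position a Darboux neighborhood of $U$ inside $M\setminus D_{\ot}$. By Murphy's theorem, in dimension $\geq 5$ any Legendrian whose complement contains an overtwisted disk is Legendrian isotopic (via an isotopy supported away from that disk) to a loose Legendrian in its formal class, which for $U$ is the class of the standard unknot itself. Since looseness is a property stable under Legendrian isotopy, $U$ is loose.

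\textbf{Loose unknot implies overtwisted.} Assume $U$ is loose in $(M,\xi)$, and note that $U$ bounds a $\Theta$-disk $D_U$ inside its Darboux neighborhood. The plan is to produce an embedded BEM-style overtwisted object by exploiting the loose chart of $U$. Using Murphy's h-principle inside this loose chart, I can perform a Legendrian isotopy of $U$ that introduces stabilizations of arbitrarily small action, and in parallel extend this isotopy to a Hamiltonian deformation of the bounding $\Theta$-disk $D_U$. The resulting stabilized disk $D'_U$, after a cutoff-and-smoothing procedure near the stabilizations, is shown to contain a BEM overtwisted disk inside it: the radial Legendrian foliation of the $\Theta$-disk (as in \autoref{fig:std_foliation}) degenerates, across the stabilized arc, into the piecewise smooth codimension-$1$ germ characterizing the BEM model. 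Alternatively, one can route the argument through PS-overtwistedness by thickening the loose chart of $U$ together with its $\Theta$-disk into a plastikstufe with spherical core, and then invoke the equivalence of PS- and BEM-overtwistedness from \cite{CM16,Hua17} (as already used in the corollary following \autoref{lem:ideal_compactification}).

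\textbf{Main obstacle.} The difficult step is the forward direction, specifically the local matching between the smooth, radially symmetric contact germ on the $\Theta$-disk $D_U$ (perturbed by a stabilization pulled into it from the loose chart of $U$) and the piecewise smooth BEM disk germ. The BEM disk has a prescribed corner locus where $\xi$ agrees with the tangent space, which is not naturally present on $D_U$; producing it requires a careful limiting procedure combined with Legendrian realization (\autoref{lemma: Legendrian realization}) to promote the Hamiltonian deformation of $D_U$ to a genuine contact-topological model. The PS-overtwisted route sidesteps the singular model at the cost of a more substantial appeal to the external equivalence results, and this is the trade-off I would expect to negotiate.
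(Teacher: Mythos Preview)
This theorem is not proved in the present paper: it is stated as a result due to Casals--Murphy--Presas and cited as \cite{CMP15}. There is therefore no ``paper's own proof'' to compare against; the paper simply invokes the statement as a black box (for instance in the proof of \autoref{lemma: OT_bypass}).

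That said, your sketch has genuine gaps in both directions. In the ``overtwisted $\Rightarrow$ loose'' direction, you appeal to ``Murphy's theorem'' to conclude that a Legendrian whose complement contains an overtwisted disk is loose. But Murphy's $h$-principle \cite{Mur12} says that \emph{loose} Legendrians satisfy an $h$-principle; it does not assert that Legendrians in overtwisted manifolds are automatically loose. The latter statement is essentially the content of the direction you are trying to prove, and in \cite{CMP15} it is established via a different route (constructing a loose chart for the unknot from the overtwisted model, not by quoting Murphy). As written, your argument is circular.

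In the ``loose $\Rightarrow$ overtwisted'' direction, the key step---producing a BEM overtwisted disk (or a plastikstufe) from a loose standard unknot together with its $\Theta$-disk---is asserted rather than carried out. The sentence ``is shown to contain a BEM overtwisted disk inside it'' is exactly the hard content of \cite{CMP15}, and your ``Main obstacle'' paragraph correctly identifies this but does not resolve it. The alternative route through PS-overtwistedness still requires you to actually build a plastikstufe from the loose chart and the $\Theta$-disk, which is nontrivial; ``thickening the loose chart \dots into a plastikstufe with spherical core'' is not an operation that has been defined or justified. If you want to pursue this, you should consult the actual arguments in \cite{CMP15} (or the alternative in \cite{Hua17}), which involve substantially more work than indicated here.
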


Let $\Sigma\times[-1,0]_t$ be a ($1$-sided) collar neighborhood of $\Sigma=\Sigma\times\{0\}$ with contact vector field $v=\bdry_t$ and suppose the bypass attachment gives $(\Sigma\times[0,1],\xi)$.  We still write $\Gamma$ for the dividing set of $\Sigma$.

We now define the overtwisted bypass attachment as follows: 

\begin{definition} \label{defn:OT_bypass} $\mbox{}$
	\begin{enumerate}
		\item Given a Legendrian disk $D\subset \Sigma=\Sigma\times\{0\}$ such that $D\cap \Gamma=\bdry D$ and $\bdry D\subset \Gamma$ is Legendrian, the ``pushdown" $D_\flat$ is the smoothing of $(\bdry D\times[-\epsilon,0]) \cup (D\times \{-\epsilon\})$ for $\epsilon>0$ small.
		\item A bypass attachment data $(\Lambda_+, \Lambda_-; D_+, D_-)$ along a convex hypersurface $\Sigma$ is {\em overtwisted} if $(D_- \uplus_b D_+)_\flat$ is a loose Legendrian with a loose chart in $\Sigma\times[-1,0)$.
		\item A bypass attachment is {\em overtwisted} if the corresponding attaching data is overtwisted.
	\end{enumerate}
		
\end{definition}

\begin{remark}
	In dimension $3$ we take $D_-\uplus_b D_+$ to be to be the interval obtained by gluing $D_-$ and $D_+$ along their common intersection.
\end{remark}


\begin{example}
In particular, a bypass attachment $(\Lambda_+, \Lambda_-; D_+, D_-)$ is overtwisted if $\Lambda_-\uplus \Lambda_+$ is loose in $\Gamma$. Indeed, since the dividing set $\Gamma$ has an arbitrarily large neighborhood by \autoref{cor:dividing_set_nbhd_size}, we may assume that $D_- \uplus_b D_+$ contains a loose chart away from a small neighborhood of $\Gamma$.
\end{example}

As the terminology suggests, the contact structure defined by an overtwisted bypass attachment should be overtwisted in the sense of \cite{BEM15}. This is the content of the following lemma:

\begin{lemma} \label{lemma: OT_bypass}
Let $\Sigma$ be a convex hypersurface. If the contact structure $\zeta$ on $\Sigma \times [0,1]$ is given by an overtwisted bypass attachment on $\Sigma = \Sigma \times \{0\}$, then $\zeta$ is overtwisted.
\end{lemma}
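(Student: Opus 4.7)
The plan is to exhibit a loose standard Legendrian unknot inside $(\Sigma \times [0,1], \zeta)$ and then invoke the Casals--Murphy--Presas criterion (\autoref{thm:OT_criterion}). By \autoref{lem:bypass_criterion}, the closed Legendrian sphere $L := (D_- \uplus_b D_+) \cup K$, where $K$ is the Legendrian core of the contact $n$-handle used in the bypass with $\p K = \p(D_- \uplus_b D_+) = \Lambda_- \uplus \Lambda_+$, is Legendrian isotopic in $\Sigma \times [0,1]$ to the standard Legendrian unknot; it therefore suffices to produce a representative of this isotopy class carrying a loose chart inside $\Sigma\times[0,1]$.

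Since $\Sigma\times\{0\}$ is a regular level set of the Morse function $f$ from \autoref{thm:bypass_attachment}(1) with the contact vector field $v$ transverse to it, it is convex in $(\Sigma\times[0,1],\zeta)$. After reparametrization, \autoref{lemma: hypersurface_nbhd_size} lets us assume that $\Sigma\times[0,3\epsilon]\subset\Sigma\times[0,1]$ is $I$-invariant with contact form $f\,dt+\beta$ and that the handles of the bypass are attached above $\Sigma\times\{3\epsilon\}$. Extend $\zeta$ by an $I$-invariant collar on the negative side to obtain a larger contact manifold $(\widetilde M,\widetilde\zeta)$ in which $\Sigma\times[-1,3\epsilon]$ is $I$-invariant.

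In $\widetilde M$ the pushdown $(D_-\uplus_b D_+)_\flat$ is a Legendrian disk in $\Sigma\times[-\epsilon,0]$ carrying, by hypothesis, a loose chart $B\subset\Sigma\times[-1,0)$. Let $\phi$ denote the time-$2\epsilon$ flow of a contact vector field on $\widetilde M$ equal to $\p_t$ on $\Sigma\times[-\epsilon,2\epsilon]$ and cut off to zero outside $\Sigma\times[-2\epsilon,3\epsilon]$. Then $\phi$ is a global contactomorphism of $\widetilde M$, identity near $K$, sending $(D_-\uplus_b D_+)_\flat$ to a Legendrian disk $\widetilde D\subset \Sigma\times[\epsilon,2\epsilon]$ with $\p\widetilde D=(\Lambda_-\uplus\Lambda_+)\times\{2\epsilon\}$ and carrying $B$ to a loose chart $\phi(B)\subset\Sigma\times[\epsilon,2\epsilon]$. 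Connect $\p\widetilde D$ to $\p K\subset\Gamma\times\{3\epsilon\}$ by the cylinder $C:=(\Lambda_-\uplus\Lambda_+)\times[2\epsilon,3\epsilon]$, which is Legendrian in the $I$-invariant collar since both $f$ and $\beta|_\Gamma$ vanish on $\Lambda_-\uplus\Lambda_+\subset\Gamma$. This produces a closed Legendrian $L':=\widetilde D\cup C\cup K\subset\Sigma\times[0,1]$.

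Reversing the flow $\phi$ while simultaneously shrinking $C$ gives a Legendrian isotopy in $\widetilde M$ from $L'$ to $(D_-\uplus_b D_+)_\flat\cup K$, which in turn is Legendrian isotopic to $L$ by undoing the pushdown inside the $I$-invariant collar; hence $L'$ is Legendrian isotopic to the standard Legendrian unknot by \autoref{lem:bypass_criterion}. Since $\phi(B)\subset\Sigma\times[\epsilon,2\epsilon]$ is disjoint from $K$ (which lies above $\Sigma\times\{3\epsilon\}$) and from $C$ (which lies over $\Gamma$, after perturbing $\phi(B)$ away from $\Gamma\times[\epsilon,2\epsilon]$ if necessary), it is a loose chart for all of $L'$. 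This exhibits a loose standard Legendrian unknot in $(\Sigma\times[0,1],\zeta)$, and the lemma follows from \autoref{thm:OT_criterion}. The main technical obstacle is ensuring that the loose chart can be transported entirely into the bypass region while remaining disjoint from the connecting cylinder $C$ and the handle core $K$; this is arranged by taking the $I$-invariant collar to be sufficiently thick compared to the loose chart and perturbing as needed.
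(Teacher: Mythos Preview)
Your approach is essentially the same as the paper's: both identify $(D_- \uplus_b D_+) \cup K$ as a standard Legendrian unknot via \autoref{lem:bypass_criterion}, observe that it is loose, and conclude via \autoref{thm:OT_criterion}. The paper simply asserts that this unknot is ``loose by definition'' without discussing where the loose chart lives relative to $\Sigma\times[0,1]$; your explicit transport via the $I$-invariant collar is the correct way to make that step rigorous, though as you acknowledge at the end, the numerics must be arranged so that the flow $\phi$ carries \emph{all} of the chart $B$ (not just the part in $\Sigma\times[-\epsilon,0]$) into the bypass region --- it would be cleaner to invoke \autoref{lemma: hypersurface_nbhd_size} to embed the entire $I$-invariant slab $\Sigma\times[-1,0]$ into an $I$-invariant collar inside $\Sigma\times[0,1]$ from the outset.
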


\begin{proof}
Let $(\Lambda_+, \Lambda_-; D_+, D_-)$ be an overtwisted attachment data on $\Sigma$. If $K$ is the core Legendrian disk of the contact $n$-handle in the bypass attachment, then $(D_- \uplus_b D_+) \cup K$ is the standard Legendrian unknot by \autoref{lem:bypass_criterion}.  On the other hand, $(D_- \uplus_b D_+) \cup K$ is loose by definition.  Hence $\zeta$ is overtwisted by \autoref{thm:OT_criterion}.
\end{proof}

\begin{remark}
In dimension $3$, the overtwistedness of an overtwisted bypass attachment (called the ``disallowed" bypass attachment in \cite[Figure 6]{Hon02}) follows immediately from Giroux's criterion for determining when a convex surface is overtwisted. In dimensions greater than $3$, there is currently no analog of Giroux's criterion. Nevertheless we will see in \autoref{sec:application} that certain overtwisted bypass attachments also yield overtwisted convex hypersurfaces in any dimension.
\end{remark}

\begin{question}
Give criteria for determining (i) precisely when a convex hypersurface is tight and (ii) precisely when a bypass attached to a tight convex hypersurface $\Sigma$ yields an overtwisted contact $\Sigma\times[0,1]$.
\end{question}

\subsection{Anti-bypass attachment}

Given a convex hypersurface $\Sigma$ and a bypass attachment data $(\Lambda_+^0, \Lambda_-^0; D_+^0, D_-^0)$, a bypass can be attached to $\Sigma=\Sigma\times\{0\}$ along $(\Lambda_+^0, \Lambda_-^0; D_+^0, D_-^0)$ to produce a contact structure $\zeta$ on $\Sigma \times [0,1]$. We say the bypass is attached to $\Sigma$ ``from above''. Analogously, a bypass may be attached to $\Sigma=\Sigma\times\{0\}$ along $(\Lambda_+^0, \Lambda_-^0; D_+^0, D_-^0)$ ``from below'' to produce a contact structure $\zeta^{\vee}$ on $\Sigma \times [-1,0]$. We call $\zeta^{\vee}$ the \emph{anti-bypass attachment} of $\zeta$.

The goal of this subsection is to show that the concatenation of a bypass attachment with its anti-bypass attachment is overtwisted.

\begin{prop} \label{prop:anti_bypass}
If $(\Sigma \times [0,1], \zeta)$ is given by a bypass attachment along a convex hypersurface $\Sigma\times\{0\}$ and $(\Sigma \times [-1,0], \zeta^{\vee})$ is given by the anti-bypass attachment along $\Sigma\times\{0\}$, then the concatenation $(\Sigma \times [-1,1], \zeta^{\vee} \cup \zeta)$ is overtwisted.
\end{prop}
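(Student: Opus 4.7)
The plan is to exhibit an embedded overtwisted orange inside $(\Sigma \times [-1,1], \zeta^\vee \cup \zeta)$ and then invoke \autoref{thm:orange}, which asserts that any embedded overtwisted orange implies BEM-overtwistedness.

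Recall from \autoref{sec:orange_bypass} that a bypass is by definition one half of an overtwisted orange, cut along its middle-dimensional singular equator. Both $\zeta$ and $\zeta^\vee$ are built from the common bypass data $(\Lambda_+, \Lambda_-; D_+, D_-)$ on $\Sigma^0$. First I would show that the bypass $\zeta$ carries a half overtwisted orange whose round side lies in $\Sigma \times (0,1]$ and whose equator is the Legendrian sphere $\Lambda_- \uplus \Lambda_+ \subset \Gamma^0$; this half-orange contains the core Legendrian $n$-disk $K$ of the $n$-handle of the bypass, with $\p K = \Lambda_- \uplus \Lambda_+$. Symmetrically, the anti-bypass $\zeta^\vee$ carries the mirror half overtwisted orange whose round side lies in $\Sigma \times [-1,0)$, containing the analogous core $n$-disk $K^\vee$ with $\p K^\vee = \Lambda_- \uplus \Lambda_+$.

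Next I would glue the two half-oranges along their common equator $\Lambda_- \uplus \Lambda_+ \subset \Sigma^0$. Since both half-oranges are specified by the same bypass data on the same convex hypersurface $\Sigma^0$, the ambient contact germs along a neighborhood of the equator match, and so the two halves fit together to produce an embedded overtwisted orange in $\Sigma \times [-1,1]$. Applying \autoref{thm:orange} then completes the proof.

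The hard part will be the matching step: one must verify that the half-orange sitting inside a bypass attachment is determined, up to ambient Legendrian isotopy through half-oranges rel equator, by the bypass data on the boundary hypersurface, so that the two half-oranges indeed glue consistently across $\Sigma^0$. This is expected to follow from the standardized local model of the overtwisted orange to be set up in \autoref{sec:orange_bypass}, together with the uniqueness part of \autoref{thm:bypass_attachment}, which shows that the bypass region is prescribed up to contactomorphism by its attaching data $(\Lambda_+, \Lambda_-; D_+, D_-)$.
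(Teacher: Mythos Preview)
Your proposal has a fatal circularity: in this paper, the proof of \autoref{thm:orange} (overtwisted orange $\Rightarrow$ BEM-overtwisted) \emph{uses} \autoref{prop:anti_bypass}. Indeed, the argument there is: an embedded orange is cut by a convex hypersurface into a bypass and an anti-bypass, hence by \autoref{thm:bypass} one obtains a bypass attachment and its anti-bypass attachment on the two sides of $\Sigma$, and then \autoref{prop:anti_bypass} is invoked to conclude overtwistedness. So you cannot cite \autoref{thm:orange} here without first giving an independent proof that an orange implies overtwistedness---which is precisely what \autoref{prop:anti_bypass} is being used to establish.

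There are also two secondary gaps. First, the intersection of a half-orange with $\Sigma^0$ is not the Legendrian sphere $\Lambda_-\uplus\Lambda_+$; by the definition in \autoref{subsec:bypass} it is $\overline{D}_+\cup\overline{D}_-$, the pair of Lagrangian disks meeting at the navel. The disk $K$ with $\partial K=\Lambda_-\uplus\Lambda_+$ sits in the \emph{peel} of the half-orange, not along $\Sigma^0$. Second, you are tacitly assuming the converse of \autoref{thm:bypass}: that a bypass \emph{attachment} (defined via a canceling pair of contact handles) contains an embedded bypass $\Delta$ (the half-orange). The paper only proves the direction ``$\Delta$ exists $\Rightarrow$ bypass attachment embeds''. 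You would need to construct $\Delta$ inside the handle pair, which is plausible but not supplied.

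The paper's own proof avoids all of this by working entirely on the handle side: it flips $\zeta^\vee$ using the dual bypass data of \autoref{subsec:up_side_down_bypass}, so that $\zeta^\vee\cup\zeta$ becomes two successive bypass attachments to $\Sigma^{-1}$; a Kirby-calculus argument then shows the second attachment can be pushed down to $\Sigma^{-1}$ disjointly from the first, where it is visibly an overtwisted bypass in the sense of \autoref{defn:OT_bypass}, and \autoref{lemma: OT_bypass} finishes. No orange is needed, and no circularity arises.
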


\begin{proof}
The idea of the proof is as follows. First we turn the anti-bypass attachment upside down using \autoref{subsec:up_side_down_bypass} so that $(\Sigma \times [-1,1], {\zeta^{\vee}} \cup \zeta)$ is given by two consecutive bypass attachments. Then we observe that the two consecutive bypass attachments can be made ``disjoint'' and hence the order of the attachments can be interchanged. Once this is done, we see that one of the bypass attachments becomes overtwisted, and hence the proposition follows from \autoref{lemma: OT_bypass}.

Here are the details. As before we write $\Sigma^i=\Sigma\times\{i\}$, $\Gamma^i=\Gamma_{\Sigma^i}$, and $\Sigma^i\setminus \Gamma^i=R^i_+\cup R^i_-$.
By \autoref{subsec:up_side_down_bypass}, $(\Sigma \times [-1,0], \zeta^{\vee})$ can be obtained by a bypass attachment to $\Sigma^{-1}$ along some $(\Lambda_+, \Lambda_-; D_+, D_-)$.  
The positive region $R^0_+$, modulo completion, is obtained from $R^{-1}_+$ by attaching a Weinstein handle along $(\Lambda_- \uplus \Lambda_+)^\epsilon$ and removing a standard neighborhood of the Lagrangian disk $D_+ \subset R^{-1}_+$; the description for $R^0_-$ is similar. We also assume that the size of the neighborhoods where these surgeries are performed is much smaller than $\epsilon$.

By the description from \autoref{subsec:up_side_down_bypass}, $(\Sigma \times [0,1], \zeta)$ is given by a bypass attachment to $\Sigma^0$ along $(\Lambda^0_+, \Lambda^0_-; D^0_+, D^0_-)$, where $\Lambda^0_+ = (\Lambda_- \uplus \Lambda_+)^{\epsilon/2}$, $\Lambda^0_- = \Lambda^{\epsilon/2}_-$, $D^0_+ \subset R^0_+$ is Hamiltonian isotopic to the cocore Lagrangian disk of the Weinstein handle attached along $(\Lambda_- \uplus \Lambda_+)^{\epsilon}$, and $D^0_-$ is Hamiltonian isotopic to the cocore Lagrangian disk of the corresponding Weinstein handle in $R^0_-$. Note that the asymmetry between $(\Lambda^0_+; D^0_+)$ and $(\Lambda^0_-; D^0_-)$ is due to the fact that $\Sigma^0$ is given by gluing $\overline{R^0_+}$ and $\overline{R^0_-}$ via a nontrivial contactomorphism of $\Gamma^0$, and we always parametrize $\Gamma^0$ as the boundary of $\overline{R^0_+}$. In particular, as a contact manifold, $\Gamma^0$ is obtained from $\Gamma^{-1}$ by a contact $(-1)$-surgery along $(\Lambda_- \uplus \Lambda_+)^{\epsilon}$ and a contact $(+1)$-surgery along $\Lambda_+$; see \autoref{fig:commuting_bypass}.

Observe that in $\Gamma^0$, one can Legendrian handleslide $\Lambda^0_+$ up over the Weinstein handle attachment along $(\Lambda_- \uplus \Lambda_+)^\epsilon$ (i.e., ``over the $(-1)$-surgery along $(\Lambda_- \uplus \Lambda_+)^\epsilon$''), relative to a neighborhood of the $\xi$-transverse intersection $\Lambda^0_+ \cap \Lambda^0_-$, to a standard Legendrian unknot $U$ which links $(\Lambda_- \uplus \Lambda_+)^\epsilon$ once as shown in \autoref{fig:commuting_bypass}. Moreover, under the exact symplectic isotopy of $R^0_+$ induced by the above handleslide of $\Lambda^0_+$, the Lagrangian disk $D^0_+$ is identified with a standard Lagrangian disk bounded by $U$. Abusing notation, let us still denote the isotoped bypass attachment data for $\zeta$ by $(\Lambda^0_+, \Lambda^0_-; D^0_+, D^0_-)$.

\s
\begin{figure}[ht]
	\begin{overpic}[scale=.3]{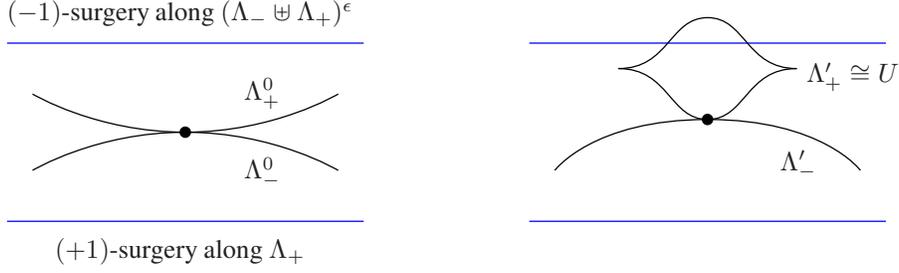}
		\put(5.5,-4){\small{$(+1)$-surgery along $\Lambda_+$}}
		\put(0,23){\small{$(-1)$-surgery along $(\Lambda_- \uplus \Lambda_+)^\epsilon$}}
		\put(27,14){\small{$\Lambda^0_+$}}
		\put(27,5){\small{$\Lambda^0_-$}}
		\put(88,6){\small{$\Lambda'_-$}}
		\put(91,16){\small{$\Lambda'_+ \cong U$}}
	\end{overpic}
	\vspace{5mm}
	\caption{A schematic picture of $\Gamma^0$, obtained by performing surgeries along $(\Lambda_-\uplus \Lambda_+)^\epsilon$ and $\Lambda_+$ (drawn as straight blue lines) inside $\Gamma^{-1}$. The vertical direction represents the Reeb direction.}
	\label{fig:commuting_bypass}
\end{figure}

We now claim that the bypass attachment data $(\Lambda^0_+, \Lambda^0_-; D^0_+, D^0_-)$ can be ``pushed down'' to a quadruple $(\Lambda'_+, \Lambda'_-; D'_+, D'_-)$ on $\Sigma^{-1}$ such that the following hold:
	
	\begin{enumerate}
		\item The quadruples $(\Lambda_+, \Lambda_-; D_+, D_-)$ and $(\Lambda'_+, \Lambda'_-; D'_+, D'_-)$ on $\Sigma^{-1}$ are disjoint.
		\item The bypass attachment data $(\Lambda'_+, \Lambda'_-; D'_+, D'_-)$ is overtwisted.
		\item After the first bypass attachment along $(\Lambda_+, \Lambda_-; D_+, D_-)$, the quadruples $(\Lambda'_+, \Lambda'_-; D'_+, D'_-)$ and $(\Lambda^0_+, \Lambda^0_-; D^0_+, D^0_-)$ are contact isotopic.
	\end{enumerate}

The bypass attachment data $(\Lambda'_+, \Lambda'_-; D'_+, D'_-)$ is defined as follows: First let $\Lambda'_+$ be a standard Legendrian unknot which is not linked with $\Lambda_+$ and links $\Lambda_-$ once, i.e., $\Lambda'_+$ bounds a $\Theta$-disk in $\Gamma$ which does not intersect $\Lambda_+$ and transversely intersects $\Lambda_-$ in a point. (Alternatively, $\Lambda'_+$ is the unknot $U$ which is now viewed inside $\Gamma^0$.) Let $D'_+ \subset R^{-1}_+$ be a standard Lagrangian disk bounded by $\Lambda'_+$ which is disjoint from $D_+$; this is possible because $\Lambda'_+$ is not linked with $\Lambda_+$. Next let $\Lambda'_-$ be a Legendrian pushoff of $\Lambda_-$ with respect to the contact $(+1)$-framing, such that $\Lambda'_- \cap \Lambda_+ = \emptyset$. We chose the framing so that $\Lambda'_-$ bounds a Lagrangian disk $D'_- \subset R^{-1}_-$ which is Hamiltonian isotopic to $D_-$ and $D'_- \cap D_- = \emptyset$. Finally $\Lambda'_+$ and $\Lambda'_-$ $\xi$-transversely intersect at a point in such a way that $\Lambda'_+$ is above $\Lambda'_-$ in the sense of \autoref{subsec:Legendrian_sum_property}.

We now verify that $(\Lambda'_+, \Lambda'_-; D'_+, D'_-)$ satisfies Properties (1)--(3): (1) follows from the construction; (2) follows from \autoref{lem:plus_unknot} applied to the Legendrian sum $\Lambda'_- \uplus \Lambda'_+$; and (3) is a consequence of the following two observations: (i) $\Lambda^0_+$ is Legendrian isotopic (via a Legendrian handleslide) to $\Lambda'_+$ viewed as a Legendrian unknot in $\Gamma^0$, and (ii) $\Lambda^0_-$ is Legendrian isotopic to $\Lambda'_-$ viewed as a Legendrian sphere in $\Gamma^0$. See \autoref{fig:commuting_bypass}.

We have now identified the contact manifold $(\Sigma \times [-1,1], \zeta^{\vee} \cup \zeta)$ with two disjoint bypass attachments to $\Sigma^{-1}$ along
$$(\Lambda_+, \Lambda_-; D_+, D_-) ~~\mbox{ and } ~~(\Lambda'_+, \Lambda'_-; D'_+, D'_-).$$
Since the latter bypass attachment data is overtwisted, the contact structure $\zeta^{\vee} \cup \zeta$ is overtwisted by \autoref{lemma: OT_bypass}.
\end{proof}

\section{Overtwisted orange and bypass} \label{sec:orange_bypass}

The goal of this section is to introduce a singular middle-dimensional contractible Legendrian foliation in a contact manifold, which we call the \emph{overtwisted orange} and whose existence implies the BEM-overtwistedness of the contact structure. Before going into details of the construction, let us briefly review the existing constructions of overtwisted objects:
\begin{enumerate}
\item An overtwisted disk in \cite{BEM15} is a piecewise smooth codimension 1 disk equipped with a special contact germ.
\item A {\em plastikstufe}, constructed in \cite{Nie06} and whose existence was shown to be equivalent to BEM-overtwistedness in \cite{CMP15,Hua17}, is a middle-dimensional noncontractible singular Legendrian foliation.
\item A {\em bordered Legendrian open book}, constructed in \cite{MNW13}, is a middle-dimensional noncontractible singular Legendrian foliation.
\end{enumerate}
In comparison, the overtwisted orange has the advantage of being simultaneously middle-dimensional and contractible, and we expect it to be more easily found in contact manifolds, especially those given by open book decompositions.

In \autoref{subsec:bypass} we will also define the \emph{bypass} to be half of an overtwisted orange, whose existence implies the existence of the bypass attachment in the sense of \autoref{subsec:construct_bypass_attachment}. This generalizes the definition of a bypass in dimension $3$ from \cite{Hon00}.

\subsection{Definition of an overtwisted orange} \label{subsec:orange}

We will construct an overtwisted orange $\OO$ in two steps: first we describe $\OO$ as a topological space which is a manifold away from a singular point, and then we define a contact germ on $\OO$ which makes it into a singular Legendrian foliation.

\s\n
{\em Topological description of $\OO$.} Consider the following (not necessarily orientable) rank $n$ vector bundle over $S^1$
	\begin{equation*}
		E =	([0,1] \times \R^n) / (0,x) \sim (1, \sigma(x)),
	\end{equation*}
where $\sigma: \R^n\stackrel\sim \to \R^n$ is given by $\sigma(x_1, x_2, \dots, x_n) = (x_1, -x_2, \dots, -x_n)$. Let $D(E) \subset E$ be the unit disk bundle
	\begin{equation*}
		D(E) = \left\{ (\tau,x) ~|~ \|x\| \leq 1 \right\} \subset E.
	\end{equation*}
It is well-defined since $\sigma$ preserves the Euclidean norm.

We then define $\OO = D(E) / \gamma$, where $\gamma$ is the loop
	\begin{equation*}
		\gamma = \left\{ (\tau, x_0) ~|~ x_0 = (1, 0, \dots, 0) \right\} \subset \p D(E).
	\end{equation*}
Let $p \in \OO$ be the equivalence class of $\gamma$. Then $\OO$ is a smooth manifold away from $p$.  When $n=1$, $\OO$ is just a $2$-dimensional disk.

\s\n
{\em Contact germ of $\OO$}. The contact germ on $\OO$ will be defined in two steps. We first construct a contact germ on a neighborhood of the singular point $p \in \OO$ and then extend the contact germ to all of $\OO$ using the methods of \cite{Hua15}.

\s\n
{\em Step 1.}  Consider the standard contact space $(\R^{2n+1}, \xi_{\std})$ with the contact form
	\begin{equation} \label{eqn:contact_form_in_orange}
		\alpha_{\std} = dz + \sum\nolimits_{i=1}^{n} r_i^2 d\theta_i.
	\end{equation}

Let $\R^{2n-1} = \{ r_n = 0 \} \subset \R^{2n+1}$. Consider the loop of isotropic subspaces
	\begin{equation} \label{eqn:Lagrangian_loop}
		\Lambda_\tau = \left\{ z=0, \theta_1 = \dots = \theta_{n-1} = -\tau\pi \text{ or } (1-\tau)\pi \right\} \subset \R^{2n-1},~~ \tau\in[0,1].
	\end{equation}
Note that $\Lambda_\tau \cap \Lambda_{\tau'} = 0$ if $\tau\not=\tau'$ and $\tau,\tau'\in[0,1)$. Let $W_\tau$ be the $n$-dimensional half-space spanned by $\Lambda_\tau$ and $\{ \theta_n = 2\tau\pi \}$. We define the Legendrian
	\begin{equation*}
		D_\tau = \left\{ r_n \geq R / \sqrt{2} \right\} \subset W_\tau,
	\end{equation*}
where $R=\sqrt{ \sum_{i=1}^{n-1} r_i^2}$; see \autoref{fig:corner}.

\begin{figure}[ht]
	\begin{overpic}[scale=.3]{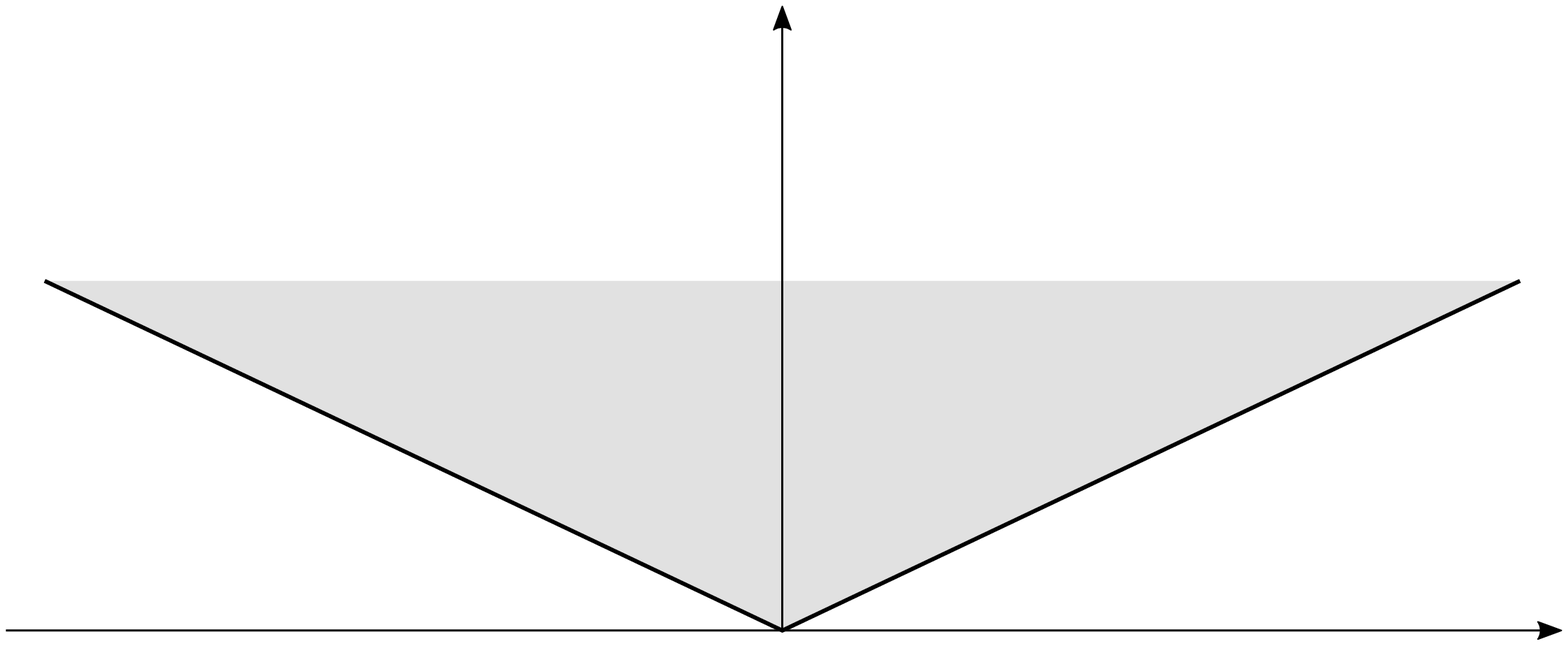}
		\put(43,40){$r_n$}
		\put(102,-1){$\Lambda_\tau$}
	\end{overpic}
	\caption{The shaded region represents the Legendrian $D_\tau$.}
	\label{fig:corner}
\end{figure}

Choose a small $\delta>0$ and let $B_\delta(0) \subset \R^{2n+1}$ be the ball centered at the origin with radius $\delta$. Consider
	\begin{equation} \label{eqn:embedding_X}
		X = \left( \cup_{\tau \in [0,1]} D_\tau \right) \cap B_\delta(0) \subset (\R^{2n+1}, \xi_{\std}).
	\end{equation}
If we write $\p D_\tau = \left\{ r_n = R / \sqrt{2} \right\}$, then the following observation is immediate.

\begin{lemma} \label{lem:orange_peel}
	The subset $\cup_{\tau \in [0,1]} \p D_\tau$ is a Legendrian submanifold away from the origin.
\end{lemma}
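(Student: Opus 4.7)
The plan is to reduce the claim to a direct computation using an explicit parameterization of $\bigcup_{\tau \in [0,1]} \partial D_\tau$. The key structural observation is that each half-space $W_\tau$ is already isotropic (since $z \equiv 0$ on $W_\tau$ and each $\theta_i$ is constant along $W_\tau$), so $\alpha_{\std}$ automatically vanishes on all tangent directions to $W_\tau$; only the $\tau$-direction must be checked. The constants in the definitions of $\Lambda_\tau$, $W_\tau$, and $D_\tau$ (in particular the choice $\theta_n = 2\tau\pi$ together with the constraint $r_n = R/\sqrt 2$ defining $\partial D_\tau$) are rigged so that the contribution from $r_n^2 d\theta_n$ exactly cancels the sum of the contributions from $r_i^2 d\theta_i$ for $i<n$ when one differentiates in $\tau$.

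Concretely, introduce signed coordinates $s_i \in \R$ along the line $\Lambda_\tau$ in each $(x_i, y_i)$-plane ($i<n$), so that uniformly $x_i = s_i \cos(\tau\pi)$ and $y_i = -s_i \sin(\tau\pi)$, and use polar coordinates on the $(x_n, y_n)$-plane with $\theta_n = 2\tau\pi$. Then $\bigcup_{\tau} \partial D_\tau$ is parameterized by $(s_1, \ldots, s_{n-1}, \tau) \in (\R^{n-1}\setminus\{0\}) \times [0,1]$ via $r_n := R/\sqrt 2$, where $R^2 := \sum_{i<n} s_i^2$, giving the correct Legendrian dimension $n$. To see that this parameterization is an immersion, note that the vectors $\partial_{s_i}$ are manifestly linearly independent in the first $2(n-1)$ coordinates, while the $(x_n, y_n)$-component of $\partial_\tau$ equals $\sqrt 2\,\pi R(-\sin(2\tau\pi), \cos(2\tau\pi))$, which is orthogonal to the $(x_n, y_n)$-components of every $\partial_{s_i}$; hence the Jacobian has full rank whenever $R \neq 0$, i.e., away from the origin.

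For the Legendrian condition, isotropy of $W_\tau$ already gives $\alpha_{\std}(\partial_{s_i}) = 0$ pointwise on $\partial D_\tau$, so it remains only to evaluate $\alpha_{\std}(\partial_\tau)$. Writing $r_i^2 d\theta_i = x_i\,dy_i - y_i\,dx_i$ (which is well-defined without reference to the multi-valued angle), a short direct calculation yields
\begin{equation*}
(x_i\,dy_i - y_i\,dx_i)(\partial_\tau) = -\pi s_i^2 \quad (i<n), \qquad (x_n\,dy_n - y_n\,dx_n)(\partial_\tau) = 2\pi r_n^2 = \pi R^2,
\end{equation*}
where the last equality uses $r_n = R/\sqrt 2$ together with the fact that $r_n$ is independent of $\tau$ along $\partial D_\tau$. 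Summing over $i$ and noting $dz(\partial_\tau) = 0$ since $z$ vanishes identically on the submanifold, we obtain $\alpha_{\std}(\partial_\tau) = -\pi R^2 + \pi R^2 = 0$, completing the verification. No genuine obstacle arises in the plan; the only subtlety is that both the immersion property and the Legendrian identity degenerate at $R = 0$, which is precisely the origin of $\R^{2n+1}$ that is excluded in the statement.
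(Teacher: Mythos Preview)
Your proof is correct and follows essentially the same approach as the paper: both observe that each $\partial D_\tau$ is isotropic (being contained in the isotropic half-space $W_\tau$), reduce to checking $\alpha_{\std}(\partial_\tau)=0$, and verify this via the cancellation $-\pi R^2 + 2\pi r_n^2 = 0$ using $r_n = R/\sqrt{2}$. The paper's proof is simply terser, stating the computation directly in polar coordinates and asserting smoothness away from the origin without the explicit parameterization and immersion check you provide.
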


\begin{proof}
Since $\p D_\tau$ is isotropic for every $\tau$ by definition, it suffices to check that
\begin{equation} \label{eqn: calculation of Legendrian}
\alpha_{\std} \left( \tfrac{d}{d\tau} \left( \p D_\tau \right) \right) = -\pi \sum\nolimits_{i=1}^{n-1} r_i^2 + \tfrac{R^2}{2} \cdot 2\pi = 0.
\end{equation}
Then observe that $\cup_{\tau \in [0,1]} \p D_\tau$ is smooth away from the origin.
\end{proof}

We define the contact germ on $U(p)\subset \OO$ by choosing a homeomorphism $$U(p)\stackrel\sim\to X \subset (\R^{2n+1}, \xi_{\std})$$ which sends $p$ to $0$ and is a diffeomorphism away from $0$.

\s\n
{\em Step 2.} To extend the contact germ to all of $\OO$, observe that $\OO \setminus U(p)$ may be identified with $D(E) \setminus N(\gamma)$, where $N(\gamma) \subset D(E)$ is a (half) tubular neighborhood of $\gamma$. Following \cite[Lemma 3.8]{Hua15}, there exists a unique contact germ on $D(E) \setminus N(\gamma)$ if we require that
	\begin{enumerate}
		\item for every $\tau_0 \in [0,1]$, $\left( D(E) \setminus N(\gamma) \right) \cap \left\{ \tau=\tau_0 \right\}$ is Legendrian, and

		\item the outer boundary $\p D(E) \setminus N(\gamma)$ is Legendrian.
	\end{enumerate}
Moreover, by \cite[Lemma 3.2]{Hua15}, the contact germs on $U(p)$ and $\OO \setminus U(p)$ can be glued together to give a well-defined contact germ on $\OO$. In fact, the contact germ on $\OO$ is determined by the Legendrian foliation.  Let us write $E_\tau$ for the Legendrian leaf of $\OO$ that extends $D_\tau\cap B_\delta(0)$.

\begin{definition}[Overtwisted orange] \label{defn:orange}
An \emph{overtwisted orange} in a contact manifold $(M,\xi)$ is a topological embedding\footnote{By topological embedding, we mean an injective continuous map.} $\varphi:\OO \hookrightarrow M$ which is a smooth embedding away from a singular point $p\in \OO$ such that there exists an open neighborhood $Op(\varphi(\OO))$ of $\varphi(\OO)$ in $M$ with $\xi|_{Op(\varphi(\OO))}$ contactomorphic to the contact germ on $\OO$ constructed above. The singular point $p \in \OO$ is the \emph{navel}, $\p D(E) \setminus \gamma \subset \OO$ is the \emph{peel} and each $E_\tau \subset \OO, \tau \in [0,1]$, is a \emph{section} of $\OO$.
\end{definition}

In what follows we will not distinguish between $\OO$ and its image under $\varphi$.

\begin{definition}[Maslov index $\mu(\OO)$]
Consider the $2$-disk
	\begin{equation*}
		B = \left\{ 1-\epsilon \leq x_1 \leq 1, x_2 = \dots = x_n = 0 \right\} / \gamma \subset \OO,
	\end{equation*}
where $\epsilon>0$ is small. Observe that each section $E_\tau, \tau \in [0,1]$, intersects $\p B$ in a unique point $p_\tau = E_\tau \cap \p B$ and defines a Lagrangian subspace $T_{p_\tau} E_\tau \subset \xi_{p_\tau}$. Then $\mu(\OO)$ is the Maslov index of the loop $T_{p_\tau} E_\tau, \tau \in [0,1]$, of Lagrangian subspaces with respect to any trivialization of $\xi|_B$.
\end{definition}

A straightforward calculation shows that if the contact manifold has dimension $2n+1$, then $\mu(\OO) = 3-n$.

At this point it is instructive to compare the overtwisted orange with the plastikstufe. Recall from \cite{Nie06} that a plastikstufe is an embedding
$$\psi: P_S = D_{\ot} \times S\hookrightarrow(\R^3 \times T^\ast S, \alpha= \eta_{\ot} - pdq),$$
where $S$ is a closed $(n-1)$-dimensional manifold, $\eta_{\ot}$ is an overtwisted contact form on $\R^3$, $pdq$ is the tautological $1$-form on $T^\ast S$, and $D_{\ot}$ is mapped to a standard overtwisted disk in $(\R^3, \eta_{\ot})$ and $S$ is mapped to the zero section in $T^\ast S$. Given a small disk $B \subset D_{\ot}$ around the center and any point $p \in S$, we consider $B_p = B \times \{p\} \subset P_S$. It is straightforward to check the Maslov index $\mu(\p B_p) = 2$.

In fact there is a family of overtwisted objects interpolating between the overtwisted orange and plastikstufe defined by $\OO_k \times S_k \subset \R^{2(n-k)+1} \times T^\ast S_k$, where $\OO_k$ is the overtwisted orange in $\R^{2(n-k)+1}$ and $S_k$ is a $k$-dimensional closed manifold for any $0 \leq k \leq n-1$. When $k=n-1$ we recover the plastikstufe and when $k=0$ we obtain the overtwisted orange by taking $S_0$ to be a single point.

The terminology ``overtwisted orange'' is justified by the following theorem:

\begin{theorem} \label{thm:orange}
	A contact manifold is BEM-overtwisted if and only if it contains an embedded overtwisted orange.
\end{theorem}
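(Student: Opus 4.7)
The plan is to prove the two implications separately.

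\emph{Sufficiency} $(\Leftarrow)$. Given an embedded overtwisted orange $\varphi:\OO \hookrightarrow (M,\xi)$, I would exhibit a small contact-model neighborhood of $\varphi(\OO)$ as a contact-product $\Sigma\times[-1,1]$ arising as a bypass attachment along a convex hypersurface $\Sigma = \Sigma\times\{0\}$ followed by its anti-bypass attachment, and then invoke \autoref{prop:anti_bypass}. Concretely, I would cut the Legendrian foliation $\{E_\tau\}_{\tau\in[0,1]}$ at $\tau=1/2$: a tubular neighborhood of $E_{1/2}$, combined with the local navel model of Step~1, supplies a convex hypersurface $\Sigma$ whose dividing set records the $\xi$-transverse Legendrian intersection coming from $\partial D_{1/2}$. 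The half-orange $\tau\in[0,1/2]$ is a bypass in the sense of \autoref{subsec:bypass}, realizing a bypass attachment with data $(\Lambda_+,\Lambda_-;D_+,D_-)$ extracted from the boundary Legendrians $\partial D_0,\partial D_{1/2}$ and the Lagrangian leaves $E_\tau$. The complementary half $\tau\in[1/2,1]$ is then, up to contactomorphism, the dual bypass attachment of the first: the monodromy $\sigma(x_1,x_2,\ldots,x_n)=(x_1,-x_2,\ldots,-x_n)$ used to glue the fibers at $\tau=0$ and $\tau=1$ is exactly the orientation reversal that passes between bypass data and anti-bypass data in the sense of \autoref{subsec:up_side_down_bypass}. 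Overtwistedness then follows from \autoref{prop:anti_bypass}.

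\emph{Necessity} $(\Rightarrow)$. Assume $(M,\xi)$ is BEM-overtwisted. For $\dim M=3$ the overtwisted orange coincides with the classical overtwisted disk, so we may assume $\dim M\geq 5$ and use \autoref{thm:OT_criterion}: the standard Legendrian unknot $U\subset M$ is loose and hence sits inside a loose chart $B\times V_C$ with $a/C^2<2$. I would build the overtwisted orange in this loose chart. The navel neighborhood is placed using the explicit local model $X=\bigl(\cup_{\tau\in[0,1]}D_\tau\bigr)\cap B_\delta(0)$ from \autoref{eqn:embedding_X}. Away from the navel, by \cite[Lemma 3.8]{Hua15} the contact germ compatible with the Legendrian foliation $\{E_\tau\}$ is determined uniquely by the Legendrian peel, and by \cite[Lemma 3.2]{Hua15} local germs glue to a global contact germ on $\OO$. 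The construction therefore reduces to producing a Legendrian peel, i.e.\ an $S^{n-1}$-bundle over $S^1$ with monodromy $\sigma|_{S^{n-1}}$, which is Legendrian in $(M,\xi)$, together with the $n$-disk family $\{E_\tau\}$ that closes the navel model into a closed foliation with the prescribed monodromy. This is where looseness is used: the loop of Lagrangian tangent planes $T_{p_\tau}E_\tau$ has Maslov index $\mu(\OO)=3-n$, which is obstructed in a tight Darboux ball but realized in the loose chart via Murphy's $h$-principle.

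\emph{Main obstacle.} The difficult direction is $(\Rightarrow)$: realizing the Legendrian peel and the extension of the $E_\tau$-foliation through the monodromy $\sigma$ while respecting the loose chart bound $a/C^2<2$. I would address this inductively over the $S^1$-direction, building the peel and the family $E_\tau$ on successively larger arcs $\{\tau\leq\tau_0\}$ using the normal form for loose Legendrians to absorb the stabilization needed to close up with the prescribed Maslov index, and then closing the last arc inside the loose chart. Once the peel is produced, the uniqueness statement \cite[Lemma 3.8]{Hua15} automatically promotes the smooth embedding to an embedding of contact germs, completing the construction.
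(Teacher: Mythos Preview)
Your sufficiency argument ($\Leftarrow$) is the same strategy as the paper's: split the orange into a bypass $\Delta$ (the half $\tau\in[0,\tfrac12]$) and an anti-bypass $\Delta^{\vee}$ (the half $\tau\in[\tfrac12,1]$) attached to a common convex hypersurface $\Sigma$ from opposite sides, then invoke \autoref{thm:bypass} and \autoref{prop:anti_bypass}. Two small corrections. First, $\Sigma$ must contain \emph{both} leaves $D_0$ and $D_{1/2}$, not only a neighborhood of $E_{1/2}$; the paper builds $\Sigma$ by gluing ideal completions of $T^\ast D_0$ and $T^\ast D_{1/2}$ along a Darboux ball around the navel. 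Second, the monodromy $\sigma$ plays no role in identifying the second half as an anti-bypass: by definition $\Delta^{\vee}$ is simply the other half of $\OO$, attached from the opposite side along the \emph{same} data $(\Lambda_+,\Lambda_-;D_+,D_-)$. This is not the dual bypass of \autoref{subsec:up_side_down_bypass}, which is a different construction.

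Your necessity argument ($\Rightarrow$) has a genuine gap. You propose to build the orange inside a loose chart $(B\times V_C,\Lambda)$. But the ambient contact structure on $B\times V_C$ is the \emph{standard} one on an open subset of $(\R^{2n+1},\xi_{\std})$; only the Legendrian $\Lambda$ inside it is stabilized. If an overtwisted orange could be embedded in $B\times V_C$, then by the $\Leftarrow$ direction you have already proved, the tight $(\R^{2n+1},\xi_{\std})$ would be overtwisted. So no orange can live inside a loose chart, and the inductive Murphy-type construction you sketch cannot succeed as stated; looseness of a Legendrian is a relative notion and gives you no extra room in the ambient contact structure itself.

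The paper's argument for this direction is a single line: an embedded overtwisted orange exists by the $h$-principle of \cite{BEM15}. The point is that the orange's own germ neighborhood is a contractible open contact manifold which, by the $\Leftarrow$ direction, is overtwisted; the BEM $h$-principle then embeds it into any BEM-overtwisted $(M,\xi)$. No hands-on construction is required.
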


The proof of \autoref{thm:orange} will be given in the next subsection after we prove some basic properties of bypasses. Combining \autoref{thm:orange}, \cite[Theorem 1.2]{Hua17}, and the $h$-principle from \cite{BEM15}, it is not hard to show that:

\begin{cor}
A contact manifold is BEM-overtwisted if and only if it contains an embedded $\OO_k \times S_k$.
\end{cor}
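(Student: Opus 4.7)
The plan is to reduce the corollary to \autoref{thm:orange} (the $k=0$ case, where $\OO_0\times S_0$ is just the overtwisted orange $\OO$) and \cite[Theorem 1.2]{Hua17} (the $k=n-1$ case, where $\OO_{n-1}\times S_{n-1}$ is precisely a plastikstufe). The intermediate cases $0<k<n-1$ are handled by interpolation via the BEM $h$-principle.

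For the ``if'' direction, suppose $(M,\xi)$ contains an embedded $\OO_k\times S_k$. I would first establish a standard-neighborhood theorem for this stratified Legendrian object (analogous to the one for $\OO$ itself, via \cite[Lemma 3.8]{Hua15}), identifying a neighborhood of the embedding with a neighborhood of the model $\OO_k\times S_k\subset (\R^{2(n-k)+1}\times T^*S_k,\,\alpha_{\OO_k}+\lambda_{T^*S_k})$, where $\R^{2(n-k)+1}$ carries the contact form \eqref{eqn:contact_form_in_orange} hosting $\OO_k$ and $\lambda_{T^*S_k}=-p\cdot dq$. Then, working near a point $(\nu_k,p_0)$ with $\nu_k\in\OO_k$ the navel and $p_0\in S_k$, I would exhibit an embedded overtwisted orange $\OO_n$ of the full middle dimension by enlarging the loop of isotropic half-spaces \eqref{eqn:Lagrangian_loop} using the $2k$ extra cotangent coordinates supplied by $T^*S_k$ around $p_0$; the Legendrian-closure computation \eqref{eqn: calculation of Legendrian} extends once these new directions are absorbed as additional Lagrangian coordinates in each $\Lambda_\tau$. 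Applying \autoref{thm:orange} to the resulting $\OO_n\subset (M,\xi)$ then shows $(M,\xi)$ is BEM-overtwisted.

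For the ``only if'' direction, the model $(\R^{2(n-k)+1}\times T^*S_k,\,\alpha_{\OO_k}+\lambda_{T^*S_k})$ plainly contains an embedded $\OO_k\times S_k$ (the product of the model $\OO_k$ with the zero section of $T^*S_k$) and, by the ``if'' direction, its neighborhoods are BEM-overtwisted. Given an arbitrary BEM-overtwisted $(M,\xi)$, I would remove a Darboux ball and insert a small ball carrying this BEM-overtwisted model containing $\OO_k\times S_k$; the resulting contact structure $\xi'$ on $M$ is BEM-overtwisted and formally homotopic to $\xi$ (the modification being supported in a contractible region), so the BEM $h$-principle \cite{BEM15} produces a contact isotopy of $M$ taking $\xi'$ back to $\xi$, along which the embedded $\OO_k\times S_k$ is transported into $(M,\xi)$.

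The main obstacle lies in the ``if'' direction: one must verify that enlarging $\Lambda_\tau$ using the cotangent directions of $T^*S_k$ yields a loop of Lagrangian half-spaces whose cone singularity at $(\nu_k,p_0)$ is genuinely modeled on the overtwisted orange $\OO_n$ of \autoref{subsec:orange}, rather than a different or more degenerate stratified object. In particular, one needs to check that the Maslov index of the resulting loop of Lagrangian subspaces is $3-n$ and that the local contact germ matches the prescribed germ on the navel chart of $\OO_n$, rather than only producing a weaker singular Legendrian along the $S_k$-direction.
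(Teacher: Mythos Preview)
Your ``only if'' direction is essentially fine and matches the paper's intended use of the $h$-principle from \cite{BEM15}. The serious problem is in the ``if'' direction.

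You propose to build a full-dimensional overtwisted orange $\OO=\OO_n$ locally near $(\nu_k,p_0)$ by ``enlarging the loop of isotropic half-spaces \eqref{eqn:Lagrangian_loop} using the $2k$ extra cotangent coordinates supplied by $T^*S_k$.'' This cannot work as stated, and the Maslov index check you yourself flag actually fails. Near $(\nu_k,p_0)$ the loop of Lagrangian subspaces in $\xi$ is the direct sum of the $\OO_k$-loop (rotating in $n-k-1$ planes) with the \emph{constant} Lagrangian $T_{p_0}S_k\subset T^*_{p_0}S_k$ coming from the zero section. Since a constant loop contributes zero, the Maslov index of the product loop is $\mu(\OO_k)=3-(n-k)$, not the required $3-n$; the $T^*S_k$ directions simply do not rotate, so there is no way to absorb them as additional rotating $(r_i,\theta_i)$-planes without leaving the given embedded object. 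Thus the singular germ you obtain at $(\nu_k,p_0)$ is genuinely different from the navel of $\OO_n$, and the construction breaks down for every $0<k\le n-1$.

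The paper's route avoids this local obstruction entirely by passing through the plastikstufe. Apply \autoref{thm:orange} in dimension $2(n-k)+1$ to conclude that a neighborhood $N(\OO_k)\subset\R^{2(n-k)+1}$ is BEM-overtwisted; then \cite[Theorem~1.2]{Hua17} supplies a plastikstufe $D_{\ot}\times S'\subset N(\OO_k)$ with $\dim S'=n-k-1$. Taking the product with the zero section $S_k\subset T^*S_k$ gives a plastikstufe $D_{\ot}\times(S'\times S_k)$ in $(M,\xi)$ with core of dimension $n-1$, and a second application of \cite[Theorem~1.2]{Hua17} yields BEM-overtwistedness of $(M,\xi)$. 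This is why the paper lists exactly those three ingredients; your attempted direct interpolation does not use \cite{Hua17} at all for $0<k<n-1$, which is a sign that something is missing.
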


\begin{remark}
In (\ref{eqn:Lagrangian_loop}) $\Lambda_\tau$ rotates clockwise by $\pi$ in each $(r_i,\theta_i)$-plane as $\tau$ goes from 0 to 1. One can construct orange-like objects by defining $\Lambda_\tau$ to be a loop of isotropic subspaces that rotates clockwise by $j\pi$, $j\geq 1$, in each $(r_i,\theta_i)$-plane as $\tau$ goes from 0 to 1, and repeat the rest of the construction of $\OO$. However, the resulting orange-like object is overtwisted if and only if $j=1$.
\end{remark}

\subsection{Definition of bypass} \label{subsec:bypass}

As in dimension $3$, we define a bypass to be one-half of an overtwisted orange.  Continuing to use the notation from \autoref{subsec:orange}, let \begin{gather*}
D(E)^{\wedge} = D(E) \cap \{ 0 \leq \tau \leq \tfrac{1}{2} \}, \quad D(E)^{\vee} = D(E) \cap \{ \tfrac{1}{2} \leq \tau\leq 1 \}\\
\gamma^{\wedge} = \gamma \cap \{ 0 \leq \tau \leq \tfrac{1}{2} \} \subset \p D(E)^{\wedge}, \quad \gamma^{\vee} = \gamma \cap \{ \tfrac{1}{2} \leq \tau \leq 1 \}\subset \p D(E)^{\vee}.
\end{gather*}

\begin{definition}[Bypass and anti-bypass] $\mbox{}$
\be
\item A \emph{bypass} is the space $\Delta = D(E)^{\wedge} / \gamma^{\wedge}$ together with the germ of a contact structure given in \autoref{subsec:orange}.
\item An \emph{anti-bypass} is the space $\Delta^{\vee} = D(E)^{\vee} / \gamma^{\vee}$ together with the germ of a contact structure given in \autoref{subsec:orange}.
\ee
\end{definition}

It follows from the definition that $\Delta \cup \Delta^{\vee}$ is an overtwisted orange.

In the following we describe how to attach a bypass to a convex hypersurface. Let $\Sigma$ be a convex hypersurface and $v$ be a transverse contact vector field. Then we have the usual decomposition $\Sigma \setminus \Gamma = R_+ \cup R_-$, where $\Gamma$ is the $v$-dividing set and $R_\pm$ are (not necessarily connected) Liouville manifolds.

\begin{definition}
A bypass $\Delta = D(E)^{\wedge} / \gamma^{\wedge}$ is \emph{attached} to $\Sigma$ with attaching data $(\Lambda_+, \Lambda_-; D_+, D_-)$ (cf.\ \autoref{defn:bypass_data}) if $D(E) \cap \{ \tau=0 \}$ is identified with the closure $\overline{D}_+$, $D(E) \cap \{ \tau=\tfrac{1}{2} \}$ is identified with $\overline{D}_-$, and under these identifications $\Delta \cap \Sigma = \overline{D}_+ \cup \overline{D}_-$.
\end{definition}

A little care should be taken in attaching higher-dimensional bypasses as they are not smooth. To spell out the details, let us consider the following model of a {\em one-sided} neighborhood $Op(\Sigma)$ of the convex hypersurface $\Sigma$:
	\begin{equation*}
		Op(\Sigma) = (R_+ \times [0,\epsilon]_t) \cup ( \Gamma \times [0,\epsilon]_s \times [0,\tfrac{1}{2}]_t) \cup (R_- \times [\tfrac{1}{2}-\epsilon, \tfrac{1}{2}]_t) / \sim~,
	\end{equation*}
where $(x,0,t) \sim (x,0,t')$ for any $x \in \Gamma$, $t,t' \in [0,\tfrac{1}{2}]$, and $\Sigma$ is identified with $$\Sigma = (R_+ \times \{0\}) \cup (\Gamma \times \{0\} \times [0,\tfrac{1}{2}]) \cup (R_- \times \{\tfrac{1}{2}\}) / \sim.$$
Here $\Gamma$ is regarded as the ideal boundary of $R_\pm$ as usual and $\Gamma \times (0,\epsilon] \times \{0\}$ (resp. $\Gamma \times (0,\epsilon] \times \{\tfrac{1}{2}\}$) is the cylindrical end of $R_+ \times \{0\}$ (resp. $R_- \times \{\tfrac{1}{2}\}$).  

The contact structure $\xi$ on $Op(\Sigma)$ is defined on the three regions as follows so they agree on their common overlaps:
\begin{itemize}
\item On $\Gamma \times [0,\epsilon]_s \times [0,\tfrac{1}{2}]_t$, $\xi = \ker (sdt+\lambda)$ where $\lambda$ is a contact form on $\Gamma$ defining $\xi|_{\Gamma}$.
\item On $R_+ \times [0,\epsilon]_t$ and $R_- \times [\tfrac{1}{2}-\epsilon, \tfrac{1}{2}]_t$, $\xi = \ker(dt+\lambda_{\pm})$, respectively, where $\lambda_\pm$ are Liouville forms on $R_\pm$ (cf.\ \autoref{lem:ideal_compactification}).
\end{itemize}
It follows that $\p_t$ is a contact vector field on $Op(\Sigma)$ which is positively transverse to $R_+$ and negatively transverse to $R_-$.

Let us also denote by $\widetilde{Op(\Sigma)}$ the neighborhood before squashing, i.e., $Op(\Sigma) = \widetilde{Op(\Sigma)} / \sim$. Similarly define $\widetilde{\Sigma}$ such that $\Sigma = \widetilde{\Sigma} / \sim$; see \autoref{fig:convex_surface_model}. 

	\begin{figure}[ht]
		\centerline{
			\begin{overpic}[scale=.6]{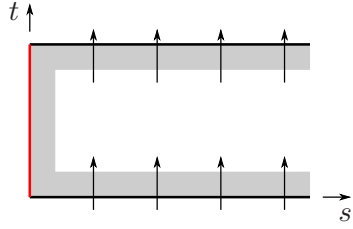}
				\put(96,-3){$s$}
				\put(-5.5,59){$t$}
			\end{overpic}
		}
		\caption{A schematic picture of $\widetilde{Op(\Sigma)}$. The ``suture'' is a concave suture $\Gamma \times \{0\} \times [0,\tfrac{1}{2}]$, depicted in red. {\em The bypass will be attached from the shaded side in \autoref{fig:convex_surface_model}.}}
		\label{fig:convex_surface_model}
	\end{figure}

The plan is to first describe the attachment of a bypass $\Delta$ onto $\widetilde{\Sigma}$ in $\widetilde{Op(\Sigma)}$, and then quotient out by the equivalence relation to get the actual bypass attachment on $\Sigma$. To this end, it suffices to focus on an open neighborhood of the singular point $p \in \Delta$, i.e., the navel. Choose a Darboux chart $\mathcal{U} \cong (\R_{r,\theta,z}^{2n-1}, \xi_{\std})$ around $p$ in $\Gamma$ with contact form $\alpha_{\std}=dz+\sum_{i=1}^{n-1} r_i^2d\theta_i$ such that the following hold:
\begin{itemize}
\item $\Lambda_+ \cap \mathcal{U}$ is identified with $\Lambda_0$ in the ideal boundary of $R_+ = R_+ \times \{0\}$ and $\Lambda_- \cap \mathcal{U}$ is identified with $\Lambda_{1/2}$ in the ideal boundary of $R_- = R_- \times \{\tfrac{1}{2}\}$, where $\Lambda_\tau, \tau \in [0,\tfrac{1}{2}]$, is defined by (\ref{eqn:Lagrangian_loop}).
\item $D_+ \cap (\mathcal{U} \times [0,\epsilon] \times \{0\}) \subset \Gamma \times [0,\epsilon] \times \{0\}$ is identified with $\Lambda_0 \times [0,\epsilon] \times \{0\}$ and $D_- \cap (\mathcal{U} \times [0,\epsilon] \times \{\tfrac{1}{2}\}) \subset \Gamma \times [0,\epsilon] \times \{\tfrac{1}{2}\}$ with $\Lambda_{1/2} \times [0,\epsilon] \times \{\tfrac{1}{2}\}$.
\end{itemize}

Let $\phi: [0,\tfrac{1}{2}] \to [0,\tfrac{1}{2}]$ be a strictly increasing function such that $\phi(0)=0, \phi(\tfrac{1}{2})=\tfrac{1}{2}, \phi'(0)=\phi'(\tfrac{1}{2})=0$, and $\phi'(\tau) > 0$ for all $\tau \in (0,\tfrac{1}{2})$. We then define the Legendrian disks
	\begin{equation} \label{eqn: Dt}
		D_\tau = \left\{ s \geq \phi'(\tau) \pi \sum\nolimits_{i=1}^{n-1} r_i^2  \right\} \subset \Lambda_{\phi(\tau)} \times [0,\epsilon] \times \{\tau\}.\footnote{Strictly speaking, given a fixed neighborhood size $\epsilon > 0$, one may need to stretch the $[0,\tfrac{1}{2}]$-direction and choose $\phi$ with sufficiently small derivative. The details are left to the reader.}
	\end{equation}
Here we recall that $\Lambda_\tau$ is $1$-periodic by definition. Then $\cup_{\tau \in [0,1/2]} \p D_\tau$ is Legendrian as in the proof of \autoref{lem:orange_peel}, where $s,t$ play the role of $r_n,\theta_n$.
Moreover by our choice of $\phi$, it is clear that $D_0 = D_+ \subset R_+, D_{1/2} = D_- \subset R_-$ and $\p D_\tau \cap \widetilde{\Sigma} = \{p\}$ for all $\tau \in (0,\tfrac{1}{2})$.

Finally, by squashing the $[0,\tfrac{1}{2}]$-factor in $\Gamma \times [0,\epsilon] \times [0,\tfrac{1}{2}] \subset \widetilde{Op(\Sigma)}$, we obtain the local model for a bypass $\Delta$ attached to $\Sigma$ along $(\Lambda_+, \Lambda_-; R_+, R_-)$ near the attaching region.

The following theorem generalizes the bypass attachment in dimension $3$. The proof in higher dimensions is different from the $3$-dimensional case since the edge-rounding lemma \cite[Lemma 3.11]{Hon00} in dimension $3$ has no known analog in higher dimensions.

\begin{theorem} \label{thm:bypass}
Let $\Sigma \subset (M, \xi)$ be a convex hypersurface. If there exists a bypass $\Delta$ attached to $\Sigma$ along $(\Lambda_+, \Lambda_-; D_+, D_-)$, then there is a contact embedding $\phi: (\Sigma \times [0,1], \zeta) \hookrightarrow (M, \xi)$ such that $\phi( \Sigma \times \{0\} ) = \Sigma$ and $(\Sigma \times [0,1], \zeta)$ is given by the bypass attachment along $(\Lambda_+, \Lambda_-; D_+, D_-)$ as constructed in \autoref{thm:bypass_attachment}.
\end{theorem}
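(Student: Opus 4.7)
The plan is to produce, in a neighborhood $N$ of $\Sigma \cup \Delta$ in $M$, a contact vector field $v$ that is gradient-like for a Morse function with exactly two critical points---one of index $n$ at (or near) the navel of $\Delta$, and one of index $n+1$ further along $\Delta$---and then invoke uniqueness of contact handle attachments with prescribed Legendrian/Lagrangian attaching data to conclude that the resulting cobordism is contactomorphic to the one given by \autoref{thm:bypass_attachment}. I would work inside the one-sided model $\widetilde{Op(\Sigma)}$ introduced just before the theorem statement, in which $\partial_t$ is already a transverse contact vector field to $\Sigma$ and in which $\Delta$ is exhibited as the union of sections $D_\tau$ from (\ref{eqn: Dt}) near the navel.

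The construction has two moves. First, I would carve out a small neighborhood of the navel and identify it with the standard contact $n$-handle model of \autoref{subsec:n_handle}: a Darboux chart around the navel together with \autoref{claim: contact embedding} and \autoref{claim: contact embedding 2} lets me identify a standard neighborhood of $\Lambda_- \uplus \Lambda_+ \subset \Gamma$ with the attaching region of $H_n$, while the ``central'' portion of the family $\{D_\tau\}_{\tau \in (0, 1/2)}$ near the navel serves as the (smoothed) core Legendrian disk. Applying \autoref{prop:n_handle} then attaches an honest contact $n$-handle to $\Sigma$, producing a new convex hypersurface $S$ inside $N$. Second, the ``outer'' pieces $D_+ \setminus \operatorname{int} H_n$ and $D_- \setminus \operatorname{int} H_n$, concatenated with the trace of a Legendrian handleslide through $H_n$ provided by \autoref{cor:isotop_Lagrangian_disk}, bound a single Legendrian sphere on $S$ which is Legendrian isotopic to the standard unknot---this is essentially the Legendrian peel of $\Delta$ together with the ``equator'' traced out by $\cup_\tau \partial D_\tau$. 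Applying \autoref{prop:(n+1)_handle} attaches a contact $(n+1)$-handle $H_{n+1}$ along this sphere, and the two handles cancel smoothly because the attaching sphere of $H_{n+1}$ crosses the belt sphere of $H_n$ exactly once, by the same count as in Step 2 of the proof of \autoref{thm:bypass_attachment}.

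The main obstacle, explicitly flagged in the paragraph preceding the theorem, is the non-smoothness of $\Delta$ at the navel together with the absence of a higher-dimensional edge-rounding lemma. To circumvent this I would not attempt to smooth the bypass directly; instead, I would choose $H_n$ large enough that a small ball around the navel $p$ lies entirely in the interior of $H_n$, so that the singular point is absorbed into the $n$-handle and never appears on the boundary of any model. The smooth attaching region $\Gamma_{\partial_1 H_n}$ then sits entirely in the smooth locus of $\Delta$, while the sections $D_\tau$ for $\tau$ away from $0$ and $1/2$ lie inside $H_n$ and contribute only to (a perturbation of) the core disk, with normalization guaranteed by the Flexibility Lemma \autoref{lemma: flexibility}. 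With this choice, the cobordism produced matches---by uniqueness of contact handle attachments given the Legendrian attaching spheres and bounding Lagrangian disks---the cobordism of \autoref{thm:bypass_attachment}, as required.
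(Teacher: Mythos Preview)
Your proposal has a genuine gap: the geometric identification of the two contact handles inside a neighborhood of $\Delta$ is incorrect, and the ``absorb the navel into $H_n$'' move does not do what you need it to do.

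The core of the contact $n$-handle must be a single Legendrian $n$-disk with boundary $\Lambda_-\uplus\Lambda_+$. Your candidate, ``the central portion of the family $\{D_\tau\}_{\tau\in(0,1/2)}$ near the navel,'' is an $(n+1)$-dimensional object (a one-parameter family of $n$-disks), so it cannot serve as that core. Relatedly, a Darboux ball around the navel cannot be the $n$-handle: its attaching region $\partial_1 H_n$ is a neighborhood of the entire Legendrian sphere $\Lambda_-\uplus\Lambda_+\subset\Gamma$, which is a global object and does not sit inside a small ball around a single point $p$ once $n\geq 2$. (In dimension $3$ this collapse happens to work because $\Lambda_-\uplus\Lambda_+$ is a pair of points.) Finally, ``absorbing'' the singular point into the interior of $H_n$ does not resolve anything: the navel lies on $\Sigma$, hence on $\partial_1 H_n$, not in the interior of the handle, and the non-smoothness of $\Delta$ at $p$ still obstructs identifying any smooth Legendrian core there.

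The paper proceeds in essentially the opposite way. It first \emph{resolves the navel} by an explicit perturbation of the local model, replacing each $D_\tau$ by a nearby Legendrian $D'_\tau$ so that the resulting ``smoothed bypass'' $\Delta'$ is a genuine piecewise-smooth $(n+1)$-disk foliated by Legendrian $n$-disks. The boundary then splits as $\partial\Delta'=\partial^{\mathrm h}\Delta'\cup\partial^{\mathrm v}\Delta'$, and it is the \emph{peel} $\partial^{\mathrm v}\Delta'=\cup_\tau\partial^{\mathrm v}E'_\tau$ --- a smooth Legendrian $n$-disk --- whose boundary is shown (via the front projection, \autoref{fig:Legendrian_sum_front}) to be Legendrian isotopic to $\Lambda_-\uplus\Lambda_+$. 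A standard neighborhood $N(\partial^{\mathrm v}\Delta')$ is the contact $n$-handle, and the remainder $\Delta'\setminus N(\partial^{\mathrm v}\Delta')$, being foliated by Legendrian disks, is the $\Theta$-disk core of the contact $(n+1)$-handle. In short: the peel gives the $n$-handle and the family of sections gives the $(n+1)$-handle, which is the reverse of what your outline suggests.
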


\begin{proof}
The main idea is to resolve the navel of the bypass $\Delta$ and find the canceling pair of contact handles explicitly.


We start by resolving the navel in a local model. Using the notation from \autoref{subsec:orange}, consider the following path of isotropic subspaces
\begin{equation*}
\Lambda'_\tau = \left\{ z = \epsilon \pi \tau, \theta_1 = \dots = \theta_{n-1} = -\tau\pi \text{ or } (1-\tau)\pi \right\} \subset \R^{2n-1}, ~~ \tau \in [0, \tfrac{1}{2}].
\end{equation*}
for a fixed small $\epsilon>0$.  Inside $W'_\tau=\Lambda'_\tau\times \{ \theta_n = 2\tau\pi \}$ we construct a Legendrian
\begin{equation*}
D'_\tau =\{r_n^2\geq \tfrac{R^2-\epsilon}{2}\}=\{ r_n \geq \sqrt{\tfrac{R^2 - \epsilon}{2}},R \geq \sqrt{\epsilon} \}\cup \{ r_n \geq 0, R \leq \sqrt{\epsilon}\}
\end{equation*}
with piecewise smooth boundary, where $R=\sqrt{\sum_{i=1}^{n-1}r_i^2}$; see \autoref{fig:piecewise_smooth}.
\begin{figure}[ht]
		\centerline{
			\begin{overpic}[scale=.3]{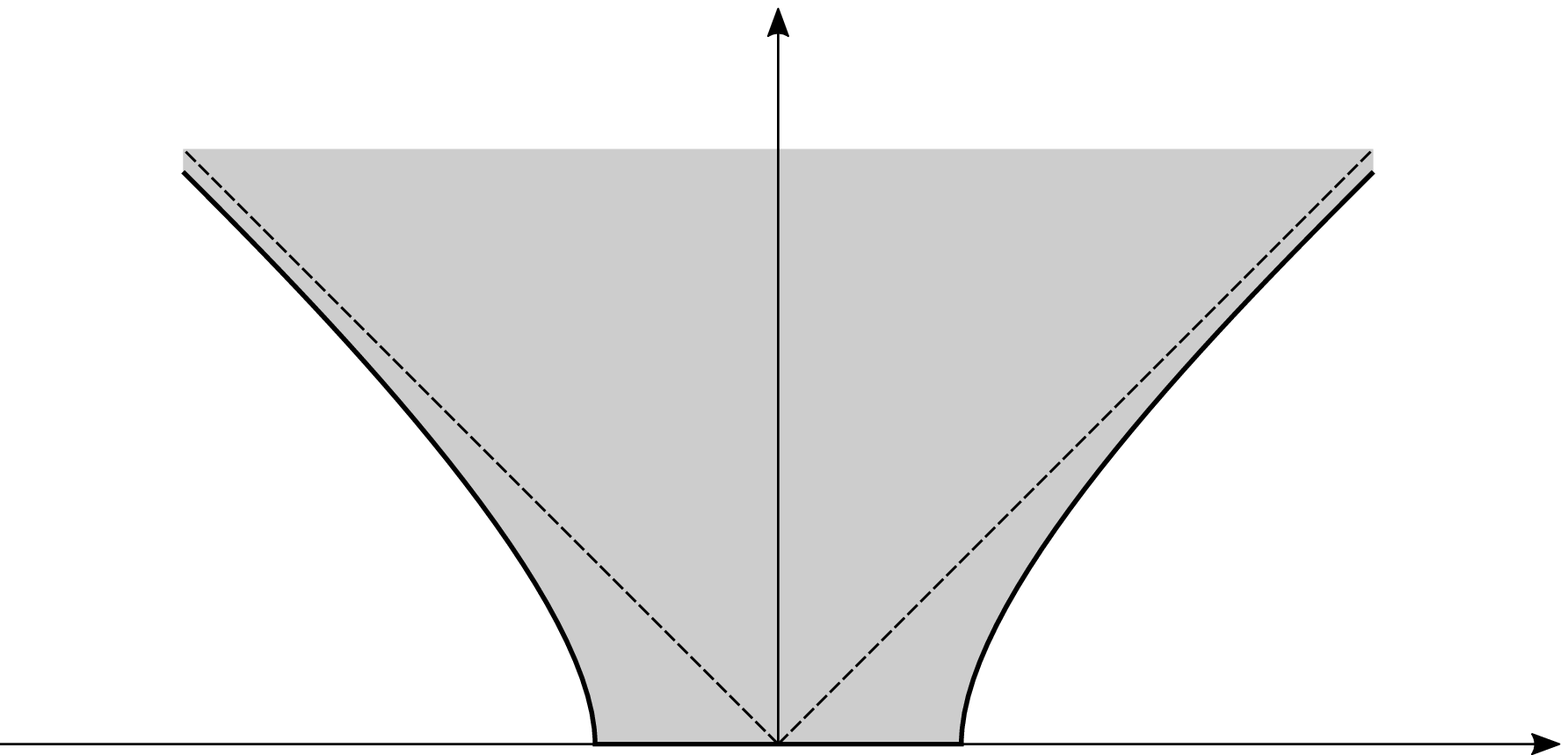}
				\put(43,40){$r_n$}
				\put(102,-1){$\Lambda'_\tau$}
			\end{overpic}
		}
			\caption{The shaded region represents the Legendrian $D'_\tau$.}
			\label{fig:piecewise_smooth}
\end{figure}

There exists a decomposition $\p D'_\tau = \p^{\text{h}} D'_\tau \cup \p^{\text{v}} D'_\tau$ such that
\begin{gather*}
\p^{\text{h}} D'_\tau = \p D'_\tau \cap \{ R \leq \sqrt{\epsilon} \}, \quad \p^{\text{v}} D'_\tau = \p D'_\tau \cap \{ R \geq \sqrt{\epsilon}\}.
\end{gather*}
Here $\cup_{\tau \in [0,1/2]} {\p^{\text{v}} D'_\tau}$ is a smooth Legendrian by the proof of \autoref{lem:orange_peel} and $\cup_{\tau \in [0,1/2]} {\p^{\text{h}} D'_\tau}$ is not Legendrian.
By choosing $\epsilon>0$ small, the Legendrians $D'_\tau$, $\tau \in [0,\tfrac{1}{2}]$, and $\cup_{\tau \in [0,1/2]} {\p^{\text{v}} D'_\tau}$ can be made arbitrarily close to the corresponding Legendrians $D_\tau$ and $\cup_{\tau \in [0,1/2]} {\p^{\text{v}} D_\tau}$. Hence using a cutoff function, we may assume that $D'_\tau = D_\tau$ and $\cup_{\tau \in [0,1/2]} {\p^{\text{v}} D'_\tau} = \cup_{\tau \in [0,1/2]} {\p D_\tau}$ outside of a small neighborhood $B_{\delta/2}(0)$ of the navel.  Let
\begin{align*}
E_\tau'&= (E_\tau\setminus (D_\tau\cap B_{\delta}(0)))\cup (D'_\tau\cap B_{\delta}(0)),\\
\Delta'&= (\Delta\setminus (\Delta\cap B_{\delta}(0)))\cup (\cup_{\tau\in[0,1/2]}D'_\tau \cap B_{\delta}(0)).
\end{align*}
We also set $\bdry^{\text{h}}E'_\tau=\bdry^{\text{h}}D'_\tau$ and $\bdry^{\text{v}}E'_\tau= \bdry E'_\tau \setminus int(\bdry^{\text{h}}D'_\tau)$.

To summarize, by resolving the navel as above, we obtain a piecewise smooth Legendrian foliation $\Delta'$, which we call a \emph{smoothed bypass}, such that:
		\begin{itemize}
			\item $\Delta'$ is foliated by pairwise disjoint Legendrian disks;

			\item $\cup_{\tau \in [0,1/2]} {\p^{\text{h}} E'_\tau} \subset \p \Delta'$ is not Legendrian, but is foliated by pairwise isotropic disks;

			\item $\p \Delta' \setminus \left( \cup_{\tau \in [0,1/2]} {\p^{\text{h}} E'_\tau} \right)$ is a Legendrian disk consisting of singularities of the foliation.
		\end{itemize}
See \autoref{fig:3d_bypass}.
		\begin{figure}[ht]
			\begin{overpic}[scale=.3]{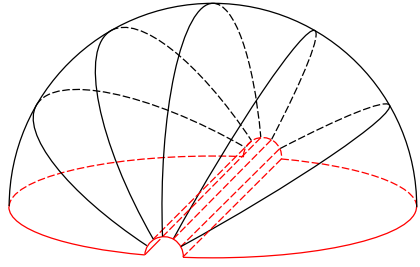}
				
			\end{overpic}
			\caption{The smoothed bypass $\Delta'$.}
			\label{fig:3d_bypass}
		\end{figure}
We have the decomposition $\p \Delta' = \p^{\text{h}} \Delta' \cup \p^{\text{v}} \Delta'$, where
\begin{equation*}
\p^{\text{h}} \Delta' = \left( \cup_{\tau \in [0,1/2]} \p^{\text{h}} E'_\tau \right) \cup E'_0 \cup E'_{1/2}, \quad\p^{\text{v}} \Delta' = \cup_{\tau \in [0,1/2]} \p^{\text{v}} E'_\tau.
\end{equation*}
	
Now the smoothed bypass $\Delta'$ can be attached to $\Sigma$ along $\p^{\text{h}} \Delta'$ such that, up to rounding corners, $E'_0$ is identified with $\overline{D}_+$, $E'_{1/2}$ is identified with $\overline{D}^{\epsilon}_-$, $\p E'_0$ is identified with $\Lambda_+$, $\p E'_{1/2}$ is identified with $\Lambda^{\epsilon}_-$, and $\cup_{\tau \in [0, 1/2]} {\p^{\text{h}} E'_\tau} \subset \Gamma$. Here the superscript $\epsilon$ denotes the $\epsilon$-pushoff in the Reeb direction. In \autoref{fig:3d_bypass} the blue portion of $\p \Delta'$ is $\cup_{\tau \in [0,1/2]} \p^{\text{h}} E'_\tau $, the red portion is $\p^{\text{v}} E'_0 \cup  \p^{\text{v}} E'_{1/2}$, and they both lie in $\Gamma$.

We claim that the Legendrian sphere
\begin{equation*}
\bdry (\bdry^{\text{v}}\Delta')=\left( \cup_{\tau \in [0,1/2]} \p \left( \p^{\text{h}} E'_\tau \right) \right) \cup \p^{\text{v}} E'_0 \cup  \p^{\text{v}} E'_{1/2} \subset \Gamma
\end{equation*}
is Legendrian isotopic to $\Lambda_- \uplus \Lambda_+$. This is most easily seen in the front projection as follows. Take a Darboux chart in $\Gamma$ containing $(\cup_{\tau \in [0,1/2]} {\p^{\text{h}} E'_\tau})\cap B_\delta(0)$, whose front projection is in blue as shown in \autoref{fig:Legendrian_sum_front}. The claim then follows from the front interpretation of the Legendrian sum.

		\begin{figure}[ht]
			\begin{overpic}[scale=.3]{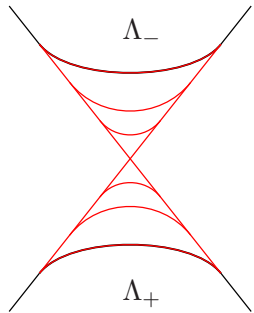}
				\put(37,89){$\Lambda_-$}
				\put(37,4){$\Lambda_+$}
			\end{overpic}
			\caption{The front projection of the Legendrian sum. The colors are in accordance with \autoref{fig:3d_bypass}.}
			\label{fig:Legendrian_sum_front}
		\end{figure}

Now a standard neighborhood $N(\p^{\text{v}} \Delta')$ of $\p^{\text{v}} \Delta'$ may be regarded as a contact $n$-handle attached to $\Sigma$. It remains to observe that the rest of the bypass $\Delta' \setminus N(\p^{\text{v}} \Delta')$ is foliated by Legendrian disks and hence is equipped with a standard contact neighborhood which may be regarded as a contact $(n+1)$-handle. Clearly the two contact handle attachments form a bypass attachment following the discussion in \autoref{subsec:construct_bypass_attachment}. This completes the proof of the theorem.
\end{proof}

As a corollary, we now prove \autoref{thm:orange}.

\begin{proof}[Proof of \autoref{thm:orange}]
If $(M, \xi)$ is BEM-overtwisted, then an embedded overtwisted orange exists by the $h$-principle of \cite{BEM15}. Conversely, given an embedded overtwisted orange $\OO$, there exists (at least locally) a convex hypersurface $\Sigma$ such that $\Sigma \cap \OO = D_0 \cup D_{1/2}$ and $\Sigma$ cuts $\OO$ into a bypass $\Delta$ and an anti-bypass $\Delta^{\vee}$, which are attached to $\Sigma$ from opposite sides. Indeed, such a $\Sigma$ may be obtained by gluing ideal completions of $T^\ast D_0$ and $T^\ast D_{1/2}$ along a Darboux ball around the navel. It then follows from \autoref{thm:bypass} that 
there simultaneously exist a bypass attachment and its anti-bypass attachment on the two sides of $\Sigma$. \autoref{thm:orange} is now a consequence of \autoref{prop:anti_bypass}, noting that \autoref{prop:anti_bypass} holds even when $\Sigma$ is not closed.
\end{proof}

\section{Bypass attachments and contact partial open books} \label{sec:partial_OB}

The Giroux correspondence \cite{Gir02} provides a dictionary between contact structures on any given closed manifold $M$ and certain open book decompositions of $M$.  In dimension $3$, there is a relative version of the Giroux correspondence due to Kazez, Mati\'c, and the first author \cite{HKM09}.  In particular, the triple $(M,\xi,\Gamma)$ consisting of a compact contact $3$-manifold $(M, \xi)$ with convex boundary and dividing set $\Gamma$ on $\bdry M$ admits a supporting partial open book decomposition. \cite{HKM09} also described how a supporting partial open book decomposition of $(M,\xi,\Gamma)$ is modified under a bypass attachment to $\p M$.

The goal of this section is to generalize the notion of a partial open book decomposition to higher dimensions and to describe how a supporting partial open book decomposition of a higher-dimensional contact manifold $(M,\xi,\Gamma)$ with convex boundary is modified under a bypass attachment. 

\subsection{Contact structures and open book decompositions} \label{subsec:OB}

We briefly review the Giroux correspondence, partly to introduce terminology; see for example \cite{Et06} for more details (in dimension $3$).

Given a smooth closed $(2n+1)$-dimensional\footnote{In the topological category, there is no need to restrict to odd-dimensional manifolds. We do so here since we are only interested in open book decompositions of contact manifolds.} manifold $M$, an \emph{open book decomposition} of $M$ consists of a pair $(S, \phi)$ and an identification $M \cong M_{(S,\phi)}$, where $S$ is a compact $2n$-dimensional manifold with nonempty boundary, $\phi \in \aut(S, \p S)$ is a diffeomorphism relative to the boundary, and
\begin{gather*}
M_{(S, \phi)} = S \times [0,1] / \sim \\
(x,t) \sim (x,t'), ~\forall x \in \p S, ~t,t' \in [0,1] \text{ and } (\phi(x),0) \sim (x,1), ~\forall x \in S.
\end{gather*}
We call $S$ the \emph{page}, $B = \p S$ the \emph{binding}, and $\phi: S \stackrel\sim\to S$ the \emph{monodromy} of the open book decomposition.


Equivalently, an open book decomposition of $M$ is a pair $(B,\pi)$, where $B \subset M$ is an embedded closed codimension $2$ submanifold with trivial normal bundle, and $\pi: M \setminus B \to S^1$ is a fibration which is consistent with the trivialization. One can recover the page of the open book decomposition by taking $S = \overline{\pi^{-1} (\theta_0)}$ for any fixed $\theta_0 \in S^1$. In the following we will not distinguish the two points of view of the open book decompositions.

Now suppose $\xi$ is a contact structure on $M$. We say an open book decomposition $(S, \phi)$ of $M$ \emph{supports} $\xi$ if there exists a contact form $\alpha$ for $\xi$ such that $(B,\xi|_B)$ is a contact submanifold, $(S, d(\alpha|_{S}))$ is the ideal compactification of a Liouville manifold, and $\phi \in \symp^c (int(S))$ is a compactly supported (exact) symplectomorphism of $int(S)$.

The existence part of the Giroux correspondence can be stated as follows.

\begin{theorem}[\cite{Gir02}] \label{thm:Giroux_correspondence}
	Any closed contact manifold admits a supporting open book decomposition.
\end{theorem}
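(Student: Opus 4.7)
The plan is to follow Giroux's original strategy, which bifurcates at dimension three. The common thread is to produce, starting from $(M,\xi)$, a codimension-two contact submanifold $B \subset M$ together with a fibration $\pi \colon M \setminus B \to S^1$ whose compactified fibers carry Liouville structures and whose monodromy is an exact symplectomorphism; one then verifies the supporting conditions on a chosen contact form.

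In dimension three I would first construct a ``contact cell decomposition'': a triangulation $\mathcal{T}$ of $M$ whose $1$-skeleton is Legendrian and whose $2$-cells are convex with Legendrian boundary, which exists by a standard perturbation argument. After a $C^\infty$-small isotopy, the dividing curves on the $2$-skeleton meet the Legendrian $1$-skeleton transversely in finitely many points. Thickening the $1$-skeleton together with a regular neighborhood of the positive region of each $2$-cell produces a ribbon surface $S_+ \subset M$ bounded by a link $B$, and its complement in $M$ deformation retracts onto a dual ribbon $S_-$. The fibration $\pi$ would then be obtained by sweeping $S_+$ across $M$ to $S_-$ using a contact vector field transverse to the $2$-skeleton; the pages are visibly Liouville from the ribbon structure, and the monodromy is a product of positive Dehn twists along the cocores of the $1$-handles of $S_+$.

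For $\dim M \geq 5$ I would invoke the approximately holomorphic techniques of Ibort--Mart\'\i nez-Torres--Presas and Giroux--Mohsen. Fix a contact form $\alpha$ for $\xi$, a compatible almost complex structure $J$ on $\xi$, and consider the prequantum line bundle $L \to M$ with curvature proportional to $d\alpha$. Donaldson-style quantitative transversality yields, for $k \gg 0$, a pair of approximately $J$-holomorphic sections $s_0, s_1$ of $L^{\otimes k}$ whose common zero set $B = \{s_0 = s_1 = 0\}$ is a codimension-four contact submanifold and whose ratio $s_0/s_1 \colon M \setminus B \dashrightarrow \mathbb{C}P^1$ is a contact pencil, in the sense that $\xi$ restricts to a contact structure on each regular fiber and the local model of $(s_0,s_1)$ near $B$ is the standard one. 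Blowing up $B$ then converts the pencil into an honest open book fibration whose pages are the proper transforms of the fibers of $s_0/s_1$ and whose binding is the proper transform of $B$.

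The main obstacle will be the analytic construction in the high-dimensional case: producing $(s_0,s_1)$ with simultaneous quantitative transversality along the zero locus, along the fiberwise directions, and along the critical locus of $s_0/s_1$ requires extending Donaldson's estimates to the non-integrable contact setting and iteratively absorbing the curvature terms coming from $\bar{\partial}_J$. Once such sections are available, the compatibility of the resulting open book with $\xi$ reduces to a local verification near $B$: checking, in the explicit contact normal form around the binding, that the restriction of $\alpha$ to each page is a Liouville form with the correct behavior at infinity, and that the resulting return map lies in $\symp^c(\mathrm{int}(S))$.
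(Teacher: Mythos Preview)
The paper does not prove this theorem at all: it is stated with the citation \cite{Gir02} and used as a black box, with no argument given beyond the subsequent remark about Weinstein pages and positive stabilizations. So there is no ``paper's own proof'' to compare against.

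Your sketch is a reasonable outline of the strategy in the literature (contact cell decompositions in dimension three; approximately holomorphic sections \`a la Donaldson, Ibort--Mart\'\i nez-Torres--Presas, and Giroux--Mohsen in higher dimensions), but a couple of points are imprecise. In the three-dimensional step, the monodromy of the open book produced from a contact cell decomposition is \emph{not} a priori a product of positive Dehn twists --- that would force Stein fillability of every contact $3$-manifold; one only gets an arbitrary element of the mapping class group of the page. In the higher-dimensional step, the passage from a contact Lefschetz pencil (with codimension-four base locus) to an honest supporting open book (with codimension-two binding) is not simply ``blow up $B$'': one must also deal with the Lefschetz singular fibers and extract an open book whose pages are the regular fibers, which requires additional work beyond the blow-up. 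These are genuine gaps if you intend this as a proof rather than a road map.
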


\begin{remark}
In fact, according to \cite{Gir02}, \autoref{thm:Giroux_correspondence} can be strengthened in two ways: first, the pages of the supporting open book can be taken to be Weinstein instead of just Liouville; and second, a certain class of supporting open books of a given contact structure can be related to each other via {\em positive stabilizations} (see \autoref{defn:stabilization}).
\end{remark}

\subsection{Contact partial open book decompositions} \label{subsec:partial_OB}

We begin with a description of the (topological) partial open book, which is a straightforward generalization of the definition in dimension $3$ in \cite{HKM09}.

Let $W$ be a compact manifold with corners and $\mathcal{C}\subset \bdry W$ be a closed smooth submanifold of codimension $1$.  Then we say that $W$ {\em has a codimension $1$ corner along $\mathcal{C}$} if a neighborhood of $\mathcal{C} \subset W$ is diffeomorphic to a neighborhood of $\mathcal{C} = \mathcal{C} \times \{0\} \subset \mathcal{C} \times (\R_{\geq 0})^2$ and there are no other corners.  We write $\mathcal{C}={\frak c}(\bdry W)$. 

Given a compact manifold $S$ with nonempty boundary and a codimension 0 compact submanifold $W$ with a codimension 1 corner, we say the embedding $W \subset S$ is a \emph{cornered embedding} if, for any connected component $V$ of $\p W$:
\begin{enumerate}
\item If ${\frak c}(V)=\varnothing$, then $V \subset \p S$.
\item If ${\frak c}(V)\not=\varnothing$, then ${\frak c}(V)$ divides $V$ into finitely many connected components.  The connected components are either contained in the interior $int(S)$ of $S$ or in $\p S$, and moreover two components that are adjacent at the same corner cannot both lie in $int(S)$ or in $\p S$.
\end{enumerate}
See \autoref{fig:cornered_embedding} for an example of cornered embedding in dimension $2$.

 	\begin{figure}[ht]
 		\begin{overpic}[scale=.25]{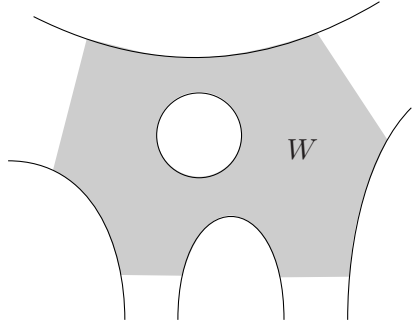}
 			\put(69,40){$W$}
 		\end{overpic}
 		\caption{A cornered embedding $W \subset S$.}
 		\label{fig:cornered_embedding}
 	\end{figure}

\begin{definition}
A \emph{partial open book} is a triple $(S, W, \phi)$, where $S$ is a compact manifold with boundary, $W \subset S$ is a cornered embedding, and $\phi: W \to S$ is a diffeomorphism onto its image such that $\phi|_{\p W \cap \p S}$ is the identity map. We call $S$ the \emph{page} and $\phi$ the \emph{monodromy} of the partial open book.
\end{definition}

Given a partial open book $(S, W, \phi)$, one can associate to it a compact manifold
\begin{gather*}
M_{(S, W, \phi)} = S \times [0,1] / \sim, \\
(x,t) \sim (x,t'), ~\forall x \in \p S, ~t,t' \in [0,1] \text{ and } (\phi(x),0) \sim (x,1), ~\forall x \in W.
\end{gather*}
It is straightforward to see that $\p M_{(S, W, \phi)} = ( S \setminus W ) \times \{1\} \cup ( S \setminus \phi(W) ) \times \{0\}$.

Next we turn to the definition of contact partial open books. Let $S$ be the ideal compactification of a Liouville manifold and let $S^c \subset S$ be the (unique up to deformation equivalence) Liouville domain whose completion has ideal compactification $S$.

A cornered embedding $W^c \subset S^c$ is called a \emph{cornered Liouville embedding} if $S^c$ is a Liouville domain with Liouville vector field $v$ that transversely points into $W^c$ along $\p W^c \cap int(S^c)$ and transversely points out of $W^c$ along $\p W^c \cap \p S^c$. Our primary example of a cornered Liouville embedding is the embedding of a standard neighborhood of a properly embedded Lagrangian disk $D \subset S^c$ with $\p D \subset \p S^c$.

By adding the cylindrical end to $S^c$ and compactifying at infinity, there exists a natural extension of $W^c$ to a cornered embedding $W\subset S$.

\begin{definition} \label{defn:contact_partial_OB}
	A \emph{contact partial open book} is a partial open book $(S, W, \phi)$ such that $S$ is the ideal compactification of a Liouville manifold, $W \subset S$ is the completion of a cornered Liouville embedding $W^c\subset S^c$, and $\phi:  W \to S$ is the completion of an exact symplectomorphism onto its image such that $\phi|_{[R,\infty]\times(\p W \cap \p S)}$ is the identity map for $R\gg 0$.
\end{definition}

Any contact partial open book gives rise to a compact contact manifold $M_{(S, W, \phi)}$ with convex boundary, after rounding corners. In particular, the dividing set on $\p M_{(S, W, \phi)}$ is given by $\p ( S \setminus W )$, up to corner rounding. This is the relative version of a higher-dimensional analog of Thurston-Winkelnkemper's theorem in \cite{TW75}.

When $W=S$, i.e., in the closed case, a positive stabilization changes the open book but does not change the resulting contact manifold up to contactomorphism; see \cite{Et06} for more details.  The following is a related operation:

\begin{definition} [Positive partial stabilization] \label{defn:stabilization}
	Given a contact partial open book $(S, W, \phi)$ and a properly embedded Lagrangian disk $L \subset S^c$ with Legendrian boundary $\p L \subset \p S^c$ which is disjoint from $W$, a \emph{positive partial stabilization of $(S, W, \phi)$} is a contact partial open book $(S', W', \phi')$ such that:
		\begin{itemize}
			\item $S'$ is the ideal compactification of the completion of the Liouville domain obtained by attaching a Weinstein handle $H$ to $S^c$ along (a parallel copy of) $\p L$. Let $D, D^{\dagger}$ be the core and cocore Lagrangian disks of $H$.

			\item $(W')^c = W^c \cup H$ and $\phi'= \tau_{L\cup D}\circ (\phi\cup \op{id}_{H})$, where $\tau_{L\cup D}$ is the positive Dehn twist along the Lagrangian sphere $L \cup D$.
		\end{itemize}
\end{definition}

The following lemma shows that a positive partial stabilization does not change the resulting contact manifold up to contactomorphism:

\begin{lemma} \label{lem:positive_stabilization}
	If $(S', W', \phi')$ is a positive partial stabilization of the contact open book $(S, W, \phi)$, then $M_{(S', W', \phi')}$ is contactomorphic to $M_{(S, W, \phi)}$.
\end{lemma}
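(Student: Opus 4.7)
The plan is to realize a positive partial stabilization as a trivial bypass attachment on the convex boundary $\partial M_{(S,W,\phi)}$, and then invoke the triviality of such attachments. First I decompose the stabilization into two steps: (i) pass from $(S,W,\phi)$ to $(S', W\cup H, \phi\cup \mathrm{id}_H)$ by attaching the Weinstein handle $H$ to the page and including it in the monodromy domain with the identity monodromy; and (ii) compose the monodromy on $W'$ with the positive Dehn twist $\tau_{L\cup D}$. Via the standard correspondence between partial open book modifications and contact handle attachments, step (i) attaches a contact $n$-handle to $M_{(S,W,\phi)}$ along the Legendrian sphere $\partial L\subset \Gamma$, which is a standard Legendrian unknot bounding the Lagrangian disk $L$. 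Via the classical correspondence between positive Dehn twists along Lagrangian spheres in pages and Legendrian (contact $(-1)$-)surgery, step (ii) attaches a contact $(n+1)$-handle along a Legendrian lift of $L\cup D$.

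Next I identify the resulting canceling pair as a bypass attachment in the sense of \autoref{thm:bypass_attachment}: the core $D\subset H$ meets the cocore $D^\dagger$ transversely in a single point, so the belt sphere of the $n$-handle intersects the attaching sphere of the $(n+1)$-handle transversely once, and the attaching data naturally assembles into a quadruple $(\Lambda_+,\Lambda_-; D_+, D_-)$ where $\Lambda_\pm$ are standard Legendrian unknots in $\Gamma$ bounding standard Lagrangian disks $D_\pm\subset R_\pm$ (obtained from the $R_+$- and $R_-$-halves of $L\cup D$ after applying \autoref{lemma: Legendrian realization}). This data matches \autoref{defn:trivial_bypass}, so the bypass attachment is trivial.

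Finally, invoking the $I$-invariance of trivial bypass attachments (which will be proved via the dictionary with partial open book decompositions developed in \autoref{subsec:bypass_to_OB}) yields that $M_{(S',W',\phi')}$ is contactomorphic to $M_{(S,W,\phi)}$.

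The main obstacle lies in the precise identification in step (ii): showing that composition with $\tau_{L\cup D}$ on the monodromy corresponds to a contact $(n+1)$-handle attachment along a Legendrian lift of $L\cup D$ in the relative partial open book setting. The closed analog is classical, but here one must verify that the lift of $L\cup D$ is indeed Legendrian in $M_{(S, W\cup H, \phi\cup\mathrm{id}_H)}$, has a standard Weinstein neighborhood matching the attaching region of a contact $(n+1)$-handle as in \autoref{prop:(n+1)_handle}, and is supported in the interior, disjoint from $\partial M$. Granting this identification, the rest reduces to the bypass and contact handle machinery of Sections \ref{sec:contact_handles} and \ref{sec:bypass_attachment}.
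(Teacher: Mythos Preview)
Your proposal has a fatal circularity. You plan to prove \autoref{lem:positive_stabilization} by reducing the positive partial stabilization to a trivial bypass attachment and then invoking the $I$-invariance of trivial bypasses, which you note will be established in \autoref{subsec:bypass_to_OB}. But the proof of that very fact (\autoref{prop:trivial_bypass}) \emph{uses} \autoref{lem:positive_stabilization}: after interpreting the trivial bypass as a modification of a local partial open book, the last step there is precisely to invoke \autoref{lem:positive_stabilization} to conclude that the resulting manifold is a boundary connected sum with the standard ball. So your argument, as written, loops back on itself.

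There is also a subsidiary confusion in your step (ii). The classical correspondence you cite says that composing the monodromy with a positive Dehn twist along a Lagrangian sphere in the page corresponds to Legendrian $(-1)$-surgery on the ambient contact manifold, i.e., a Weinstein $n$-handle attached to a filling. This is not the same as a contact $(n+1)$-handle in the sense of \autoref{subsec:(n+1)_handle}: by \autoref{prop:(n+1)_handle}(3) a contact $(n+1)$-handle performs contact $(+1)$-surgery on the dividing set, which is a different operation in a different codimension. So the identification of your two steps with the $n$- and $(n+1)$-handles of a bypass is not as immediate as you suggest, and would itself require the dictionary of \autoref{lemma:bypass_to_OB} rather than the closed-case surgery correspondence.

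The paper's argument avoids both issues by being direct and self-contained: it realizes the stabilized partial open book as a plumbing of $(S,W,\phi)$ with the model $(S_0,I_0,\tau)$ whose associated contact manifold is the standard ball, and then cuts $M_{(S',W',\phi')}$ along a convex sphere to exhibit it as the boundary connected sum $M_{(S,W,\phi)} \# M_{(S_0,I_0,\tau)}$. This lemma thus serves as the base case on which the triviality of trivial bypasses is subsequently built, not the other way around.
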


\begin{proof}[Sketch of proof]
The proof is analogous to the closed case (cf.\ \cite[Theorem 2.17]{Et06}).
	
Consider the contact partial open book $(S_0,I_0, \tau)$, where $S_0$ is the ideal compactification of $T^\ast S^n$ with the standard Liouville structure, $I_0$ is the standard rectangular neighborhood of the ideal compactification of a Lagrangian fiber $L_0=T^*_p S^n\subset T^\ast S^n$, and $\tau$ is the positive Dehn twist along the $0$-section. The associated contact manifold $M_{(S_0, I_0, \tau)}$ is contactomorphic to the unit ball in $(\R^{2n+1}, \xi_{\std})$.
	
We now describe the ``plumbing" $(S', W', \phi')=(S, W, \phi)*(S_0,I_0, \tau)$:  Let $R_0=S_0\setminus I_0$ and let  $R$ be the standard rectangular neighborhood of $L$.  We then set $S'=S_0\cup_{R_0=R} S$ such that $L_0$ and $L$ intersect exactly once and $W'= I_0\cup W$. After plumbing, $I_0$ and $L_0$ become the $n$-handle $H$ and cocore disk $D^\dagger$. We then construct $M_{(S', W', \phi')}$ by starting from
\begin{gather*}
M^\circ:=S'\times[0,1]/\sim \\
(x,t)\sim (x,t'), \forall x\in \bdry S', t,t'\in[0,1] ~ \mbox{and} ~ (x,1)\sim ((\phi\cup \op{id}_H)(x),0), \forall x\in W',
\end{gather*}
and applying a Legendrian $(-1)$-surgery along $(L\cup D) \times\{\tfrac{3}{4}\}$. Then $M_{(S', W', \phi')}$ decomposes into $H_0\cup H_1$, where $H_0= (S\times[0,1] - R\times[{1\over 2},1])/\sim$ and $H_1$ is obtained from $(S_0\times[0,1] - R\times[0,{1\over 2}])/\sim$ by applying the $(-1)$-surgery.

Let us write $\bdry R= \bdry_1 R\cup \bdry_2 R$, where $\bdry_1R=\bdry R\cup \bdry S$ and $\bdry _2 R=\bdry R\cup \bdry S_0$.
We then consider the sphere
$$S^{2n}:= (R\times \{\tfrac{1}{2}, 1\}) \cup (\bdry_1 R\times [0,\tfrac{1}{2}]) \cup (\bdry_2 R\times [\tfrac{1}{2}, 1])/\sim.$$
Observe that
\begin{gather*}
\bdry H_0\cap \bdry H_1=(R\times \{\tfrac{1}{2}\}) \cup (\bdry_1 R\times [0,\tfrac{1}{2}]) \cup (\bdry_2 R\times [\tfrac{1}{2}, 1])/\sim
\end{gather*}
is a disk in $S^{2n}$.  In fact this disk can be viewed as the positive region of the standard convex $S^{2n}$ given as the boundary of the standard $(2n+1)$-ball.
Hence, modulo carefully rounding $S^{2n}$ (which is the reason for calling this proof only a sketch), we see that $M_{(S', W', \phi')}$ is contactomorphic to the boundary connected sum $M_{(S, W, \phi)} \# M_{(S_0,I_0, \tau)}$, which in turn is contactomorphic to $M_{(S, W, \phi)}$.
\end{proof}

The significance of contact partial open book lies in the following relative analog of the Giroux correspondence \cite{Gir02}, which we state as a conjecture:

\begin{conj}
A compact contact manifold with convex boundary is contactomorphic to $M_{(S, W, \phi)}$ for some contact partial open book $(S, W, \phi)$.
\end{conj}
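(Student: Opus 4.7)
The plan is to reduce the problem to the closed case via a doubling construction and then invoke the Giroux correspondence (\autoref{thm:Giroux_correspondence}). Let $(M,\xi)$ be a compact contact manifold with convex boundary $\Sigma=\p M$, equipped with an outward-pointing contact vector field $v$, dividing set $\Gamma$, and decomposition $\Sigma\setminus\Gamma = R_+\cup R_-$. Since a collar of $\Sigma$ in $M$ is $\R$-invariant, I would glue on a mirror copy of $M$ along $\Sigma$ to obtain a closed contact manifold $(DM,\xi_D)$ containing $\Sigma$ as a convex hypersurface that cuts $DM$ into $M$ and a complementary piece $M^\vee$.

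Applying \autoref{thm:Giroux_correspondence} gives a supporting open book $(S,\phi)$ of $DM$. The next step, and the main part of the argument, is to put $(S,\phi)$ into standard position with respect to $\Sigma$: the binding should be disjoint from $\Sigma$, the pages should be transverse to $\Sigma$, and each page $S_\theta$ should intersect $\Sigma$ in a codimension-one submanifold that becomes $\Gamma$ after isotopy. I would achieve this by a sequence of positive stabilizations along suitable Lagrangian disks with Legendrian boundary on pages near $\Sigma$, using the neighborhood size estimate of \autoref{cor:dividing_set_nbhd_size} to push the binding away from $\Gamma$, and \autoref{lemma: Legendrian realization} to arrange the relevant Lagrangians to be Legendrian on a chosen convex representative of $\Sigma$.

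Once $(S,\phi)$ is standardized, the partial open book for $(M,\xi)$ should be read off as follows. Let $S^c_M:=S^c\cap M$, viewed as a codimension-zero Liouville sub-domain of $S^c$ with a codimension-one corner along $\p S^c_M\cap \Sigma$; its ideal compactification plays the role of the page in \autoref{defn:contact_partial_OB}. The subset on which the partial monodromy is nontrivial is $W:= S\cap \phi^{-1}(S^c_M)\subset S^c_M$, which is again a cornered Liouville embedding by construction, and the partial monodromy is $\phi|_W$. One verifies directly that the convex boundary of $M_{(S,W,\phi|_W)}$ reproduces $(\Sigma,\Gamma)$ up to corner rounding, using that the dividing set of $\p M_{(S,W,\phi|_W)}$ is $\p(S\setminus W)$ by construction, while \autoref{lem:positive_stabilization} ensures the stabilizations performed above do not alter the contactomorphism type.

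I expect the main obstacle to be the standardization step. In dimension three, \cite{HKM09} uses Legendrian realization for curves on convex surfaces together with an edge-rounding lemma to place the pages in controlled position. In higher dimensions, Legendrian realization applies only to exact Lagrangian disks with cylindrical ends, and no edge-rounding lemma is available (cf.\ the proof of \autoref{thm:bypass}). Consequently, one must show that a finite sequence of positive stabilizations, performed along Lagrangian disks chosen from a contact handle decomposition of a thickening of $\Sigma$, suffices to move $(S,\phi)$ into a configuration where each page meets $\Sigma$ along the corresponding Legendrian skeleton. A secondary difficulty, which I would expect to handle by a small shrinking of $W$ together with \autoref{cor:isotop_Lagrangian_disk}, is arranging that $\phi|_{\p W\cap \p S}=\op{id}$, as required by \autoref{defn:contact_partial_OB}.
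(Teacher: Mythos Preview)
The paper does not prove this statement: it is explicitly labeled a \emph{conjecture} and no argument is given. So there is nothing to compare your proposal against; you are attempting to settle an open problem the authors chose to leave unresolved.

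Your outline has a concrete gap already at the first step. The ``double'' $DM$ you describe is not, in general, a contact manifold. A neighborhood of $\Sigma$ in $M$ is modeled on $\Sigma\times[0,\epsilon)_t$ with $v=\p_t$ pointing \emph{out} of $M$; to glue a second copy of $M$ you need an identification of boundaries that reverses the transverse coorientation, i.e., sends the outward $v$ to an inward $v$. An orientation-reversing self-diffeomorphism of $\Sigma$ will not carry the contact germ on $\Sigma$ to itself unless it also swaps $R_+$ and $R_-$ compatibly with the Liouville structures, and there is no reason such a symmetry exists for a general convex hypersurface. (The doubling $D(V)=\overline V\cup_{\mathrm{id}}\overline V$ that appears later in the paper is a doubling of a single Liouville manifold along its ideal boundary, not a doubling of a contact manifold with convex boundary; it produces a hypersurface, not an ambient contact manifold.) Without a closed contact manifold to feed into \autoref{thm:Giroux_correspondence}, the rest of the plan does not get off the ground.

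Even if one could manufacture a closed contact manifold containing $\Sigma$ as a separating convex hypersurface, the ``standardization'' you flag is not a minor technicality but the entire content of the conjecture. In dimension three the argument in \cite{HKM09} relies on $C^\infty$-genericity of convexity, Legendrian realization for \emph{curves}, and edge rounding---all of which are unavailable or much weaker in higher dimensions, as the paper itself emphasizes. Your suggestion to reach the standardized position by ``a finite sequence of positive stabilizations along Lagrangian disks chosen from a contact handle decomposition of a thickening of $\Sigma$'' presumes exactly what is in question: that such stabilizations suffice to make the open book compatible with an arbitrary convex hypersurface. No mechanism is proposed for producing those disks or for proving the process terminates. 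As written, the proposal is a restatement of the conjecture rather than a proof strategy.
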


\subsection{From a bypass attachment to a contact partial open book} \label{subsec:bypass_to_OB}

In this subsection we describe how a supporting partial open book decomposition of a contact $(2n+1)$-manifold $(M,\xi,\Gamma)$ with convex boundary is modified under a bypass attachment.

Let $(M, \xi,\Gamma)$ be a compact contact manifold with convex boundary. Suppose there exists a contact partial open book $(S, W, \phi)$ such that $(M,\xi)$ is contactomorphic to $M_{(S, W, \phi)}$. Let us write $\p M \setminus \Gamma = R_+ \cup R_-$. Then $R_+$ is identified with $(S \setminus W) \times \{1\}$, $R_-$ is identified with $(S \setminus \phi(W)) \times \{0\}$, and $\Gamma$ is identified with $\p (S \setminus W)$.

Given bypass attachment data $(\Lambda_+, \Lambda_-; D_+, D_-)$ on $\p M$, we will construct a new contact partial open book $(S^{\flat}, W^{\flat}, \phi^{\flat})$.  {\em To simplify notation, we will not distinguish a Liouville domain, its completion, and the ideal compactification of the completion, as well as Lagrangian submanifolds in each of those, whenever it is clear from the context what we mean.} (For example, we view $\Lambda_- \uplus \Lambda_+$ as a Legendrian on the boundary of the Liouville domain $R^c_+$ instead of the ideal boundary of $R_+$.)

\s\n
{\em Construction of $(S^{\flat}, W^{\flat}, \phi^{\flat})$.}
Let $\epsilon>0$ be small. Let $S^{\flat}$ be the Liouville domain obtained by attaching a Weinstein handle $H$ to $R_+$ along $(\Lambda_- \uplus \Lambda_+)^{2\epsilon}$ and let $W^{\flat} = W \sqcup N_{\epsilon} (D_+) \subset S^{\flat}$ be the disjoint union of $W$ and a standard $\epsilon$-neighborhood $N_{\epsilon} (D_+)$ of the Lagrangian disk $D_+$. Here we may take $N_{\epsilon} (D_+) \subset S \setminus W$ to be a cornered Liouville embedding.

The partial monodromy $\phi^{\flat}: W^{\flat} \to S^{\flat}$ is defined as follows: Let $D$ be the core Lagrangian disk of $H$ and let $\widetilde D \subset S^{\flat}$ be the Hamiltonian isotopic copy of $D$ such that $\p \widetilde D = \Lambda_- \uplus \Lambda_+$. Let $D_-^{\ast} \subset R_+\subset S\times\{1\}$ be the parallel copy of $D_-\subset R_-\subset S\times\{0\}$, obtained by translating in the $[0,1]$-direction of the partial open book; note that $D_- \cap \phi(W) =\emptyset$. 
Viewing $\Lambda_- \uplus \Lambda_+$ as $\xi$-transversely intersecting $\Lambda_-$ at a point, we consider the Legendrian boundary sum $\widetilde D \uplus_b D_-^{\ast}$.  Note that $\p (\widetilde D \uplus_b D^{\ast}_-) = (\Lambda_- \uplus \Lambda_+) \uplus \Lambda_- \cong \Lambda_+ = \p D_+$. We then define $\phi^{\flat}: W^{\flat} \to S^{\flat}$ such that $\phi^{\flat}|_W = \phi$ and $\phi^{\flat} (N_{\epsilon} (D_+)) = N_{\epsilon} (\widetilde D \uplus_b D^{\ast}_-)$ such that $\phi^{\flat} (D_+) = \widetilde D \uplus_b D^\ast_-$. Note that $\phi^{\flat}$ restricts to the identity map on $\p W^{\flat}$ by construction.


\begin{lemma} \label{lemma:bypass_to_OB}
Let $(M, \xi,\Gamma)$ be a compact contact manifold with convex boundary supported by a contact partial open book $(S, W, \phi)$. If $(M^{\flat}, \xi^{\flat},\Gamma^\flat)$ is the contact manifold obtained by attaching a bypass along $(\Lambda_+, \Lambda_-; D_+, D_-)$ on $(M, \xi,\Gamma)$, then $(M^{\flat}, \xi^{\flat},\Gamma^\flat)$ is supported by the contact partial open book $(S^{\flat}, W^{\flat}, \phi^{\flat})$.
\end{lemma}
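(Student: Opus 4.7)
The strategy will be to compare boundary data: the positive and negative regions, dividing set, and gluing contactomorphism of $M_{(S^\flat, W^\flat, \phi^\flat)}$ against the description of $M^\flat$ given in \autoref{thm:bypass_attachment}(2)--(4). Since the bypass attachment modifies only a collar of $\partial M$, and the contact germ on a convex hypersurface is determined by these data up to contactomorphism, this comparison will suffice. I would first check that $(S^\flat, W^\flat, \phi^\flat)$ is a contact partial open book in the sense of \autoref{defn:contact_partial_OB}. The only nontrivial point is that $N_\epsilon(D_+) \subset S^\flat$ is a cornered Liouville embedding, which holds because $D_+$ is a properly embedded Lagrangian disk in $S^\flat$ with Legendrian boundary on $\partial S^\flat$; the map $\phi^\flat|_{N_\epsilon(D_+)}$ is then the unique Weinstein-neighborhood identification sending $D_+$ to $\widetilde{D} \uplus_b D^\ast_-$ and restricting to the identity on $\partial W^\flat \cap \partial S^\flat$.

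Next I would compute $R^\flat_+ = S^\flat \setminus W^\flat = (R_+ \cup H) \setminus N_\epsilon(D_+)$, which, up to Weinstein homotopy, matches $R_+^1$ from \autoref{thm:bypass_attachment}(2). The main technical step is the negative region. From the partial open book, $R^\flat_- = S^\flat \setminus \phi^\flat(W^\flat) = (R_- \cup H) \setminus N_\epsilon(\widetilde{D} \uplus_b D^\ast_-)$. Here I would apply the ``negative'' version of the Legendrian handleslide lemma (\autoref{lemma: trace equals boundary connect sum}) to the handleslide of $\Lambda_-^\epsilon$ up across $H$: this shows that the Lagrangian disk $\widetilde{D} \uplus_b D^\ast_-$ is Hamiltonian isotopic rel boundary to the Lagrangian $D_-'$ obtained by concatenating $D_-^\epsilon$ with the trace of that handleslide. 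Removing a standard Weinstein neighborhood of $D_-'$ then yields exactly $R_-^1$ from \autoref{thm:bypass_attachment}(2), so the negative regions match.

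With the regions identified, the dividing set $\Gamma^\flat = \partial S^\flat \setminus (\partial W^\flat \cup \partial \phi^\flat(W^\flat))$ is obtained from $\Gamma$ by a contact $(-1)$-surgery along $\Lambda_- \uplus \Lambda_+$ (from attaching $H$), a contact $(+1)$-surgery along $\Lambda_+^{-\epsilon}$ (from removing $N_\epsilon(D_+)$), and a contact $(+1)$-surgery along a Legendrian that, after the $(-1)$-surgery, becomes Legendrian isotopic to $\Lambda_-^\epsilon$ via the same handleslide (from removing $N_\epsilon(\widetilde{D} \uplus_b D^\ast_-)$). This matches \autoref{thm:bypass_attachment}(3), and the gluing contactomorphism in the partial open book---the identity on the common binding portion of $\partial S^\flat$---corresponds under these identifications to the handleslide gluing of \autoref{thm:bypass_attachment}(4).

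The main obstacle will be the identification $\widetilde{D} \uplus_b D^\ast_- \simeq D_-'$ inside $S^\flat$: these disks are defined in an a priori asymmetric way relative to the two ``sides'' of the page (one constructed from the core of $H$ together with a translate of $D_-$ in the positive half, the other as a handleslide trace in the negative half). Resolving this requires careful combination of \autoref{lemma: trace equals boundary connect sum} and \autoref{cor:isotop_Lagrangian_disk} (to upgrade Legendrian isotopy of boundary to Hamiltonian isotopy of Lagrangian fillings within $S^\flat$), together with \autoref{lem: Legendrian_wheel} to reorder boundary sums such as $\widetilde{D} \uplus_b D^\ast_-$ versus $D^\ast_- \uplus_b \widetilde{D}$ when needed.
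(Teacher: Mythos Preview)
Your proposal has a genuine logical gap. You argue that it suffices to compare the convex-boundary data $(R^\flat_+, R^\flat_-, \Gamma^\flat, \psi)$ of $M_{(S^\flat,W^\flat,\phi^\flat)}$ with that of $M^\flat$ as described by \autoref{thm:bypass_attachment}(2)--(4), because ``the contact germ on a convex hypersurface is determined by these data up to contactomorphism.'' But this only pins down the contact germ on $\partial M^\flat$, not the contact manifold $M^\flat$ itself. Two compact contact manifolds with contactomorphic convex boundaries need not be contactomorphic; there are in general many non-isotopic contact structures on $\Sigma\times[0,1]$ with the same convex boundary data on both ends. So even if all of your boundary computations went through, they would not establish that $M^\flat \cong M_{(S^\flat,W^\flat,\phi^\flat)}$.

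The paper's proof avoids this by working handle-by-handle rather than comparing boundaries. Both $M^\flat$ and $M_{(S^\flat,W^\flat,\phi^\flat)}$ are built from $M=M_{(S,W,\phi)}$ by adjoining something, and the argument identifies \emph{what is adjoined} directly. The contact $n$-handle along $(\Lambda_-\uplus\Lambda_+)^{2\epsilon}$ is exactly $H\times[0,1]$ added to the open book, enlarging the page from $S$ to $S^\flat$ while leaving $(W,\phi)$ unchanged. The contact $(n+1)$-handle is then identified via its core $\Theta$-disk: this disk is foliated by Legendrians $D_\tau$ with $D_0=D_+$ and $D_1=\widetilde D'\uplus_b D_-$ (using \autoref{lemma: trace equals boundary connect sum} for $D_-$), which is precisely how a partial open book grows by adding $N_\epsilon(D_+)$ to $W$ and declaring $\phi^\flat(D_+)=\widetilde D\uplus_b D_-^\ast$. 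This gives an honest contactomorphism of the attached pieces, hence of the full manifolds. Your boundary-matching strategy could be salvaged only by supplying this interior identification, at which point it becomes the paper's argument.
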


\begin{proof}
The proof is a direct translation from the contact handle description of the bypass attachment to the partial open book description, and is completely analogous to the $3$-dimensional case studied by Kazez, Mati\'c, and the first author in \cite{HKM09}.

Recall that a bypass attachment to $\p M$ along $(\Lambda_+, \Lambda_-; D_+, D_-)$ is a smoothly canceling pair of contact $n$- and $(n+1)$-handle attachments. The contact $n$-handle is attached along $(\Lambda_- \uplus \Lambda_+)^{2\epsilon} \subset \Gamma$, where $\Gamma \subset \p M$ is the dividing set and $\epsilon>0$ is small. This produces a new contact manifold $(M', \xi',\Gamma')$ which corresponds to the contact partial open book $(S^{\flat}, W', \phi')$, where $S^{\flat}$ is obtained by attaching a Weinstein handle to $S$ along $(\Lambda_- \uplus \Lambda_+)^{2\epsilon}$, $W'=W$, and $\phi'=\phi$.
	
The contact $(n+1)$-handle is attached to $\p M'$ along the Legendrian sphere $(\widetilde{D}' \uplus_b D_-) \cup D_+$ which we describe now. Consider the usual decomposition of the convex boundary $\p M' \setminus \Gamma' = R'_+ \cup R'_-$. The region $R'_-$ is obtained by attaching a Weinstein handle $H'$ to $R_-$ along $(\Lambda_- \uplus \Lambda_+)^{2\epsilon}$. Let $D'$ be the core Lagrangian disk of $H'$. It is clear that $D'$ can be identified with the core disk $D$ of $H$ by a translation in the $[0,1]$-direction of the partial open book. Let $\widetilde{D}' \subset R'_-$ be a Hamiltonian isotopic copy of $D'$ such that $\p \widetilde{D}' = \Lambda_- \uplus \Lambda_+$. Now consider $\widetilde{D}' \uplus_b D_-\subset R'_-$ with boundary $(\Lambda_- \uplus \Lambda_+) \uplus \Lambda_-\cong \Lambda_+$. By Lemma~\ref{lemma: trace equals boundary connect sum} (applied to $D_-$ instead of $D_+$), the contact $(n+1)$-handle is attached along the Legendrian sphere $(\widetilde{D}' \uplus_b D_-) \cup D_+$. Moreover the core $\Theta$-disk of the contact $(n+1)$-handle is foliated by a family of Legendrian disks $D_\tau, \tau \in [0,1]$, such that $D_0 = D_+$ and $D_1 = \widetilde D' \uplus_b D_-$.
	
To wrap up the proof, note that $\widetilde D' \uplus_b D_-$ is naturally identified with $\widetilde D \uplus_b D_-^{\ast}$ by a translation in the $[0,1]$-direction of the partial open book $(S^{\flat}, W', \phi')$. Hence the contact $(n+1)$-handle attachment produces $(M^{\flat}, \xi^{\flat},\Gamma^\flat)$, whose partial open book is given by $(S^{\flat}, W^{\flat}, \phi^{\flat})$ with $W^{\flat} = W \sqcup N_{\epsilon} (D_+)$ and $\phi^{\flat} (D_+) = \widetilde D \uplus_b D_-^{\ast}$ as desired.
\end{proof}

As an application of \autoref{lemma:bypass_to_OB}, we show that the trivial bypass attachment (cf.\ \autoref{defn:trivial_bypass}) is indeed trivial.

\begin{prop} \label{prop:trivial_bypass}
Let $(M, \xi,\Gamma)$ be a contact manifold with convex boundary and $(\Lambda_+, \Lambda_-; D_+, D_-)$ be a trivial bypass attachment data on $\p M$. The trivial bypass attachment to $M$ along $(\Lambda_+, \Lambda_-; D_+, D_-)$ yields a new contact structure $(M,\xi',\Gamma')$ which is contactomorphic to $(M,\xi,\Gamma)$. In other words, a trivial bypass attachment does not change the isotopy class of the contact structure.
\end{prop}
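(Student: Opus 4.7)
The plan is to translate the trivial bypass attachment into a modification of a contact partial open book via \autoref{lemma:bypass_to_OB}, recognize this modification as a positive partial stabilization in the sense of \autoref{defn:stabilization}, and conclude by \autoref{lem:positive_stabilization}, which states that positive partial stabilizations preserve the contactomorphism type.

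Concretely, I would work in a one-sided collar of $\p M$ equipped with a contact partial open book $(S,W,\phi)$, which exists because the collar is $I$-invariant. By \autoref{lemma:bypass_to_OB}, attaching the trivial bypass along $(\Lambda_+,\Lambda_-;D_+,D_-)$ transforms this into $(S^\flat,W^\flat,\phi^\flat)$, where $S^\flat$ is obtained from $S$ by attaching a Weinstein handle $H$ along $(\Lambda_-\uplus\Lambda_+)^{2\epsilon}$, $W^\flat=W\sqcup N_\epsilon(D_+)$, $\phi^\flat|_W=\phi$, and $\phi^\flat(D_+)=\widetilde D\uplus_b D_-^*$.

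I would then focus on case (T$_-$); case (T$_+$) is entirely dual. By \autoref{lem:plus_unknot}, since $\Lambda_-$ sits above $\Lambda_+$, the Legendrian sphere $\Lambda_-\uplus\Lambda_+$ is Legendrian isotopic to $\Lambda_+$, so $(\Lambda_-\uplus\Lambda_+)^{2\epsilon}$ bounds a standard Lagrangian disk $L\subset S^c\setminus W$ Hamiltonian isotopic to $D_+$. Letting $D$ denote the core of $H$, the positive partial stabilization of $(S,W,\phi)$ along $L$ produces $(\widetilde S,\widetilde W,\widetilde\phi)$ with $\widetilde S=S^\flat$, $\widetilde W=W\cup H$ and $\widetilde\phi=\tau_{L\cup D}\circ(\phi\cup\op{id}_H)$. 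Applying the Weinstein neighborhood theorem to the Hamiltonian isotopic Lagrangian disks $D_+$ and $L$ furnishes a Hamiltonian isotopy of $S^\flat$ carrying $N_\epsilon(D_+)$ onto $H$, so the underlying cornered embeddings $W^\flat$ and $\widetilde W$ are identified.

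What remains is to check that under this identification the two partial monodromies coincide. Both restrict to $\phi$ on $W$, so it suffices to match $\tau_{L\cup D}(D)$ with $\widetilde D\uplus_b D_-^*$ as Lagrangian disks relative to their common boundary $\Lambda_+\cong(\Lambda_-\uplus\Lambda_+)\uplus\Lambda_-$, where the latter equivalence comes from \autoref{lem:conjugation}. The sphere $L\cup D$ is standard and $\tau_{L\cup D}$ acts on its hemisphere $D$ by swapping it with the opposite hemisphere $L$; on the other side, \autoref{lemma: trace equals boundary connect sum} (with the roles of $D_+$ and $D_-$ interchanged) represents $\widetilde D\uplus_b D_-^*$ as the image of $D_-^*$ under the handleslide trace across $H$, which a local calculation in the standard Weinstein neighborhood of $L\cup D\cong S^n$ identifies with $L$ up to Hamiltonian isotopy rel boundary. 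The hard part is this last symplectic computation: although it is local and takes place in a standard model, matching the Dehn twist formula to the boundary sum description of $\widetilde D\uplus_b D_-^*$ is the heart of the argument.
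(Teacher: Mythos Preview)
Your overall strategy matches the paper's: translate via \autoref{lemma:bypass_to_OB}, recognize a positive partial stabilization, and invoke \autoref{lem:positive_stabilization}. However, the identification step contains a genuine gap.

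The problem is the sentence ``Applying the Weinstein neighborhood theorem to the Hamiltonian isotopic Lagrangian disks $D_+$ and $L$ furnishes a Hamiltonian isotopy of $S^\flat$ carrying $N_\epsilon(D_+)$ onto $H$.'' First, $N_\epsilon(L)$ is not $H$: the disk $L$ lives in the old page $S$, while $H$ is the Weinstein handle attached along $\partial L$ on the \emph{other} side. Second, and more seriously, once $H$ has been attached, the Legendrian $\partial L=(\Lambda_-\uplus\Lambda_+)^{2\epsilon}$ no longer lies on $\partial S^\flat$ --- it is in the interior, where $H$ meets $S$. By contrast $\partial D_+=\Lambda_+$ remains on $\partial S^\flat$. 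So $D_+$ and $L$ are \emph{not} Hamiltonian isotopic as Lagrangian disks with Legendrian boundary in $S^\flat$, and the Weinstein neighborhood argument does not apply. (A related minor point: in case (T$_-$) only $\Lambda_-$ is assumed to be the standard unknot, so calling $L$ a ``standard'' Lagrangian disk is a misnomer.)

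The paper circumvents this by working in an explicit \emph{local} partial open book whose page is $S_1=N_{4\epsilon}(D_+)\cong T^\ast D^n$, so that after the handle attachment $S_1'\cong T^\ast S^n$. In this model one can compute directly that $\phi^\flat(D_+)=\widetilde D\uplus_b D_-^\ast$ equals $\tau_L(D_+)$, where $L=D_+^{2\epsilon}\cup K$ is the zero section; the identification with the model $(S_0,I_0,\tau)$ from the proof of \autoref{lem:positive_stabilization} is then immediate from the symmetry of $T^\ast S^n$. What you flag as ``the hard part'' is exactly this computation, but it becomes tractable only after localizing --- in a general $S^\flat$ there is no reason for the required symplectomorphism to exist.
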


\begin{proof}
Suppose the trivial bypass attachment data $(\Lambda_+, \Lambda_-; D_+, D_-)$ is given such that $\Lambda_-$ is the standard Legendrian unknot, $D_-$ is the standard Lagrangian disk bounded by $\Lambda_-$, and $\Lambda_-$ is above $\Lambda_+$. The other case can be handled similarly. Let $\p M \setminus \Gamma = R_+ \cup R_-$ be the usual decomposition of the convex boundary.
	
It is not necessary to have a global partial open book decomposition of $(M,\xi)$. In fact, consider a $(4\epsilon)$-neighborhood $N_{4\epsilon} (D_+) \subset R_+$ of $D_+$ and a $(4\epsilon)$-neighborhood $N_{4\epsilon} (\Lambda_+) \subset \p M$ of $\Lambda_+$. Since $\Lambda_-$ is the standard Legendrian unknot bounding a standard Lagrangian disk $D_-$, we may assume that $D_- \subset N_{4\epsilon} (\Lambda_+)$. Let us consider the following local partial open book
\begin{gather*}
\left( \left( N_{4\epsilon} (\Lambda_+) \cap R_+ \right) \times [0,1] \right) \cup \left( N_{4\epsilon} (D_+) \times [\tfrac{1}{2},1] \right) / \sim,\\ (x,t) \sim (x,t') \text{ for any } x \in N_{4\epsilon} (\Lambda_+) \cap \Gamma, t,t' \in [0,1],
\end{gather*}
with page $S_0 = N_{4\epsilon} (\Lambda_+) \cap R_-, S_1 = N_{4\epsilon} (D_+)$, and trivial partial monodromy in $(M,\xi)$; see \autoref{fig:local_OB}.

		\begin{figure}[ht]
			\begin{overpic}[scale=.4]{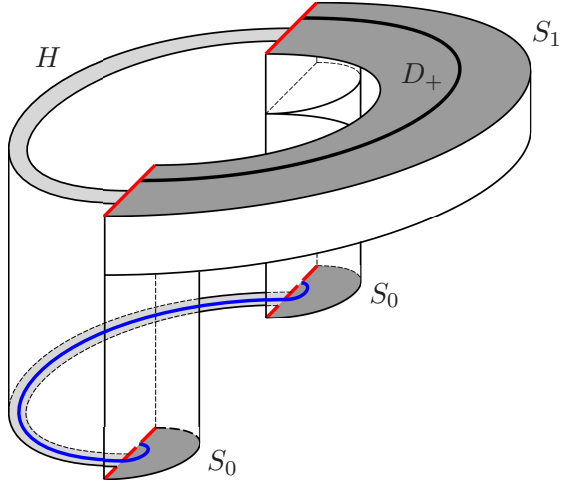}
				\put(100,84){$S_1$}
				\put(38,2){$S_0$}
				\put(69,34){$S_0$}
				\put(5,79){$H$}
				\put(75,76){$D_+$}
			\end{overpic}
			\caption{The local partial open book corresponding to the trivial bypass attachment.}
			\label{fig:local_OB}
		\end{figure}
	
By \autoref{lemma:bypass_to_OB}, the trivial bypass attachment along $(\Lambda_+, \Lambda_-; D_+, D_-)$ yields a new (local) partial open book $(S', W', \phi')$ defined as follows:
\begin{itemize}
\item For $i=0,1$, $S'_i$ is obtained from $S_i$ by attaching a Weinstein handle $H$ along $(\Lambda_- \uplus \Lambda_+)^{2\epsilon} \cong \Lambda_+^{2\epsilon}$ such that the attaching region is a standard $\epsilon$-neighborhood of $\Lambda_+^{2\epsilon}$.
\item $W'$ is a standard $\epsilon$-neighborhood of $D_+ \subset S \subset S'_1$.
\item Let $D_+^{2\epsilon} \subset S_1$ be a Hamiltonian isotopic copy of $D_+$ such that $\p D_+^{2\epsilon} = \Lambda_+^{2\epsilon}$. Then $L = D_+^{2\epsilon} \cup K$ is a Lagrangian sphere in $S'_1$, where $K$ is the core Lagrangian disk of the Weinstein handle $H$. Consider the Lagrangian disk $\tau_L (D_+)$ where $\tau_L$ denotes the positive Dehn twist along $L$. This is illustrated by the blue arc in \autoref{fig:local_OB}. Up to a Hamiltonian isotopy, we can assume that a standard $\epsilon$-neighborhood $N_{\epsilon} (\tau_L (D_+))$ is contained in $S'_0$. Define $\phi': W' \to N_{\epsilon} (\tau_L (D_+)) \subset S'_0$ to be a symplectomorphism onto its image such that $\phi' (D_+) = \tau_L (D_+)$.
\end{itemize}

By \autoref{lem:positive_stabilization}, the new contact manifold $(M, \xi',\Gamma')$ can be obtained from $(M, \xi,\Gamma)$ by boundary connected sum with the unit ball in $(\R^{2n+1}, \xi_{\std})$. Hence $\xi'$ is contactomorphic to $\xi$ as claimed.
\end{proof}

\begin{cor} \label{cor:trivial_bypass_exists}
Given a contact manifold $(M,\xi)$ and a convex hypersurface $\Sigma \subset M$, a trivial bypass exists on both sides of $\Sigma$.
\end{cor}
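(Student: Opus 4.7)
The plan is to construct a trivial bypass explicitly as an embedded object in a one-sided neighborhood of $\Sigma$, using \autoref{cor:dividing_set_nbhd_size} to guarantee a sufficiently large ambient neighborhood and the local bypass model from \autoref{subsec:orange} and \autoref{subsec:bypass} to build the bypass near the navel.

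First, by \autoref{cor:dividing_set_nbhd_size}, the dividing set $\Gamma$ admits a standard contact neighborhood $(\Gamma \times D(R), \ker(\beta - y\,dx))$ in $M$ of arbitrarily large radius $R$, which may be placed on a chosen side of $\Sigma$. Fix a point $p \in \Gamma$ together with a Darboux chart $U \subset \Gamma$ around $p$ with contact form $dz + \sum_{i=1}^{n-1} r_i^2\, d\theta_i$. Inside $U \times D(R)$ I would exhibit trivial bypass attachment data of type (T$_+$) as follows: let $\Lambda_+ \subset U$ be a small standard Legendrian unknot bounding a standard Lagrangian disk $D_+ \subset R_+$, realized in a Weinstein ball in $R_+$ adjacent to $\Gamma$ via the cylindrical end structure from \autoref{lem:ideal_compactification}; let $\Lambda_-$ be a parallel copy of $\Lambda_+$ displaced slightly in the positive $z$-direction (the Reeb direction) so that $\Lambda_-$ lies above $\Lambda_+$ in the sense of \autoref{subsec:Legendrian_sum_property}, bounding the corresponding standard Lagrangian disk $D_- \subset R_-$.

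Second, using the local contact model of a bypass and the explicit family of Legendrian disks $D_\tau$ given by \eqref{eqn: Dt}, I would embed the bypass $\Delta$ in a one-sided neighborhood of $\Sigma$ with navel on $\Gamma$ near $p$, sweeping out the sections $D_\tau$ for $\tau \in [0, \tfrac{1}{2}]$ that interpolate between $D_0 = D_+$ and $D_{1/2} = D_-$. The explicit coordinate formulas show that the entire bypass lies in a bounded region near $p$, so choosing $R$ sufficiently large ensures that $\Delta$ is contained in $U \times D(R) \subset M$ on the chosen side of $\Sigma$.

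For the trivial bypass on the opposite side of $\Sigma$, I would repeat the construction after replacing the transverse contact vector field $v$ by $-v$; this swaps the roles of $R_+$ and $R_-$ and reverses the ``above/below'' relation, producing trivial data of type (T$_-$) from \autoref{defn:trivial_bypass} on the opposite side. The main technical point is matching the endpoints of the local family $D_\tau$ to the chosen standard Lagrangian disks $D_\pm$; this is immediate from the explicit coordinate formulas because the standard Legendrian unknots $\Lambda_\pm$ and their standard bounding disks can be arranged to coincide with $\Lambda_0$, $\Lambda_{1/2}$ and their standard fillings in the local bypass model near the navel.
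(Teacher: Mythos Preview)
Your approach is genuinely different from the paper's and contains a real gap. The paper's one-line argument invokes \autoref{prop:trivial_bypass}: the trivial bypass attachment produces a contact structure on $\Sigma\times[0,1]$ contactomorphic to the $I$-invariant one, so the bypass attachment---and hence the bypass $\Delta$ sitting inside it as the union of the handle cores, as in the proof of \autoref{thm:bypass}---embeds in an $I$-invariant neighborhood of $\Sigma$ on either side. This route leans entirely on the partial open book machinery of \autoref{sec:partial_OB}; no explicit construction of the half-orange in $M$ is attempted.

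Your direct construction, by contrast, only builds the bypass near the navel. The family $D_\tau$ in \eqref{eqn: Dt} lives in a Darboux chart about the $\xi|_\Gamma$-transverse intersection point, and the $\Lambda_{\phi(\tau)}$ appearing there are linear isotropic half-spaces in $\R^{2n-1}$, not the closed Legendrian unknots $\Lambda_\pm$. The sections $E_\tau$ of an actual bypass are \emph{closed} $n$-disks, and the peel $\cup_\tau \partial E_\tau$ must be an embedded Legendrian $n$-disk away from the navel. You give no mechanism for closing off the local pieces $D_\tau$ into compact Legendrian sections with Legendrian peel while remaining in a one-sided neighborhood of $\Sigma$; this is precisely the nontrivial global step, and it is not encoded in \eqref{eqn: Dt} or anywhere else in \autoref{subsec:bypass}. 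There is also a smaller slip: taking $\Lambda_-$ to be a Reeb pushoff of $\Lambda_+$ produces disjoint Legendrians, whereas bypass data requires $\Lambda_+$ and $\Lambda_-$ to meet $\xi|_\Gamma$-transversely at one point (compare \autoref{fig:trivial_bypass}).
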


\begin{proof}
By \autoref{prop:trivial_bypass}, the trivial bypass exists in an $I$-invariant neighborhood of $\Sigma$.
\end{proof}


\section{Bypass rotation}

The notion of a bypass would be rather useless if there were no effective ways of finding them. In dimension $3$, this problem was solved in two different ways:
\be
\item Using the $C^\infty$-genericity of convex surfaces and the Giroux flexibility theorem for characteristic foliations, one can easily find bypasses, say, in a convex disk with Legendrian boundary. This is the strategy exploited in depth in \cite{Hon00} and subsequent works.
\item If one knows a priori the existence of one bypass, then there is an operation known as a \emph{bypass rotation}, defined in \cite{HKM05} and exploited extensively in \cite{HKM07}, which produces many new bypasses from the given one.
\ee
In dimensions greater than $3$, (1) fails essentially due to the nongenericity of convex hypersurfaces in higher dimensions. (2), however, can be generalized to higher dimensions with care, as we will explain in this section.


Let $(V, d\lambda, X)$ be a Liouville domain. A \emph{Liouville subdomain} $W \subset V$ is a compact subdomain such that $X$ is positively transverse to $\p W$; in particular $\lambda|_{\p W}$ is a contact $1$-form. The following key definition, roughly speaking, gives a partial ordering on (based) Legendrian submanifolds in $\p V$.

\begin{definition}[To the left/right] \label{defn:Legendrian_to_the_left}
Let $(V, d\lambda, X)$ be a Liouville domain and let $\Lambda, \Lambda' \subset (\p V, \lambda|_{\p V})$ be Legendrian submanifolds. Suppose there exists a point $p \in \Lambda \cap \Lambda'$ such that $\Lambda = \Lambda'$ on an open neighborhood $Op(p)\subset \bdry V$ of $p$. We say that {\em $\Lambda'$ is to the left of $\Lambda$ relative to $p$} if there exists a Liouville subdomain $W \subset V$ such that the following hold:
\be
\item There exist Lagrangian cylinders $\Lambda \times [0,1], \Lambda'\times[0,1]\subset V \setminus \mathring{W}$, where $\mathring{W}$ denotes the interior of $W$, such that
\be
\item $\Lambda = \Lambda \times \{0\} , \Lambda'=\Lambda'\times\{0\}\subset \p V$,
\item $\Lambda_1 := \Lambda \times \{1\}, \Lambda_1':=\Lambda'\times\{1\} \subset \p W$ are Legendrian, and
\item The restricted Liouville vector fields $X|_{\Lambda \times [0,1]_s}$ and $X|_{\Lambda' \times [0,1]_s}$ both coincide with $-\p_s$.\footnote{To explain the unusual choice of the direction of the $s$-coordinate, we note that in this section, bypasses are attached from the inside of a contact manifold with convex boundary, as opposed to the constructions in \autoref{subsec:bypass}, where the bypass is attached from the outside of a convex hypersurface.}
\ee
In particular, $\Lambda_1 = \Lambda'_1$ on an open neighborhood $Op(p_1)\subset \bdry W$ of the unique intersection point $p_1$ of the trajectory of $X$ passing through $p$ and $\p W$.
			
\item There exists a Legendrian isotopy $\Xi_\tau \subset (\p W, \lambda|_{\p W}), \tau \in [0,1]$, such that
\be
\item $\Xi_0 = \Lambda_1, \Xi_1 = \Lambda'_1$,
\item $\Xi_\tau \cap Op(p_1) \equiv \Lambda_1 \cap Op(p_1)$ for all $t$, and
\item $\p_\tau (\Xi_\tau (x)) > 0$ for any $x \in \Xi_\tau \setminus \overline{Op(p_1)}$.
\ee
\ee
	If $\Lambda'$ is to the left of $\Lambda$, then we also say $\Lambda$ is to the \emph{right} of $\Lambda'$.
\end{definition}

\begin{remark}
The notion of left/right is analogous to that of left/right in \cite{HKM07}. It is a subtle but important point in \autoref{defn:Legendrian_to_the_left} that the Legendrian isotopy defining the ``left rotation'' fixes not just one point $p$ but an open neighborhood of it. This, of course, was not a problem in $3$-dimensional contact topology because in that case $\Lambda$ is a $0$-dimensional sphere.
\end{remark}

\begin{definition}[Anchoring]
Let $D \subset (V, d\lambda, X)$ be a Lagrangian disk with a cylindrical end on $\Lambda = \p D \subset \p V$. If $W \subset V$ is a Liouville subdomain, then we say that $D$ is \emph{anchored in $W$} if $D \cap (V \setminus \mathring{W}) = \Lambda \times [0,1]$ with $\Lambda = \Lambda \times \{0\}$ and $X|_{\Lambda \times [0,1]_s} = c\p_s$ for some $c>0$. We write $D_W := D \cap W$.
\end{definition}

Suppose $\Lambda'$ is to the left (or right) of $\Lambda$ with Legendrian isotopy $\Xi_s \subset \p W$ as in \autoref{defn:Legendrian_to_the_left}. Then it follows from \autoref{cor:isotop_Lagrangian_disk} that there is a Lagrangian disk $D'_W \subset W$ with $\p D'_W = \Lambda'_1$ (unique up to exact Lagrangian isotopy rel boundary), which approximates the trace of $\Xi_s$. Hence there exists a Lagrangian disk $D' = D'_W \cup (\Lambda' \times [0,1]) \subset V$ with $\p D' = \Lambda'$.

\begin{lemma}[Bypass rotation] \label{lem:bypass_sliding}
Let $\Sigma \subset (M,\xi)$ be a convex hypersurface. Suppose a bypass $\Delta$ exists on one side of $\Sigma$ along $(\Lambda_+, \Lambda_-; D_+, D_-)$. If $\Lambda'_+$ is to the left of $\Lambda_+$ relative to $p\in \Lambda_+ \cap \Lambda_-$, then a bypass $\Delta'$ exists on the same side of $\Sigma$ along $(\Lambda'_+, \Lambda_-; D'_+, D_-)$, where $D'_+$ is the Lagrangian disk constructed above (up to completion). Similarly, if $\Lambda'_-$ is to the right of $\Lambda_-$, then a bypass exists along $(\Lambda_+, \Lambda'_-; D_+, D'_-)$.
\end{lemma}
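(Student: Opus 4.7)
The plan is to construct the bypass $\Delta'$ explicitly inside a neighborhood of the original bypass $\Delta$, using the Lagrangian cobordism supplied by the left-rotation Legendrian isotopy $\Xi_\tau$.

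First, I produce the Lagrangian disk $D'_+$ advertised in the statement. Applying \autoref{cor:isotop_Lagrangian_disk} to the Legendrian isotopy $\Xi_\tau \subset \partial W$ (from $\Lambda_1$ to $\Lambda'_1$) yields a Lagrangian disk $D'_W \subset W$ with $\partial D'_W = \Lambda'_1$, unique up to compactly supported Hamiltonian isotopy in $W$. Gluing with the cylindrical end over $\Lambda' \times [0,1]$ (where $X = -\p_s$) gives $D'_+ := D'_W \cup (\Lambda' \times [0,1])$, a Lagrangian disk in $R_+$ with $\partial D'_+ = \Lambda'_+$ and anchored in $W$.

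Next, I construct $\Delta'$ by modifying the singular Legendrian foliation of $\Delta$. Recall that $\Delta$ consists of a family of Legendrian disks $\{E_\tau\}_{\tau \in [0,1/2]}$ sharing a common navel $p$, with $E_0 = \overline{D}_+$ and $E_{1/2} = \overline{D}_-$. Since $\Lambda_+$ and $\Lambda'_+$ agree on an open neighborhood $Op(p)$, the navel of $\Delta'$ is taken at the same point $p$ and the Legendrian foliation of $\Delta'$ coincides with that of $\Delta$ on a neighborhood of $p$. Away from the navel, the new leaves $E'_\tau$ are obtained by ``dragging'' the attaching end of $E_\tau$ along the Lagrangian cobordism in $W$ cut out by the trace of $\Xi_\tau$, and then Legendrian lifting via \autoref{lemma: Legendrian realization} to an $I$-invariant collar of $\Sigma$. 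The interpolation in $\tau$ is performed by a cutoff argument similar in spirit to the proof of \autoref{prop:isotopy_to_cylinder}: for $\tau$ near $0$ the full cobordism is applied, while for $\tau$ near $1/2$ the deformation vanishes, so that $E'_0 = \overline{D}'_+$ and $E'_{1/2} = \overline{D}_- = E_{1/2}$.

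The main obstacle is to verify that the family $\{E'_\tau\}$ assembles into a valid bypass, i.e., a singular Legendrian foliation contactomorphic to the half overtwisted orange of \autoref{defn:orange}. One must check that the new leaves vary smoothly in $\tau$, are pairwise disjoint off the navel, and exhibit the required singular contact germ at $p$. The uniqueness of the contact germ on a $\Theta$-disk foliation (\cite[Lemma 3.8]{Hua15}) then identifies the resulting Legendrian foliation with the standard model. Contact parallel transport (\autoref{lemma: contact parallel transport}) and the flexibility lemma (\autoref{lemma: flexibility}) are employed to arrange the required smooth deformations, both near the navel and along the cylindrical ends where the Lagrangian cobordism is grafted in. The symmetric case, when $\Lambda'_-$ is to the right of $\Lambda_-$, is handled by an analogous argument on the $D_-$ side.
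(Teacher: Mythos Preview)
Your proposal has a genuine geometric gap. The intermediate leaves $E_\tau$ for $\tau\in(0,\tfrac12)$ of the bypass $\Delta$ do not lie in $\Sigma$; they meet $\Sigma$ only at the navel $p$. So ``dragging the attaching end of $E_\tau$ along the Lagrangian cobordism in $W\subset R_+\subset\Sigma$'' is not a well-defined operation on these leaves, and Legendrian realization (\autoref{lemma: Legendrian realization}), which lifts Lagrangians in $R_\pm$ to Legendrians in $\Sigma$, does not apply to them. You would need to specify a deformation of these leaves inside the ambient contact manifold, not inside $\Sigma$.

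More seriously, you never isolate where the ``to the left'' hypothesis is used. This is the heart of the argument. In the paper's proof (done for the symmetric $\Lambda'_-$ case) one first enlarges the one-sided neighborhood by an invariant slab $\overline{R}_-\times[\tfrac12,K]_t$, then truncates $\Delta$ at $\tau=\tfrac12-\delta$ and \emph{extends} it by new leaves $D^t_-$ for $t\in[\tfrac12-\delta,K]$. These leaves are bounded by a Legendrian cylinder $T$ built from an isotropic isotopy $\Xi_r$ in $\overline{R}_-$ running from $\Lambda_-$ down into $\partial W$, across via the positive isotopy, and back out to $\Lambda'_-$. The Legendrian condition forces the $s$-coordinate of $T$ to equal (up to sign) $\lambda_-(\partial_r\Xi_r)$; the positivity in \autoref{defn:Legendrian_to_the_left}(2c) is exactly what guarantees, after a small perturbation, that $\lambda_-(\partial_r\Xi_r(x))<0$ for $x\neq p$, so that $T$ stays strictly inside the one-sided neighborhood and touches $\Gamma\times\{0\}$ only at the navel. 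Without this sign control the extended object would escape through the dividing set and fail to be a bypass. Your sketch invokes disjointness of the new leaves as something ``one must check'' but gives no mechanism; the mechanism is precisely this inequality, and it is the reason the direction of rotation matters.
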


\begin{remark}
Even in the special case where $D_+$ is anchored in $W \subset R_+$ with $R_+ \setminus W$ symplectomorphic to the (truncated) symplectization of $\Gamma$, 
the bypass rotation may still be nontrivial because the trace of the Legendrian isotopy $\Xi_s\subset \Gamma$ may intersect $\Lambda_-$.
\end{remark}


\begin{proof}[Proof of \autoref{lem:bypass_sliding}]
We decide to treat the case when $\Lambda'_-$ is to the right of $\Lambda_-$, for a reason that will be explained momentarily; the other case is proved in the same way. Let $W \subset R_-$ be the Liouville subdomain in which $D_-$ is anchored.

We continue to use the local model for the contact germ on $\Sigma$ as defined in \autoref{subsec:bypass}, with a twist. Namely, we assume without loss of generality that the contact manifold $M$ is (locally) identified with the half-open rectangular region depicted in \autoref{fig:convex_surface_model}. Moreover, the convex hypersurface $\Sigma = \p M$ is co-oriented by an inward-pointing contact vector field. In particular $R_+$ is identified with $\{s=0\}$ and $R_-$ is identified with $\{ s=\tfrac{1}{2} \}$. Roughly speaking, we will construct the new bypass $\Delta'$ in three steps. Firstly, we extend $M$ by attaching to it from the above (in the sense of \autoref{fig:convex_surface_model}) a large invariant neighborhood of $R_-$. The resulting contact manifold is of course canonically contactomorphic to the original $(M,\xi)$. Secondly, we use the assumption that $\Lambda'_-$ is to the right of $\Lambda_-$ to construct an isotopy of isotropic spheres in $R_-$. Finally the new bypass $\Delta'$ is constructed by extending $\Delta$ to the invariant neighborhood of $R_-$ from the first step using the isotropic isotopy from the second step. Here are the details.

\s\n
\textsc{Step 1.} \emph{Enlarging $(M,\xi)$.}

\s
Recall the standard model of the contact structure in a neighborhood of $\Sigma$ given by $Op(\Sigma)$ from \autoref{subsec:bypass}. Note that $Op(\Sigma)$ is obtained from an invariant contact structure on $\widetilde{Op(\Sigma)}$ by collapsing the dividing set. To enlarge $(M,\xi)$ as explained before, we attach to $\widetilde{Op(\Sigma)}$ an invariant layer $\overline{R}_- \times [\tfrac{1}{2},K]_t$ for $K \gg 0$ along $\overline{R}_- \times \{\tfrac{1}{2}\}$ and specify the contact form in coordinates as follows:
		\begin{itemize}
			\item On $R_- \times [\tfrac{1}{2},K]_t$, $\xi =\ker(dt+\lambda_-)$.
			\item On $\Gamma \times [0,\epsilon]_s \times [\tfrac{1}{2},K]_t$, $\xi = \ker (sdt + \lambda)$, where $\Gamma \times \{0\}$ is identified with $\p \overline{R}_-$ and $\lambda$ is the contact form on $\Gamma$.
			\item On $\p W \times [1-\epsilon,1+\epsilon]_{s'} \times [\tfrac{1}{2}, K]_t$, $\xi = \ker (s'dt + \lambda')$, where $\p W$ is identified with $\p W \times \{1\}$ and $\lambda'$ is the contact form on $\p W$.
			\item In a small tubular neighborhood of the Lagrangian cylinder $\Lambda_- \times [0,1] \times \{\tfrac{1}{2}\} \subset R_- \times \{\tfrac{1}{2}\}$, we have the unified contact form $\xi = \ker (sdt + \lambda)$. As the notation suggests, it implies in particular that
			\be
				\item the coordinates $s, s'$ can be extended to just one coordinate $s \in [0,1+\epsilon]$ on $Op(\Lambda_-) \times [1-\epsilon,1] \times [\tfrac{1}{2},K]$, and
				
				\item the contact forms $\lambda, \lambda'$ can be identified with each other on $Op(\Lambda_-)$, which, if one wishes, can be identified with the standard contact form on the 1-jet space $J^1 (\Lambda_-)$.
			\ee
			The same applies to a tubular neighborhood of the Lagrangian cylinder $\Lambda'_- \times [0,1] \times \{K\} \subset R_- \times \{K\}$.
		\end{itemize}

\s\n
\textsc{Step 2.} \emph{Constructing the isotropic isotopy in $\overline{R}_-$.}

\s
Let $Op(p)\subset Op'(p)\subset \bdry \overline{R}_-$ be an open neighborhood of $p$ and a slightly larger open neighborhood of $p$; let $Op(p_1)\subset Op'(p_1)\subset \bdry W$ be corresponding open neighborhoods of $p_1$.

We now describe a piecewise smooth ``based'' isotopy $\Xi_r, r \in [0,3]$, of isotropic spheres\footnote{Strictly speaking, $\overline{R}_-$ is not symplectic. So by isotropy we mean it is isotropic when restricted to $R_-$ and is Legendrian when restricted to the contact boundary $\p \overline{R}_-$.} in $\overline{R}_-$ such that $\Xi_0 = \Lambda_-$, $\Xi_3 = \Lambda'_-$, and {\em $\Xi_r$ is the constant isotopy when restricted to $Op(p)$}:
\be
\item For $r\in[0,1]$, $\Xi_r$ is contained in the cylinder $\Lambda_- \times [0,1] \subset \overline{R}_- \setminus \mathring{W}$ such that $\Xi_1=\Lambda_- \times \{1\}$ outside of $Op'(p_1)$.
\item For $r\in[1,2]$, $\Xi_r$ is the positive isotopy in $\p W$ from $\Xi_1 = \Lambda_- \times \{1\}$ to $\Xi_2 = \Lambda'_- \times \{1\}$ outside of $Op'(p_1)$ as in \autoref{defn:Legendrian_to_the_left}.
\item For $r\in[2,3]$, $\Xi_r$ is contained in the cylinder $\Lambda'_- \times [0,1] \subset \overline{R}_- \setminus \mathring{W}$. 
\ee
See \autoref{fig:bypass_slide}.

\begin{figure}[ht]
\centerline{
\begin{overpic}[scale=.45]{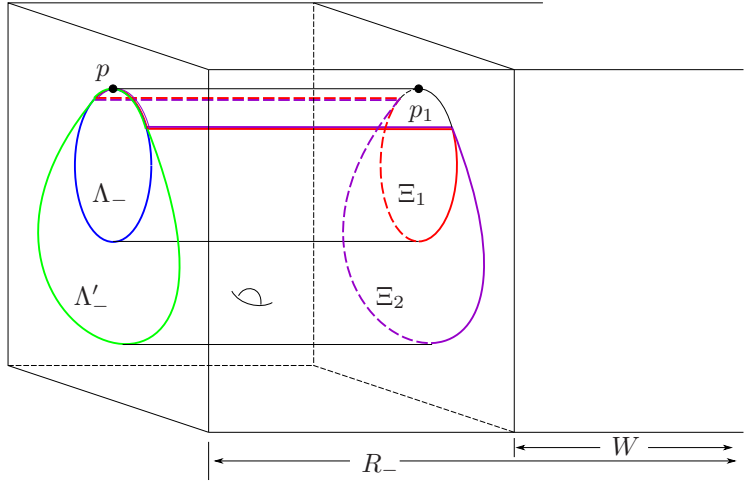}
			\put(82,3.5){\small{$W$}}
			\put(48,1.3){\small{$R_-$}}
			\put(11.5,38){\small{$\Lambda_-$}}
			\put(9,24){\small{$\Lambda'_-$}}
			\put(53,38){\small{$\Xi_1$}}
			\put(50,24){\small{$\Xi_2$}}
			\put(12,55){\small{$p$}}
            \put(54.5,49.5){\small{$p_1$}}
\end{overpic}
}
\caption{A schematic picture of the isotropic isotopy $\Xi_r, r \in [0,3]$ in $R_-$.}
\label{fig:bypass_slide}
\end{figure}

Observe that by construction $\lambda_- (\p_r \Xi_r) = 0$ for $r \in [0,1] \cup [2,3]$. Using the specific contact form on neighborhoods of the Lagrangian cylinders $\Lambda_- \times [0,1], \Lambda'_- \times [0,1] \subset R_-$ constructed in Step 1, one can slightly perturb $\Xi_r$ by a $C^\infty$-small perturbation such that, abusing notation,
		\begin{equation} \label{eqn:negative_isotopy}
			\lambda_- (\p _r \Xi_r (x)) < 0
		\end{equation}
for all $r \in [0,3]$ and $x \neq p$. From now on, we will refer to the perturbed (isotropic) isotopy by $\Xi_r$.

\s\n
\textsc{Step 3.} \emph{Constructing $\Delta'$.}

\s

Recall the function $\phi: [0,\tfrac{1}{2}] \to [0,\tfrac{1}{2}]$ from \autoref{subsec:bypass}. We define the function
$$\widetilde{\phi}: [0,K] \to [0,\tfrac{1}{2}]$$
such that $\widetilde{\phi} = \phi$ on $[0,\tfrac{1}{2}-\delta]$, $\widetilde{\phi}$ maps $[\tfrac{1}{2}-\delta, K)$ diffeomorphically onto $[\phi(\tfrac{1}{2}-\delta), \tfrac{1}{2})$ with small (positive) derivative, and $\widetilde{\phi} (K) = \tfrac{1}{2}$ with $\widetilde{\phi}'(K) = 0$. Here $\delta>0$ is small. Continuing to use the notation from \autoref{subsec:bypass}, we first truncate $\Delta$ to obtain
$$\Delta^{\delta} = \Delta \cap \{ 0 \leq t \leq \tfrac{1}{2}-\delta \}.$$
Then consider the (reparametrized) isotropic isotopy $\Xi^{\delta}_{\mu}, \mu \in [\phi(\tfrac{1}{2}-\delta), \tfrac{1}{2}]$ in $\overline{R}_-$ obtained by concatenating $\Lambda_t, \tau \in [\tfrac{1}{2}-\delta, \tfrac{1}{2}]$ and $\Xi_r, r \in [0,3]$ by identifying $\Lambda_{1/2} = \Lambda_- = \Xi_0 \subset \Gamma = \p \overline{R}_-$. We define the Legendrian cylinder $T \subset \overline{R}_- \times [\tfrac{1}{2}-\delta, K]$ by
		\begin{equation*}
			T = \cup_{\tau \in [1/2-\delta, K]} \left( \Xi^{\delta}_{\widetilde{\phi} (t)} \times \left\{ s = \lambda (\p_t \Xi^{\delta}_{\widetilde{\phi} (t)}) \right\} \right).
		\end{equation*}
	Then it follows from the construction that $T \cap (\overline{R}_- \times \{t\})$ bounds a Lagrangian disk $D_-^t \subset \overline{R}_- \times \{t\}$ for any $\tau \in [\tfrac{1}{2}-\delta, K]$. Moreover $\p D_-^t$ intersects $\Gamma \times \{0\} \times \{t\}$ precisely at the navel $p$ by (\ref{eqn:negative_isotopy}), and $\p D_-^{K} = \Lambda'$. Finally, the desired bypass is given by
		\begin{equation*}
			\Delta' = \Delta^{\delta} \cup \left( \cup_{\tau \in [1/2-\delta,K]} D^t_- \right).
		\end{equation*}
	This finishes the proof of the lemma.
\end{proof}

\section{Applications to overtwisted convex hypersurfaces} \label{sec:application}

In convex hypersurface theory, it is important to be able to determine when the contact germ on a convex hypersurface is (BEM)-overtwisted.  (A hypersurface with an overtwisted contact germ will be called {\em overtwisted} and a hypersurface which is not overtwisted will be called {\em tight}.)  In dimension $3$, this question is answered in a satisfactory way by Giroux's criterion, which states that a closed convex surface $\Sigma$ is overtwisted if and only if the dividing set $\Gamma_{\Sigma}$ has a homotopically trivial component if $\Sigma \neq S^2$ and has more than one component if $\Sigma=S^2$.

The answer to this problem in dimension greater than $3$ is much more delicate. In fact, to the best of the authors' knowledge, there did not exist a single example of a convex hypersurface which was known to be overtwisted. The goal of this section is to provide the first examples of closed overtwisted convex hypersurfaces in all dimensions and compare with the $3$-dimensional situation.

Let $\Sigma^0$ be a convex hypersurface. Suppose $(\Lambda_+, \Lambda_-; D_+, D_-)$ is a bypass attachment data on $\Sigma^0$ such that $\Lambda_+$ is the standard Legendrian unknot, $D_+$ is the standard Lagrangian disk bounded by $\Lambda_+$, and $\Lambda_+$ is above $\Lambda_-$ in the sense of \autoref{subsec:Legendrian_sum_property}. It is straightforward to check that $(\Lambda_+, \Lambda_-; D_+, D_-)$ is an overtwisted bypass attachment, i.e., that $\Lambda_- \uplus \Lambda_+$ is stabilized. We strengthen that result to show that the convex hypersurface $\Sigma^1$ obtained by attaching a bypass to $\Sigma^0$ along $(\Lambda_+, \Lambda_-; D_+, D_-)$ specified as above is overtwisted.

Using \autoref{thm:bypass_attachment}, we give an explicit description of $\Sigma^1$ as follows. Write $\Sigma^i \setminus \Gamma^i = R^i_+ \cup R^i_-, i=0,1$, as usual. Then $R^1_+$ is, up to completion, obtained by attaching a Weinstein handle to $R^0_+$ along $\Lambda_- \uplus \Lambda_+$ and removing a standard neighborhood of $D_+^{-\epsilon}$; $R^1_-$ is, up to completion, obtained by attaching a Weinstein handle to $R^0_-$ along $\Lambda_- \uplus \Lambda_+$ and removing a standard neighborhood of $D_-^{\epsilon}$. Moreover, the gluing map $\psi: \p \overline{R^1_+} \to \p \overline{R^1_-}$ is induced by Legendrian handlesliding $\Lambda_+^{-\epsilon}$ up across the contact $(-1)$-surgery along $\Lambda_- \uplus \Lambda_+$ to $\Lambda_-^{\epsilon}$. See \autoref{fig:OT_bypass}.

	\begin{figure}[ht]
	\centerline{
		\begin{overpic}[scale=.3]{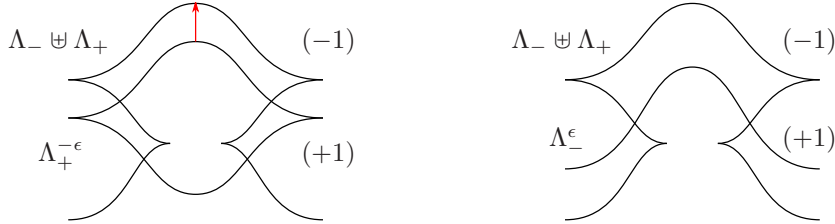}
			\put(-8,23){\small{$\Lambda_- \uplus \Lambda_+$}}
			\put(-4,8){\small{$\Lambda_+^{-\epsilon}$}}
			\put(31,8){\small{$(+1)$}}
			\put(31,23){\small{$(-1)$}}
			\put(59,23){\small{$\Lambda_- \uplus \Lambda_+$}}
			\put(64,10){\small{$\Lambda_-^{\epsilon}$}}
			\put(95,10){\small{$(+1)$}}
			\put(95,23){\small{$(-1)$}}
		\end{overpic}
		}
		\caption{Legendrian surgery diagrams of the overtwisted bypass attachment. The identification $\Gamma = \p \overline{R^1_+}$ is on the left, and $\Gamma = \p \overline{R^1_-}$ is on the right.} 
		\label{fig:OT_bypass}
	\end{figure}

\begin{theorem} \label{thm:OT_convex_hypersurface}
The convex hypersurface $\Sigma^1$ constructed above is overtwisted.
\end{theorem}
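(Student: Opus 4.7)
The plan is to prove that $\Sigma^1$ is overtwisted by showing first that its dividing set $\Gamma^1$ is BEM-overtwisted, then upgrading to overtwistedness of the germ of $\Sigma^1$ via the neighborhood criterion of Casals--Murphy--Presas recalled in the remark after \autoref{cor:dividing_set_nbhd_size}.

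By \autoref{thm:bypass_attachment}(3), $\Gamma^1$ is obtained from $\Gamma^0$ by a contact $(-1)$-surgery along $\Lambda_- \uplus \Lambda_+$ together with auxiliary $(+1)$-surgeries along $\Lambda_\pm^{\mp\epsilon}$. Since $\Lambda_+$ is a standard Legendrian unknot sitting above $\Lambda_-$, \autoref{lem:plus_unknot} gives that $\Lambda_- \uplus \Lambda_+$ is a stabilization of $\Lambda_-$, in particular a loose Legendrian in $\Gamma^0$. To exhibit a loose standard Legendrian unknot in $\Gamma^1$, I would fix an auxiliary standard Legendrian unknot $U \subset \Gamma^0$ meeting $\Lambda_- \uplus \Lambda_+$ $\xi$-transversely in exactly one point and placed away from $\Lambda_\pm^{\mp\epsilon}$ and the corresponding $(+1)$-surgery loci; this is straightforward since $U$ is local and the surgeries along $\Lambda_\pm^{\mp\epsilon}$ are performed in small, disjoint regions. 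By \autoref{lemma: handle_slide} in case (UA), the pushoff $U^{-\epsilon}$ is Legendrian isotopic in $\Gamma^1$ to $(U \uplus (\Lambda_- \uplus \Lambda_+))^\epsilon$, and by \autoref{lem:plus_unknot} applied once more this Legendrian sum is Legendrian isotopic either to $\Lambda_- \uplus \Lambda_+$ itself (if $U$ lies above it) or to a further stabilization of it (if $U$ lies below); in either case the outcome is a stabilized, hence loose, Legendrian sphere. Invariance of looseness under Legendrian isotopy then shows that $U^{-\epsilon}$ is a loose standard Legendrian unknot in $\Gamma^1$, and \autoref{thm:OT_criterion} forces $\Gamma^1$ to be BEM-overtwisted.

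For the upgrade, \autoref{cor:dividing_set_nbhd_size} provides a contact embedding $\Gamma^1 \times D(R) \hookrightarrow N$ into any invariant neighborhood $N$ of $\Sigma^1$, with $R > 0$ arbitrarily large. Since $\Gamma^1$ is BEM-overtwisted, the Casals--Murphy--Presas criterion on contact submanifolds with sufficiently large neighborhoods (cf.\ the remark following \autoref{cor:dividing_set_nbhd_size}) implies that, for $R$ large enough, $N$ itself is BEM-overtwisted. Applied to every neighborhood of $\Sigma^1$, this yields that $\Sigma^1$ is an overtwisted convex hypersurface. The main technical subtlety is the transfer of looseness through the $(-1)$-surgery in the first step: one must verify that the handleslide of \autoref{lemma: handle_slide} preserves looseness of the Legendrian isotopy class of $U^{-\epsilon}$, which concretely amounts to showing that a loose chart witnessing the stabilization of $\Lambda_- \uplus \Lambda_+$ can be arranged so that the resulting $(U \uplus (\Lambda_- \uplus \Lambda_+))^\epsilon$ retains a loose chart in $\Gamma^1$. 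This is expected to reduce to a standard flexibility argument for Weinstein handles attached along loose Legendrians, and represents the principal obstacle to a fully self-contained proof.
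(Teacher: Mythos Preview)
Your approach has a fatal flaw: you are attempting to prove that the dividing set $\Gamma^1$ is BEM-overtwisted, but this is impossible. The corollary immediately following \autoref{lem:ideal_compactification} states that the induced contact structure on every component of the dividing set of a convex hypersurface is semi-fillable, hence tight. Since $\Gamma^1$ is by construction the dividing set of the convex hypersurface $\Sigma^1$, it is tight, and your Step~1 cannot succeed.

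The specific place where your argument breaks is exactly the ``technical subtlety'' you flag at the end, and it is not a subtlety but an actual obstruction. The loose chart witnessing that $\Lambda_- \uplus \Lambda_+$ is a stabilization lives in a neighborhood of $\Lambda_- \uplus \Lambda_+$ in $\Gamma^0$. The contact $(-1)$-surgery along $\Lambda_- \uplus \Lambda_+$ removes precisely this neighborhood and replaces it with the upper boundary of a Weinstein handle, so the loose chart does not survive into $\Gamma^1$. There is no ``standard flexibility argument'' that recovers it: contact $(-1)$-surgery along a loose Legendrian produces a Weinstein cobordism, and if the bottom is fillable the top remains fillable, hence tight. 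In short, looseness of the attaching sphere does not propagate to overtwistedness of the surgered manifold.

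The paper's proof takes a completely different route that never leaves the level of the hypersurface germ. It starts from the existence of a \emph{trivial} bypass $\Delta^{\mathrm{tri}}$ in any invariant neighborhood of $\Sigma^1$ (\autoref{cor:trivial_bypass_exists}), then applies the bypass rotation lemma (\autoref{lem:bypass_sliding}) to rotate $\Lambda_+^{\mathrm{tri}}$ to the left across the Weinstein handle attached along $\Lambda_- \uplus \Lambda_+$, producing a new bypass $\Delta^{\mathrm{ot}}$. A Kirby-calculus computation then shows that the resulting $\Lambda_-^{\mathrm{ot}} \uplus \Lambda_+^{\mathrm{ot}}$ is loose, so $\Delta^{\mathrm{ot}}$ is an overtwisted bypass living inside every neighborhood of $\Sigma^1$; \autoref{lemma: OT_bypass} finishes the argument. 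The key point is that overtwistedness is detected by a bypass attachment \emph{inside the neighborhood of $\Sigma^1$}, not by the contact topology of $\Gamma^1$.
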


\begin{proof}
Consider the trivial bypass $\Delta^{\text{tri}}$ attached to $\Sigma^1$ along $(\Lambda_+^{\text{tri}}, \Lambda_-^{\text{tri}}; D_+^{\text{tri}}, D_-^{\text{tri}})$ as depicted in the left-hand side of \autoref{fig:OT_convex_hypersurface}.
\begin{figure}[ht]
	\centerline{
		\begin{overpic}[scale=.3]{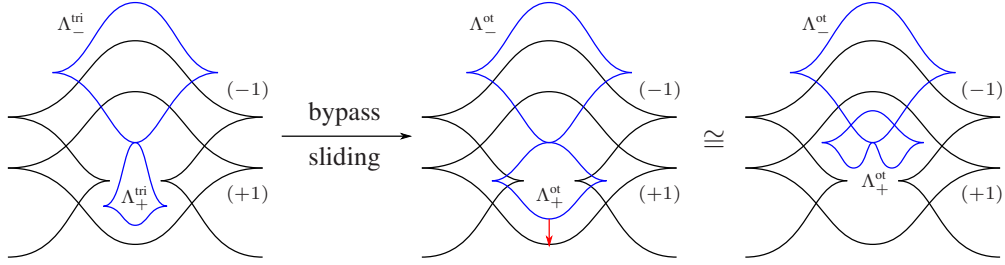}
			\put(70,11){$\cong$}
			\put(22,6){\tiny{$(+1)$}}
			\put(63.5,6){\tiny{$(+1)$}}
			\put(96,6){\tiny{$(+1)$}}
			\put(22,16.5){\tiny{$(-1)$}}
			\put(63.5,16.5){\tiny{$(-1)$}}
			\put(96,16.5){\tiny{$(-1)$}}
			\put(11.4,5.6){\tiny{$\Lambda_+^{\text{tri}}$}}
			\put(5,23){\tiny{$\Lambda_-^{\text{tri}}$}}
			\put(53.2,5.8){\tiny{$\Lambda_+^{\text{ot}}$}}
			\put(46.5,23){\tiny{$\Lambda_-^{\text{ot}}$}}
			\put(80,23){\tiny{$\Lambda_-^{\text{ot}}$}}
			\put(86,7.2){\tiny{$\Lambda_+^{\text{ot}}$}}
			\put(30.3,14){\small{bypass}}
			\put(30.3,9.4){\small{sliding}}
		\end{overpic}
	}
		\caption{The bypass rotation from $\Delta^{\text{tri}}$ to $\Delta^{\text{ot}}$ viewed in $R^1_+$. The red arrow indicates the short Reeb chord along which the Legendrian handleslide is performed. The $(-1)$-surgery is along $\Lambda_-\uplus \Lambda_+$ and the $(+1)$-surgery is along $\Lambda_+^{-\epsilon}$.}
		\label{fig:OT_convex_hypersurface}
	\end{figure}
Here $D_+^{\text{tri}}$ is the usual Lagrangian disk bounded by the standard Legendrian unknot $\Lambda_+^{\text{tri}}$, and the fact that $\Lambda_-^{\text{tri}}$ indeed bounds a Lagrangian disk $D_-^{\text{tri}}$ in $R_-^1$ can be seen by sliding $\Lambda_+^{-\epsilon}$ over $\Lambda_-\uplus \Lambda_+$ on the left-hand side of Figure~\ref{fig:OT_convex_hypersurface}. By \autoref{cor:trivial_bypass_exists}, $\Delta^{\text{tri}}$ exists in an invariant neighborhood of $\Sigma^1$.

Now observe that in $R^1_+$, $\Lambda_+^{\text{ot}}$, as given in the middle of \autoref{fig:OT_convex_hypersurface}, is to the left of $\Lambda_+^{\text{tri}}$ (in the sense of \autoref{defn:Legendrian_to_the_left}).  This can be seen by choosing $W \subset R^1_+$ to be the Liouville subdomain obtained by removing the Weinstein handle along $\Lambda_- \uplus \Lambda_+$ from $R^1_+$. Hence by \autoref{lem:bypass_sliding} the bypass $\Delta^{\text{ot}}$ along $(\Lambda_+^{\text{ot}}, \Lambda_-^{\text{ot}}; D_+^{\text{ot}}, D_-^{\text{ot}})$ exists in an invariant neighborhood of $\Sigma^1$.
\begin{figure}[ht]
	\centerline{
		\begin{overpic}[scale=.3]{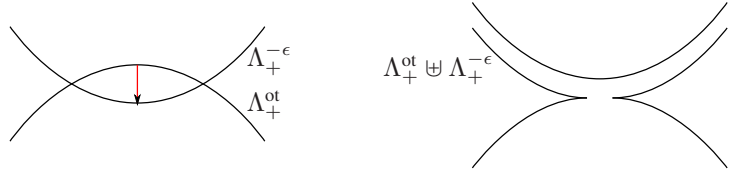}
			\put(33,14.3){\small $\Lambda_+^{-\epsilon}$}
			\put(33,8.1){\small $\Lambda_+^{\text{ot}}$}
            \put(52,13){\small $\Lambda_+^{\text{ot}}\uplus \Lambda_+^{-\epsilon}$}
		\end{overpic}
	}
		\caption{Sliding $\Lambda_+^{\text{ot}}$ below $\Lambda_+^{-\epsilon}$.}
		\label{fig: model_handleslide}
	\end{figure}

We now apply the Kirby calculus shown on the right-hand side of \autoref{fig:OT_convex_hypersurface}:  First we push the cusp edges of $\Lambda_+^{\text{ot}}$ below (the lower sheet of) $\Lambda_+^{-\epsilon}$.  Then we slide $\Lambda_+^{\text{ot}}$ below $\Lambda_+^{-\epsilon}$ using the model given in \autoref{fig: model_handleslide}; note that it looks slightly different from \autoref{fig:Legendrian_sum_front}(b) but is equivalent to it.  Finally we observe that $\Lambda_-^{\text{ot}}\uplus \Lambda_+^{\text{ot}}$ in the right-hand side of \autoref{fig:OT_convex_hypersurface} is loose. Hence $\Delta^{\text{ot}}$ is an overtwisted bypass and $\Sigma^1$ is overtwisted.
\end{proof}

We conclude this section by comparing our example in \autoref{thm:OT_convex_hypersurface} with the $3$-dimensional case. We claim that, unlike the situation in dimension $3$, the overtwistedness of a convex hypersurface in dimension at least $5$ cannot be determined solely by looking at either $R_+$ or $R_-$, or even both --- in fact how they are glued is important!

In the following we will apply the general discussions from the beginning of this subsection to a very special situation. Namely, let $\Sigma^0$ be the unit sphere in $(\R^{2n+1}, \xi_{\std})$, which is clearly convex since the radial vector field is contact and is transverse to $\Sigma^0$. Moreover the dividing set $\Gamma^0$ is contactomorphic to the standard contact structure on $S^{2n-1}$, and $R^0_\pm$ is symplectomorphic to the standard Liouville structure on $\R^{2n}$. Let $\Lambda_\pm$ both be Legendrian isotopic to the standard Legendrian unknot such that $\Lambda_+$ is above $\Lambda_-$. Let us write $\Sigma^1 = \overline{R^1_+} \cup_{\psi} \overline{R^1_-}$ for the resulting convex hypersurface of the bypass attachment, where $\psi: \p \overline{R^1_+} \to \p \overline{R^1_-}$ is the contactomorphism induced by a Legendrian handleslide as before.

Observe that in this case $R^1_+$ is in fact symplectomorphic to $R^1_-$, so we can construct another convex hypersurface $D(R^1_+) = \overline{R^1_+} \cup_{\text{id}} \overline{R^1_+}$ using the identity contactomorphism as the gluing map. Here the notation $D$ stands for ``double''. More generally, for any Liouville manifold $V$, one can form a convex hypersurface $D(V) = \overline{V} \cup_{\text{id}} \overline{V}$. The following result asserts that the contact germ on $D(V)$ is always tight. In particular it implies that $D(R^1_+)$ is tight, and hence together with \autoref{thm:OT_convex_hypersurface}, we have seen that the overtwistedness of convex hypersurfaces in dimension greater than $3$ crucially depends on the gluing contactomorphism.

\begin{lemma} \label{lemma: tight_contact_handlebody}
	For any Liouville manifold $V$, $D(V)$ is a tight convex hypersurface.
\end{lemma}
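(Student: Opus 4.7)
The plan is to realize $D(V)$ as a convex hypersurface inside a closed contact manifold admitting a Liouville filling; tightness of the contact germ on $D(V)$ will then follow from the fact that semi-fillable contact structures are not overtwisted, exactly as in the corollary to \autoref{lem:ideal_compactification}.

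Let $V^c \subset V$ be a Liouville domain whose completion is $V$, and form the product Liouville domain
\[
W = V^c \times \D^2, \qquad \lambda_W = \lambda_V + \tfrac{1}{2}(x\,dy - y\,dx),
\]
whose Liouville vector field $Y_W = Y_V + \tfrac{1}{2}(x\partial_x + y\partial_y)$ is outward-transverse along both $\partial V^c \times \D^2$ and $V^c \times S^1$. After corner rounding, $N := \partial W$ is a closed contact manifold Liouville-filled by $W$, hence semi-fillable and therefore tight.

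To exhibit $D(V)$ as a convex hypersurface in $N$, I would use the supporting open book decomposition of $N$ with page $V^c$, binding $\Gamma = \partial V^c$, and trivial monodromy. Let $\Sigma_0 \subset N$ be the union of the two pages at $\theta = 0$ and $\theta = \pi$, glued along the common binding; as a smooth manifold $\Sigma_0 \cong D(V)$. A convexity-witnessing contact vector field can be constructed from a single contact Hamiltonian $H$ given by $x$ on the binding region (where the contact form is $\alpha_\Gamma + \tfrac{1}{2}(x\,dy - y\,dx)$ on $\Gamma \times \D^2$) and by $\cos\theta$ on the page region (where the contact form is $\lambda_V + \tfrac{1}{2}d\theta$ on $V^c \times S^1$); these expressions agree on the interface $\Gamma \times S^1$ since $x = \cos\theta$ there. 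A direct computation yields $X_H = \tfrac{x}{2} R_\Gamma + \partial_y$ on the binding piece, which is transverse to $\{y = 0\}$, and a contact vector field on the page piece whose $\partial_\theta$-component equals $\pm 2$ at $\theta \in \{0,\pi\}$, hence transverse to the pages. Furthermore $\{H = 0\} \cap \Sigma_0$ is precisely the binding $\Gamma$, and each side $R_\pm(\Sigma_0)$ is identified with the Liouville manifold $V$ (being one page of the open book, completed to $V$), matching the abstract convex-hypersurface data of $D(V)$.

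The main technical obstacle is the corner-rounding step: patching the two locally defined contact Hamiltonians into a globally smooth function on $N$ and checking that the associated contact vector field remains transverse to $\Sigma_0$ through the smoothed corner. This should be a routine if slightly tedious local computation modeled on the corner rounding discussed in \autoref{rmk:corner_rounding}. Once it is carried out, any sufficiently small neighborhood of $\Sigma_0 \subset N$ inherits tightness from $N$, so $D(V)$ is a tight convex hypersurface as claimed.
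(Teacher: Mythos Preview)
Your approach is correct and genuinely different from the paper's. The paper argues via sutured contact homology: the completion of the contact handlebody $(H(V),\xi,\Gamma)$ is $(V\times\R,\ker(dt+\lambda))$, whose Reeb vector field $\partial_t$ has no closed orbits, so the sutured contact homology is the ground field and in particular nonzero; hence $H(V)$, and therefore its boundary $D(V)$, is tight.

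Your argument instead realizes $D(V)$ inside the Liouville-fillable contact manifold $N=\partial(V^c\times\D^2)$, which is the open book with page $V$ and trivial monodromy, and then invokes the fillability $\Rightarrow$ tight implication already used in the corollary to \autoref{lem:ideal_compactification}. This is more elementary in that it avoids the holomorphic-curve machinery of \cite{CGHH11}, relying only on results (\cite{Nie06,AH09,CMP15,Hua17}) the paper has already cited. The corner-rounding issue you flag is real but can be sidestepped: rather than building the contact vector field by hand on $N$, observe that $N$ decomposes as $H(V)\cup_{D(V)} H(V)$ (splitting the $S^1$ of pages into two arcs), so $D(V)=\partial H(V)$ sits in $N$ with the convexity already witnessed by the vector field $\partial_t$ from the paper's own definition of $H(V)$. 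What the paper's approach buys is that it proves the stronger statement that $H(V)$ itself is tight with nonvanishing contact homology, which could be useful for further applications; your approach is shorter and self-contained within the paper's toolkit.
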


We first define the \emph{contact handlebody} $(H(V),\xi,\Gamma)$ as follows: Let $\lambda$ be the Liouville form on $V$ and let $\overline{V}$ be the ideal compactification of $V$. Define
\begin{equation*}
	H(V) = (\overline{V} \times [0,1]) / (x,t) \sim (x,t') \text{ for all } x \in \p \overline{V}, t,t' \in [0,1],
\end{equation*}
equipped with the contact structure $\xi = \ker (dt+\lambda)$ on $V \times [0,1]$ and $\xi = \ker(sdt+\eta)$ in a neighborhood of $\p \overline{V} \subset H(V)$, where $s$ is the coordinate on a collared neighborhood of $\p \overline{V} \subset \overline{V}$ and $\eta$ is the induced contact form on $\Gamma=\p \overline{V}$. It is not hard to see that $\p H(V) = D(V)$ is convex.

\begin{proof}
We will show the tightness of $(H(V), \xi,\Gamma)$, which implies the tightness of $D(V)$. To this end, we compute the sutured contact homology of $(H(V), \xi,\Gamma)$, which was defined \cite{CGHH11}. Following \cite{CGHH11}, we observe that the (sutured) completion of $(H(V),\xi,\Gamma)$ is contactomorphic to $(V \times \R, \ker(dt+\lambda))$. Hence the associated Reeb vector field $\p_t$ has no closed orbit. It follows that the sutured contact homology of $(H(V), \xi,\Gamma)$ is isomorphic to the ground field $k$, which in particular is nonzero. Hence $(H(V),\xi)$ is tight.
\end{proof}

\bibliography{mybib}
\bibliographystyle{amsalpha}

\end{document}